\title{Probabilistic Properties of Delay Differential Equations}
\author{S.\ Richard Taylor}
\newlength{\figwidth}
\numberwithin{equation}{chapter}
\numberwithin{figure}{chapter}
\theoremstyle{plain}
\newtheorem{thm}{Theorem}[chapter]       
\newtheorem{cor}[thm]{Corollary}         
\theoremstyle{definition}
\newtheorem{defn}{Definition}[chapter]   
\newtheorem{example}{Example}[section]
\newtheorem{remark}{Remark}[section]
\newcommand{\ie}{\emph{i.e.}}
\newcommand{\eg}{\emph{e.g.}}
\newcommand{\cf}{\emph{cf.}}
\newcommand{\etc}{\emph{etc}}
\newcommand{\etal}{\emph{et al.}}
\newcommand{\viz}{\emph{viz.}}
\newcommand{\Reals}{\ensuremath{\mathrm{I\!R}}}
\newcommand{\Rn}{\ensuremath{\Reals^n}}
\newcommand{\Acal}{\ensuremath{\mathcal{A}}}
\newcommand{\Rplus}{\ensuremath{\Reals_+}}
\newcommand{\Zplus}{\ensuremath{\mathbb{Z}_+}}
\newcommand{\eps}{\ensuremath{\varepsilon}}
\newcommand{\rhostar}{\ensuremath{\rho_{\ast}}}
\newcommand{\mustar}{\ensuremath{\mu_{\ast}}}
\newcommand{\etastar}{\ensuremath{\eta_{\ast}}}
\newcommand{\Tstar}{\ensuremath{T_{\ast}}}
\begin{document}
\year=2004  
\dominitoc[e]  
%
\prepages
\maketitle
\sigpages
\begin{abstract}
\addstarredchapter{Abstract}%
Systems whose time evolutions are entirely deterministic can
nevertheless be studied probabilistically, \ie\ in terms of the
evolution of probability distributions rather than individual
trajectories.  This approach is central to the dynamics of ensembles
(statistical mechanics) and systems with uncertainty in the initial
conditions.  It is also the basis of ergodic theory---the study of
probabilistic invariants of dynamical systems---which provides one
framework for understanding chaotic systems whose time evolutions are
erratic and for practical purposes unpredictable.

Delay differential equations (DDEs) are a particular class of
deterministic systems, distinguished by an explicit dependence of the
dynamics on past states.  DDEs arise in diverse applications including
mathematics, biology and economics.  A probabilistic approach to DDEs
is lacking.  The main problems we consider in developing such an
approach are (1) to characterize the evolution of probability
distributions for DDEs, \ie\ develop an analog of the Perron-Frobenius
operator; (2) to characterize invariant probability distributions for
DDEs; and (3) to develop a framework for the application of ergodic
theory to delay equations, with a view to a probabilistic
understanding of DDEs whose time evolutions are chaotic.  We develop a
variety of approaches to each of these problems, employing both
analytical and numerical methods.

In transient chaos, a system evolves erratically during a transient
period that is followed by asymptotically regular behavior.  Transient
chaos in delay equations has not been reported or investigated before.
We find numerical evidence of transient chaos (fractal basins of
attraction and long chaotic transients) in some DDEs, including the
Mackey-Glass equation.  Transient chaos in DDEs can be analyzed
numerically using a modification of the ``stagger-and-step'' algorithm
applied to a discretized version of the DDE.
\end{abstract}

\begin{acknowledgements}
\addstarredchapter{Acknowledgements}%
\bigskip There are many individuals and organizations who have
contributed to the success of this project:

\bigskip

Foremost I wish to thank my thesis supervisor, Sue Ann Campbell, for
her insight, guidance, helpful criticism, and flexibility, and for her
careful and constructive reading of this document in its various
incarnations.

\bigskip

I am grateful for the support of my family and friends, many of whom
will understandably never read these pages but have nevertheless
been generous in their patience and encouragement.

\bigskip

This research was supported by scholarships from the Natural Sciences
and Engineering Research Council of Canada and from the University of
Waterloo.

\end{acknowledgements}

\newpage
\vspace*{\fill}

\hspace*{\fill}
\begin{minipage}[t]{4in}
I have long discovered that geologists never read each other's works,
and that the only object in writing a book is a proof of earnestness,
and that you do not form your opinions without undergoing labor of
some kind.\\
\hspace*{\fill}  ---\textsc{Charles Darwin~\cite{Dar87}}
\end{minipage}
\hspace*{\fill}

\vspace*{\fill}
\vspace*{\fill}

\begin{preface}
\addstarredchapter{Preface}%
As an undergraduate physics student I discovered Ilya Prigogine's book
``The End of Certainty'', quite serendipitously, on the new arrivals
shelf at the UBC library. I signed the book out and read it through
over a very short period, inspired by Prigogine's new (to me) idea
that deterministic systems---predictable things in the world of
Newtonian mechanics---could and should be discussed in probabilistic
terms.  This idea, which pointed to a way out of the clockwork
universe that classical mechanics usually portrays, resonated with
what I had been learning about classical physics, quantum mechanics,
and ``chaos theory''.  According to Prigogine the mathematical
foundation of his ideas was called ergodic theory.  Eager to learn
more, I made my first ever visit to the mathematics library and signed
out Lasota and Mackey's ``Probabilistic Properties of Deterministic
Systems''.  Unfortunately I found that I lacked the mathematical
maturity to read the book on my own, and I soon gave up.

A few years later I met Michael Mackey at a summer school in
Montr{\'e}al, where he gave a presentation in which, as an aside, he
mentioned that a probabilistic/ergodic approach to delay differential
equations was lacking, and that such a theory might have interesting
applications.  At the time I didn't appreciate the ambitiousness of
such a project, but it was the excuse I needed to tackle these ideas
again.  The result, after some earnest labor, is the present
document---a representative subset of my present ideas on a
probabilistic approach to delay differential equations.
\end{preface}

\tableofcontents
\cleardoublepage
\addstarredchapter{List of Figures}
\listoffigures
%
\mainbody
\chapter{Introduction} \label{ch.intro}

\begin{singlespacing}
\minitoc  
\end{singlespacing}

\vspace{0.4in}

Today, some of the most profound unanswered scientific questions are
related to the interplay between order and disorder.  The physical
basis of consciousness and intelligence (the mind-body problem), the
origins of life, the nature of turbulence, and the paradox between
between thermodynamics and deterministic microscopic dynamics, are but
a few examples.  These are fundamental problems that have plagued
scientists and philosophers for centuries, and each remains largely
unresolved.  Each of them involves the spontaneous creation of order
out of disorder, or disorder out of order, and they share among them
the difficulty of explaining how such processes can result from the
operation of deterministic physical laws.

Recent decades have seen a renewal of interest and progress in the
understanding of the nature of order and disorder, beginning with the
discovery in 1963 by Lorenz of the existence of systems that, despite
being deterministic, exhibit disordered and essentially unpredictable
behavior.  Since then, the study of ``deterministic chaos'' has
brought forth a rich corpus of experimental and theoretical results,
incorporating the insights of researchers in diverse fields spanning
mathematics, physics and engineering.  This corpus comprises what has
come to be called ``chaos theory''\footnote{Abuse and misuse of the
term ``chaos theory'' in the popular literature has led some serious
people to avoid the term.  One comprehensive reference~\cite{KH95}
manages not to use the term in its entire 800+ pages.}.

In particular, the mathematical field of \emph{dynamical systems}
(\ie, systems that evolve in time according to deterministic laws) has
contributed much to a unified understanding of chaotic systems.  This
general framework reveals that, despite the differences in origins and
physical nature of different deterministic systems, the same
underlying mechanisms operate to generate disorder.  One of the
central results of this theory is the identification of a precise
notion of what constitutes a ``chaotic system'', and the discovery of
sufficient conditions that imply the existence of chaos as
such~\cite{LY75}.

Given the random character of so-called chaotic evolutions, it is not
surprising that statistical and probabilistic ideas should be useful
tools for their analysis.  It turns out that far from being merely
descriptive, probabilistic ideas provide fundamental insights into the
behavior of dynamical systems~\cite{LM94,Pr96}.  This observation is
the basis of the ergodic theory of dynamical systems, a theory that
had its origins in foundational issues in statistical mechanics in the
late 1800's.  Despite the success of ergodic theory as a mathematical
endeavor, the physical problems that motivated its development
surprisingly remain unresolved~\cite{Ma92,Pr80,Pr96}.

It may seem, at first, that the application of probabilistic ideas to
deterministic systems is inherently contradictory.  Indeed,
probability theory is concerned with the study of inherently random
phenomena, which are antithetical to a strictly deterministic
conception of physical law.  However, experience has shown quite the
opposite: probabilistic ideas provide a new and, in the end, quite
natural way to view deterministic phenomena.  The following discussion
is intended to give the flavor of this viewpoint.  In Chapter~2 we
give a more detailed theoretical presentation.

\section{Probabilistic Approach to Deterministic Systems}

Consider the following, much-studied example of a simple deterministic
system whose evolution exhibits a species of random behavior.  For a
given a real number $x_0$ (the ``initial state'' of the system)
between $0$ and $1$, let $x_1$, $x_2$, $x_3$, \etc\ be defined by
repeated application of the formula
\begin{equation} \label{eq.quadmap}
  x_{n+1} = 4 x_n (1 - x_n), \quad n=0, 1, 2, \ldots
\end{equation}
Once can view this formula as prescribing the evolution of the state
of the system, $x_n$, at discrete times $n=0,1,2,\ldots$.  The
evolution of this system is deterministic, in that once the initial
state is specified equation~\eqref{eq.quadmap} uniquely determines
the sequence of values $\{x_0, x_1, x_2, \ldots\}$ (\ie, the
\emph{trajectory} of the system) for all time. Thus for example if
$x_0=0.51$ we obtain
\begin{equation*}
  x_1 = .9996, x_2 \approx .0026, x_3 \approx .0064, x_4 \approx .025,
  x_5 \approx .099, \; \mathit{etc.}
\end{equation*}

\begin{figure}
\begin{center}
\includegraphics[width=\figwidth]{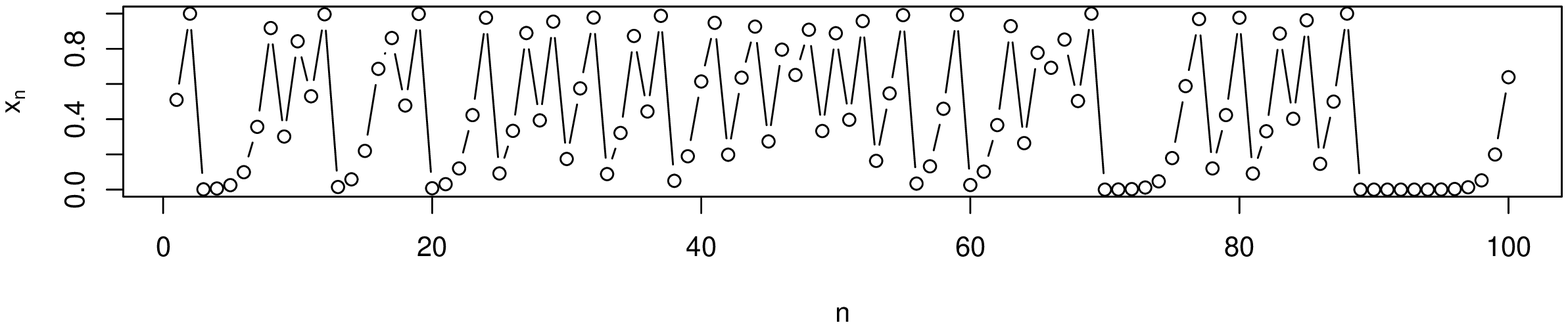}\\
\includegraphics[width=\figwidth]{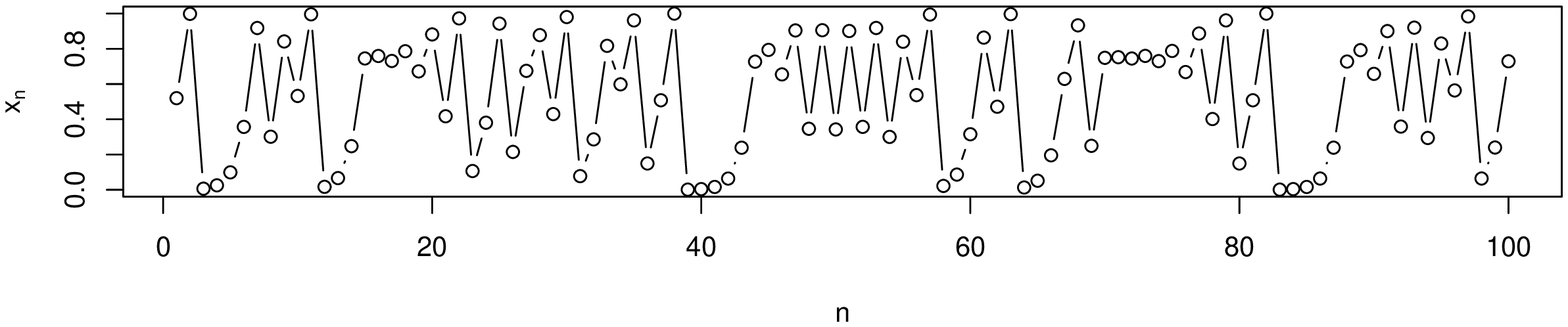}\\
\includegraphics[width=\figwidth]{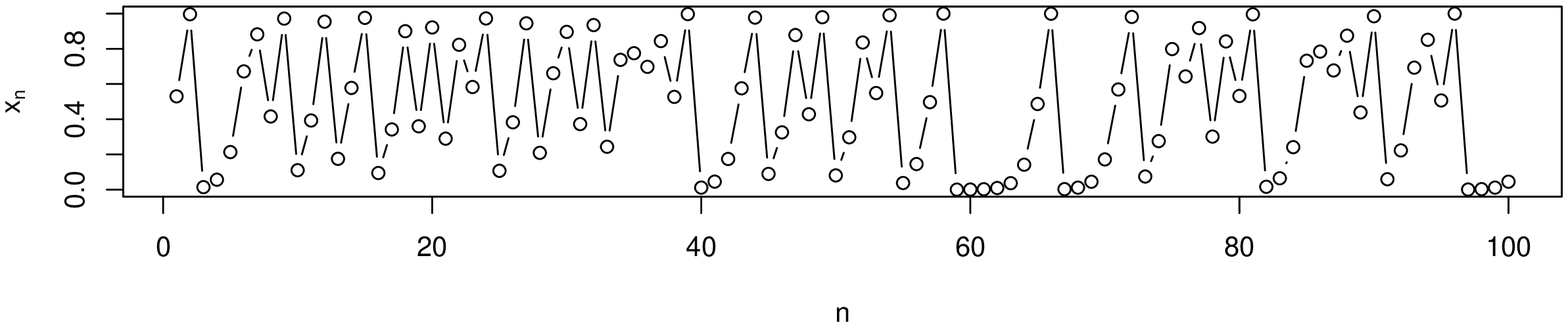}\\
\includegraphics[width=\figwidth]{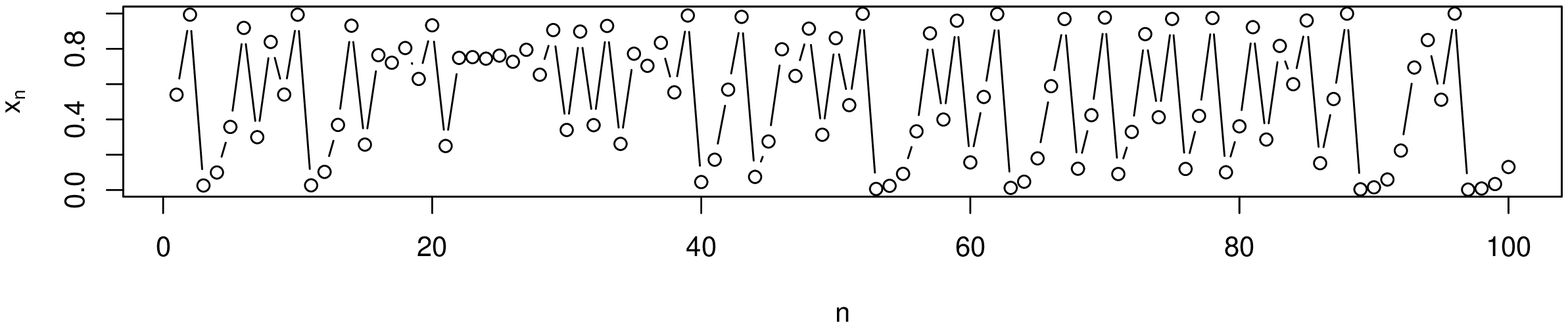}
\caption[Numerical trajectories for the map $x \mapsto 4 x (1 - x)$.]{Numerical
trajectories for the map $x \mapsto 4 x (1 - x)$.  The initial
conditions different only slightly for each trajectory.}
\label{fig.quadmapev}
\end{center}
\end{figure}
The qualitative behavior of this system is most easily appreciated
graphically, as in Figure~\ref{fig.quadmapev} which plots $x_n$ vs.\
$n$ for typical trajectories obtained for different choices of $x_0$.
Each of these trajectories is erratic, and random in the sense that no
regularity is apparent.  Furthermore, it can be seen by comparing the
graphs in Figure~\ref{fig.quadmapev} that two near-identical initial
states eventually yield radically different time evolutions.  This
phenomenon, termed ``sensitive dependence on initial
conditions''~\cite{Lor63}, imposes strong limits on the predictability
of this system over long periods of time: a small error in the
determination of the initial condition rapidly becomes amplified to
the extent that reliable prediction of the future states of the system
eventually becomes impossible.  Thus, despite being entirely
deterministic, trajectories of this simple system have some hallmarks
of essentially \emph{random} phenomena: their behavior is irregular
and unpredictable.

The mechanisms underlying the random character of this system are
reasonably well understood (see \eg~\cite{Col80}), the key notion
being sensitivity to initial conditions and its consequences.
However, within this framework it is difficult to approach questions
of the type ``what is the asymptotic behavior of a \emph{typical}
trajectory of this system?''  Indeed, the very nature of sensitivity
to initial conditions would seem to preclude any notion of ``typical''
behavior, since even very similar initial conditions eventually lead
to their own very particular, uncorrelated evolutions.

However, different conclusions are reached if one takes a
probabilistic point of view.  Suppose that instead of being precisely
determined, the initial state $x_0$ has associated with it some
uncertainty.  In particular, suppose we know the initial probability
density, $\rho$, giving the probabilities of all possible values that
$x_0$ can take.  Then it makes sense to ask, ``what will be the
probability density of $x_1$, the new state after one iteration of the
map~\eqref{eq.quadmap}?''.  A precise answer to this question can be
found using analytical methods described in Chapter 2.  For an
approximate answer, it suffices to simulate a large ensemble of
different initial states $x_0$ distributed according to $\rho$, evolve
these states forward under the map~\eqref{eq.quadmap}, and approximate
the transformed density of the ensemble by constructing a histogram of
the ensemble of values $x_1$.  One can then proceed, in the same
fashion, to determine the probability densities of subsequent states
$x_2$, $x_3$, \etc.  Thus, even if the initial state $x_0$ is not
known precisely, it is at least possible to give a probabilistic
description of the system's evolution in terms of the evolution of a
probability density.

\begin{figure}
\begin{center}
\includegraphics[width=2.5in]{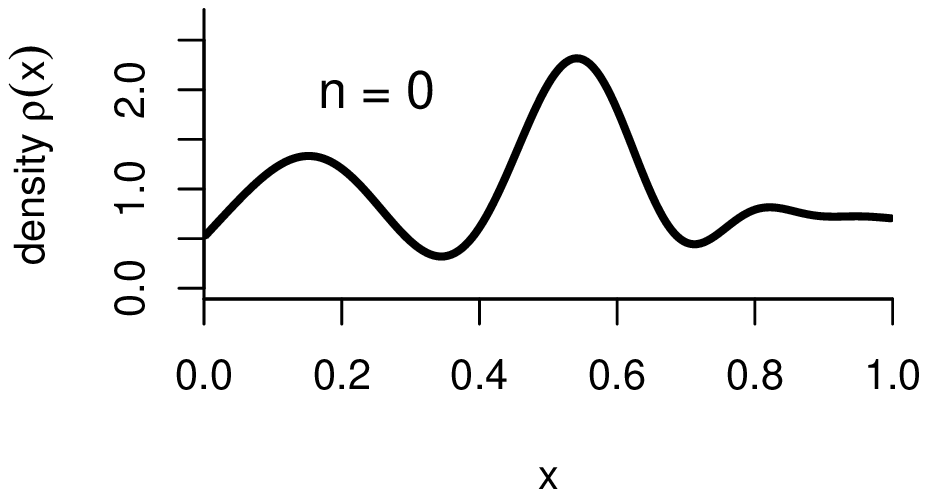}
\includegraphics[width=2.5in]{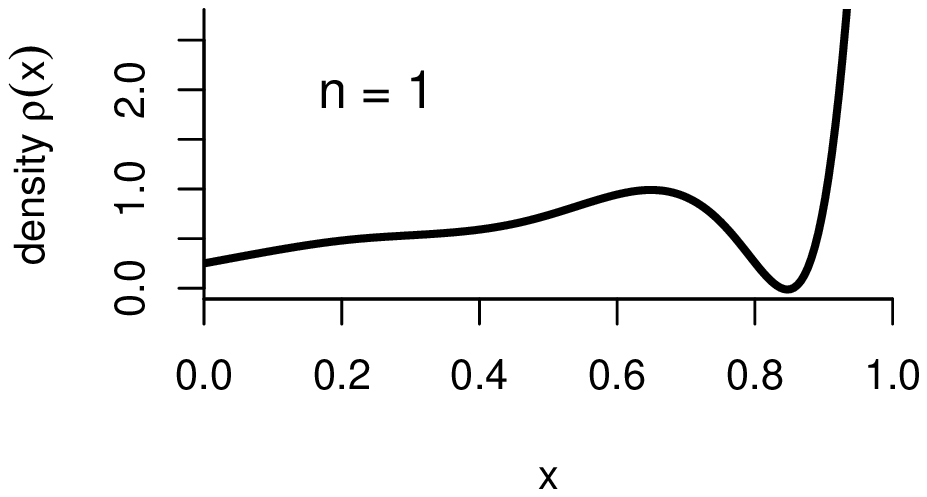}\\
\includegraphics[width=2.5in]{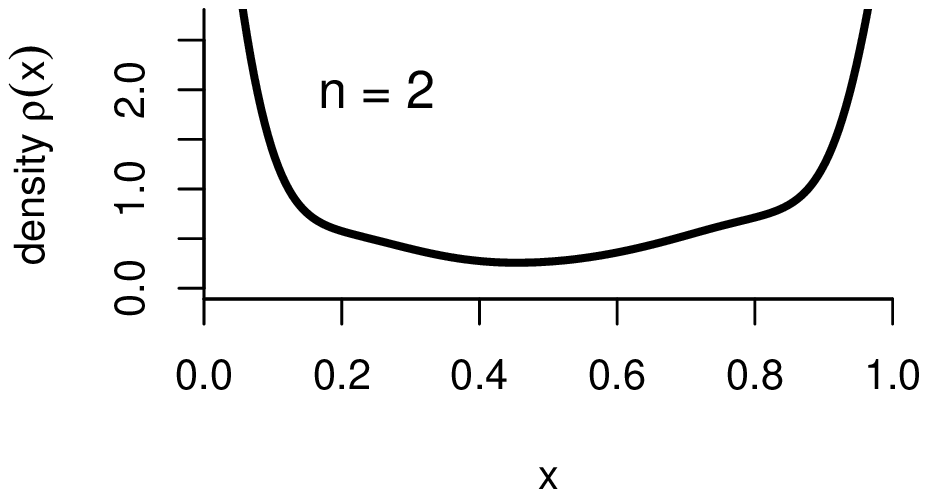}
\includegraphics[width=2.5in]{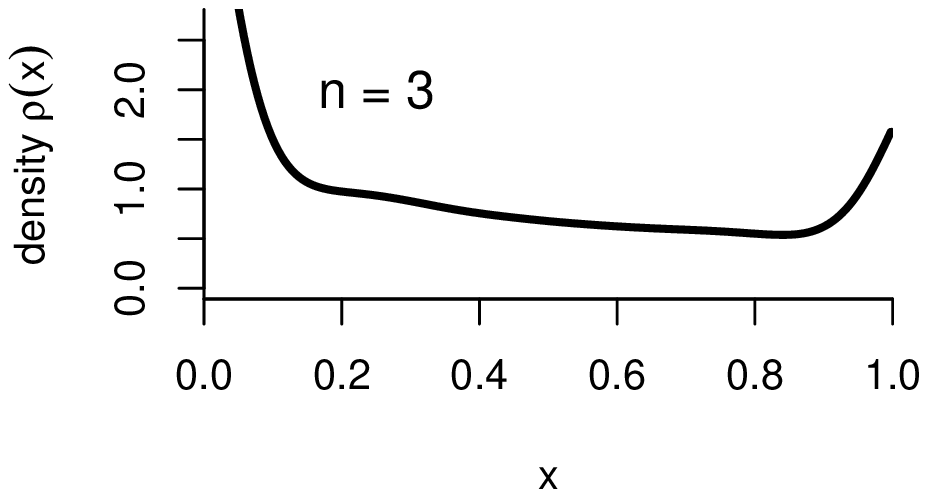}\\
\includegraphics[width=2.5in]{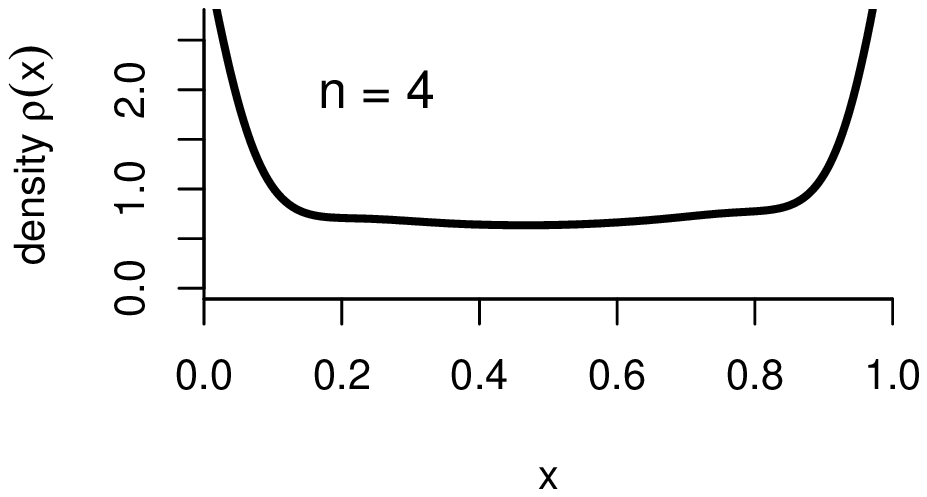}
\caption{Simulated evolution of an ensemble density $\rho$ under iterations
of the map $x \mapsto 4x (1 - x)$.}
\label{fig.quadmapdensev1}
\end{center}
\end{figure}
\begin{figure}
\begin{center}
\includegraphics[width=2.5in]{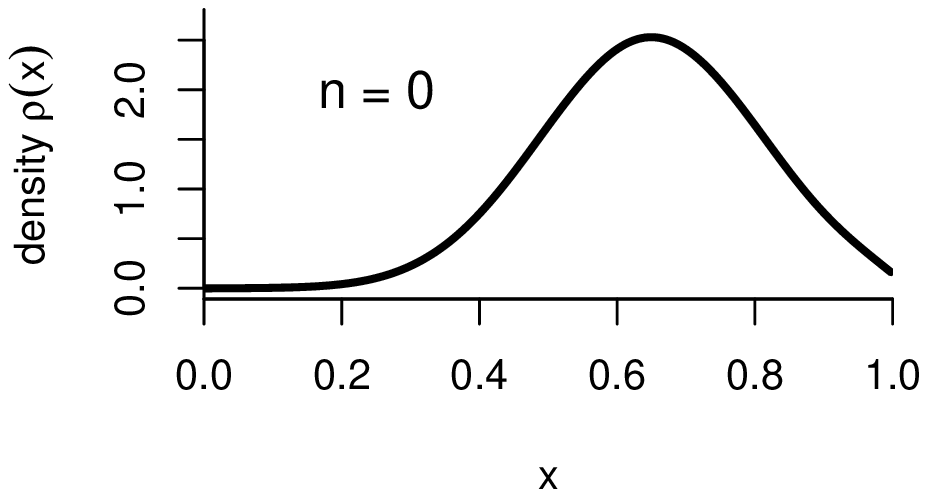}
\includegraphics[width=2.5in]{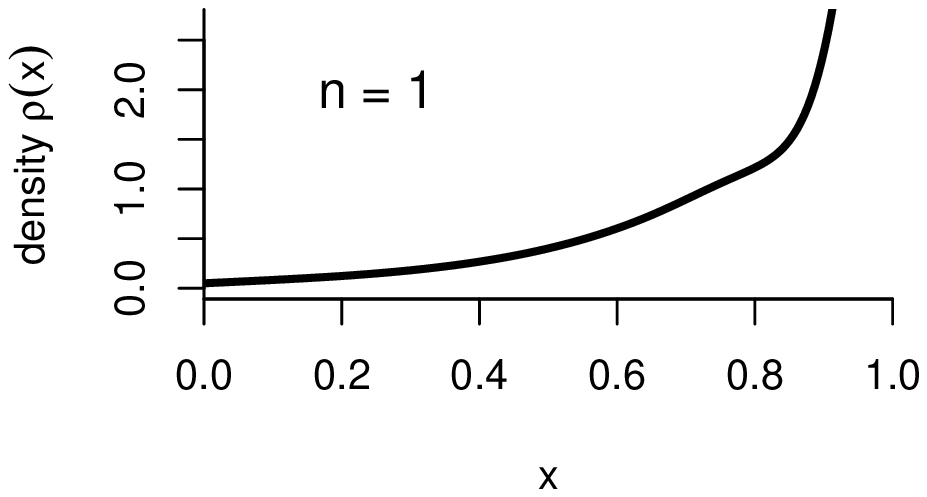}\\
\includegraphics[width=2.5in]{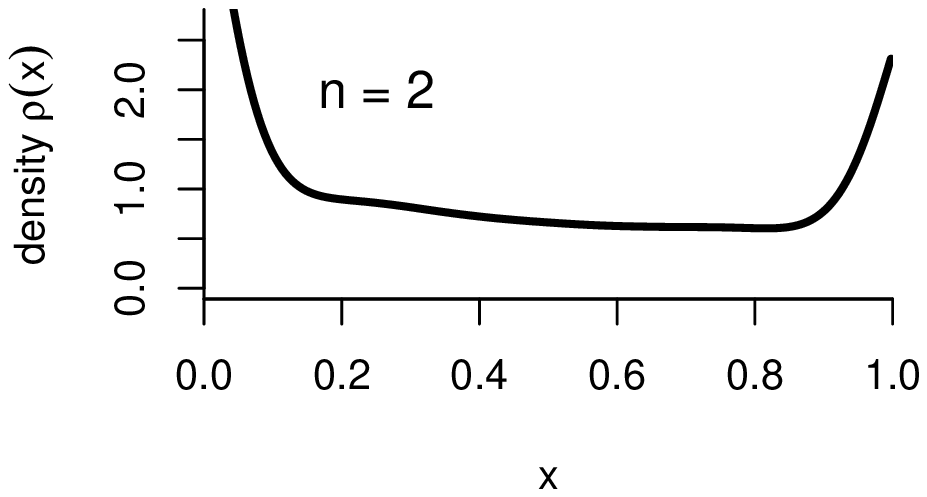}
\includegraphics[width=2.5in]{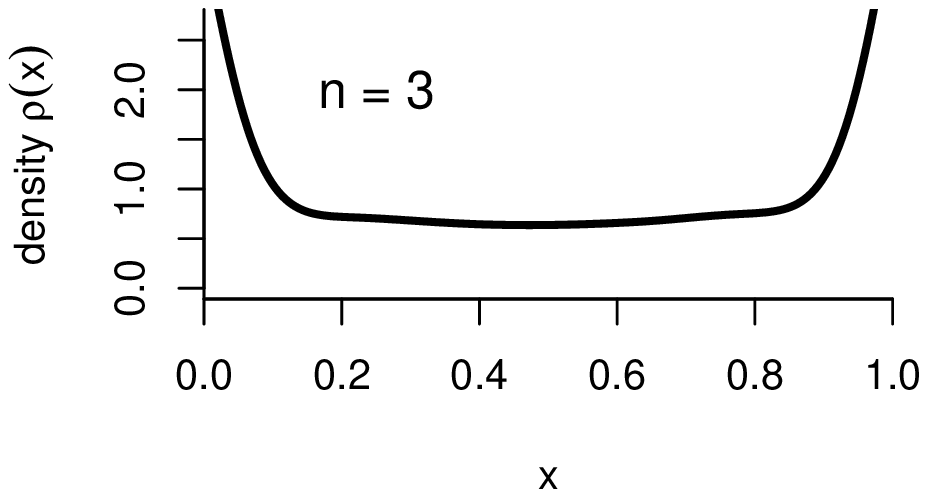}\\
\includegraphics[width=2.5in]{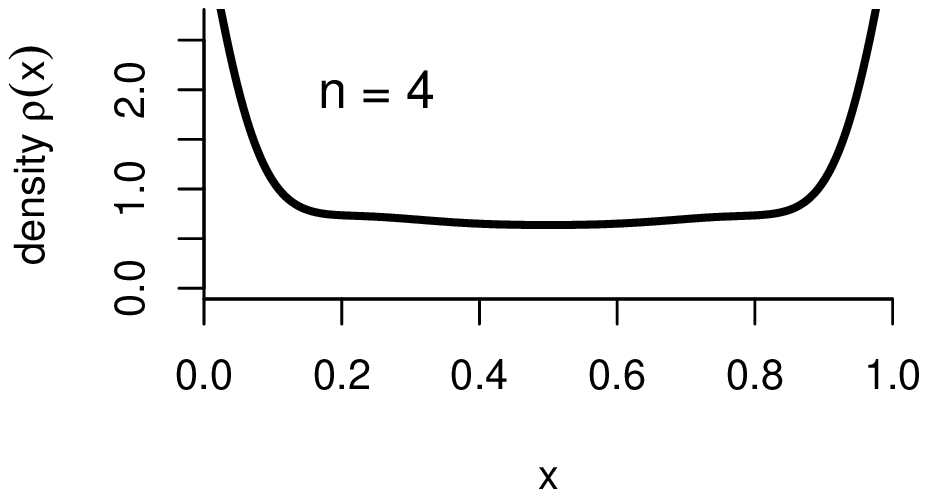}
\caption{As Figure~\ref{fig.quadmapdensev1}, with a different initial
density.}
\label{fig.quadmapdensev2}
\end{center}
\end{figure}
The graphs in Figure~\ref{fig.quadmapdensev1} show a particular choice
for the probability density $\rho$ of the initial state $x_0$,
together with the subsequent densities of states $x_1$, $x_2$, $x_3$,
$x_4$, obtained by numerical simulation of an ensemble of $10^6$
initial values distributed according to $\rho$, iterated forward under
the map~\eqref{eq.quadmap}.  The striking feature of this figure is
that the sequence of densities rapidly approaches an equilibrium or
invariant density that does not change under further iteration.
Moreover, the invariant density appears to be unique.  This is
supported by Figure~\ref{fig.quadmapdensev2}, which shows how a
different choice of initial density evolves toward the same
equilibrium density as before.

\begin{figure}
\begin{center}
\includegraphics[width=3.5in]{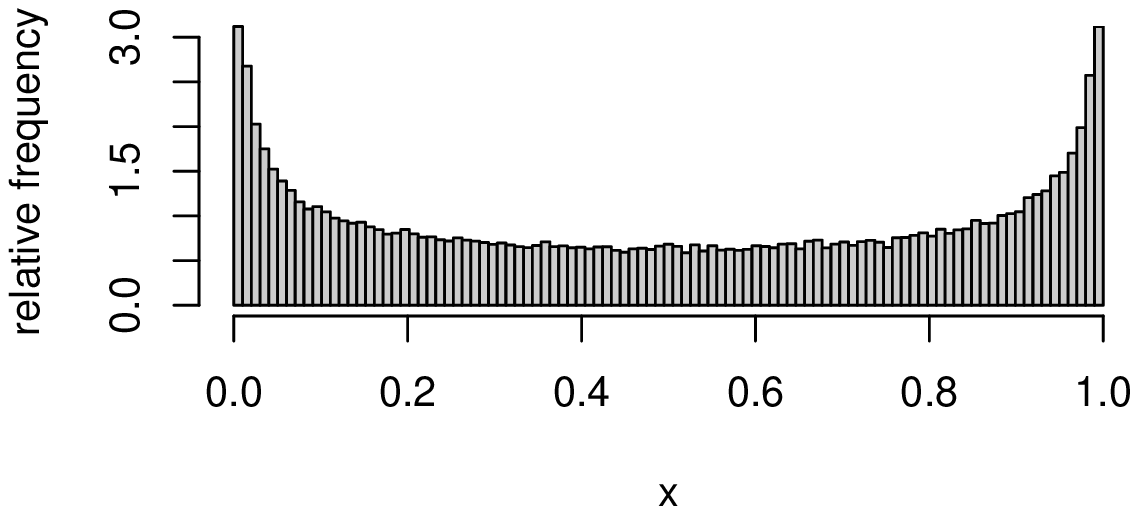}
\caption{Histogram of a trajectory $\{x_n\}$ generated by $10^6$
iterates of the map $x_{n+1}=4 x_n (1-x_n)$.}
\label{fig.quadmapinvdens}
\end{center}
\end{figure}
A different but related statistical approach to this system is to
focus on the statistics of a single trajectory.  For a given initial
state $x_0$, by iterating $x_{n+1} = 4 x_n (1-x_n)$ we obtain an
arbitrarily long sequence $\{x_n\}$ like the one illustrated in
Figure~\ref{fig.quadmapev}.  A histogram of this sequence reveals the
long-term frequency with which the trajectory visits various parts of
the interval $[0,1]$.  Figure~\ref{fig.quadmapinvdens} shows such a
histogram, for a trajectory of length $10^6$.  Remarkably, this
histogram reproduces the invariant density shown in
Figures~\ref{fig.quadmapdensev1} and~\ref{fig.quadmapdensev2}, which
arises in a different context.  Moreover, the same histogram is
obtained for almost \emph{any} choice of initial state.\footnote{There
are exceptions, such as $x_0=0$, that yield trajectories with
different (periodic) asymptotic behavior.  These exceptions are very
rare: in fact they constitute a set of Lebesgue measure $0$ (\cf\
Chapter~\ref{ch.ergth}).}  Thus the invariant density describes the
behavior of ``typical'' trajectories, \ie\ those whose statistics are
described by this particular probability density.

A probabilistic or ensemble treatment of dynamical systems provides a
point of view complementary to one given in terms of the evolution of
individual trajectories.  The iterated map~\eqref{eq.quadmap} is just
one example of a system that behaves erratically on the level of
individual trajectories, but has very regular asymptotic properties
when considered at the level of probability densities.  This
observation appears to hold for many other systems.  Moreover, it
turns out that the converse holds as well: various regularity
properties at the level of probability densities imply various degrees
of disorder in the evolution of individual trajectories.
Chapter~\ref{ch.ergth} explores these connections further.

\section{Delayed Dynamics}

The aim of the present work is to develop a probabilistic (\ie\
ergodic) approach to systems with delayed dynamics, particularly those
systems whose evolution can be described by a \emph{delay differential
equation}.  Delay differential equations (DDEs) arise in the
mathematical description of systems whose time evolution depends
explicitly on a past state of the system, as for example in the case
of delayed feedback.  Neural systems~\cite{adH79}, respiration
regulation~\cite{GM79}, agricultural commodity markets~\cite{AM},
nonlinear optics~\cite{Gibbs}, and neutrophil populations in the
blood~\cite{GM79} are but a few systems in which delayed feedback
leads naturally to a description in terms of a delay differential
equation.

We will restrict our attention to systems modeled by evolutionary
delay equations that can be expressed in the form
\begin{equation} \label{eq.dde0}
  x'(t) = f\big( x(t), x(t-\tau) \big), \quad x(t) \in \Rn, t \geq 0.
\end{equation}
Here the ``state'' of the system at time $t$ is $x(t)$, whose rate of
change depends explicitly, via the function $f$, on the past state
$x(t-\tau)$ where $\tau$ is a fixed time delay.  More general delay
equations might be considered: multiple time delays, variable time
delays, continuously distributed delays, and higher derivatives all
arise in applications and lead to more complicated evolution
equations.  Nevertheless, equations of the form~\eqref{eq.dde0}
constitute a sufficiently broad class of systems to be of practical
importance, and they will provide adequate fodder for the types of
problems we wish to consider.

Some delay equations give rise to erratic time evolutions; there are
numerous examples in the
literature~\cite{adH85,HM82,HW83,Farm82,Hale88,Ikeda87,Ucar02,Wal81}.
In some cases it is possible to describe precisely the sense in which
these systems are chaotic, and to identify generic mechanisms
responsible~\cite{adH85,HM82,HW83,Wal81}.  These works focus on the
application of topological notions of chaos, but little has been done
on interpreting delay equations in an ergodic or measure theoretic
context.  We are aware of a handful of published
works~\cite{adH85,Cap91,Ersh91,LM92,LM95}, among which there is no
consistent framework for discussing ergodic notions in the context of
DDEs.  One aim of the present work is to develop such a framework.

A probabilistic approach to delay equations in particular is desirable
for a variety of reasons.  It has been suggested~\cite{DH95,MM00} that
brains encode information at the level of large neuron ensembles,
rather than at the level of individual neurons.\label{neurons}
Statistical periodicity~\cite{LM94}, in which the ensemble density
cycles periodically rather than settling down to an invariant density,
provides one possible mechanism for representing information at this
level.  Statistical periodicity does not (and cannot) arise in
\emph{ordinary} differential equations, but it does occur in some
delay equations~\cite{LM95,Ma92}.  Moreover, since synaptic and
conduction delays introduce explicit delays into the dynamics of
neurons, delay equations arise naturally in models of neuron
dynamics~\cite{adH81,adH79,CR76}.  Since the dynamics of large
ensembles can be treated with probabilistic methods, a probabilistic
approach to delay equations (\ie\ a statistical mechanics of systems
with delayed dynamics) would provide the mathematical tools for
further developing this theory of brain functioning.

Delay equations also serve as relatively simple models for the study
of infinite dimensional systems.  Much of the complex dynamics that we
experience directly is extended in space as well as in time, for
example fluid turbulence and other systems modeled by partial
differential equations (PDEs).  These systems are infinite
dimensional, in that the state at any given time cannot be specified
precisely by a finite set of data.  There is a rich literature on
chaotic dynamics in PDEs, including some rigorous results on the
existence of compact attractors and Li and Yorke type chaos, as well
as numerous numerical observations, including methods for estimating
Lyapunov exponents and dimensions for chaotic attractors (see \eg\
\cite{CHZ98,Gh88,GT96,HM81,Luce95,TG95,YD97,Zub95} and references
therein).  However, to our knowledge, with the exception
of~\cite{Rud85,Rud87,Rud88} and \cite{CHZ98} little has been said of
infinite-dimensional systems within the context of ergodic theory.
Delay equations are also infinite dimensional, but in some respects
they are much simpler than PDEs.\footnote{For example, a delay
equation is much easier to solve, since it can often be represented as
a sequence of ordinary differential equations (method of steps), \cf\
Section~\ref{sec.methsteps}.}  Thus delay equations are a natural
place to begin formulating ergodic concepts applicable to a broader
class of spatially extended systems.

\section{Probabilistic Questions for Delay Equations}

The following chapters explore various problems related to a
probabilistic or ergodic-theoretic treatment of delay differential
equations.  Questions that arise naturally in this context, and which
we propose to consider, are the following:
\begin{itemize}
\item How can the language and concepts of ergodic theory be adapted and
applied in the context of delay equations?
\item How can the evolution of a probability distribution or ensemble
density for a given DDE be determined; \ie, can one find an evolution
equation for the probability density?
\item How can one determine invariant probability distributions for DDEs?
\end{itemize}
To some extent these problems can be considered independently, and
each is the subject of one of the chapters that follow.

Chapter~\ref{ch.ergth} presents the elements of ergodic theory needed
in our subsequent discussion.  It also serves to illuminate the scope
of ergodic concepts that might be considered in the context of DDEs,
and to further motivate the considerations of subsequent chapters.

Chapter~\ref{ch.framework} examines the fundamental theoretical
questions posed by an ergodic approach to delay equations, and
infinite dimensional dynamical systems in general.  Alternative
representations of a delay equation as a dynamical system are
considered, and the measure-theoretic framework needed for a
probabilistic treatment is developed.  As it turns out, even these
fundamental questions lead to unresolved ambiguities that are endemic
to infinite dimensional systems.

Chapter~\ref{ch.densev} explores the practical issue of how the
evolution of probability distributions for delay equations might be
determined.  In particular it would be nice to have an evolution
equation governing the evolution of a probability density under the
action of a delay equation.  Such an equation would be a useful tool,
for example, in making probabilistic predictions for systems governed
by DDEs, and for modeling ensembles (such as neuron populations,
\cf\ page~\pageref{neurons}) whose microscopic dynamics are governed by
DDEs.  A number of different approaches to this problem are explored,
including analytical methods and computer algorithms.

Invariant measures, which determine the long-term statistical
properties of deterministic systems, are important quantities in
statistical mechanics and also play a fundamental role in ergodic
theory.  Chapter~\ref{ch.invdens} considers the problem of determining
invariant measures for delay equations.  Here the focus is on finding
computer algorithms for computing invariant measures and their
densities.

Transient chaos is a phenomenon in which a deterministic system has a
chaotic time evolution for a transient period of time after which it
asymptotically becomes regular (\eg\ periodic).  Such systems can be
described in ergodic theoretic terms, but due to the instability of
the ``chaotic set'' specialized techniques are required for their
numerical analysis.  Chapter~\ref{ch.transient} illustrates the
existence of transient chaos in some delay equations, and shows how
numerical methods can be adapted to their analysis.  Until recently,
limits on computational resources made it impractical to numerically
investigate transient chaos in infinite dimensional systems such as
DDEs.

\chapter{Ergodic Theory} \label{ch.ergth}

\begin{singlespacing}
\minitoc   
\end{singlespacing}

\newpage
This chapter introduces the basic concepts of the ergodic theory of
dynamical systems.  Following a brief review of measure theoretic
probability (those familiar with measure theory might like to skip
directly to Section~\ref{sec.probev}), we consider the problem of
determining the evolution of a probability measure or density under
the evolution of a dynamical system.  This serves to formalize some of
the intuitive ideas presented in Chapter~\ref{ch.intro}.  In
section~\ref{sec.ergth} we then turn to the characterization, using
ergodic concepts, of dynamical systems that exhibit irregular
behavior.  The general discussion given here sets the context in which
we will consider the problem of an ergodic/probabilistic treatment of
delay differential equations.

\section{Dynamical Systems Formalism} \label{sec.dynsys}

The essential properties of a deterministic evolutionary system are
encapsulated in the mathematical notion of a \emph{semigroup}.  At any
particular time
\nomenclature[t]{$t$}{time variable ($\in \Rplus$ or \Zplus) for a dynamical system}%
$t$ the state of such a system is identified with an element $x_t$
\nomenclature[xt]{$x_t$}{phase point in $X$ at time $t$ for a
dynamical system}%
(the current \emph{phase point}) of a \emph{phase space} $X$.  In
practice $X$ is often simply \Rn\ or \Reals, and the phase point or
state $x$ represents the numerical value of some physical quantity,
\eg\ a voltage, displacement, population,
\etc.  In some of the following we will require only that $X$ have a topology,
but occasionally we will want $X$ to be equipped with some further
structure.

The term ``deterministic'' implies that the phase point $x_t$ is, for
all times $t>0$, uniquely determined by the initial phase point $x_0$.
That is, for each $t>0$ there is a rule that determines $x_t$ from
$x_0$---in other words, a transformation $S_t: X \to X$ that takes the
initial state $x_0$ to the future state $x_t$:
\begin{equation} \label{eq.sg1}
  x_0 \mapsto S_t(x_0) = x_t.
\end{equation}

In many systems, the dynamical law that determines $S_t$ does not
change with time (\eg\ autonomous systems evolving under
time-invariant physical laws such as Newton's laws of motion).  Then
the result of the time evolution depends only on the initial phase
point and the amount of time elapsed, not on the particular moment
identified as the initial time.  The identification of $t=0$ as the
initial time in equation~\eqref{eq.sg1} is then somewhat arbitrary.
Consequently, the evolution of an initial phase point $x_0$ to time
$s+t$ can be accomplished by a sequence of two evolutions, one from
time $0$ to time $s$ carried out by the transformation $S_s$, followed
by the evolution from time $s$ to time $t$ carried out by the
transformation $S_t$.  This can be expressed by the relation
\begin{equation} \label{sg.sgprop1}
  S_{s+t} = S_t \circ S_s.
\end{equation}
That is, the family of transformations $\{S_t: t \geq 0\}$ form a
semigroup:

\begin{defn}[semigroup of transformations]\label{def.sg}
Let either $G = \Zplus = \{0,1,2,\dots\}$ or $G=\Rplus=\{t \in
\Reals: t \geq 0\}$.
\nomenclature[R]{$\Rplus$}{the set $\{x \in \Reals: x \geq 0\}$}
\nomenclature[Z]{$\Zplus$}{the set $\{x \in \mathbb{Z}: x \geq 0\} = \{0,1,2,\ldots\}$}%
A one-parameter \emph{semigroup of transformations} $\{S_t: t \in
G\}$
\nomenclature[S]{$S_t$}{dynamical system or semigroup of transformations}
\nomenclature[Xa]{$X$}{phase space}
on $X$ is a family of transformations $S_t: X \to X$ satisfying
\begin{enumerate}
  \item \label{sg1} $S_0(x) = x \quad \forall x \in X$,
  \item \label{sg2} $S_t \circ S_{t'} = S_{t+t'} \quad \forall t, t' \in G$.
\end{enumerate}
\end{defn}
 
Thus the evolution of a deterministic system under a time-invariant
dynamical law can be described by an evolution equation $x_t = S_t
x_0$ where the family of transformations $\{S_t: t \in G\}$ forms a
semigroup.  For discrete-time systems $G=\Zplus$; for continuous-time
systems $G=\Rplus$.

If the mapping $x_0 \mapsto x_t$ is also
continuous with respect to both $x_0$ and $t$, the semigroup is
called a semidynamical system:

\begin{defn}[semidynamical system] \label{def.semidyn}
Let $X$ be a Banach space with $\{S_t: t \in G\}$ a semigroup of
transformations on $X$.  Then $\{S_t\}$ is a
\emph{semidynamical system} if the mapping
\begin{equation}
  (x,t) \mapsto S_t(x)
\end{equation}
from $X \times G$ into $X$ is continuous.
\end{defn}

\begin{defn}[orbit; trajectory]
Let $\{S_t: t \in G\}$ ($G=\Rplus$ or \Zplus) be a semidynamical
system on $X$, and let $x_0 \in X$.  The set
\begin{equation}
  \{S_t (x_0): t \geq 0\} \subset X
\end{equation}
is called the \emph{orbit} or \emph{trajectory} originating at $x_0$.
\end{defn}

In general, the evolution of a semidynamical system cannot be extended
uniquely to all times in the past.  However, if each of the $S_t$ is
one-to-one (hence invertible), the identification $S_{-t} = S_t^{-1}$
extends the family of transformations to all $t<0$, and the initial
phase point does indeed uniquely determine $x_t$ for all times in the
past.  In this case the resulting \emph{group} (rather than semigroup)
of transformations $\{S_t\}$ is called a
\emph{dynamical system}.  In the present work we will be concerned
with systems that in general are not invertible.  Nevertheless, for
simplicity's sake we will frequently abuse notation and refer to these
systems simply as dynamical systems, as the technical distinction
between dynamical and semi-dynamical systems will not play an important
role.

\section{Measure Theoretic Probability} 

The evolution of a probability distribution under the action of a
dynamical system requires the structures of measure theoretic
probability.  To this end, the elements of this theory are reviewed in
the following.  For a more complete review see \eg\ the standard
reference~\cite{Hal74}.

\subsection{Measure theory} \label{sec.mtheory}

Basically, a measure is a function that assigns a ``size'' to a subset
$A \subset X$.  It turns out that it is not possible, in general, to
do this consistently for just any subset of $X$ (see
\eg\ \cite{Roy68} for a construction of an ``unmeasurable'' set).  Rather, such
an assignment can only be made consistently for a collection of
subsets called a $\sigma$-algebra.

\begin{defn}[$\sigma$-algebra, measurable space]
Let $X$ be a set.  A non-empty collection \Acal\ of subsets of $X$ is
called a \emph{$\sigma$-algebra},
\nomenclature[A]{\Acal}{$\sigma$-algebra}
and the pair $(X,\Acal)$
\nomenclature[Xb]{$(X,\Acal)$}{measurable space}
is called a \emph{measurable space}, if each of the following holds.
\begin{enumerate}
\item If $A \in \Acal$ then $X \setminus A \in \Acal$.
\item If $\{A_i\} \subset \Acal$ is a finite or countable collection then
$\bigcup_i A_i \in \Acal$.
\item\label{alg.int} If $\{A_i\} \subset \Acal$ is a finite or countable collection then
$\bigcap_i A_i \in \Acal$.
\item If $A, B \in \Acal$ then $A \setminus B \in \Acal$.
\item\label{alg.X} $X \in \Acal$ and $\emptyset \in \Acal$.
\end{enumerate}
\end{defn}

The elements of a $\sigma$-algebra are sometimes called
\emph{measurable sets}.  In the context of probability theory they are
sometimes called \emph{events}.  Note that the properties above are
not independent, \eg\ properties 3--5 follow from others.  We include
all five here to give a more thorough and intuitive characterization
of $\sigma$-algebras.

Finite $\sigma$-algebras are easy to construct.  For example $\{X,
\emptyset\}$ is a trivial $\sigma$-algebra.  Given $A \subset X$, the
collection $\{X, \emptyset, A, X \setminus A\}$ is a $\sigma$-algebra.
Although less trivial $\sigma$-algebras are sometimes extremely
difficult to characterize explicitly, the existence of a
$\sigma$-algebra containing certain sets can be asserted using the
following theorem.
\begin{thm}
Let $X$ be a set and let $\mathcal{B}$ be a class of subsets of $X$.
Then there is a unique smallest $\sigma$-algebra, denoted
$\sigma(\mathcal{B})$%
\nomenclature[S]{$\sigma(\mathcal{B})$}{$\sigma$-algebra generated by a class
$\mathcal{B}$ of sets}
(the $\sigma$-algebra \emph{generated} by $\mathcal{B}$), that
contains every set in $\mathcal{B}$.  That is,
\begin{itemize}
\item $\mathcal{B} \subset \sigma(\mathcal{B})$, and
\item if $\mathcal{A}$ is any $\sigma$-algebra such that $\mathcal{B} \subset \mathcal{A}$,
then $\sigma(\mathcal{B})\subset \mathcal{A}$.
\end{itemize}
\end{thm}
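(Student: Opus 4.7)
The plan is to construct $\sigma(\mathcal{B})$ explicitly as the intersection of all $\sigma$-algebras on $X$ that contain $\mathcal{B}$, and then verify that this intersection itself satisfies the defining properties of a $\sigma$-algebra. First I would observe that the collection over which we intersect is non-empty: the power set $2^X$ is trivially a $\sigma$-algebra containing every subset of $X$, and in particular every member of $\mathcal{B}$. Hence it is legitimate to define
\[
  \sigma(\mathcal{B}) \;:=\; \bigcap \bigl\{ \mathcal{A} : \mathcal{A} \text{ is a } \sigma\text{-algebra on } X \text{ with } \mathcal{B} \subset \mathcal{A} \bigr\}.
\]

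The main technical step is then to show that $\sigma(\mathcal{B})$, so defined, is actually a $\sigma$-algebra. This reduces to checking each of the five defining clauses in turn, each check having the same flavor: if a set (or countable family of sets) lies in $\sigma(\mathcal{B})$, then it lies in every member $\mathcal{A}$ of the intersecting family, so the corresponding complement, countable union, countable intersection, or set difference lies in every such $\mathcal{A}$ (by closure of each individual $\mathcal{A}$), and therefore lies in $\sigma(\mathcal{B})$. Similarly $X$ and $\emptyset$ belong to $\sigma(\mathcal{B})$ because they belong to every $\mathcal{A}$ in the family. Thus an arbitrary intersection of $\sigma$-algebras is again a $\sigma$-algebra, which is the sole substantive content of the argument.

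With this established, the two bulleted claims follow immediately. Containment $\mathcal{B} \subset \sigma(\mathcal{B})$ is immediate because every $\mathcal{A}$ in the intersecting family contains $\mathcal{B}$ by construction. For minimality, if $\mathcal{A}$ is any $\sigma$-algebra with $\mathcal{B} \subset \mathcal{A}$, then $\mathcal{A}$ is one of the sets being intersected, so $\sigma(\mathcal{B}) \subset \mathcal{A}$ by the definition of intersection. Uniqueness is then a purely formal consequence: if $\mathcal{C}_1$ and $\mathcal{C}_2$ were two smallest $\sigma$-algebras containing $\mathcal{B}$, applying the minimality property of each to the other gives $\mathcal{C}_1 \subset \mathcal{C}_2$ and $\mathcal{C}_2 \subset \mathcal{C}_1$, hence $\mathcal{C}_1 = \mathcal{C}_2$.

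There is no genuine obstacle in this proof; it is essentially a sequence of definition-checks. The only point requiring a modicum of care is the verification that the intersection is closed under the $\sigma$-algebra operations, and even there the argument is uniform across the clauses. The non-constructive nature of $\sigma(\mathcal{B})$—it is defined as an intersection over a typically enormous family—is a feature rather than a bug: in most applications one rarely needs to describe $\sigma(\mathcal{B})$ more explicitly than this.
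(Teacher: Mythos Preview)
Your argument is correct and is precisely the standard construction one finds in any measure-theory text. The paper itself does not give a proof at all but simply defers to a reference (Halmos), so your write-up is strictly more complete than what appears there.
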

\begin{proof}
See \eg\ \cite{Hal74}.
\end{proof}

We frequently work with a phase space $X$ that comes already equipped
with a topology (\eg, a metric space).  An important application of
the preceding theorem is the identification of a $\sigma$-algebra that
is compatible with a given topology:

\begin{defn}[Borel $\sigma$-algebra]
Let $X$ be a topological space.  The $\sigma$-algebra \Acal\ generated
by the collection of all open subsets of $X$ is called the \emph{Borel
$\sigma$-algebra} on $X$, and the elements of \Acal\ are called
\emph{Borel sets}.
\end{defn}

The Borel $\sigma$-algebra is a natural choice since the sets we
typically want to work with---open and closed sets, as well as their
finite and countable unions and intersections---are all measurable.
In the following, whenever a topology is given or implied the
corresponding Borel $\sigma$-algebra will be implied also.

\begin{defn}[measure, measure space]
Let $(X,\Acal)$ be a measurable space.  A function $\mu: \Acal \to
\Rplus$
\nomenclature[M]{$\mu(A)$}{measure of a set $A$}
is called a \emph{measure}, and the triple $(X,\Acal,\mu)$
\nomenclature[Xb]{$(X,\Acal,\mu)$}{measure space or probability space}
is called a \emph{measure space}, if each of the following holds.
\begin{enumerate}
\item $\mu(\emptyset)$ = 0.
\item $\mu(A) \geq 0 \quad \forall A \in \Acal$.
\item If $\{A_i\} \subset \Acal$ is a finite or countable collection of pairwise
disjoint sets then\\ $\mu\big(\bigcup_i A_i\big) = \sum_i \mu(A_i)$.
\item\label{measure.minus} If $A, B \in \Acal$ and $B \subset A$ then
$\mu(A \setminus B) = \mu(A) - \mu(B)$.
\end{enumerate}
\end{defn}
Note that~\ref{measure.minus} is not an independent property, but is a
useful consequence of the first three.

In the following, ``a measure space $X$'' will be taken to mean ``a
measure space $(X,\Acal,\mu)$'' wherever this is unlikely to cause
confusion.

\begin{defn}[Borel measure]
Given a topological space $X$, a \emph{Borel measure} on $X$ is a
measure defined on the Borel $\sigma$-algebra generated by the
topology on $X$.
\end{defn}

On the real line, the notion of length provides a natural Borel
measure $\mu$, defined on intervals according to
\begin{equation}
  \mu\big( [a,b] \big) = b - a.
\end{equation}
Similarly, in \Rn\ the notion of volume also provides a natural Borel
measure, defined on $n$-dimensional ``rectangles'' according to
\begin{equation}
  \mu\big( [a_1,b_1] \times \cdots \times [a_n,b_n] \big) = (b_1 - a_1) \cdots (b_n - a_n).
\end{equation}
It can be proved~\cite{Hal74} that this formula can be uniquely
extended to a measure on the respective Borel $\sigma$-algebra of \Rn.
\emph{Lebesgue measure}, $\lambda$,
\nomenclature[L]{$\lambda$}{Lebesgue measure on \Rn}
is the completion of this measure.\footnote{A measure $\lambda$ is the
completion of a Borel measure $\mu$ if
\begin{itemize}
\item $\lambda$ is defined on the smallest $\sigma$-algebra containing both the Borel
sets and all subsets of Borel sets having Borel measure $0$.
\item $\lambda$ agrees with $\mu$ wherever $\mu$ is defined; in particular, $\lambda(A)=0$ if $A \subseteq B$ and $\mu(B)=0$.
\end{itemize}
}

\begin{defn}[null set]
Let $(X,\Acal,\mu)$ be a measure space.  A set $A \subset X$ is called
a \emph{null set} if it is contained in a set $B \in \Acal$ with
$\mu(B)=0$.
\end{defn}

\begin{defn}[almost everywhere]
Let $(X,\Acal,\mu)$ be a measure space.  A property is said to hold
$\mu$-almost everywhere if it holds on the complement of a null set.
\end{defn}

\subsection{Probability measures}

With the basic concepts of measure theory outlined above, the elements
of probability theory that we will need can be expressed in measure
theoretic terms.

\begin{defn}[finite and probabilistic measures]
Let $X$ be a measure space.  If $\mu(X) < \infty$ then $\mu$ is said
to be \emph{finite}.  If $\mu(X) = 1$ then the finite measure $\mu$ is
said to be \emph{probabilistic} and $(X,\Acal,\mu)$ is called a
\emph{probability space}.
\end{defn}

Note that any finite measure $\mu$ can be normalized to yield a
corresponding probability measure $\nu$ according to $\nu(A) =
\mu(A)/\mu(X)$.

Depending on context a probability measure has at least two different
interpretations, owing to the similarity in structure between
probability and statistics and the fact that probability measures
embody this structure:

\begin{description}
\item[Probabilistic interpretation] A probability measure $\mu$ can represent the
``probability distribution'' of a random variable $x \in X$.  For a
given measurable set $A \subset X$, the quantity $\mu(A)$ represents
$\text{Prob}(x \in A)$.  The measurable sets are called \emph{events},
since the statement ``$x \in A$'' asserts the occurrence of a
particular event or outcome.  The defining properties of a probability
measure can be interpreted in this context as the axioms of
probability theory.
\item[Statistical ensemble interpretation] A probability measure $\mu$ can represent the
distribution of an idealized, infinite ensemble of points $\{x_i\}
\subset X$.  The quantity $\mu(A)$ describes the fraction of the $x_i$ for
which $x_i \in A$.
\end{description}

This dual interpretation of probability measures is helpful in solving
problems, since we are free to use whichever interpretation is most
convenient in a given context.

In the following we will require the notion of measurable transformation:

\begin{defn}[measurable transformation]
Let $(X,\Acal)$ and $(Y,\mathcal{B})$ be measurable spaces.  A
transformation $S:X \to Y$ is \emph{measurable} if
\begin{equation}
  S^{-1}(B) \in \Acal \quad \forall B \in \mathcal{B},
\end{equation}
where the pre-image $S^{-1}(B)$ is defined as the set
\begin{equation}
  S^{-1}(B) = \{x \in X: S(x) \in B\}.
\end{equation}
\end{defn}
\nomenclature[S]{$S^{-1}(A)$}{pre-image of a set $A$ under a transformation $S$}

In particular, a transformation $S: X \to X$ is measurable if
$S^{-1}(A) \in \Acal$ $\forall A \in \Acal$.  A functional $f: X \to
\Reals$ is measurable if $f^{-1}(B) \in \Acal$ for every Borel set
$B \subset \Reals$.  Note that the set $S^{-1}(B)$ is well defined
even if $S$ does not have an inverse.

Continuity is a sufficient condition for measurability:

\begin{thm} \label{thm.contmeas}
Let $X$, $Y$ be topological spaces and \Acal, $\mathcal{B}$ their
respective Borel $\sigma$-algebras.  If $S: X \to Y$ is continuous
then $S$ is measurable.
\end{thm}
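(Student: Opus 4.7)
The plan is to reduce measurability of $S$ on all of $\mathcal{B}$ to the much weaker statement that preimages of \emph{open} sets are measurable, which is exactly what continuity gives us for free. In other words, we exploit the fact that $\mathcal{B}$ is generated (as a $\sigma$-algebra) by the open sets of $Y$, together with the fact that $S^{-1}$ interacts very well with all the set operations used to build $\sigma$-algebras.

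First, I would introduce the auxiliary collection
\begin{equation*}
  \mathcal{F} = \{B \subset Y : S^{-1}(B) \in \mathcal{A}\}
\end{equation*}
and verify that $\mathcal{F}$ is itself a $\sigma$-algebra on $Y$. This is the routine but essential step: one checks that $S^{-1}(Y \setminus B) = X \setminus S^{-1}(B)$, that $S^{-1}\bigl(\bigcup_i B_i\bigr) = \bigcup_i S^{-1}(B_i)$, and analogously for countable intersections, so that closure of $\mathcal{A}$ under complementation and countable unions/intersections transfers directly to $\mathcal{F}$. The set-theoretic identities for preimages hold regardless of whether $S$ is invertible, which is important since we are not assuming invertibility.

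Next, I would use continuity of $S$ to observe that for every open set $U \subset Y$, the preimage $S^{-1}(U)$ is open in $X$, hence lies in the Borel $\sigma$-algebra $\mathcal{A}$ by the definition given earlier in the excerpt. Thus every open subset of $Y$ belongs to $\mathcal{F}$.

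Finally, I would invoke the theorem on generated $\sigma$-algebras (already stated in the excerpt): since $\mathcal{F}$ is a $\sigma$-algebra containing the class of open subsets of $Y$, and since $\mathcal{B}$ is by definition the smallest $\sigma$-algebra containing those open sets, we must have $\mathcal{B} \subset \mathcal{F}$. This says exactly that $S^{-1}(B) \in \mathcal{A}$ for every $B \in \mathcal{B}$, which is the definition of measurability. I do not anticipate any serious obstacle; the only point that requires a little care is the verification that $\mathcal{F}$ is closed under the $\sigma$-algebra operations, but this is entirely formal once one writes down the preimage identities.
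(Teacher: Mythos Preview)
Your proof is correct and is exactly the standard ``good sets'' argument one finds in any measure theory text. The paper itself does not give a proof but simply refers the reader to Halmos~\cite{Hal74}, so there is nothing to compare; your argument is precisely what that citation would supply.
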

\begin{proof}
See \eg~\cite{Hal74}.
\end{proof}

\subsection{Lebesgue integral} \label{sec.integral}

We will require a notion of integration on measure spaces.  One such
notion that can be defined on an arbitrary measure space (and requires
no other structure) is the \emph{Lebesgue integral}.  Various
developments of the theory of this integral have been given, all
somewhat lengthy.  The following, based on the presentation given
in~\cite{LM94}, gives the basic idea; for a more complete treatment
see \eg\ \cite{Hal74}.

An intuitive way to approach the Lebesgue integral is to define it
first for ``simple functions''.  Given a subset $A \subset X$, denote
by $1_A$ the ``indicator function''
\begin{equation}
  1_A(x) = \begin{cases}
	     1 & \text{if $x \in A$}\\
             0 & \text{if $x \notin A$}.
	   \end{cases}
\end{equation}
\nomenclature{$1_A$}{indicator function of a set $A$}

\begin{defn}[simple function]
Let $(X,\Acal,\mu)$ be a measure space, and suppose the sets $A_i \in
\Acal$, $i = 1,\ldots,n$, are pairwise disjoint.  Then a functional
$g: X \to \Reals$ of the form
\begin{equation} \label{eq.simpfn}
  g(x) = \sum_{i=1}^n \lambda_i 1_{A_i}(x), \quad \lambda_i \in \Reals
\end{equation}
is called a \emph{simple function}.
\end{defn}

\begin{defn}
The Lebesgue integral of the simple function~\eqref{eq.simpfn} is
defined by
\begin{equation}
  \int_X g(x) \, d\mu(x) \equiv \sum_{i=1}^n \lambda_i \mu(A_i).
\end{equation}
\end{defn}

The definition of Lebesgue integral proceeds by approximating such
more general functions with simple functions.  If $f: X \to \Reals$ is
a non-negative bounded measurable function, then $f$ can be
approximated by simple functions $f_n$ such that $\{f_n(x)\}$
converges to $f(x)$ uniformly in $x$~\cite[p.\,20]{LM94}.

\begin{defn}
If $f: X \to \Reals$ is a non-negative bounded measurable function,
and $\{f_n\}$ is a sequence of simple functions converging uniformly
to $f$ then
\begin{equation}
  \int_X f(x) \, d\mu(x) \equiv \lim_{n \to \infty} \int_X f_n(x) \, d\mu(x).
\end{equation}
(This limit exists and is independent of the particular sequence
$\{f_n\}$.)
\end{defn}

The preceding definition can be extended unambiguously to define the
integral over $X$ of an arbitrary measurable functional $f:X
\to \Reals$.  If the integral is finite then $f$ is said to be \emph{integrable}.
For arbitrary $A \in \Acal$ we define
\begin{equation}
  \int_A f(x)\,d\mu(x) \equiv \int_X 1_A(x) f(x) \, d\mu(x).
\end{equation}
Where confusion is unlikely we will sometimes write simply $\int
f\,d\mu$ to mean $\int_X f(x)\,d\mu(x)$.

While the Lebesgue integral is more general than the usual Riemann
integral, the two notions of integral agree for any Riemann-integrable
function if the integral is taken with respect to Lebesgue
measure~\cite[p.\,323]{Ru76}.

Some important properties of the Lebesgue integral, which we give
without proof, are the following.
\begin{itemize}
\item $\int |f|\,d\mu = 0$ if and only if $f=0$ almost everywhere.
\item If $\int_A f\,d\mu = \int_A g\,d\mu$ $\forall A \in \Acal$ then
$f=g$ almost everywhere.
\item If $f:X \to \Reals$ is integrable and the finite or countable
collection of sets $\{A_i\} \subset \Acal$ is disjoint with $\bigcup_i
A_i = A$, then
\begin{equation}
  \sum_i \int_{A_i} f \, d\mu = \int_A f\,d\mu.
\end{equation}
\end{itemize}

\begin{thm}[change of variables] \label{thm.cov}
Let $(X,\mu)$ be a measure space, $S: X \to Y$ a measurable
transformation.  If $f: Y \to \Reals$ is measurable then $\mu \circ
S^{-1}$ is a measure on $Y$, and for any measurable $A \subset Y$,
\begin{equation}
  \int_A f \, d(\mu \circ S^{-1}) = \int_{S^{-1}(A)} (f \circ S) \, d\mu.
\end{equation}
\end{thm}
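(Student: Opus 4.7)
The plan is to follow the standard ``measure-theoretic induction'' pattern: verify the claim on indicator functions, extend to simple functions by linearity, then to bounded measurable functions by the uniform approximation used in the definition of the integral, and finally to general measurable functions.

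First I would check that $\nu := \mu \circ S^{-1}$ really is a measure on $Y$. Non-negativity is immediate since $\mu \geq 0$, and $\nu(\emptyset) = \mu(S^{-1}(\emptyset)) = \mu(\emptyset) = 0$. For countable additivity, I would use the elementary (but essential) fact that pre-images commute with arbitrary unions and preserve disjointness: if $\{B_i\} \subset \mathcal{B}$ is a pairwise disjoint countable collection, then $\{S^{-1}(B_i)\}$ is likewise pairwise disjoint in $\Acal$ and $S^{-1}(\bigcup_i B_i) = \bigcup_i S^{-1}(B_i)$. Countable additivity of $\nu$ then follows from countable additivity of $\mu$. Measurability of $S$ ensures $S^{-1}(B) \in \Acal$ so that $\nu$ is well-defined on $\mathcal{B}$.

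Next I would establish the integral identity. For an indicator $f = 1_B$ with $B \subset Y$ measurable, observe that $f \circ S = 1_{S^{-1}(B)}$, so by the definition of the integral of a simple function,
\begin{equation*}
  \int_Y 1_B \, d\nu = \nu(B) = \mu(S^{-1}(B)) = \int_X 1_{S^{-1}(B)} \, d\mu = \int_X (f \circ S) \, d\mu.
\end{equation*}
Linearity of the integral immediately extends this to every simple function $g = \sum_{i=1}^n \lambda_i 1_{B_i}$ on $Y$, since $g \circ S = \sum_{i=1}^n \lambda_i 1_{S^{-1}(B_i)}$ is simple on $X$. For a non-negative bounded measurable $f: Y \to \Reals$, take a sequence $\{f_n\}$ of simple functions on $Y$ converging uniformly to $f$; then $\{f_n \circ S\}$ is a sequence of simple functions on $X$ converging uniformly to $f \circ S$ (uniform convergence is preserved under composition on the left). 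Passing to the limit in the identity $\int_Y f_n \, d\nu = \int_X (f_n \circ S)\,d\mu$ yields the result for bounded non-negative $f$. The extension to a general integrable $f$ is obtained by decomposing $f = f^+ - f^-$ and applying the bounded case to suitable truncations.

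Finally, to obtain the identity with an arbitrary measurable $A \subset Y$ in place of $Y$, I would apply the case just proved to the function $1_A \cdot f$ in place of $f$, using the identity $(1_A \cdot f) \circ S = 1_{S^{-1}(A)} \cdot (f \circ S)$, and then invoke the definition $\int_A \cdot \, d\nu = \int_Y 1_A \cdot \, d\nu$. The only real subtlety in the whole argument is the inductive bootstrapping from simple to general measurable functions; the genuine ``content'' is concentrated in the indicator case, where everything reduces to the tautology $\nu(B) = \mu(S^{-1}(B))$, and this is the step on which all the rest is built.
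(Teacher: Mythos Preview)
Your proof is correct and follows the standard measure-theoretic induction (indicators, simple functions, bounded measurables via the uniform approximation that the paper itself uses to \emph{define} the integral, then general $f$). The paper gives no proof of its own---it simply cites Halmos~\cite{Hal74}---so your argument is precisely the kind of proof one would find in that reference, and in fact your careful use of uniform approximation by simple functions dovetails with the paper's own development of the Lebesgue integral in Section~\ref{sec.integral}.
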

\begin{proof}
See \eg\ \cite{Hal74}.
\end{proof}

The Lebesgue integral leads to an important class of normed vector
spaces:
\begin{defn}[$L^p$ space]
Let $(X,\Acal,\mu)$ be a measure space and $p$ a real number, $1 \leq
p < \infty$.  The set of functionals $f: X \to \Reals$ such that
$|f|^p$ is integrable, \ie,
\begin{equation}
  \int_X{|f(x)|^p \, d\mu(x)} < \infty,
\end{equation}
is denoted by $L^p(X,\Acal,\mu)$,
\nomenclature[L]{$L^p$}{$L^p$ space of functions}
or simply $L^p(X)$ if \Acal\ and $\mu$ are understood.  The norm
$\|\cdot\|$ on $L^p$ is defined by
\begin{equation}
  \|f\| = \left[ \int{|f|^p \, d\mu} \right]^{1/p}.
\end{equation}
\end{defn}

\subsection{Densities} \label{sec.densities}

Densities provide a convenient and intuitive way of prescribing
probability measures on \Rn.  Indeed, applied probability and
statistics is concerned largely with certain specific types of
densities (\eg, the uniform, Gaussian, and Poisson densities, among
others).  Densities have the additional advantage that they allow many
problems in measure theory and probability to be solved using
calculus.

Suppose $(X,\Acal,\mu)$ is a measure space.  If $f: X \to \Reals$ is a
non-negative integrable function then
\begin{equation} \label{eq.densdef1}
  \mu_f(A) = \int_A f \, d\mu
\end{equation}
\nomenclature[M]{$\mu_f$}{probability measure with density functional $f$}%
defines a finite measure on $A$.  For a certain class of measures,
this observation can be reversed:

\begin{defn}[absolutely continuous measure]
Let $(X,\Acal,\mu)$ be a measure space.  A measure $\nu$ on \Acal\ is
\emph{absolutely continuous} with respect to $\mu$ (written $\nu \ll \mu$)
\nomenclature[0]{$\ll$}{$\nu \ll \mu$ means measure $\nu$ is absolutely continuous with respect to $\mu$}
if for every set $A \in \Acal$ for which $\mu(A) = 0$ we have that
$\nu(A) = 0$.
\end{defn}

\begin{thm}[Radon-Nikodym] \label{thm.RN}
Let $X$ be a $\sigma$-finite measure space\footnote{A measure space
$X$ is $\sigma$-finite if $X=\bigcup_i A_i$ with $\mu(A_i) <
\infty$~\cite{Hal74}.  Any measure we might wish to consider for
practical purposes is $\sigma$-finite.} and let $\mu_f$ be a finite
measure on $\Acal$ with $\mu_f \ll \mu$.  Then there exists a unique,
non-negative integrable functional $f \in L^1(X)$ such that
\begin{equation} \label{eq.densdef2}
  \mu_f(A) = \int_A f \, d\mu \quad \forall A \in \Acal.
\end{equation}
\end{thm}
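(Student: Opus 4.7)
The plan is to prove existence by constructing $f$ as the pointwise supremum of a distinguished class of ``sub-densities,'' and then use a Hahn decomposition argument to show that no mass is left over. Uniqueness will follow from the integration properties already listed in Section~\ref{sec.integral}.

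First I would reduce to the case $\mu(X) < \infty$. Since $X$ is $\sigma$-finite, write $X = \bigsqcup_n X_n$ with $\mu(X_n) < \infty$; if a density $f_n$ is found for the restricted problem on each $X_n$, then $f = \sum_n f_n \mathbf{1}_{X_n}$ is measurable and non-negative, and its integral equals $\sum_n \mu_f(X_n) = \mu_f(X) < \infty$, so $f \in L^1(X)$. On a space with $\mu(X) < \infty$, consider the class
\[
  \mathcal{F} = \Bigl\{ g: X \to [0,\infty) \text{ measurable} : \int_A g \, d\mu \leq \mu_f(A) \text{ for all } A \in \Acal \Bigr\}.
\]
This class is non-empty (it contains $0$) and I would check that it is closed under finite maxima: if $g_1, g_2 \in \mathcal{F}$, splitting any $A \in \Acal$ along $B = \{g_1 \geq g_2\}$ and its complement gives $\int_A \max(g_1,g_2) \, d\mu = \int_{A \cap B} g_1 \, d\mu + \int_{A \setminus B} g_2 \, d\mu \leq \mu_f(A \cap B) + \mu_f(A \setminus B) = \mu_f(A)$.

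Every $g \in \mathcal{F}$ satisfies $\int g \, d\mu \leq \mu_f(X) < \infty$, so $M = \sup_{g \in \mathcal{F}} \int g \, d\mu$ is finite. Choose $g_n \in \mathcal{F}$ with $\int g_n \, d\mu \to M$, set $h_n = \max(g_1, \ldots, g_n)$ (still in $\mathcal{F}$ by the previous step), and let $f = \lim_n h_n$, which exists pointwise since $(h_n)$ is non-decreasing. By monotone convergence, $\int_A f \, d\mu = \lim \int_A h_n \, d\mu \leq \mu_f(A)$ for all $A$, so $f \in \mathcal{F}$, and $\int_X f \, d\mu = M$; after modification on a $\mu$-null set $f$ may be taken finite-valued and hence in $L^1(X)$.

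It remains to show that $\nu(A) := \mu_f(A) - \int_A f \, d\mu$ vanishes identically. From $f \in \mathcal{F}$, $\nu$ is a non-negative measure, and $\nu \ll \mu$ since both terms are. Suppose for contradiction $\nu \not\equiv 0$; then $\nu(X) > 0$, and one can choose $\varepsilon > 0$ so small that $\nu(X) > \varepsilon \mu(X)$. Applying Hahn decomposition to the signed measure $\nu - \varepsilon \mu$ yields a measurable set $P$ on which $\nu(A) \geq \varepsilon \mu(A)$ for every $A \subseteq P$ and $(\nu - \varepsilon \mu)(P) > 0$; the latter forces $\mu(P) > 0$, for otherwise $\nu(P) = 0$ by absolute continuity. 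But then $f + \varepsilon \mathbf{1}_P$ lies in $\mathcal{F}$ (since for any $A$, $\int_A (f + \varepsilon \mathbf{1}_P) \, d\mu = \int_A f \, d\mu + \varepsilon \mu(A \cap P) \leq \int_A f \, d\mu + \nu(A \cap P) \leq \mu_f(A)$) and has integral $M + \varepsilon \mu(P) > M$, contradicting the definition of $M$. Hence $\nu = 0$ and equation~\eqref{eq.densdef2} holds. Uniqueness follows from the property cited in Section~\ref{sec.integral}: if $f, f'$ both satisfy~\eqref{eq.densdef2}, then $\int_A (f - f') \, d\mu = 0$ for every $A \in \Acal$, so $f = f'$ $\mu$-almost everywhere.

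The main obstacle is the step ruling out a non-trivial residual $\nu$, which requires Hahn decomposition of the signed measure $\nu - \varepsilon \mu$. I would either cite Hahn decomposition from the background reference~\cite{Hal74} or prove it as a preliminary lemma. The reduction to finite measure and the pointwise manipulation of suprema are comparatively routine once the maximum-closure property of $\mathcal{F}$ is in hand.
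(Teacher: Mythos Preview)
Your argument is correct and follows the classical variational construction of the Radon--Nikodym derivative (maximize over sub-densities, then eliminate any residual via a Hahn decomposition). The paper does not supply its own proof of this theorem: it simply refers the reader to~\cite[p.\,69]{Fr70}. So there is nothing substantive to compare; your write-up supplies a complete proof where the paper defers to the literature. One small remark: you invoke the Hahn decomposition of $\nu - \varepsilon\mu$ as a known tool, which is fine since the paper already cites~\cite{Hal74} as the standard measure-theoretic background, and Hahn decomposition is covered there.
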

\begin{proof}
See \eg\ \cite[p.\,69]{Fr70}.
\end{proof}

Absolute continuity with respect to a given measure $\mu$ defines an
important class of measures $\mu_f$ that can be expressed in the
form~\eqref{eq.densdef2}.  If $\mu_f$ is a probability measure, the
functional $f$ in equation~\eqref{eq.densdef2} is called the
\emph{probability density} or simply the \emph{density} of $\mu_f$.
\begin{defn}[Set of densities]
Let $(X,\Acal,\mu)$ be a measure space.  The space
\begin{equation}
D(X) = \{ f \in L^1(X): f \geq 0 \text{ and } \|f\| = 1 \}
\end{equation}
\nomenclature[D]{$D(X)$}{set of densities on a phase space $X$, a
subset of $L^1(X)$}%
is the set of \emph{densities on $X$}, and an element $f \in D(X)$ is
called a \emph{density}.
\end{defn}
On \Rn\ densities are usually given with respect to Lebesgue measure.

\section[Evolution of Probabilities]{Evolution of Probabilities for Dynamical Systems} \label{sec.probev}

\subsection{Evolution of probability measures}

Suppose we have a dynamical system described by a semigroup $\{S_t: t
\in G\}$ ($G = \Rplus$ or $\Zplus$) of transformations $S_t: X \to X$,
such that $x_0 \mapsto S_t(x_0) = x_t$.  Suppose further that $X$ is
equipped with a $\sigma$-algebra \Acal, and that a particular measure
$\nu_0$ on \Acal\ has the interpretation that $\nu_0(A) =
\text{Prob}(x_0 \in A)$.  One can then ask, what is the probability
measure $\nu_t$ that describes $x_t$, \ie, such that
$\nu_t(A)=\text{Prob}\big(x_t \in A\big)$?

Consider that for arbitrary $A \in \Acal$ we have
\begin{equation}
\begin{split}
  \nu_t(A) &= \text{Prob}\big( S_t\big( x_0 \big) \in A \big) \\
         &= \text{Prob}\big( x_0 \in S_t^{-1}(A) \big) \\
         &= \nu_0 \big( S_t^{-1}(A) \big).
\end{split}
\end{equation}
Thus the probability measure $\nu_t$ describing $x_t$ is given by the
formula
\begin{equation} \label{eq.muev1}
  \nu_t = \nu_0 \circ S_t^{-1}.
\end{equation}
It is readily verified that this expression does indeed define a
probability measure $\nu_t$ on \Acal.  This measure is said to be the
\emph{image} of $\nu_0$ under the transformation $S_t$ and (abusing
notation somewhat) we write $S_t \nu_0 \equiv \nu_0 \circ S_t^{-1}$.
Note that each of the $S_t$ must be a measurable transformation.
Since $\{S_t\}$ is assumed to be a semidynamical system (\cf\
Definition~\ref{def.semidyn}), the transformation $x \mapsto S_t(x)$
is indeed measurable since it is continuous.

\subsection{Evolution of densities: Perron-Frobenius operator} \label{sec.pfop}

If the initial phase point $x_0$ of a dynamical system is described
by a probability density rather than a probability measure, we can ask
``what is the probability density that describes $x_t = S_t\big( x_0
\big)$?  For this question to be well-posed, we require that $S_t$ be nonsingular:

\begin{defn}[nonsingular transformation]
Let $(X,\Acal,\mu)$ be a measure space.  A measurable transformation
$S:X \to X$ is \emph{nonsingular} (with respect to $\mu$) if for every
set $A \in \Acal$ for which $\mu(A)=0$ we have that
$\mu\big(S^{-1}(A)\big)=0$.
\end{defn}

Suppose the density of $x_0$ (with respect to some measure $\mu$) is
given by $f_0$, with corresponding probability measure $\nu_0$ (\cf\
equation~\eqref{eq.densdef1}).  If $S_t$ is nonsingular then from
equation~\eqref{eq.muev1} we have that
\begin{equation}
  \nu_t \ll \nu_0 \ll \mu.
\end{equation}
Therefore, by Theorem \ref{thm.RN} (assuming the measure space is
$\sigma$-finite) there is a unique element $f_t \in L^1(X)$ such that
\begin{equation}
  \nu_t(A) = \int_A f_t \, d\mu \quad \forall A \in \Acal.
\end{equation}
Using equation~\eqref{eq.muev1} we obtain
\begin{equation}
  \int_A f_t \, d\mu = \int_{S_t^{-1}(A)} f_0 \, d\mu \quad \forall A \in \Acal.
\end{equation}
In fact, for any given $f_0 \in L^1(X)$, this relationship uniquely
determines an element $f_t \in L^1(X)$~\cite[p.\,42]{LM94} and thus
defines a mapping $P_t: f_0 \mapsto f_t$.

\begin{defn}[Perron-Frobenius operator]
Let $(X,\Acal,\mu)$ be a measure space, $S: X \to X$ a nonsingular
transformation.  The operator $P:L^1(X) \to L^1(X)$ defined by
\begin{equation} \label{eq.pfdef1}
  \int_A (Pf) \, d\mu = \int_{S^{-1}(A)} f \, d\mu \quad \forall A \in \Acal
\end{equation}
is called the \emph{Perron-Frobenius} operator
\nomenclature[P]{$P$}{Perron-Frobenius operator}
\nomenclature[P]{$P_t$}{Perron-Frobenius operator corresponding to $S_t$}
corresponding to $S$.
\end{defn}

Note that if $P_t$ is the Perron-Frobenius operator corresponding to
$S_t$ then the action of $P_t$ on a density $f_0$ carries out the
evolution of this density under the action of $S_t$.

Some important properties of the Perron-Frobenius operator, which can
be verified directly from its definition, are the following~\cite{LM94}.
\begin{itemize}
\item $P: L^1(X) \to L^1(X)$ is linear.
\item $P f \geq 0$ if $f \geq 0$.
\item $\int (Pf) \, d\mu = \int f \, d\mu$.
\item If $P$ is the Perron-Frobenius operator with respect to $S$, then $P^n$ is the
Perron-Frobenius operator with respect to $S^n$.
\item If $\{S_t: t \in G\}$ is a semigroup then the corresponding family of
Perron-Frobenius operators $\{P_t: t \in G\}$ is also a semigroup.
\end{itemize}

From the first three of these properties it follows that $P$ is a
\emph{Markov operator}, \ie\ a linear transformation of $L^1(X)$ that maps
densities to densities.

\subsubsection{Explicit Perron-Frobenius operators}

In some cases equation~\eqref{eq.pfdef1} can be used to find an
explicit representation of the Perron-Frobenius operator.  If $S:
\Reals \to \Reals$ and $A = [a,x]$ then~\eqref{eq.pfdef1} becomes
\begin{equation}
  \int_a^x (Pf)(s) \, ds = \int_{S^{-1}([a,x])} f(s) \, ds.
\end{equation}
Differentiating then yields
\begin{equation} \label{eq.PFexpl1}
  Pf(x) = \frac{d}{dx} \int_{S^{-1}([a,x])} f(s) \, ds.
\end{equation}

Consider for example the dynamical system discussed in
Chapter~\ref{ch.intro}, defined by iterates of
\begin{equation}
  S: x \mapsto 4 x (1 - x).
\end{equation}
Since
\begin{equation}
  S^{-1}([0,x]) = [0,\tfrac{1}{2} - \tfrac{1}{2}\sqrt{1-x}] \cup
                  [\tfrac{1}{2} + \tfrac{1}{2}\sqrt{1-x}, 1],
\end{equation}
equation~\eqref{eq.PFexpl1} becomes
\begin{equation}
\begin{split}
  Pf(x) &= \frac{d}{dx} \int_0^{1/2 - 1/2\sqrt{1-x}} f(s) \, ds +
          \frac{d}{dx} \int_{1/2+1/2\sqrt{1-x}}^1 f(s) \, ds \\
   &= \frac{1}{4\sqrt{1-x}} \Big[ f\big(\tfrac{1}{2} - \tfrac{1}{2}\sqrt{1-x}\big) +
                                  f\big(\tfrac{1}{2} + \tfrac{1}{2}\sqrt{1-x}\big) \Big].
\end{split}
\end{equation}
Figure~\ref{fig.quaddens} illustrates the sequence of densities
$P^n(f)$, $n=0,\ldots,3$, obtained by equation~\eqref{eq.PFexpl1} for
a uniform initial density $f=1$ on $[0,1]$; \cf\
Figures~\ref{fig.quadmapdensev1}--\ref{fig.quadmapdensev2}.
\begin{figure}
\begin{center}
\includegraphics[width=\figwidth]{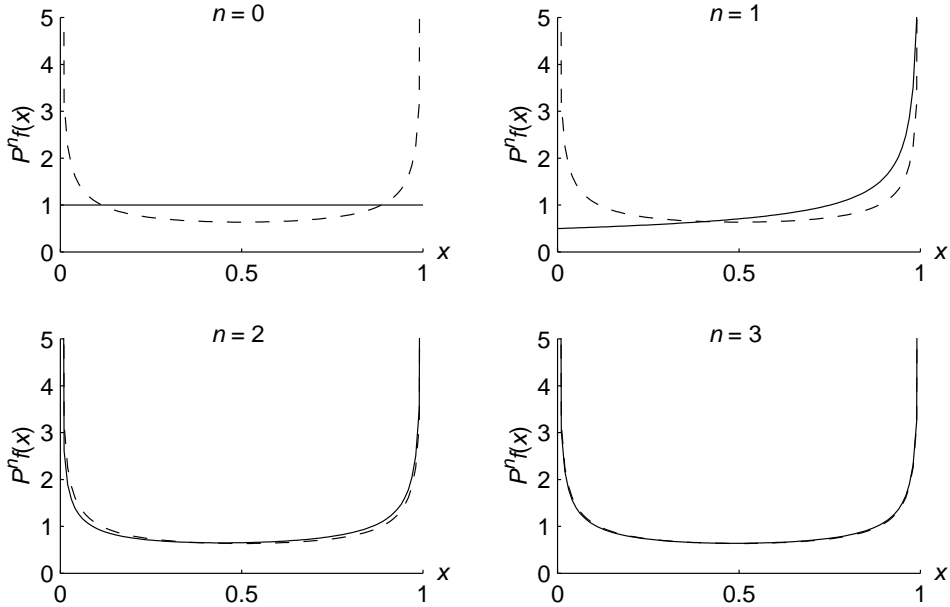}
\caption[Iterates $P^n(f)$ of the Perron-Frobenius operator
corresponding to the quadratic map $x \mapsto 4x(1-x)$, with
initial density $f = 1$.]{Iterates $P^n(f)$ of the Perron-Frobenius
operator corresponding to the quadratic map $x \mapsto 4x(1-x)$, for
the initial density $f = 1$.  Also shown, for comparison, is the
invariant density $f_{\ast}(x)$ (dashed curve~--~--~--) given by
equation~\eqref{eq.quadinvdens}.}
\label{fig.quaddens}
\end{center}
\end{figure}

\subsubsection{Perron-Frobenius operator for flows} \label{sec.fpode}

Autonomous ordinary differential equations consitute an important
class of continuous-time dynamical systems.  If the initial value problem
\begin{equation}
\begin{split}
  &\frac{dx}{dt} = F(x), \quad t \geq 0, \quad F: \Rn \to \Rn \\
  &x(0) = x_0
\end{split}
\end{equation}
has a unique solution for all $t>0$ \footnote{\eg\ it suffices to have
$F$ continuous with $|F(x)| \leq \alpha + \beta |x|$ for constants
$\alpha$, $\beta$~\cite{Dr77}.} for each initial value $x_0$, then the
family of solution maps $S_t: x_0 \mapsto x(t)$ constitute a
differentiable semigroup or \emph{flow} $\{S_t: t \in \Rplus\}$ on
\Rn.

Suppose $u(t) \in L^1(\Rn)$ is the probability density of the solution
variable $x$ at time $t$.  If the vector field $F$ is smooth and we
interpret $u(x,t)$ as a scalar quantity transported by the flow, then
by analogy with the continuity equation for scalar transport in fluid
mechanics, we have the following evolution equation for $u$,
\begin{equation} \label{eq.pfflow}
  \frac{\partial u}{\partial t} = - \nabla\cdot(u F).
\end{equation}
(See \eg~\cite[p.\,210]{LM94} for a measure-theoretic justification of
this evolution equation.)  The semigroup $\{P_t\}$ of Perron-Frobenius
operators corresponding to $\{S_t\}$ is then defined by
\begin{equation}
  (P_t f)(x) = u(x,t)
\end{equation}
where $u(x,t)$ is the solution of~\eqref{eq.pfflow} with initial data
$u(x,0)=f(x)$.

\section{Ergodic Theory} \label{sec.ergth}

Ergodic theory can be described as the study of measure theoretic
invariants of dynamical systems.  Invariant measures play a
fundamental role.

\subsection{Invariant measures}

Chapter~\ref{ch.intro} gave an example of a discrete-time dynamical
system, defined by iterates of the map
\begin{equation} \label{eq.quadmap2}
  S: x \mapsto 4 x (1 - x).
\end{equation}
Numerical evidence suggests the existence of a unique density that is
unchanged by evolution under the action the map~\eqref{eq.quadmap2}
(\cf\ Figures~\ref{fig.quadmapdensev1} and~\ref{fig.quadmapdensev2}).
The concepts introduced in the previous section provide the following
characterization of invariant densities and the more general notion of
invariant measure.

\begin{defn}[invariant measure]
Let $(X,\Acal)$ be a measurable space, $\{S_t: t \in G\}$ ($G=\Rplus$
or \Zplus) a dynamical system on $X$.  A measure $\mu$ on \Acal\ is
\emph{invariant} under $\{S_t\}$ if $S_t \mu = \mu$.  That is,
\begin{equation}
  \mu\big( S_t^{-1}(A) \big) = \mu(A) \quad \forall A \in \Acal, \; \forall t \in G.
\end{equation}
We say that $S_t$ preserves $\mu$, or that $S_t$ is $\mu$-preserving.
\end{defn}

Absolutely continuous invariant measures can be characterized as fixed
points of the Perron-Frobenius operator:

\begin{defn}[invariant density]
Let $(X,\Acal,\mu)$ be a measure space, $\{S_t: t \in G\}$ a dynamical
system such that each $S_t$ nonsingular.  A probability density $f$ is
\emph{invariant} under $\{S_t\}$ if $P_t f = f$, where $P_t$ is the
Perron-Frobenius operator with respect to $S_t$.  That is,
\begin{equation}
  \int_A f \, d\mu = \int_{S_t^{-1}(A)} f \, d\mu \quad \forall A \in \Acal,
 \; \forall t \in G.
\end{equation}
\end{defn}

For example, with the Perron-Frobenius operator~\eqref{eq.PFexpl1}
corresponding to the map \eqref{eq.quadmap2}, it is straightforward to
check that the density
\begin{equation} \label{eq.quadinvdens}
  f_{\ast}(x) = \frac{1}{\pi \sqrt{x (1-x)}}
\end{equation}
is invariant under iterates of $S$.  The graph of this density is
shown in Figure~\ref{fig.quaddens}; it agrees with the numerically
obtained invariant density shown in Figures~\ref{fig.quadmapdensev1}
and~\ref{fig.quadmapdensev2}.

One immediate consequence of the existence of an invariant measure is
the following.

\begin{thm}[Poincar{\'e} Recurrence]
Let $\{S_t: t \in G\}$ ($G = \mathbb{Z}_+$ or \Rplus) be a
$\mu$-preserving dynamical system on a measure space $X$.  Then
$\forall A \in \Acal$ and $\forall T > 0$, the set
\begin{equation}
  \{ x \in A: S_t x \notin A \; \; \forall t > T\}
\end{equation}
has measure $0$.  In other words, if $\mu(A)>0$ then under the action
of $\{S_t\}$, $\mu$-almost every initial phase point in $A$ returns to
$A$ infinitely often.
\end{thm}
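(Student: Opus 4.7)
The plan is to first handle the discrete-time case ($G = \Zplus$) by the classical disjoint-preimages argument, and then reduce the continuous-time case to the discrete one. I will assume implicitly that $\mu(X) < \infty$ (natural in the probabilistic setting of this chapter, and indeed necessary for the conclusion).

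For the discrete case, fix an integer $N > T$ and set
$$B = \{ x \in A : S^n x \notin A \text{ for all } n \geq N \} \subseteq A.$$
The heart of the proof is the observation that the preimages $B,\; S^{-N}(B),\; S^{-2N}(B), \ldots$ are pairwise disjoint. Indeed, if $x \in S^{-jN}(B) \cap S^{-kN}(B)$ with $0 \leq j < k$, then $S^{jN}(x) \in B$ while $S^{kN}(x) = S^{(k-j)N}\bigl(S^{jN}(x)\bigr) \in A$; but $(k-j)N \geq N$, contradicting the defining property of $B$. Because $\mu$ is $S$-invariant, every preimage has $\mu\bigl(S^{-kN}(B)\bigr) = \mu(B)$, and the disjointness yields
$$\sum_{k=0}^{\infty} \mu(B) \;=\; \mu\!\left( \bigsqcup_{k=0}^{\infty} S^{-kN}(B) \right) \;\leq\; \mu(X) \;<\; \infty,$$
which forces $\mu(B) = 0$. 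The set appearing in the statement is contained in $B$, so it too has measure zero.

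For the continuous-time case, fix any real $s > T$. The single transformation $S_s$ is $\mu$-preserving, so the discrete result applied to its iterates (with the discrete-time threshold chosen in $(0,1)$, giving $N = 1$) shows that
$$\mu\bigl( \{ x \in A : S_{ns} x \notin A \text{ for all } n \geq 1 \} \bigr) = 0.$$
Since $ns > T$ whenever $n \geq 1$, any $x$ satisfying $S_t x \notin A$ for all $t > T$ lies in this null set, completing the proof. I expect no major obstacle: the only step requiring attention is the implicit finiteness of $\mu$ used to conclude $\mu(B) = 0$ from the disjoint-union inequality, and the harmless choice of auxiliary integer $N$ (respectively real $s$) strictly above $T$ needed so the contradiction in the disjointness step actually triggers. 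The rest is the classical argument of Poincar\'e.
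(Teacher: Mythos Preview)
Your argument is the classical Poincar\'e disjoint-preimages proof and is correct under the finite-measure hypothesis you state; the paper itself does not give a proof but simply cites \cite{KH95} and \cite{Ma92} (the latter specifically for the finite case $\mu(X)<\infty$), so there is nothing to compare against beyond noting that your proof is precisely the standard one those references contain. Your remark that finiteness is needed is apt---the theorem as stated in the paper omits this hypothesis, but the translation flow on $(\Reals,\lambda)$ shows the conclusion can fail otherwise.
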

\begin{proof}
  See \eg\ \cite{KH95}, or~\cite{Ma92} in the simpler case
  $\mu(X) < \infty$.
\end{proof}

This theorem already has interesting consequences for the orbit
structure of the dynamical system $x \mapsto 4 x (1 - x)$ (typical
orbits are shown in Figure~\ref{fig.quadmapev}): since the measure
with density~\eqref{eq.quadinvdens} is invariant, under iteration by
$S$, Lebesgue-almost every $x \in [0,1]$ returns arbitrarily close to
$x$ infinitely often.  This imples a certain kind of irregularity, in
fact aperiodicity, thus confirming the intuitive impression of
Figure~\ref{fig.quadmapev}.  However, for some systems the
consequences of the Poincar{\'e} Recurrence Theorem are quite vacuous.
For example the identity map preserves Lebesgue measure on $[0,1]$,
but the consequences of the Poincar{\'e} Recurrence Theorem in this
case are trivial.

\subsection{Statistical regularity} \label{sec.statreg}

From the examples above it seems that, for some systems, existence of
an invariant measure implies strong statistical properties, while for
other systems is does not.  It would be nice to have tools for
discerning between dynamical systems that exhibit varying degrees of
irregularity and have different statistical properties.

As observed in Chapter~\ref{ch.intro}, an orbit $\{x_n\}$ of the map
$x \mapsto 4 x (1-x)$ appears to have a well defined asymptotic
distribution, as seen from a histogram of the trajectory (\cf\
Figure~\ref{fig.quadmapinvdens}).  The frequency with which an orbit
visits a given histogram bin is asymptotically regular, despite the
irregularity of the orbit itself.  A somewhat stronger notion of
statistical regularity requires that an arbitrary continuous function,
evaluated along an orbit of $\{S_t\}$, has a well-defined time
average~\cite{Ruelle89}:
\begin{defn}[statistical regularity]
Let $\{S_t: t \in G\}$ ($G=\Rplus$ or \Zplus) be a dynamical system on
$X$.  An orbit $\{S_t x: t > 0\}$ is \emph{statistically regular} if
the time average
\begin{equation} \label{eq.timeav1}
  \bar{\phi}(x) = \lim_{n \to \infty} \frac{1}{n} \sum_{k=1}^n \phi( S_k x ) \qquad
  \text{(if $G=\Zplus$)}\\
\end{equation}
or
\begin{equation} \label{eq.timeav2}
  \bar{\phi}(x) = \lim_{T \to \infty} \frac{1}{T} \int_0^T \phi( S_t x ) \, dt \qquad
  \text{(if $G=\Rplus$)}
\end{equation}
exists for every bounded continuous function $\phi: X \to \Reals$.
\end{defn}

Statistical regularity implies the existence of an invariant
probability measure:

\begin{thm} \label{thm.invmeas}
Let $\{S_t: t \in G\}$ be a dynamical system on a metrizable space
$X$.  If the time average $\bar{\phi}(x)$ exists for every bounded
continuous function $\phi: X \to \Reals$ then there is a unique
invariant Borel probability measure $\mu_x$ such that
\begin{equation} \label{eq.phibar}
  \bar{\phi}(x) = \int \phi \, d\mu_x.
\end{equation}
\end{thm}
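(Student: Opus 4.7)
The plan is to promote the map $\phi \mapsto \bar\phi(x)$ to a Borel probability measure via a Riesz-type representation, and then derive invariance from the fact that the time average is insensitive to a finite time-shift.

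First I would define $L : C_b(X) \to \Reals$ by $L(\phi) = \bar\phi(x)$. From the defining formulas~\eqref{eq.timeav1}--\eqref{eq.timeav2}, linearity of the limit passes to linearity of $L$; positivity of the sum or integral gives $L(\phi) \geq 0$ whenever $\phi \geq 0$; and $L(1) = 1$. In particular $|L(\phi)| \leq \|\phi\|_\infty$. Equivalently, the empirical measures $\mu_T$ defined by $\int \phi\,d\mu_T = \frac{1}{T}\int_0^T \phi(S_t x)\,dt$ (and its discrete analog) are Borel probability measures with $\int \phi\,d\mu_T \to L(\phi)$ for every bounded continuous $\phi$.

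Next I would invoke the Riesz representation theorem to obtain a unique Borel probability measure $\mu_x$ satisfying $\int \phi \, d\mu_x = \bar\phi(x)$ for every $\phi \in C_b(X)$; equivalently, $\mu_T \to \mu_x$ in the weak-$\ast$ sense. Invariance then follows from a short calculation: for any $s \in G$ and $\phi \in C_b(X)$, Theorem~\ref{thm.cov} gives
\begin{equation*}
  \int \phi \, d(S_s \mu_x) \;=\; \int (\phi \circ S_s)\, d\mu_x \;=\; \overline{\phi \circ S_s}(x),
\end{equation*}
while a simple boundary estimate,
\begin{equation*}
  \frac{1}{T}\int_0^T \phi(S_{t+s}x)\,dt \;=\; \frac{1}{T}\int_0^T \phi(S_t x)\,dt \;+\; O\!\left(\tfrac{s\|\phi\|_\infty}{T}\right),
\end{equation*}
shows $\overline{\phi\circ S_s}(x) = \bar\phi(x) = \int \phi \, d\mu_x$. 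Since $C_b(X)$ separates Borel probability measures on a metrizable space, $S_s\mu_x = \mu_x$; the same separation property yields uniqueness of $\mu_x$ at once.

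The main obstacle is the Riesz step. On a metrizable space that is neither compact nor locally compact, a positive linear functional on $C_b(X)$ of norm $1$ need not correspond to a countably additive probability measure: mass can ``escape to infinity,'' so $L$ only represents a finitely additive set function. One rules this out by establishing tightness of the family $\{\mu_T\}$, which in practice follows from precompactness of the orbit $\{S_t x : t \geq 0\}$. I suspect the theorem is implicitly invoking compactness of $X$ (or at least an orbit-precompactness hypothesis); under such an assumption, Prokhorov's theorem produces a weak-$\ast$ limit point of $\{\mu_T\}$, and the hypothesis that $\int \phi\, d\mu_T$ converges for every $\phi \in C_b(X)$ forces this limit point to be unique, yielding the desired $\mu_x$.
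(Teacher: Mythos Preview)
Your approach is essentially identical to the paper's: define the positive linear functional $\phi \mapsto \bar\phi(x)$, invoke Riesz to obtain $\mu_x$, and deduce invariance from the boundary estimate $\overline{\phi\circ S_s}(x)=\bar\phi(x)$ together with uniqueness. Your caveat about the Riesz step on a general metrizable space is well taken---the paper simply cites ``the Riesz Representation Theorem'' without addressing compactness or tightness, so you have in fact been more careful than the original on this point.
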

\begin{proof}
The mapping $T: \phi \mapsto \bar{\phi}(x)$ defined by
equations~\eqref{eq.timeav1}--\eqref{eq.timeav2} is a bounded, linear,
positive functional on $C(X)$.  Therefore by the Riesz
Representation Theorem~\cite{Hal74} there is a unique Borel
probability measure $\mu_x$ such that
\begin{equation}
  T(\phi) = \int \phi \, d\mu_x.
\end{equation}
Furthermore (in the discrete-time case),
\begin{equation}
\begin{split}
  \int \phi \; d(\mu_x \circ S_t^{-1}) &= \int (\phi \circ S_t) \; d\mu_x \\
  &= \overline{\phi \circ S_t}(x) \\
  &= \lim_{n \to \infty} \frac{1}{n}\sum_{k=1}^n (\phi \circ S_t) (S_k x) \\
  &= \lim_{n \to \infty} \frac{1}{n}\sum_{k=1}^n \phi( S_{t+k} x ) \\
  &= \lim_{n \to \infty} \frac{1}{n} \Big[ \sum_{k=1}^{n} \phi(S_k x)
                          + \sum_{k=n+1}^{n+t} \phi(S_k x)
                          - \sum_{k=1}^t \phi(S_k x) \Big] \\
  &= \bar{\phi}(x).
\end{split}
\end{equation}
It follows that $\mu_x \circ S_t^{-1} = \mu_x$, by uniqueness of
$\mu_x$.  A similar argument holds in the continuous-time case.
\end{proof}

\subsection{Ergodicity, mixing, exactness} \label{sec.ergmixexact}

A century ago, motivated by fundamental problems in statistical
mechanics, Boltzmann and Gibbs raised the
\emph{ergodic problem}: to determine sufficient conditions under which
the time average $\bar{\phi}(x)$ (equations
\eqref{eq.timeav1}--\eqref{eq.timeav2}) exists and is essentially
independent of $x$.  The answer, given by Birkhoff in 1931, is that it
is both necessary and sufficient that the dynamical system $\{S_t\}$
be ergodic.

\begin{defn}[invariant set]
Let $S: X \to X$ be a measurable transformation.  A set $A \in \Acal$
is \emph{invariant} under $S$ if\footnote{The ``symmetric difference'' of two sets is defined by $A
\; \Delta \; B = (A \setminus B) \cup (B \setminus A)$.}  $S^{-1}(A) \; \Delta \;
A$ has measure $0$ (that is, $S^{-1}(A)=A$ mod $\mu$).  If $\{S_t\}$
is a semigroup then $A$ is invariant under $\{S_t\}$ if $A$ is
invariant under $S_t$ $\forall t > 0$.
\end{defn}

\begin{defn}[ergodic dynamical system]
Let $\{S_t: t \in G\}$ ($G=\Rplus$ or \Zplus) be a dynamical system on
a probability space $X$.  $\{S_t\}$ is \emph{ergodic} (alternatively
$\mu$ is ergodic) if for every $S_t$-invariant set $A$, either $\mu(A)
= 0$ or $\mu(A) = 1$.
\end{defn}

Ergodicity is a non-decomposability condition: having $\{S_t\}$
ergodic means there is no non-null set that is invariant under
$\{S_t\}$.  If it were possible to decompose $X$ into invariant sets
$A \cup B = X$ then $\{S_t\}$ could be studied separately on either
$A$ or $B$.  An ergodic dynamical system must be studied on
essentially the entire space $X$.

Ergodicity is a sufficient condition for statistical regularity:

\begin{thm}[Birkhoff Ergodic Theorem] \label{thm.birk}
Let $\{S_t: t \in G\}$ be an ergodic $\mu$-preserving dynamical system
on a measure space $X$, and $\phi \in L^1(X)$. Then for $\mu$-almost
all $x$,
\begin{equation}
  \int \phi \, d\mu =
  \begin{cases}
     \displaystyle
     \lim_{n \to \infty} \frac{1}{n} \sum_{k=1}^n \phi( S_k x ) &  \text{if $G = \Zplus$} \\
     \displaystyle
     \lim_{T \to \infty} \frac{1}{T} \int_0^T \phi( S_t x )\,dt & \text{if $G = \Rplus$.}
  \end{cases}
\end{equation}
\end{thm}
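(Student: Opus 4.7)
The plan is to handle the discrete-time case $G = \Zplus$ in detail, then reduce the continuous-time case to it via the auxiliary observable $\tilde{\phi}(x) = \int_0^1 \phi(S_t x)\,dt$: the average $\frac{1}{T}\int_0^T \phi(S_t x)\,dt$ differs from $\frac{1}{\lfloor T \rfloor}\sum_{k=0}^{\lfloor T \rfloor - 1}\tilde{\phi}(S_k x)$ (with the time-$1$ map $S_1$ generating the iterates) by a remainder that vanishes as $T \to \infty$, so it is enough to prove the theorem for iterates of a single measure-preserving transformation. Throughout I write $A_n\phi(x) = \frac{1}{n}\sum_{k=1}^n \phi(S_k x)$ for the discrete Birkhoff average.

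The technical heart of the proof is the \emph{maximal ergodic theorem}: on the set $E = \{x : \sup_{n \geq 1}\sum_{k=1}^{n}\phi(S_k x) > 0\}$ one has $\int_E \phi\,d\mu \geq 0$. I would establish this by a ``rising sun''/stopping-time argument, exploiting $\mu$-invariance of $S_1$ to pair orbit segments of length equal to the first time each Birkhoff sum becomes positive and adding up the contributions. This is the main obstacle of the whole argument; everything after it is essentially soft analysis. Granted the maximal inequality, apply it to $\phi - \alpha$ on $\{\limsup_n A_n\phi > \alpha\}$ and to $\beta - \phi$ on $\{\liminf_n A_n\phi < \beta\}$ to see, for any rationals $\alpha < \beta$, that the intersection of these two sets has measure zero. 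Taking a countable union over rational pairs then gives existence $\mu$-a.e.\ of the limit $\phi^{*}(x) := \lim_n A_n\phi(x)$.

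To identify $\phi^*$, first observe that $\phi^*(S_1 x) = \phi^*(x)$ a.e., since $A_n\phi(S_1 x) = \frac{n+1}{n}A_{n+1}\phi(x) - \frac{1}{n}\phi(x)$ and the correction terms vanish as $n \to \infty$. Hence $\phi^*$ is $S_1$-invariant, so each level set $\{\phi^* > c\}$ is an invariant set in the sense of the definition preceding the theorem; by ergodicity, each has measure $0$ or $1$, forcing $\phi^*$ to be constant $\mu$-a.e. Finally, $\mu$-invariance of $S_1$ gives $\int A_n\phi\,d\mu = \int \phi\,d\mu$ for every $n$, and a dominated-convergence argument (with the uniform $L^1$ control supplied by the maximal inequality, since $\phi$ itself is not assumed bounded) allows passage to the limit to conclude $\int \phi^*\,d\mu = \int \phi\,d\mu$. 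Combined with $\mu(X) = 1$ and the constancy of $\phi^*$, this forces the constant to equal $\int \phi\,d\mu$, completing the proof in discrete time; the continuous-time case follows by the reduction above.
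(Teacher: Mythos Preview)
The paper does not prove this theorem; it simply defers to standard references (Katok--Hasselblatt, Pollicott--Yuri), and your sketch is exactly the classical argument found there: maximal ergodic theorem, then a.e.\ existence of the limit via the $\limsup/\liminf$ sandwich over rational levels, then identification of the constant via ergodicity.

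One genuine issue in your reduction of the continuous-time case: the time-$1$ map $S_1$ of an ergodic flow need not itself be ergodic (unit-speed rotation on the circle has $S_1 = \mathrm{id}$), so you cannot simply apply the full discrete-time theorem to $S_1$ and $\tilde\phi$ and read off the constant. The existence of the limit $\phi^*$ does transfer via your remainder argument, but for the identification step you must observe that the continuous-time limit is invariant under $S_t$ for \emph{every} $t \geq 0$ (not just $t=1$) and then invoke ergodicity of the flow, not of $S_1$, to force $\phi^*$ constant. A smaller imprecision: dominated convergence does not directly justify $\int\phi^*\,d\mu = \int\phi\,d\mu$, since $\sup_n |A_n\phi|$ need not be integrable for general $\phi \in L^1$; the usual fix is truncation to bounded $\phi$, or a second pass with the maximal inequality applied to $\phi - \alpha$ and $\alpha - \phi$ on either side of the (now known to be constant) value of $\phi^*$.
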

\begin{proof}
The proof is somewhat technical, and can be found in any of the
standard references on ergodic theory---see \eg\ \cite{KH95,PY98}.
\end{proof}

In other words, if $\{S_t\}$ is ergodic then the time average of
$\phi$ along $\mu$-almost every orbit is just a ``spatial'' average or
expectation of $\phi$, weighted with respect to $\mu$.  Ergodicity
also gives an explicit formula for the ergodic measure $\mu$ in terms
of time averages.  Applying the ergodic theorem with $\phi = 1_A$
yields
\begin{equation} \label{eq.binav}
\begin{split}
  \mu(A) &= \int 1_A d\mu \\
         &= \lim_{n \to \infty} \frac{1}{n} \sum_{k=1}^n 1_A(S_k x) \\
         &= \lim_{n \to \infty} \frac{1}{n} \#\{k > 0: S_k x \in A\},
\end{split}
\end{equation}
which is just the fraction of time that the orbit $\{S_k x: k > 0\}$
spends in the set $A$.  Thus an ergodic measure describes the
asymptotic distribution in phase space of $\mu$-almost every orbit.
This helps explain the observation made in Chapter~\ref{ch.intro} that
the long-run distribution of points on any given orbit agrees with the
invariant density.

From Theorem~\ref{thm.birk} we have that ergodicity implies the
existence of the time average of any $L^1$ function $\phi$ evaluated
along $\mu$-almost every trajectory, hence ergodicity implies
statistical regularity.  However, ergodicity remains a difficult
property to prove for the dynamical systems that arise in statistical
mechanics, and for practical purposes the ergodic problem is still
unresolved.

Ergodicity (and hence statistical regularity) does not necessarily
imply any kind of ``random'' behavior of individual orbits.
Irrational rotations are a classic example.  Let points on the
circumference of a circle be parametrized by points $x \in [0,2\pi)$,
and denote rotation through angle $\phi$ by the map
\begin{equation}
  S: x \mapsto x + \phi \quad (\textrm{mod}\; 2\pi).
\end{equation}
This system is ergodic if (and only if) $\phi/2\pi$ is irrational (see
\eg\ \cite[p.\,75]{LM94} and~\cite[Prop 4.2.1]{KH95}).  The ergodic
invariant measure is Lebesgue measure on $[0,2\pi)$, hence Lebesgue
almost every orbit of this system is statistically regular and is,
asymptotically, distributed uniformly on $[0,2\pi)$.

To distinguish between degrees of irregularity there are other,
stronger statistical properties that we can ask of a given dynamical
system.  Two such properties are mixing and exactness.

\begin{defn}[mixing]
Let $\{S_t: t \in G\}$ ($G=\Rplus$ or \Zplus) be a $\mu$-preserving
dynamical system on a probability space $(X,\Acal,\mu)$.  $\{S_t\}$ is
called \emph{mixing} if $\forall A, B \in \Acal$
\begin{equation}
  \mu\big( A \cap S_t^{-1}(B) \big) \to \mu(A) \mu(B) \quad \text{as } t \to \infty.
\end{equation}
\end{defn}

In particular if $\mu(A) \not= 0$ then for a mixing dynamical system,
\begin{equation} \label{eq.mixingdef}
  \frac{ \mu\big( A \cap S_t^{-1}(B) \big) }{ \mu(A) } \to \mu(B) \quad \text{as } t \to \infty.
\end{equation}
In other words, the fraction (with respect to $\mu$) of phase points
originating in $A$ that end up in $B$ after time $t$ (for sufficiently
large $t$) is equal to the size of $B$.  This result is independent of
the particular choice of sets $A$, $B$.

The mixing property comes closer than ergodicity to characterizing
what one would call a ``random'' process, as the
condition~\eqref{eq.mixingdef} implies that $S_t x$ eventually becomes
uncorrelated with the initial state $x$.  Thus mixing plays much the
same role as ``sensitivity to initial conditions'' in the topological
approach to chaotic dynamics.  Mixing distinguishes between merely
statistically regular systems and systems with stronger statistical
properties.  Irrational rotations, for example, are ergodic but not
mixing.  A yet stronger statistical property is exactness:

\begin{defn}[exactness] \label{defn.exact}
Let $\{S_t: t \in G\}$ be a $\mu$-preserving dynamical system on a
probability space $(X,\Acal,\mu)$, such that $S_t(A) \in \Acal$
$\;\forall A \in \Acal,\; t\geq0$.  $\{S_t\}$ is called \emph{exact} if
\begin{equation} \label{eq.exactdef}
  \lim_{t \to \infty} \mu\big( S_t(A) \big) = 1.
\end{equation}
\end{defn}

Non-invertibility is a necessary condition for exactness, since for
invertible $S_t$ we have
\begin{equation}
  \mu \big( S_t(A) \big) = (\mu \circ S_t^{-1}) \big( S_t(A) \big) = \mu(A),
\end{equation}
so the condition~\eqref{eq.exactdef} cannot hold.  Thus there are
mixing systems (\eg\ the two-dimensional Baker
transformation~\cite{LM94}, which is invertible) that are not exact.
The transformation $x \mapsto 4 x (1-x)$ is
exact~\cite[p.\,167]{LM94}.

The implications between the various ergodic properties of dynamical
systems, and their topological counterparts, are summarized in the
following diagram (proofs can be found in~\cite{KH95,LM94};
definitions of topological notions of complex dynamics are given
in~\cite{KH95}).
\begin{equation*}
\begin{CD}
\text{exactness} \\ 
@VVV \\
\text{mixing} @>>> \text{topological mixing} \\
@VVV @VVV \\
\text{ergodicity} @>>> \text{transitivity} \\
@VVV @VVV \\
\text{statistical regularity} @>>> \text{orbit recurrence} \\
\end{CD}
\end{equation*}

For systems having any of the ergodic properties discussed above, an
arbitrary initial density converges---that is, iterates of the
Perron-Frobenius operator converge---to a uniform density with respect
to the invariant measure.  A classification can be made in terms of
the strength of this convergence, and the Perron-Frobenius operator
beomes an important classification tool:

\begin{thm}
Let $\{S_t: t \in G\}$ ($G=\Rplus$ or \Zplus) be a dynamical system on
a probability space $(X,\mu)$, and let $\{P_t: t \in G\}$ be the
corresponding semigroup of Perron-Frobenius operators.  If $\{S_t\}$
has a unique absolutely continuous measure $\mu_{f_{\ast}}$ with
positive density $f_{\ast}$ then
\begin{enumerate}
  \item $\{S_t\}$ is exact iff $\,\forall f \in L^1(X)$, $\{P_t f\}$
  is strongly convergent\footnote{$\{f_n\}$ is strongly convergent to
  $f$ if $\| f_n - f \| \xrightarrow{n \to \infty} 0$ (in the $L^1$
  norm).}\ to $f_{\ast}$;
  \item $\{S_t\}$ is mixing iff $\, \forall f \in L^1(X)$, $\{P_t f\}$
  is weakly convergent\footnote{$\{f_n\}$ is weakly convergent to $f$
  if $\phi(f_n - f) \xrightarrow{n \to \infty} 0$ for any
  bounded linear $\phi: L^1(X) \to \Reals$.}\ to $f_{\ast}$;
  \item $\{S_t\}$ is ergodic iff $\,\forall f \in L^1(X)$, $\{P_t f\}$
  is C{\'e}saro convergent\footnote{$\{f_n\}$ is C{\'e}saro convergent
  to $f$ if $\frac{1}{n} \sum_{k=1}^n \phi(f_k - f) \xrightarrow{n \to
  \infty} 0$ for every bounded linear $\phi: L^1(X) \to \Reals$.}\ to
  $f_{\ast}$.
\end{enumerate}
\end{thm}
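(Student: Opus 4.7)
The plan is to translate each ergodic property into an equivalent $L^1$-convergence statement for $\{P_t f\}$. The central tool is the duality identity
\begin{equation*}
\int_X (P_t f)\, g\, d\mu = \int_X f\, (g \circ S_t)\, d\mu, \quad f \in L^1(X),\; g \in L^\infty(X),
\end{equation*}
which follows directly from the defining relation~\eqref{eq.pfdef1} by taking $g = 1_A$ and extending to simple and bounded measurable $g$ by linearity and approximation. Since mixing, ergodicity and exactness are defined with respect to $\mu_{f_{\ast}}$, I would first reduce the question to a convenient family of test densities: every convergence claim need only be checked for $f$ of the form $1_A f_{\ast}/\mu_{f_{\ast}}(A)$ with $\mu_{f_{\ast}}(A)>0$, using that such densities have dense linear span in $L^1$ and that each $P_t$ is an $L^1$-contraction.

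For part~(2), the duality gives $\mu_{f_{\ast}}(A \cap S_t^{-1} B) = \int (P_t(1_A f_{\ast}))\, 1_B\, d\mu$, while $\mu_{f_{\ast}}(A)\mu_{f_{\ast}}(B) = \int (\mu_{f_{\ast}}(A) f_{\ast})\, 1_B\, d\mu$. Mixing is therefore equivalent to $\int (P_t g_A - \mu_{f_{\ast}}(A) f_{\ast})\, 1_B\, d\mu \to 0$ for every measurable $A$, $B$ (with $g_A = 1_A f_{\ast}$). Since indicator functions have appropriately dense linear span in $L^\infty$, this is exactly weak convergence $P_t f \to f_{\ast}$ for the test densities, and by the reduction above, for every $f \in L^1$. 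For part~(3), I would invoke the mean ergodic theorem for Markov semigroups. Ergodicity of $\{S_t\}$ corresponds, via the bijection between $S_t$-invariant sets modulo $\mu$ and $P_t$-fixed densities, to the statement that the only solutions of $P_t h = h$ in $L^1$ are scalar multiples of $f_{\ast}$. The Kakutani--Yosida mean ergodic theorem, applied to the Markov semigroup $\{P_t\}$, then yields that the Ces\`aro averages $\tfrac{1}{n}\sum_{k=1}^n P_k f$ converge in $L^1$ to the projection onto this one-dimensional fixed subspace, which for $f \in D(X)$ is $f_{\ast}$. The converse is immediate: if some $A$ were invariant with $0 < \mu_{f_{\ast}}(A) < 1$, then $P_t(1_A f_{\ast}/\mu_{f_{\ast}}(A))$ would remain supported on $A$, obstructing Ces\`aro convergence to $f_{\ast}$.

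The main obstacle is part~(1). The converse direction is clean: taking $f = 1_A f_{\ast}/\mu_{f_{\ast}}(A)$, the definition of $P_t$ combined with $S_t^{-1}(S_t A) \supseteq A$ gives
\begin{equation*}
\int_{S_t A} (P_t f)\, d\mu = \int_{S_t^{-1}(S_t A)} f\, d\mu = 1,
\end{equation*}
so $\|P_t f - f_{\ast}\|_1 \to 0$ forces $\mu_{f_{\ast}}(S_t A) = \int_{S_t A} f_{\ast}\, d\mu \to 1$. The forward direction is harder because one must pass from a forward-image set condition to a norm statement on densities. The standard route (\cf~\cite{LM94}) is to characterize exactness as triviality of the tail $\sigma$-algebra $\bigcap_{t \geq 0} S_t^{-1}(\Acal)$ modulo $\mu_{f_{\ast}}$, and then use a lower-function argument---exploiting positivity of $P_t$, $L^1$-contractivity, and uniqueness of $f_{\ast}$---to deduce the asymptotic stability $\|P_t f - f_{\ast}\|_1 \to 0$ uniformly on $D(X)$. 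Making the tail-triviality equivalence rigorous, and translating it into norm convergence for all densities rather than merely for a dense subclass, is the delicate step I expect to be the main difficulty.
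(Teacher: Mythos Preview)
The paper does not give a self-contained proof of this theorem; it simply refers the reader to \cite[p.\,72]{LM94} and \cite[pp.\,56, 63, 92]{Ma92}. Your outline is correct and follows essentially the approach in those references: the Koopman--Perron-Frobenius duality to reduce mixing to weak convergence on a generating class, the mean ergodic theorem for the Ces\`aro statement, and the tail $\sigma$-algebra plus lower-function argument for exactness are precisely the ingredients Lasota and Mackey use. One small imprecision: ``indicator functions have dense linear span in $L^\infty$'' is false in norm, but what you actually need---that bounded measurable functions are uniformly approximable by simple functions, together with the uniform $L^1$-bound $\|P_t f - f_\ast\|_1 \le 2$---does the job, so the argument goes through.
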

\begin{proof}
See \eg\ \cite[p.\,72]{LM94}, \cite[pp.\,56, 63, 92]{Ma92}.
\end{proof}

\subsection{Natural and physical measures} \label{sec.SRB}

According the Birkhoff Ergodic Theorem~\ref{thm.birk}, an ergodic
measure $\mu$ describes the asymptotic statistics of $\mu$-almost
every trajectory of a dynamical system $\{S_t\}$.  Unfortunately, this
statement need not have much dynamical relevance.  For example, the
map $x \mapsto 4 x (1-x)$ has a fixed point at the origin.  The
probability measure concentrated at the origin is ergodic, and
trivially reflects the statistics of an orbit originating at this
point, but it has nothing to say about any other orbit.  The same
applies to an ergodic measure concentrated on any periodic orbit.  The
problem here is that the Birkhoff Ergodic Theorem gives a description
of orbits only on a set of Lebesgue measure zero.

In general, a dynamical system may have many ergodic measures---in
fact uncountably many~\cite{ER85,Ruelle89}---only some of which imply
something about the statistical behavior of typical orbits, if by
``typical'' we mean ``Lebesgue almost every''.  It would be nice to
have a way to select, among the ergodic measures present, which
measure is natural in the sense of reflecting the dynamics of typical
orbits.

Ergodic measures that are absolutely continuous with respect to
Lebesgue measure are a good candidate for a class of ``natural''
ergodic measures.  If $\mu_f$ is an ergodic measure with strictly
positive density $f$ (with respect to Lebesgue measure) then the
statement ``$\mu_f$-almost every'' implies ``Lebesgue almost
every''.\footnote{The relation ``$\ll$'' is transitive and reflexive,
so the condition ``$\mu \ll \nu$ and $\nu \ll \mu$'' gives an
equivalence relation $\mu \sim \nu$.  Equivalent measures share the
same null sets.  All measures with positive density (\eg, with respect
to Lebesgue measure) are equivalent.  Thus we seek ``natural''
measures in the equivalence class containing Lebesgue measure.}  Then
the Birkhoff Ergodic Theorem says that $\mu_f$ describes the
statistics of Lebesgue almost every orbit, hence $\mu_f$ is the
dynamically relevant ergodic measure.  Moreover, if such a natural
measure exists then it is unique:

\begin{thm}
Let $\{S_t: t \in G\}$ be a nonsingular dynamical system on a measure
space $X$.  If $\{S_t\}$ is ergodic then $\{S_t\}$ has at most one
invariant density with respect to $\mu$.  Furthermore, if $\{S_t\}$
has a unique strictly positive invariant density then $\{S_t\}$ is
ergodic.
\end{thm}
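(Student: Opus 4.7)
The plan is to prove each implication separately, with the invariant measure induced by the strictly positive density $f_{\ast}$ (namely $\mu_{f_{\ast}}$) serving as the bridge between the density-level statements (involving the Perron--Frobenius operator on $L^1(\mu)$) and the set-level statements (involving ergodicity with respect to $\mu_{f_{\ast}}$). Strict positivity of $f_{\ast}$ is what makes $\mu$ and $\mu_{f_{\ast}}$ share the same null sets, so invariance of a set mod $\mu$ and mod $\mu_{f_{\ast}}$ coincide.

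For the forward direction (ergodicity implies uniqueness), I would suppose $g$ is any invariant density besides $f_{\ast}$ and set $h = f_{\ast} - g \in L^1(\mu)$, so $P_t h = h$ and $\int h\,d\mu = 0$. Writing $h = h^+ - h^-$ with $h^{\pm} \geq 0$, I would first show $P_t h^{\pm} = h^{\pm}$: positivity of $P_t$ gives $|P_t h| \leq P_t |h|$, and since $P_t$ preserves the $L^1$ norm on nonnegative functions, $\int |h|\,d\mu = \int P_t |h|\,d\mu$, forcing $P_t|h| = |h|$ a.e.; combining $P_t h^+ + P_t h^- = h^+ + h^-$ with $P_t h^+ - P_t h^- = h^+ - h^-$ yields the claim. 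Next I would show the support $A^+ := \{h^+ > 0\}$ is $S_t$-invariant mod $\mu$: since $h^+$ vanishes outside $A^+$, the defining relation $\int_{A^+} P_t h^+\,d\mu = \int_{S_t^{-1}(A^+)} h^+\,d\mu$ combined with $P_t h^+ = h^+$ forces $\int_{S_t^{-1}(A^+)} h^+\,d\mu = \int h^+\,d\mu$, so $\{h^+ > 0\} \subseteq S_t^{-1}(A^+)$ mod $\mu$; reversing the roles of $A^+$ and $(A^+)^c$ gives the opposite inclusion.

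With $A^+$ invariant, ergodicity of $\{S_t\}$ with respect to $\mu_{f_{\ast}}$ gives $\mu_{f_{\ast}}(A^+) \in \{0,1\}$. The case $\mu_{f_{\ast}}(A^+) = 1$ would force $f_{\ast} > g$ $\mu$-a.e., contradicting $\int f_{\ast}\,d\mu = \int g\,d\mu = 1$; hence $\mu_{f_{\ast}}(A^+) = 0$, and strict positivity of $f_{\ast}$ gives $\mu(A^+) = 0$, i.e.\ $f_{\ast} \leq g$ a.e. Swapping roles yields $g \leq f_{\ast}$ a.e., so $g = f_{\ast}$.

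For the converse, I would argue by contradiction: if $\{S_t\}$ were not ergodic there would exist an invariant $A$ with $0 < \mu_{f_{\ast}}(A) < 1$, and strict positivity of $f_{\ast}$ again promotes invariance mod $\mu_{f_{\ast}}$ to invariance mod $\mu$. I would then construct two distinct invariant densities
\begin{equation*}
  g_1 := \frac{f_{\ast}\, 1_A}{\mu_{f_{\ast}}(A)}, \qquad g_2 := \frac{f_{\ast}\, 1_{A^c}}{\mu_{f_{\ast}}(A^c)},
\end{equation*}
verifying $P_t g_1 = g_1$ by computing, for arbitrary $B \in \mathcal{A}$,
\begin{equation*}
  \int_B P_t(f_{\ast} 1_A)\,d\mu = \int_{S_t^{-1}(B)\cap A} f_{\ast}\,d\mu
   = \int_{S_t^{-1}(B \cap A)} f_{\ast}\,d\mu = \int_{B \cap A} f_{\ast}\,d\mu,
\end{equation*}
using invariance of $A$ mod $\mu$ and $P_t f_{\ast} = f_{\ast}$; the same argument gives $P_t g_2 = g_2$. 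Since $g_1 \neq g_2$, this contradicts uniqueness. The main obstacle I anticipate is the careful bookkeeping between invariance mod $\mu$ versus mod $\mu_{f_{\ast}}$ at each step, and verifying that all arguments go through uniformly for every $t \in G$ (both the $\Zplus$ and $\Rplus$ cases)---strict positivity of $f_{\ast}$ is precisely the tool that keeps these two null-set classes aligned.
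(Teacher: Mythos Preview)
Your route---split $h=f_\ast-g$ into positive and negative parts, show each is a fixed point of $P_t$, argue the support is invariant, then invoke ergodicity---is precisely the argument in the reference the paper cites (Lasota--Mackey, Theorem~4.2.2). The paper itself gives no self-contained proof, only the citation together with a one-line heuristic (distinct ergodic measures are mutually singular, so cannot both be absolutely continuous). So your approach is not a different route; it \emph{is} the cited proof, written out in detail. The converse direction you give is correct as stated.

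Two points in the forward direction need repair. First, you repeatedly use strict positivity of $f_\ast$, but that hypothesis belongs only to the second implication; the first assumes only ergodicity (with respect to the reference measure $\mu$). When you reach $\mu_{f_\ast}(A^+)=0$ you cannot invoke global positivity of $f_\ast$ to conclude $\mu(A^+)=0$. The step is salvageable for a different reason: on $A^+=\{f_\ast>g\}$ one has $f_\ast>g\ge 0$, so $f_\ast$ is automatically positive there and $\mu_{f_\ast}(A^+)=0$ already forces $\mu(A^+)=0$. Cleaner is to apply ergodicity with respect to $\mu$ directly and never pass through $\mu_{f_\ast}$ in this half of the argument.

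Second, the claim that ``reversing the roles of $A^+$ and $(A^+)^c$'' yields the opposite inclusion is false. Carrying out the computation with $(A^+)^c$ gives
\[
0=\int_{(A^+)^c} P_t h^+\,d\mu=\int_{S_t^{-1}((A^+)^c)} h^+\,d\mu,
\]
hence $S_t^{-1}((A^+)^c)\subseteq(A^+)^c$, which is again $A^+\subseteq S_t^{-1}(A^+)$---the same inclusion, not its reverse. One fix: use that $\mu_{f_\ast}$ (and $\mu_g$) are $S_t$-invariant probability measures, so $\mu_{f_\ast}(S_t^{-1}(A^+))=\mu_{f_\ast}(A^+)$; combined with the one-sided inclusion this gives $S_t^{-1}(A^+)=A^+$ mod $\mu_{f_\ast}$ and mod $\mu_g$. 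Alternatively, follow Lasota--Mackey and pass to the strictly invariant set $\bigcup_{n\ge 0} S_1^{-n}(A^+)$, but be aware that this still requires a short argument beyond what you wrote to reach the null-or-conull dichotomy for $A^+$ itself.
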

\begin{proof}
See \eg\ \cite[Thm 4.2.2]{LM94}; also \cite[Prop 5.1.2]{KH95}.  The
result follows from the fact that if $\nu_1$, $\nu_2$ are distinct
ergodic measures then $\nu_1 \perp \nu_2$\footnote{The relation
$\perp$ is the antithesis of absolute continuity.  If $\mu \perp \nu$
then essentially the only sets on which $\mu$ does not vanish are
those on which $\nu$ does, and vice versa.  See \eg\
\cite{Hal74}.}~\cite[p.\,94]{PY98}.  Thus $\nu_1$, $\nu_2$ cannot both be
absolutely continuous, hence at most one of them can have a density.
\end{proof}

However, absolute continuity is not an adequate criterion for the
selection of a natural ergodic measure for a dissipative dynamical
system.  In a dissipative system, phase space volumes are contracted
by the time evolution, typically onto a compact invariant set, or
attractor~\cite{Ruelle81,Ruelle89}.  In this case the relevant ergodic
measure is expected to be concentrated on a set of Lebesgue measure
zero, and therefore will not be absolutely continuous.  Thus, even for
the dynamically relevant ergodic measure, the Birkhoff Ergodic Theorem
gives a statement only about orbits originating on a set of Lebesgue
measure zero.

Nevertheless, in physical experiments and computer simulations there
is typically just one invariant measure---the so-called \emph{physical}
measure---that describes typical orbits of the system.  The existence
of such a measure motivates the following definition.

\begin{defn}[SRB (Sinai-Ruelle-Bowen) measure] \label{def.SRB}
Let $\{S_t: t \in G\}$ be a dynamical system on a measure space $X$.
Then $\mu$ is an SRB or physical measure for $\{S_t\}$ if for any
bounded continuous $\phi: X \to \Reals$ and for all $x$ in a set of
positive Lebesgue measure,
\begin{equation}
  \int \phi \,d\mu = \lim_{n \to \infty} \frac{1}{n} \sum_{k=1}^n \phi( S_k x ) \qquad
  \text{(if $G = \Zplus$)}
\end{equation}
or
\begin{equation}
   \int \phi \,d\mu = \lim_{T \to \infty} \frac{1}{T} \int_0^T \phi( S_t x )\,dt \qquad
   \text{(if $G = \Rplus$)}.
\end{equation}
\end{defn}

Thus an SRB measure is one for which the conclusion of the Birkhoff
Ergodic Theorem~\ref{thm.birk} holds, not just on a set of positive
$\mu$ measure, but on a set of positive \emph{Lebesgue} measure.  Thus
an SRB measure describes the statistics of Lebesgue almost every orbit
originating in some nontrivial set.  By definition, existence of an
SRB measure requires statistical regularity for all orbits in a set of
positive Lebesgue measure.  An SRB measure is necessarily invariant
under $S_t$, as can be seen from the proof of
Theorem~\ref{thm.invmeas}.  An SRB measure need not be
ergodic~\cite{BB03}.

Other candidates for the notion of ``physical'' measure have been
given.  For example, suppose a dynamical system with random
perturbations of amplitude \eps\ has a stationary measure
$\mu_{\eps}$.  The zero-noise limit ($\eps \to 0$) of $\mu_{\eps}$, if
it exists, is the \emph{Kolmogorov measure}~\cite{Ruelle89}.  For some
systems (\eg\ Axiom A systems\footnote{Axiom A systems are a fairly
restrictive class of dynamical systems with strong chaotic properties.
For details see \eg\ \cite{Ruelle89}.  Examples are Anosov flows,
Smale's horseshoe map, and the solenoid~\cite{Smale67}.})  the
Kolmogorov measure is known to coincide with the SRB measure.  Because
of this equivalence, a number of different definitions of SRB measure
appear in the literature; see \eg~\cite{BB03,Ruelle89}.  The
definition above is commonly preferred because it is motivated by
physical considerations.

Existence of an SRB measure is a strong condition that, although quite
natural to define, turns out to be very difficult to prove.  Existence
is known for Axiom A systems and $C^2$ flows with hyperbolic
attractors~\cite{Bow75,Ruelle76}.  The notion of strange attractor, of
which much has been made in chaos studies, has been defined as an
attracting invariant set that supports an SRB measure that is
mixing~\cite{Sin00}.  Proving the existence of such a measure is
widely recognized as one of the most important outstanding problems in
dynamical systems theory~\cite{ER85,Via01}.  Computer methods are
showing promise in this direction, and have recently been used to show
the existence of an SRB measure supported on the famous Lorenz
attractor~\cite{Lor63,Tucker00,Tucker02}.

\section{Dimensions and Lyapunov Exponents} \label{sec.dimexp}

In numerical studies of chaotic systems various numerical
parameters---for example Lyapunov exponents, dimensions, and
entropy---are frequently used to quantify the degree of ``randomness''
exhibited by typical orbits.  The Birkhoff Ergodic Theorem and its
generalizations make it possible to define these quantities rigorously
in terms of time averages along trajectories, and to establish that
these quantities are identical for almost every trajectory with
respect to the ergodic measure:

\begin{description}
\item[Lyapunov characteristic exponents] are the time averages of the
exponential rates of divergence of nearby trajectories, along
orthogonal directions in the tangent space of a given
orbit~\cite{Ben80a,Ben80b,Osel68}.
\item[Dimensions] help quantify the geometric structure of invariant sets
(\eg\ attractors) of dynamical systems.  For sets that support an
invariant measure, one can define the information dimension,
correlation dimension, and Hausdorff dimension (see \eg\
\cite{Ruelle89}) which quantify the average number of ``independent directions''
on the invariant set.
\item[Entropy] is a measure of the average rate of \emph{information creation}
along an orbit of a dynamical system (see \eg\ \cite{Shaw81}).
\end{description}

\section{Reliability of Numerical Simulations} \label{sec.shadowing}

Because of sensitivity to initial conditions, orbits of chaotic
dynamical systems cannot be reliably computed using finite precision
arithmetic, as in computer simulations.  A numerically computed
pseudo-orbit approximating a true orbit typically loses any relation
to the true orbit after only a few iterations of the system dynamics.
The exact map $S: x \mapsto 2 x \textrm{ mod } 1$ on $[0,1]$ makes
this point especially clear, since $S$ effects a left-shift on the
binary representation of $x$.  That is,
\begin{equation}
S: \; 0.x_1 x_2 x_3\ldots \; \mapsto \; 0.x_2 x_3 x_4\ldots
\end{equation}
where $x_1$, $x_2$, $\ldots$ are the digits of the binary
representation of $x$.  If the computer stores $16$ binary digits in
its representation of $x$, then the numerical approximation of any
particular orbit becomes meaningless after only $16$ iterations of the
dynamics.

The situation therefore seems hopeless when we come to using computer
simulations to approximate statistical properties of dynamical systems,
since very long and reasonably accurate pseudo-orbits are required.
Surprisingly, this does not pose a problem for sufficiently
well-behaved systems.

\begin{defn}[pseudo-orbit; shadowing]
Let $\{S_n = S^n: n \in \Zplus\}$ be a dynamical system on a metric
space $X$.
\begin{itemize}
\item A pseudo-orbit $\{x_n: a \leq n \leq b\}$ is an
\emph{$\alpha$-pseudo-orbit} if
\begin{equation}
  d\big(x_{n+1}, S(x_n)\big) < \alpha \quad \forall a \leq n \leq b.
\end{equation}
\item A point $y \in X$ \emph{$\beta$-shadows} $\{x_n: a \leq n \leq b\}$ if
\begin{equation}
  d\big(S^n(y),x_n\big) < \beta \quad \forall n.
\end{equation}
\end{itemize}
\end{defn}

One can think of an $\alpha$-pseudo-orbit as an orbit of $\{S_n\}$ that
is perturbed by an amount smaller than $\alpha$ after each iteration;
this models, for example, the round-off error in a numerical
simulation.  A shadow orbit $\{S^n(y)\}$ is a \emph{true} orbit that
is approximated by $\{x_n\}$ within accuracy $\beta$ at all times.

\begin{defn}[shadowing property]
A dynamical system $\{S_n\}$ has the \emph{shadowing property} if
$\forall \beta > 0$, $\exists \alpha > 0$ such that every
$\alpha$-pseudo-orbit is $\beta$-shadowed by a point $y$.
\end{defn}

For a system with the shadowing property, a pseudo-orbit (\eg, one
found by numerical simulation) is always an accurate representation of
\emph{some} nearby true orbit, although perhaps not the particular
orbit one was trying to approximate.  It is known that Anosov
systems\footnote{An Anosov system is one for which the entire phase
space is a hyperbolic set~\cite{KH95}.} have this
property~\cite{KH95}.  More generally a smooth dynamical system has
the shadowing property in a neighborhood of a hyperbolic invariant
set; this is the celebrated Shadowing Lemma~\cite{Guck83,KH95}.

It can be shown~\cite{Ben78a} that for any
uniformly continuous functional $\phi$ and a given $\delta>0$, one can
choose a sufficiently small $\alpha>0$ so that if $\{x_n\}$ is an
$\alpha$-pseudo-orbit then, $\forall n > 0$,
\begin{equation}
  \bigg| \frac{1}{n} \sum_{k=1}^n \phi(x_k) -
         \frac{1}{n} \sum_{k=1}^n \phi(S^k y) \bigg| < \delta.
\end{equation}
where $\{S^k(y)\}$ $\beta$-shadows $\{x_k\}$.  Thus the time average
of $\phi$ along the pseudo-orbit differs by less than $\delta$ from
the time average of $\phi$ along some true orbit $\{S^k y\}$, hence
time averages computed from numerical simulations are in principle
reliable.

The considerations above apply only to systems with the shadowing
property.  Unfortunately this property is difficult to prove except
under fairly restrictive conditions, as in Anosov systems.
Nevertheless, numerical studies suggest that numerical simulations of
many non-Anosov systems are statistically reliable as
well~\cite{Ben78a}; the shadowing property provides a plausible
mechanism that might account for this phenomenon.

\chapter{Elements of an Ergodic Theory of Delay Equations}\label{ch.framework}
\markboth{CHAPTER \thechapter.\ \ ERGODIC THEORY OF DELAY EQUATIONS}{}

\begin{singlespacing}
\minitoc
\end{singlespacing} 

\newpage
\section{Introduction}

The central aim of this thesis is to apply probabilistic concepts,
\eg, from ergodic theory, to the dynamics of delay differential
equations (DDEs).  Before such a project can proceed, a number of
foundational questions must be addressed.  For instance,
\begin{itemize}
\item In what sense can a DDE be interpreted as a dynamical system,
\ie, with a corresponding evolution semigroup?
\item What is the phase space for such a system?
\item What semigroup of transformations governs the phase space dynamics of
a DDE?
\end{itemize}

The dynamical systems approach to delay equations is well established,
and provides standard answers to these questions.  This theory is
discussed in Sections~\ref{sec.ddesys}--\ref{sec.ddesg} below.  As it
happens the phase space for a delay differential equation is
infinite dimensional, which complicates matters considerably.

An ergodic approach to DDEs will require a theory of probability in
infinite dimensional spaces.  The elements of this theory are
discussed in Sections~\ref{sec.ddefp}--\ref{sec.infprob}.  Naturally,
there are technical and interpretational difficulties with doing
probability in infinite dimensions.  Indeed, the available
mathematical machinery proves to be inadequate to deal with some of
the problems that arise.  This has important consequences for the
remainder of the thesis.

\section{Delay Differential Equations} \label{sec.ddetheory}

Delay differential equations, which are representative of the more
general class of \emph{functional} differential equations~\cite{HV93},
take a great variety of forms.  Delay equations having multiple time
delays, time-dependent delays, and even continuous distributions of
delays all arise in mathematical models of evolutionary
systems~\cite{Dr77}.  To simplify matters we will restrict our
attention to delay equations of the form
\begin{equation} \label{eq.dde1}
  x'(t) = f\big( x(t), x(t-\tau) \big),
\end{equation}
where $x(t) \in \Rn$, $f: \Rn \times \Rn \to \Rn$, and $\tau>0$ is a
single fixed time delay.  Despite this restriction, the class of delay
equations of the form~\eqref{eq.dde1} provides more than a sufficient
arena for the considerations that follow.

\subsection{Definition of a solution}

By a solution of the DDE~\eqref{eq.dde1} we mean the following: if for
some $t_0 \in \Reals$ and $\beta > t_0$, the function $x: [t_0-\tau,
\beta]
\to \Rn$ satisfies~\eqref{eq.dde1} for $t \in [t_0,\beta]$,
then we say $x$
\nomenclature[xt]{$x(t)$}{solution of a delay differential equation}%
is a solution of~\eqref{eq.dde1} on $[t_0-\tau,\beta]$.  If $\phi:
[t_0-\tau,t_0] \to \Rn$ and $x$ is a solution that coincides with
$\phi$ on $[t_0-\tau,t_0]$, we say $x$ is a solution through
$(t_0,\phi)$.\footnote{The difficulty that arises if $\phi$ does not
satisfy~\eqref{eq.dde1} at $t_0$ is avoided if $x'$ is interpreted as
a right-hand derivative.}

Because equation~\eqref{eq.dde1} is autonomous (\ie, the right-hand
side does not depend explicitly on $t$), it is invariant under time
translation.  That is, if $x(\cdot)$ is a solution then, for any $T
\in \Reals$, $x(\cdot+T)$ is also a solution.  Consequently the choice
of initial time $t_0$ is arbitrary, and for the sake of convenience we
can take $t_0 = 0$.  Let $C = C([-\tau,0])$ be the space of continuous
functions from $[-\tau,0]$ into \Rn.  Then if $\phi \in C$ and $x:
[-\tau,\beta] \to \Rn$, we say $x$ is a solution of~\eqref{eq.dde1}
with \emph{initial function} $\phi$,
\nomenclature[f]{$\phi$}{initial function for a delay differential equation}%
or simply a solution through $\phi$, if $x$ is a
solution through $(0,\phi)$.

Our intention to consider delay equations as models of deterministic
processes imposes some constraints on the equations it makes sense to
consider.  In order that a given DDE describes an evolutionary process
at all, we require the existence of solutions, at least for some
subset of initial functions $\phi \in C$.  Moreover, since ergodic
theory is concerned largely with asymptotic properties, we require
\emph{global} existence, \ie\ existence of solutions on the entire interval
$[-\tau,\infty)$.  To ensure that the process is deterministic we
require that, for given $\phi \in C$, the solution through $\phi$
should be unique.

These constraints are in fact met under fairly mild restrictions on the
right-hand side of~\eqref{eq.dde1}.  The following section presents
the basic results of this theory that we will require.  For further
details of the existence and uniqueness theory for delay equations see
for example~\cite{Dr77,HV93}.

\subsection{Existence and uniqueness theory}

The existence and uniqueness theory for delay equations can be derived
from the more general theory of functional differential equations.
Since we intend to consider only equations of the form~\eqref{eq.dde1}
we will not make use of the full generality available.  Nevertheless,
the more general theory leads to a presentation that is simpler and
also benefits from an analogy with similar results in the theory of
ordinary differential equations.

In the following, $C([a,b])$
\nomenclature[C]{$C([a,b])$}{space of continuous functions from $[a,b]$
into \Rn, with sup norm}
denotes the Banach space of continuous functions from $[a,b]$ into
\Rn, equipped with the sup norm, and $C$ denotes the space
$C([-\tau,0])$.
\nomenclature[Ct]{$C$}{space of continuous functions from $[-\tau,0]$
into \Rn, with sup norm}%
If $\beta>0$ and $x \in C([-\tau,\beta])$, let $x_t \in C$ be defined
by
\begin{equation}
  x_t(s) = x(t+s), \quad s \in [-\tau,0].
\end{equation}
Suppose $F: \Reals \times C \to \Rn$.  Then the equation
\begin{equation} \label{eq.rfde}
  x'(t) = F(t,x_t),
\end{equation}
where $x'$ denotes the right-hand derivative, is called a
\emph{retarded functional differential equation} (RFDE) on $D$.

Equation~\eqref{eq.rfde} provides for a very general dependence of
$x'(t)$ on the retarded values of $x$ on the interval $[t-\tau,t]$.
The DDE~\eqref{eq.dde1} is a special case, with $F$ given by
\begin{equation} \label{eq.ddefnl}
  F(t,\phi) = f\big(\phi(0),\phi(-\tau)\big).
\end{equation}

A solution $x$ of~\eqref{eq.rfde} is defined in the same manner as for
the DDE~\eqref{eq.dde1} (see the preceding section).  The basic
results on existence and uniqueness of solutions are presented below.
We omit the proofs, which are somewhat lengthy and technical, and
refer the reader to~\cite{Dr77,HV93} for details.

\begin{thm}[Local existence] \label{thm.ddeloc}
Suppose $\Omega \subset \Reals \times C$ is open, and $F:\Omega \to
\Rn$ is continuous.  If $(t_0,\phi) \in \Omega$ then there is a
solution of the RFDE~\eqref{eq.rfde} through $(t_0,\phi)$.
\end{thm}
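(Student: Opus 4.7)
The plan is to follow the standard Schauder fixed point argument, which parallels Peano's existence theorem for ODEs. First I would reformulate the RFDE as an integral equation. A continuous function $x:[t_0-\tau,t_0+\alpha]\to\Rn$ satisfies the RFDE~\eqref{eq.rfde} through $(t_0,\phi)$ if and only if $x(t)=\phi(t-t_0)$ for $t\in[t_0-\tau,t_0]$ and
\[
x(t) \;=\; \phi(0) \;+\; \int_{t_0}^t F(s,x_s)\,ds, \qquad t\in[t_0,t_0+\alpha].
\]
Existence of a solution is therefore equivalent to existence of a fixed point of the integral operator $T$ defined by the right-hand side.

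Next I would carve out the data needed to make the fixed point argument work. Since $\Omega$ is open and $F$ is continuous at $(t_0,\phi)$, I can pick $\alpha>0$ and $b>0$ small enough that the tube
\[
U \;=\; \bigl\{(t,\psi)\in\Reals\times C : t_0\le t\le t_0+\alpha,\ \|\psi-\phi\|\le b\bigr\}
\]
lies in $\Omega$ and $F$ satisfies $|F(t,\psi)|\le M$ on $U$ (using continuity at the single point $(t_0,\phi)$ to get boundedness on the tube, shrinking $\alpha,b$ if necessary). Let $K\subset C([t_0-\tau,t_0+\alpha])$ be the set of continuous functions $y$ with $y(t)=\phi(t-t_0)$ on $[t_0-\tau,t_0]$ and $\|y_t-\phi\|\le b$ for $t\in[t_0,t_0+\alpha]$. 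Then $K$ is closed, bounded, and convex.

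I would then verify the three Schauder hypotheses. For \emph{invariance} $T(K)\subset K$: for $y\in K$ and $s\in[-\tau,0]$, the difference $(Ty)(t+s)-\phi(s)$ equals either $\phi(t+s-t_0)-\phi(s)$ (when $t+s\le t_0$, controlled by uniform continuity of $\phi$ on $[-\tau,0]$ since $|t+s-t_0-s|\le\alpha$) or $\phi(0)-\phi(s)+\int_{t_0}^{t+s}F(r,y_r)\,dr$ (when $t+s>t_0$, controlled by uniform continuity of $\phi$ plus $M\alpha$). Shrinking $\alpha$ forces $\|(Ty)_t-\phi\|\le b$. For \emph{compactness} of $T(K)$: every $Ty$ agrees with $\phi(\cdot-t_0)$ on $[t_0-\tau,t_0]$ and has $|(Ty)'(t)|\le M$ on $[t_0,t_0+\alpha]$, so $T(K)$ is pointwise bounded and equicontinuous, hence relatively compact by Arzelà--Ascoli. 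For \emph{continuity} of $T$: if $y_n\to y$ uniformly on $[t_0-\tau,t_0+\alpha]$, then $(y_n)_s\to y_s$ uniformly in $s$, the set $\{(s,y_s):s\in[t_0,t_0+\alpha]\}$ is compact in $\Omega$, and joint continuity of $F$ gives $F(s,(y_n)_s)\to F(s,y_s)$ uniformly in $s$, whence $Ty_n\to Ty$ uniformly. Schauder's theorem then produces a fixed point $x\in K$, which is the required solution.

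The main obstacle is the standard pitfall of infinite-dimensional fixed-point arguments: closed bounded subsets of $C$ are not compact, so one cannot simply pick a closed ball around $(t_0,\phi)$ and invoke compactness of $\Omega\cap U$. The point that makes the argument go through is that $T$ has a built-in smoothing effect—the images $Ty$ are uniformly Lipschitz with constant $M$—so although $K$ itself is not compact, $T(K)$ is. The other delicate step is the uniform bound $|F|\le M$ on $U$: this uses only continuity at one point, not uniform continuity on a non-compact neighborhood, and is the minimal hypothesis the argument can tolerate.
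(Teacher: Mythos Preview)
Your Schauder fixed-point argument is correct and is precisely the standard proof. The paper itself does not give a proof of this theorem; it simply cites Hale and Verduyn Lunel~\cite[p.\,43]{HV93}, whose proof proceeds by exactly the integral-equation reformulation and Schauder argument you have outlined. Your identification of the key technical point---that compactness of $T(K)$ comes from the Lipschitz bound $|(Ty)'|\le M$ via Arzel\`a--Ascoli, even though $K$ itself is not compact---is the heart of the matter, and your observation that the bound $M$ requires only continuity of $F$ at the single point $(t_0,\phi)$ is correct (though of course the continuity-of-$T$ step uses continuity of $F$ throughout $\Omega$).
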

\begin{proof}
See \eg~\cite[p.\,43]{HV93}.
\end{proof}

\begin{cor}
If $f: \Rn \times \Rn \to \Rn$ is continuous then for any $\phi \in C$
there is a solution of~\eqref{eq.dde1} through $\phi$.
\end{cor}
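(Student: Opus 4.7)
The plan is to deduce this corollary as an immediate specialization of Theorem~\ref{thm.ddeloc}. First I would set $\Omega = \Reals \times C$, which is trivially open in the product topology, and define $F: \Omega \to \Rn$ by the formula already indicated in equation~\eqref{eq.ddefnl},
\begin{equation*}
  F(t,\psi) = f\bigl(\psi(0),\psi(-\tau)\bigr),
\end{equation*}
so that the DDE~\eqref{eq.dde1} is literally the RFDE~\eqref{eq.rfde} associated to this $F$. Once this identification is in place, invoking Theorem~\ref{thm.ddeloc} at the point $(t_0,\phi) = (0,\phi)$ produces a solution of~\eqref{eq.dde1} through $\phi$, as required.

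The only non-trivial step is verifying the continuity hypothesis on $F$. I would argue this in two pieces. The evaluation maps $e_0, e_{-\tau}: C \to \Rn$ defined by $e_s(\psi) = \psi(s)$ satisfy $|e_s(\psi) - e_s(\psi')| \leq \|\psi - \psi'\|_{\sup}$, so each is continuous (indeed $1$-Lipschitz) on $C$. Composing with the continuous map $f: \Rn \times \Rn \to \Rn$, the assignment $\psi \mapsto f\bigl(e_0(\psi), e_{-\tau}(\psi)\bigr)$ is continuous from $C$ to $\Rn$. Since $F$ does not depend on $t$ at all, this gives continuity of $F$ on $\Reals \times C$.

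The main (and really the only) obstacle is this continuity check, and it is mild; it hinges on the fact that the sup norm on $C$ controls pointwise evaluation, which is why the two-parameter form of the right-hand side in~\eqref{eq.dde1} lifts cleanly to a continuous functional on $C$. Had we instead allowed $f$ to depend on, say, an unbounded functional of the history segment, continuity of $F$ on all of $\Reals \times C$ would not be automatic and one would have to restrict $\Omega$ to an appropriate open subset; but under the stated hypothesis no such restriction is necessary, and the corollary follows immediately.
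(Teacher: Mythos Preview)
Your proof is correct and follows essentially the same approach as the paper: you identify the DDE with the RFDE via $F(t,\psi)=f(\psi(0),\psi(-\tau))$, check that $F$ is continuous on $\Reals\times C$, and apply Theorem~\ref{thm.ddeloc}. The only difference is that you spell out the continuity argument in more detail (via Lipschitz evaluation maps), whereas the paper simply notes that $F$ is a composition of continuous functions.
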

\begin{proof}
Equation~\eqref{eq.dde1} is equivalent to~\eqref{eq.rfde} with $F$
defined as in~\eqref{eq.ddefnl}.  Since $f$ is continuous, $F$ is a
composition of continuous functions and is therefore continuous on
$\Reals \times C$.  The conclusion follows from
Theorem~\ref{thm.ddeloc}.
\end{proof}

\begin{defn}[Lipschitzian]
Let $\Omega \subset \Reals \times C$ and $F: \Omega \to \Rn$.  $F$ is
\emph{Lipschitzian} in $\phi$ (on $\Omega$) if, for some $K \geq 0$,
\begin{equation}
  \big| F(t,\phi_1) - F(t,\phi_2) \big| \leq K | \phi_1 - \phi_2 | \quad
  \forall (t,\phi_1), (t,\phi_2) \in \Omega.
\end{equation}
\end{defn}

\begin{thm}[Uniqueness] \label{thm.ddeuniq}
Suppose $\Omega \subset \Reals \times C$ is open, $F: \Omega \to
\Rn$ is continuous, and $F(t,\phi)$ is Lipschitzian in $\phi$ on every
compact set in $\Omega$.  If $(t_0,\phi) \in \Omega$ then there is a
unique solution of the RFDE~\eqref{eq.rfde} through $(t_0,\phi)$.
\end{thm}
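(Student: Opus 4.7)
The plan is to reduce the RFDE to an integral equation and then apply a Gronwall-type argument using the local Lipschitz hypothesis. First I would note that, since $x'$ is the right-hand derivative and $x$ is continuous, a function $x$ is a solution of~\eqref{eq.rfde} through $(t_0,\phi)$ on $[t_0-\tau,t_0+\alpha]$ if and only if $x_{t_0}=\phi$ and
\begin{equation*}
  x(t) = \phi(0) + \int_{t_0}^{t} F(s,x_s)\,ds, \qquad t \in [t_0,t_0+\alpha].
\end{equation*}
Local existence (Theorem~\ref{thm.ddeloc}) already guarantees at least one such solution on some interval $[t_0-\tau,t_0+\alpha]$ with $\alpha>0$; the task is to show that any two solutions agree.

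Next I would localize in $\Omega$ to gain access to the Lipschitz constant. Given two solutions $x$ and $y$ through $(t_0,\phi)$ on a common interval $[t_0-\tau,t_0+\alpha]$, continuity of the maps $t\mapsto (t,x_t)$ and $t\mapsto(t,y_t)$ into $\Reals\times C$, together with openness of $\Omega$, allows us to choose $\alpha>0$ small enough that the compact set
\begin{equation*}
  K = \bigl\{(t,x_t): t\in[t_0,t_0+\alpha]\bigr\} \cup \bigl\{(t,y_t): t\in[t_0,t_0+\alpha]\bigr\}
\end{equation*}
is contained in $\Omega$. By hypothesis $F$ is Lipschitzian in $\phi$ on $K$ with some constant $K_0$.

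Then I would subtract the integral equations for $x$ and $y$ and estimate, using $x_{t_0}=y_{t_0}=\phi$:
\begin{equation*}
  |x(t)-y(t)| \leq \int_{t_0}^{t} K_0\,\|x_s - y_s\|\,ds, \qquad t\in[t_0,t_0+\alpha],
\end{equation*}
where $\|\cdot\|$ denotes the sup norm on $C$. Since $x_s-y_s$ vanishes on $[-\tau,0]$, the sup-norm $\|x_s-y_s\|$ equals $\sup_{t_0\le u\le s}|x(u)-y(u)|$. Setting $m(t)=\sup_{t_0\le u\le t}|x(u)-y(u)|$ and taking the supremum of both sides up to $t$ yields $m(t)\le K_0\int_{t_0}^{t}m(s)\,ds$, so by Gronwall's inequality $m\equiv 0$ on $[t_0,t_0+\alpha]$, i.e.~$x=y$ there.

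Finally I would bootstrap this local uniqueness to the whole common interval of existence: the set of $t\ge t_0$ on which $x$ and $y$ agree is non-empty (contains $t_0$), closed (by continuity), and open (by the local argument just given applied at any point of agreement, using the translation-invariance of the problem and the fact that $F$ is Lipschitz on compact subsets of $\Omega$ containing the shifted initial datum). Hence $x=y$ on their entire common domain, which establishes uniqueness. The main technical obstacle is the first localization step: verifying that the local Lipschitz hypothesis can be turned into a global-on-$[t_0,t_0+\alpha]$ constant requires the compactness argument above, and the bootstrap at the end is where one has to be careful not to implicitly invoke Lipschitzness beyond the compact set where it is assumed.
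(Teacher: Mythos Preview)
Your argument is correct and is the standard proof: reduce to the integral equation, exploit the local Lipschitz hypothesis on a compact piece of the solution graphs to get a Gronwall estimate, and then extend by the usual open--closed connectedness argument. The paper itself does not give a proof but simply cites \cite[p.\,44]{HV93}; what you have written is essentially the argument found there, so there is nothing to compare.
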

\begin{proof}
See \eg~\cite[p.\,44]{HV93}.
\end{proof}

\begin{cor}
If $f: \Rn \times \Rn \to \Rn$ is Lipschitzian then for any $\phi \in
C$ there is a unique solution of~\eqref{eq.dde1} through $\phi$.
\end{cor}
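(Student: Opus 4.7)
The plan is to reduce the corollary to Theorem~\ref{thm.ddeuniq} by verifying its hypotheses for the particular functional $F$ arising from the DDE~\eqref{eq.dde1}. Following the approach of the preceding corollary, I would set $\Omega = \Reals \times C$ and define $F: \Omega \to \Rn$ by $F(t,\phi) = f\bigl(\phi(0), \phi(-\tau)\bigr)$, so that~\eqref{eq.dde1} becomes the RFDE $x'(t) = F(t,x_t)$.

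First I would observe that $F$ is continuous on $\Omega$: since Lipschitz continuity of $f$ implies continuity of $f$, and the evaluation maps $\phi \mapsto \phi(0)$ and $\phi \mapsto \phi(-\tau)$ are continuous from $C$ (with the sup norm) into \Rn, $F$ is a composition of continuous maps. Next I would check the Lipschitz condition on $\phi$. Let $K$ be a Lipschitz constant for $f$ with respect to a suitable product norm on $\Rn \times \Rn$ (for example, $|(u,v)| = |u| + |v|$). Then for any $\phi_1, \phi_2 \in C$,
\begin{equation*}
\bigl| F(t,\phi_1) - F(t,\phi_2) \bigr|
  \leq K \bigl( |\phi_1(0) - \phi_2(0)| + |\phi_1(-\tau) - \phi_2(-\tau)| \bigr)
  \leq 2K \, \|\phi_1 - \phi_2\|,
\end{equation*}
where $\|\cdot\|$ is the sup norm on $C$. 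This shows $F$ is globally Lipschitzian in $\phi$ on $\Omega$, with Lipschitz constant $2K$ independent of $t$. In particular $F$ is Lipschitzian on every compact subset of $\Omega$, so the hypotheses of Theorem~\ref{thm.ddeuniq} are satisfied.

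Finally, for any $\phi \in C$ we have $(0,\phi) \in \Omega$, so Theorem~\ref{thm.ddeuniq} gives a unique solution of the RFDE through $(0,\phi)$, which by construction is a unique solution of~\eqref{eq.dde1} through $\phi$. There is no real obstacle here, since this is essentially a bookkeeping argument reducing to the RFDE framework; the only thing requiring a moment of care is making sure the product-norm constant used in the Lipschitz hypothesis for $f$ is translated correctly into a Lipschitz constant for $F$ with respect to the sup norm on $C$.
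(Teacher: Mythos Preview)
Your proposal is correct and follows essentially the same approach as the paper: reduce to Theorem~\ref{thm.ddeuniq} by writing the DDE as an RFDE via $F(t,\phi)=f(\phi(0),\phi(-\tau))$ and checking that $F$ is continuous and Lipschitz in $\phi$. The only difference is cosmetic: the paper cites~\cite[p.\,292]{Dr77} for the fact that $f$ Lipschitz implies $F$ Lipschitz in $\phi$, whereas you supply the explicit estimate yourself.
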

\begin{proof}
Equation~\eqref{eq.dde1} is equivalent to~\eqref{eq.rfde} with $F$
defined as in~\eqref{eq.ddefnl}.  Since $f$ is Lipschitzian,
$F(t,\phi)$ is Lipschitzian in $\phi$~\cite[p.\,292]{Dr77}.  The
conclusion follows from Theorem~\ref{thm.ddeuniq}.
\end{proof}

\begin{thm}[Global existence] \label{thm.rfdeglobal}
Suppose $F: [t_0,\beta) \times C \to \Rn$ is continuous, and that
$F(t,\phi)$ is Lipschitzian in $\phi$.  If
\begin{equation}
  \big| F(t,\phi) \big| \leq M(t) + N(t) \big| \phi \big|, \quad
  \forall t \in [t_0,\beta), \; \phi \in C,
\end{equation}
for some positive continuous functions $M,N$ on $[t_0,\beta)$, then
there is a unique solution of the RFDE~\eqref{eq.rfde} through $\phi$
on $[t_0-\tau,\beta)$.
\end{thm}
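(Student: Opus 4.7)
The plan is to extend the local solution guaranteed by Theorems~\ref{thm.ddeloc} and~\ref{thm.ddeuniq} to the entire interval $[t_0-\tau,\beta)$ by showing that any maximal solution cannot terminate before time $\beta$. The argument follows the familiar ODE pattern: use the linear growth bound on $F$ together with a Gronwall-type inequality to derive an a priori bound on $|x_t|$, then use this bound to extend the solution past any candidate termination time.

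First I would use Theorems~\ref{thm.ddeloc} and~\ref{thm.ddeuniq} to obtain a unique maximal solution $x$ on some interval $[t_0-\tau, t_{\max})$ with $t_{\max} \leq \beta$, and argue by contradiction that $t_{\max} < \beta$ is impossible. On $[t_0, t_{\max})$ the solution satisfies
\begin{equation*}
x(t) = \phi(0) + \int_{t_0}^t F(s, x_s)\, ds,
\end{equation*}
so, setting $u(t) = \sup_{t_0-\tau \leq s \leq t} |x(s)|$ (noting $|x_s| \leq u(s)$), the growth hypothesis yields
\begin{equation*}
u(t) \leq |\phi| + \int_{t_0}^t \bigl[ M(s) + N(s)\, u(s) \bigr] ds.
\end{equation*}
Since $[t_0, t_{\max}]$ is a compact subset of $[t_0,\beta)$, the continuous coefficients $M$ and $N$ are bounded there, and the generalized Gronwall inequality yields a uniform bound $u(t) \leq K$ on $[t_0, t_{\max})$.

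The bound on $u$ implies that $|x'(t)| = |F(t,x_t)|$ is bounded on $[t_0, t_{\max})$, so $x$ is uniformly Lipschitz (hence uniformly continuous) on $[t_0-\tau, t_{\max})$ and admits a unique continuous extension to $t_{\max}$. The extended segment $x_{t_{\max}} \in C$ then serves as initial data for a fresh application of the local existence result at $(t_{\max}, x_{t_{\max}})$, yielding a solution on some $[t_{\max}, t_{\max} + \delta)$ which, by continuity and uniqueness, glues onto $x$ to produce a solution on $[t_0-\tau, t_{\max}+\delta)$. This contradicts the maximality of $t_{\max}$, forcing $t_{\max} = \beta$. Global uniqueness then follows by iterating Theorem~\ref{thm.ddeuniq}: any two solutions through $\phi$ must agree wherever both are defined.

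I expect two minor obstacles. The first is that the domain $[t_0,\beta) \times C$ is not open in $\Reals \times C$ as required by Theorem~\ref{thm.ddeloc}; this is harmless because $t_{\max}$ is interior to $[t_0,\beta)$, so one applies local existence on a small open neighborhood of $(t_{\max}, x_{t_{\max}})$ inside $(t_0-\varepsilon, \beta) \times C$. The second, more substantive, step is the Gronwall estimate with continuous (rather than constant) coefficients $M$ and $N$; this is standard but requires the integral-form generalization rather than the textbook version.
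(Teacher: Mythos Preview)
Your argument is the standard one and is correct in outline; the paper, however, does not give its own proof of this theorem but simply cites the reference \cite[p.\,308]{Dr77}. So there is nothing to compare against beyond noting that your sketch is precisely the classical continuation-via-Gronwall argument one finds in such references.
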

\begin{proof}
See \eg~\cite[p.\,308]{Dr77}.
\end{proof}

\begin{cor} \label{cor.ddeglobal}
If $f: \Rn \times \Rn \to \Rn$ is Lipschitzian and satisfies
\begin{equation}
  \big| f(u,v) \big| \leq N(t) \max \{ |u|, |v| \}
\end{equation}
for some positive continuous function $N(t)$ on $[0,\beta)$, then
$\forall \phi \in C$ there is a unique solution of~\eqref{eq.dde1}
through $\phi$ on $[-\tau,\beta)$.
\end{cor}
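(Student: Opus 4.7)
The plan is to reduce directly to Theorem~\ref{thm.rfdeglobal} (global existence for RFDEs) by viewing the DDE~\eqref{eq.dde1} as a particular RFDE with $F(t,\phi) = f\bigl(\phi(0),\phi(-\tau)\bigr)$ as in~\eqref{eq.ddefnl}. The previous two corollaries already handled local existence and uniqueness in this way, so the only new ingredient to verify is the growth hypothesis of Theorem~\ref{thm.rfdeglobal}.

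First I would check that $F$ meets the continuity and Lipschitz hypotheses of Theorem~\ref{thm.rfdeglobal}. Continuity is immediate: the evaluation maps $\phi \mapsto \phi(0)$ and $\phi \mapsto \phi(-\tau)$ are continuous (in fact bounded linear) on $C$ with the sup norm, and $f$ is continuous since it is Lipschitz, so $F$ is a composition of continuous maps on $[0,\beta) \times C$. The Lipschitz property of $F$ in $\phi$ was already noted in the uniqueness corollary (citing~\cite[p.\,292]{Dr77}): if $L$ is a Lipschitz constant for $f$, then
\begin{equation*}
\bigl|F(t,\phi_1) - F(t,\phi_2)\bigr| \leq L\bigl(|\phi_1(0) - \phi_2(0)| + |\phi_1(-\tau) - \phi_2(-\tau)|\bigr) \leq 2L\,|\phi_1 - \phi_2|.
\end{equation*}

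Next I would establish the growth bound. For any $\phi \in C$, the sup norm $|\phi|$ on $C([-\tau,0])$ satisfies $\max\{|\phi(0)|,|\phi(-\tau)|\} \leq |\phi|$. Applying the assumed pointwise bound on $f$ gives
\begin{equation*}
\bigl|F(t,\phi)\bigr| = \bigl|f(\phi(0),\phi(-\tau))\bigr| \leq N(t)\max\{|\phi(0)|,|\phi(-\tau)|\} \leq N(t)\,|\phi|,
\end{equation*}
so the bound $|F(t,\phi)| \leq M(t) + N(t)|\phi|$ required by Theorem~\ref{thm.rfdeglobal} holds with $M$ any positive continuous function on $[0,\beta)$ (\eg\ $M \equiv 1$) and the same $N$ as in the hypothesis. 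Invoking Theorem~\ref{thm.rfdeglobal} with $t_0 = 0$ then yields a unique solution of the RFDE---equivalently of~\eqref{eq.dde1}---through $\phi$ on $[-\tau,\beta)$.

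There is no genuine obstacle here: the corollary is essentially a translation of hypotheses on $f$ into hypotheses on the associated functional $F$. The only subtlety worth checking is that the DDE's pointwise growth condition on $f$ really does deliver a functional growth condition in the sup norm on $C$, which it does because both arguments of $f$ are bounded by $|\phi|$. Everything else is direct appeal to the general RFDE theorem.
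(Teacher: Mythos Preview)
Your proposal is correct and follows essentially the same approach as the paper: define $F(t,\phi)=f(\phi(0),\phi(-\tau))$, note that $f$ Lipschitz gives $F$ continuous and Lipschitz in $\phi$, bound $|F(t,\phi)|\le N(t)\max\{|\phi(0)|,|\phi(-\tau)|\}\le N(t)|\phi|$, and invoke Theorem~\ref{thm.rfdeglobal}. Your version is actually slightly more careful in noting that one may take $M\equiv 1$ to match the strict form of the hypothesis in Theorem~\ref{thm.rfdeglobal}, a detail the paper leaves implicit.
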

\begin{proof}
With $F$ defined as in equation~\eqref{eq.ddefnl}, equation~\eqref{eq.dde1} is
equivalent to the RFDE~\eqref{eq.rfde}.  Since $f$ is Lipschitzian it
is also continuous, hence $F$ is continuous and $F(t,\phi)$ is
Lipschitzian in $\phi$~\cite[p.\,292]{Dr77}.  Since
\begin{equation}
\begin{split}
  \big| F(t,\phi) \big| &= \big| f\big( \phi(0), \phi(-\tau) \big) \big| \\
   &\le N(t) \max \{ |\phi(0)|, |\phi(-\tau)| \} \\
   &\le N(t) \sup_{s \in [-\tau,0]} |\phi(s)| \\
   &= N(t) |\phi|,
\end{split}
\end{equation}
the conclusion follows from Theorem~\ref{thm.rfdeglobal}.
\end{proof}

The hypotheses of the preceding theorems can be considerably weakened.
In particular, continuity of $F$ can be weakened to continuity of
$F(t,x_t)$ with respect to $t$ for every continuous $x$.  The
Lipschitz condition on $F$ can also be weakened to a local Lipschitz
condition, for which it suffices that $f$ in the DDE~\eqref{eq.dde1}
have continuous first partial derivatives~\cite[p.\,261]{Dr77}.

In the following we also require continuous dependence of solutions on
initial conditions, for which the following theorem gives a result
analogous to that for ordinary differential equations.

\begin{thm}[Continuous dependence] \label{thm.rfdecont}
Suppose $x$ is a solution through $(t_0,\phi)$ of the
RFDE~\eqref{eq.rfde} and that it is unique on $[t_0-\tau,\beta]$.  If
$\{(t_n,\phi_n)\} \subset \Reals \times C$ is a sequence such that
$(t_n,\phi_n) \to (t_0,\phi)$ as $n \to \infty$, then for all
sufficiently large $n$ every solution $x_n$ through $\phi_n$ exists on
$[t_n-\tau,\beta]$, and $x_n \to x$ uniformly on $[t_0-\tau,\beta]$.
\end{thm}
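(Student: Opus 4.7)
The plan is to combine local existence (Theorem~\ref{thm.ddeloc}), a priori bounds derived from a compact neighborhood of the graph of $x$, an Arzel\`a--Ascoli compactness argument, and the assumed uniqueness on $[t_0-\tau,\beta]$ to conclude that the full sequence $x_n$ exists on $[t_n-\tau,\beta]$ for large $n$ and converges uniformly to $x$.

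First, I would set up a compact ``tube'' around the reference solution. Since $x$ is continuous on $[t_0-\tau,\beta]$, the curve $\Gamma=\{(t,x_t):t\in[t_0,\beta]\}$ is compact in $\Reals\times C$. Choose $\delta>0$ so that the closed $\delta$-neighborhood $K$ of $\Gamma$ lies in the open domain of $F$; then $K$ is compact and $F$ is bounded on $K$, say $|F|\le M$ on $K$, and (by continuity) uniformly continuous there. This tube is the geometric object that all the subsequent estimates live in.

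Next I would establish existence of $x_n$ on $[t_n-\tau,\beta]$ for sufficiently large $n$. By Theorem~\ref{thm.ddeloc} applied at $(t_n,\phi_n)$ (which lies in the domain for large $n$ since $(t_n,\phi_n)\to(t_0,\phi)$), there is a solution $x_n$ defined on some initial interval. Define $\beta_n$ as the supremum of times $t\in[t_n,\beta]$ for which $x_n$ exists on $[t_n-\tau,t]$ and $(s,(x_n)_s)\in K$ for all $s\in[t_n,t]$. Using the integral form $x_n(t)=\phi_n(0)+\int_{t_n}^t F(s,(x_n)_s)\,ds$ and the bound $|F|\le M$ on $K$, I would show that $\|(x_n)_s-x_s\|_C$ stays controlled by a Gr\"onwall-type estimate whose input is the initial discrepancy $\|\phi_n-\phi\|_C+|t_n-t_0|$, plus a term measuring oscillation of $F$ on $K$. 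The hard part is precisely here: to show that $\beta_n=\beta$ eventually, one must rule out the possibility that $(t,(x_n)_t)$ exits $K$ before $t=\beta$, which in turn requires that the Gr\"onwall bound be small compared to the tube radius $\delta$ for all large $n$. This is the step that would consume most of the work.

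Once $x_n$ is known to exist on $[t_n-\tau,\beta]$ and stay in $K$, uniform convergence follows from a compactness argument. The family $\{x_n\}$, restricted to $[t_0,\beta]$ for $n$ so large that $t_n\le t_0$, is uniformly bounded (since $(s,(x_n)_s)\in K$) and equicontinuous (since $|x_n'|\le M$). By Arzel\`a--Ascoli, every subsequence has a further subsequence converging uniformly on $[t_0-\tau,\beta]$ to some continuous $y$; the initial segments converge to $\phi$ because $\phi_n\to\phi$ in $C$ and $t_n\to t_0$. Passing to the limit in the integral equation, using continuity of $F$ on $K$ and uniform convergence of the segments $(x_n)_s\to y_s$, shows $y$ is a solution of~\eqref{eq.rfde} through $(t_0,\phi)$ on $[t_0-\tau,\beta]$. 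The assumed uniqueness then forces $y=x$. Since every subsequence has a sub-subsequence converging to the same limit $x$, the entire sequence $x_n$ converges uniformly to $x$ on $[t_0-\tau,\beta]$, completing the proof.
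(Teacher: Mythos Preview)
The paper does not give its own proof of this theorem: it simply cites Theorem~2.2 of Hale and Verduyn Lunel~\cite{HV93}, noting that the result there is actually stronger (continuous dependence on $(t_0,\phi,F)$). Your outline---compact tube around the graph of $x$, a~priori bound $|F|\le M$ on that tube, Arzel\`a--Ascoli compactness, and identification of subsequential limits via the assumed uniqueness---is essentially the standard argument one finds in that reference, so in spirit you are aligned with what the paper defers to.

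One point to tighten: the ``Gr\"onwall-type estimate'' you invoke to keep $(x_n)_s$ inside the tube and thereby push $\beta_n$ up to $\beta$ is not available under the stated hypotheses, since $F$ is only assumed continuous, not Lipschitz in $\phi$. The usual remedy is to fold the existence step into the compactness argument rather than treat it separately: argue by contradiction that if infinitely many $x_n$ exited the tube at times $\sigma_n<\beta$, then on $[t_n,\sigma_n]$ the family is still equicontinuous (from $|x_n'|\le M$) and bounded, so a subsequence converges uniformly to a solution through $(t_0,\phi)$, which by uniqueness is $x$; but $x$ lies in the interior of the tube, contradicting the exit at $\sigma_n$. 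With that adjustment your plan goes through.
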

\begin{proof}
See Theorem 2.2 of~\cite[p.\,43]{HV93}, which proves a stronger
result giving continuous dependence on $(t_0,\phi,F)$.  The version
given here is a simpler special case.
\end{proof}

\subsection{Method of steps} \label{sec.methsteps}

Existence and uniqueness for a given DDE can sometimes be shown
indirectly, by representing the DDE as a sequence of ordinary
differential equations.  This approach, known as the method of
steps~\cite{Dr77}, also furnishes a method of finding explicit
solutions.

The DDE problem
\begin{equation} \label{eq.methsteps1}
\begin{split}
  & x'(t) = f\big( x(t), x(t-\tau) \big), \quad t \geq 0 \\
  & x_0 = \phi,
\end{split}
\end{equation}
when restricted to the interval $[0,\tau]$, becomes the \emph{ordinary}
differential equation
\begin{equation}
  x'(t) = f\big( x(t), x_0(t-\tau) ) \equiv g_0\big(t,x(t)\big),
  \quad t \in [0,\tau],
\end{equation}
since $x_0 = \phi$ is a known function.  Under suitable hypotheses on
$g$, existence and uniqueness of a solution of this equation (hence a
solution of~\eqref{eq.methsteps1}) on $[0,\tau]$ can be established.
Denoting this solution by $x_1$ and restricting
equation~\eqref{eq.methsteps1} to the interval $[\tau,2\tau]$, we
obtain the ordinary differential equation
\begin{equation}
  x'(t) = f\big( x(t), x_1(t-\tau) \big) \equiv g_1\big(t,x(t)\big),
  \quad t \in [\tau,2\tau],
\end{equation}
for which we can again establish existence and uniqueness of a
solution $x_2$.

Proceeding inductively, considering equation~\eqref{eq.methsteps1} as
an ordinary differential equation on a sequence of intervals $[n \tau,
(n+1)\tau]$, it is sometimes possible to show existence and uniqueness
of a solution of the DDE on $[-\tau,\infty)$.  This approach is
especially simple if $f\big(x(t),x(t-\tau)\big)=f\big(x(t-\tau)\big)$
is independent of $x(t)$, since existence and uniqueness of $x_{n+1}$
then requires only integrability of $x_{n}$, hence almost-everywhere
continuity of $\phi$ is sufficient to guarantee existence and
uniqueness of a solution on $[-\tau,\infty)$.

\section{Delay Equation as a Dynamical System} \label{sec.ddesys}

As noted above, to make sense of the DDE~\eqref{eq.dde1} as
prescribing the evolution of a deterministic system, we require that
for any $\phi$ in $C$, a solution $x$ through $\phi$ exists and is
unique on $[-\tau,\infty)$.  We will also require that $x(t)$ depend
continuously on $\phi$.  Thus from now on we will simply assume that
sufficient conditions are satisfied to guarantee that these
constraints are met, for example the hypotheses of
Corollary~\ref{cor.ddeglobal}.

By a simple rescaling of the time variable in~\eqref{eq.dde1}, the
delay time $\tau$ can be made equal to $1$.  For the sake of
convenience, and wherever it seems natural, we will assume in the
following that such a rescaling has been done.  Thus the generic DDE
``initial data problem'' we consider is the following,
\begin{equation} \label{eq.ddeivp}
\begin{split}
  & x'(t) = f\big( x(t), x(t-1) \big), \quad t \ge 0 \\
  & x(t) = \phi(t), \quad t \in [-1,0],
\end{split}
\end{equation}
where $\phi \in C = C([-1,0])$.
\nomenclature[Cu]{$C$}{space of continuous functions from $[-1,0]$
into \Rn, with sup norm}

Since equation~\eqref{eq.ddeivp} specifies the evolution of a variable
$x(t) \in \Rn$, it might seem that such a DDE could be regarded simply
as a dynamical system on \Rn.  However, $x(t)$ alone is inadequate as
a ``phase point'', since the initial value $x(0)$ does not provide
sufficient information to determine a solution.  Indeed, in order that
the right-hand side $f\big(x(0),x(-1)\big)$ is well defined for all $t
\in [0,1]$, initial data consisting of values of $x(t)$ for $t \in [-1,0]$
must be supplied, as in~\eqref{eq.ddeivp}.

In general, to determine a unique solution of~\eqref{eq.ddeivp} for
all $t \geq T$, it is necessary and sufficient to know the retarded
values of $x(t)$ for all $t$ in the ``delay interval'' $[T-1,T]$.
Thus equation~\eqref{eq.dde1} can only be considered as a dynamical
system if the phase point at time $t$ contains information about the
solution $x(t)$ on the entire interval $[t-1,t]$.  That this is in
fact sufficient to define a dynamical system corresponding to the
initial value problem~\eqref{eq.ddeivp} is shown in the following.

As before, let $C$ be the Banach space of bounded continuous functions
from $[-1,0]$ into \Rn, supplied with the sup norm.  For each $t \geq
0$ define a transformation $S_t: C \to C$ by
\begin{equation} \label{eq.ddesg}
  (S_t\phi)(s) \equiv x_t(s) = x(t+s), \quad s \in [-1,0],
\end{equation}
where $x(t)$ is the solution of~\eqref{eq.ddeivp}.  Then we have
(\cf\ \cite{HV93}):
\begin{thm} \label{thm.ddesg}
The family of transformations $S_t$, $t \geq 0$, defined by
equation~\eqref{eq.ddesg}, is a semidynamical system on $C$ (\cf\
Definition~\ref{def.semidyn}).  That is,
\begin{enumerate}[(a)]
\item $S_0 \phi = \phi \quad \forall \phi \in C$,
\item $(S_t \circ S_{t'})\phi = S_{t+t'}\phi \quad \forall \phi \in C,
\; t_1, t_2 \geq 0$,
\item $(t,\phi) \mapsto S_t(\phi)$ is continuous $\forall t \geq 0$.
\end{enumerate}
\end{thm}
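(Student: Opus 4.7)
The plan is to verify the three semidynamical system axioms in turn, with (a) being immediate from the initial condition, (b) following from uniqueness and autonomy of the DDE, and (c) requiring continuous dependence on initial data combined with uniform continuity of solutions.

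For (a), I would simply evaluate: by equation~\eqref{eq.ddesg} and the initial condition in~\eqref{eq.ddeivp}, $(S_0\phi)(s) = x(0+s) = x(s) = \phi(s)$ for all $s \in [-1,0]$. For (b), let $x$ be the unique solution through $\phi$. Then $S_{t'}\phi = x_{t'}$, so by definition $(S_t(S_{t'}\phi))(s) = y(t+s)$ where $y$ is the unique solution through the initial function $x_{t'}$. The key observation is that the function $\tilde{y}(u) := x(u + t')$, defined for $u \geq -1$, satisfies the same DDE (autonomy) and agrees with $x_{t'}$ on $[-1,0]$. By uniqueness of solutions, $y(u) = x(u+t')$ for all $u \geq -1$, and therefore $(S_t(S_{t'}\phi))(s) = x(t+t'+s) = (S_{t+t'}\phi)(s)$.

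The main obstacle is part (c), joint continuity of $(t,\phi) \mapsto S_t\phi$, because varying $t$ shifts the window over which we look at the solution. I would attack it by a standard triangle inequality: given $(t_n,\phi_n) \to (t_0,\phi_0)$, let $x$ and $x_n$ denote the solutions through $\phi_0$ and $\phi_n$, respectively, and fix any $\beta > t_0$. Then
\begin{equation}
\|S_{t_n}\phi_n - S_{t_0}\phi_0\|_C \leq \sup_{s \in [-1,0]} |x_n(t_n+s) - x(t_n+s)| + \sup_{s \in [-1,0]} |x(t_n+s) - x(t_0+s)|.
\end{equation}
For large $n$ we have $t_n + s \in [-1,\beta]$, so the first term is bounded by $\sup_{u \in [-1,\beta]} |x_n(u) - x(u)|$, which tends to $0$ by the continuous dependence result (Theorem~\ref{thm.rfdecont}) applied with initial time $0$ fixed and $\phi_n \to \phi_0$ in $C$. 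The second term tends to $0$ because $x$ is continuous on the compact interval $[-1,\beta]$, hence uniformly continuous there, and $t_n \to t_0$.

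The delicate point in executing the plan is ensuring the hypotheses of Theorem~\ref{thm.rfdecont} are met uniformly on $[-1,\beta]$: we need both the existence of $x$ on all of $[-1,\beta]$ (guaranteed by the standing global existence/uniqueness assumption, \eg\ via Corollary~\ref{cor.ddeglobal}), and the existence of each $x_n$ on $[-1,\beta]$ for sufficiently large $n$ (which is part of the conclusion of Theorem~\ref{thm.rfdecont}). Once these are in place, the two-term estimate closes the argument cleanly, and no further machinery is required.
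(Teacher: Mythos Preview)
Your proposal is correct and follows essentially the same approach as the paper: part (a) is immediate from the definition, part (b) uses autonomy (translation invariance) plus uniqueness exactly as the paper does, and part (c) invokes Theorem~\ref{thm.rfdecont} for continuous dependence. Your treatment of (c) via the explicit two-term triangle inequality---separating the $\phi$-variation (handled by Theorem~\ref{thm.rfdecont}) from the $t$-variation (handled by uniform continuity of $x$ on compacts)---is actually more detailed than the paper's, which argues more tersely by observing that $x_t$ is a restriction of $x \in C([t_0-1,\beta])$ and inheriting continuity from that; but the underlying ingredients are the same.
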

\begin{proof}
(a) is obvious from equations~\eqref{eq.ddeivp} and~\eqref{eq.ddesg},
since by definition
\begin{equation}
  (S_0 \phi)(s) = x(s) = \phi(s), \quad s \in [-1,0].
\end{equation}

To prove (b), let $x(t)$ be the solution of \eqref{eq.ddeivp}.  Then
by definition of $S_t$,
\begin{equation} \label{eq.Stprime}
\begin{gathered}
  (S_{t+t'} \phi)(s) = x(t+t'+s), \\
  (S_{t'} \phi)(s) = x_{t'}(s) = x(t'+s).
\end{gathered}
\end{equation}
By translation invariance of the DDE, $x(t+t')$ is also a solution,
corresponding to the initial function $x_{t'}$.  Thus by definition
of $S_t$,
\begin{equation}
  (S_t x_{t'})(s) = x(t+t'+s).
\end{equation}
Combining with~\eqref{eq.Stprime}, we have
\begin{equation}
  (S_t \circ S_{t'}) \phi = S_t x_{t'} = S_{t+t'}.
\end{equation}

(c) follows from Theorem~\ref{thm.rfdecont}, which asserts continuity
of $(t_0,\phi) \mapsto x \in C([t_0-1,\beta])$.  Since $x_{t+t_0}$ is
just the restriction of $x$ to $[t+t_0-1,t+t_0] \subset
[t_0-1,\beta]$, we also have continuity of $(t_0,\phi) \mapsto
x_{t+t_0} = S_{t+t_0}
\phi$.
\end{proof}

In terms of the evolution semigroup just defined, the initial data problem
\eqref{eq.ddeivp} can be written as an abstract initial value problem,
\begin{equation} \label{eq.absivp}
\begin{cases}
  x_t = S_t(x_0) \\
  x_0 = \phi.
\end{cases}
\end{equation}
In accordance with the terminology of Section~\ref{sec.dynsys} we call
the function $x_t$ the ``phase point'' at time $t$ of the
corresponding DDE~\eqref{eq.ddeivp}.  The trajectory
\begin{equation}
  \{x_t = S_t \phi: t \geq 0\}
\end{equation}
is a continuous curve in the function space $C$.  The relationship of
the DDE solution $x(t)$ to this trajectory is simple, and is given by
\begin{equation}
  x(t) = x_t(0).
\end{equation}
That is, the solution $x(t)$ ``reads off'' the right endpoint of the
phase point $x_t$.  In other words, $x(t)$ can be interpreted as the
projection of $x_t$ under the map $\pi: C \to \Rn$ defined by
$\pi(x_t) = x_t(0)$.

The action of $S_t$ has a simple geometric interpretation.  Since
$(S_t \phi)(\cdot) = x(t+\cdot)$, $S_t$ consists of a translation of
the solution $x$ followed by a restriction to the interval
$[-1,0]$.  Figure~\ref{fig.ddestate} illustrates this action, together
with the relationship of the state $x_t$ to the DDE solution $x(t)$.
\begin{figure}
\begin{center}\includegraphics[width=5in]{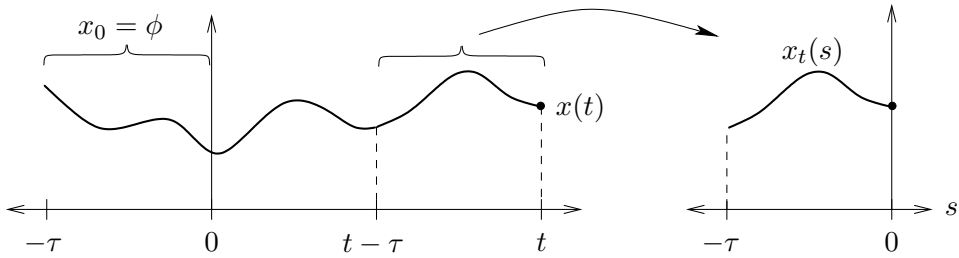}
\caption[Relationship between the solution $x(t)$ of a delay
equation and the phase point $x_t \in C$ of the corresponding
dynamical system.]{Relationship between the solution $x(t)$ of the
delay equation~\eqref{eq.ddeivp} and the phase point $x_t \in C$ of
the corresponding dynamical system.}
\label{fig.ddestate}
\end{center}
\end{figure}

The phase space of the dynamical system $\{S_t\}$ (and hence the phase
space of the corresponding DDE~\eqref{eq.ddeivp}), being the space of
continuous functions on the interval $[-1,0]$, is
infinite dimensional.  The infinite dimensionality of the phase space
for delay equations complicates their analysis dramatically, and as we
will see, it proves to be a serious barrier to developing a
probabilistic treatment.

\section{Representations of the Semigroup} \label{sec.ddesg}

The previous section illustrates how the delay
equation~\eqref{eq.ddeivp} can be viewed as a dynamical system in an
infinite dimensional phase space.  However, the definition of the
corresponding semigroup $S_t$ (\cf\ equation~\eqref{eq.ddesg}) is given
implicitly in terms of a particular solution of the DDE.  That is, in
order to evaluate $S_t(\phi)$ we must have the corresponding solution
$x$ of~\eqref{eq.ddeivp} already in hand.  Consequently, the present
definition of $S_t$ provides little insight as to how $S_t$ operates
as a transformation on $C$.  It is illuminating to consider
alternative representations of the semigroup, with a view to making
its action on the phase space of continuous functions more
transparent.  Sections~\ref{sec.explmap}--\ref{sec.absde} explore some
of the possibilities.

\subsection{Explicit solution map} \label{sec.explmap}

For some delay equations it is possible to write the semigroup
operator $S_t$ explicitly as an iterated map on $C$.  For example,
consider delay equations of the form~\eqref{eq.ddeivp} where $f$ is
linear in its first argument, \viz,
\begin{equation} \label{eq.semilin}
  x'(t) = -\alpha x(t) + g\big( x(t-1) \big).
\end{equation}
Using the notation introduced in the previous section, let $x_t(\cdot)
= x(t+\cdot) \in C$ represent the phase point at time $t$ for the
corresponding dynamical system $\{S_t\}$.  It is simplest to construct
just the time-one map $S = S_1$ for this system, for which the only
relevant phase points are those at discrete times,
\begin{equation} \label{eq.ddestate2}
  x_n(\cdot) = x(n + \cdot), \quad n = 0,1,2,\ldots.
\end{equation}
In this notation the DDE \eqref{eq.semilin} becomes
\begin{equation}
  x_{n+1}'(s) = -\alpha x_{n+1}(s) + g\big( x_n(s) \big),
\end{equation}
an ordinary differential equation for $x_{n+1}$ in terms of the
(known) previous phase point $x_n$.  Its solution defines the time-one
map $S: x_n \mapsto x_{n+1}$.  Explicitly (\cf\ \cite{Ersh91}),
\begin{equation}
  (S u)(s) = u(0) e^{-\alpha(s+1)} + \int_{-1}^s {e^{\alpha(t-s)}
   g\big( u(t) \big) \, dt}, \quad s \in [-1,0].
\end{equation}

This map gives a representation of the DDE \eqref{eq.semilin} as a
discrete-time dynamical system,
\begin{equation}
   x_{n+1} = S x_n.
\end{equation}
Together with an initial function $x_0=\phi$, this system defines a
trajectory $\{x_n: n=0,1,\ldots\} \subset C$.  From this trajectory,
the solution $x(t)$ of the original delay equation
\eqref{eq.semilin} can be recovered according to equation
\eqref{eq.ddestate2}.

It is interesting that, although $\{S_t\}$ is a continuous-time
dynamical system, a trajectory of the discrete-time system $\{S^n: n
\in \Zplus\}$ is sufficient to construct the solution $x(t)$ of the
original DDE for all $t>0$.  The continuous-time family of maps $S_t$
does not provide any additional information about the solution, so it
is reasonable to treat the DDE as a truly discrete-time dynamical
system in $C$.  This observation does not depend on the special form
of the DDE~\eqref{eq.semilin}, as the same conclusion can be drawn for
the more general DDE~\eqref{eq.ddeivp} where, although we do not have
an explicit formula for the time-one map, $S$ can be defined using the
method of steps (\cf\ page~\pageref{sec.methsteps}).

\subsection{Initial boundary value problem}  \label{sec.ddeivp}

The semigroup of operators $S_t$ on $C$ also has a representation in
terms of the solution of an initial boundary value problem.  Again,
this representation may be more illuminating than an implicit
definition of $S_t$ in terms of solutions $x(t)$ of the DDE, and it
applies even if an explicit solution map like that in the previous
section cannot be obtained.

If the right-hand side $f$ of the DDE \eqref{eq.ddeivp} is continuous,
then the solution $x$ is continuously differentiable on $(0,\infty)$.
Therefore, at least for $t>1$, the phase point $x_t(s)$ is
differentiable in both $t$ and $s$.  It follows that $x_t$, considered
as a function
\begin{equation} \label{eq.udef}
  u(s,t) = x_t(s) = x(t+s),
\end{equation}
satisfies the partial differential equation
\begin{equation} \label{eq.bvp}
  \frac{\partial u(s,t)}{\partial t} = \frac{\partial u(s,t)}{\partial s},
                 \quad s \in [-1,0], \; t > 1. \\
\end{equation}
The DDE \eqref{eq.ddeivp} implies a boundary condition on $u$,
\begin{equation} \label{eq.bvpbc}
  \frac{\partial u(s,t)}{\partial s}\Big|_{s=0} = f\big( u(0,t),u(-1,t) \big).
\end{equation}
Equations \eqref{eq.bvp}--\eqref{eq.bvpbc}, together with initial
data
\begin{equation} \label{eq.bvpic}
  u(s,0)=\phi(s),
\end{equation}
constitute an initial boundary value problem describing the evolution
of $x_t$.  If the initial function $\phi$ is differentiable and
satisfies the ``splicing condition''
\begin{equation} \label{eq.splicing}
  \phi'(0) = f(\phi(0),\phi(-1)),
\end{equation}
then the domain of~\eqref{eq.bvp} can be extended to $[-1,0] \times
[0,\infty)$.\footnote{If the splicing condition does not hold,
$u(s,t)$ can be interpreted as a weak solution on $[-1,0] \times
[0,\infty)$~\cite{BM00}.}

This initial boundary value problem evidently has a solution
$u(s,t)=x(t+s)$.  Moreover, since any solution $u(s,t)$
of~\eqref{eq.bvp}--\eqref{eq.bvpic} defines a solution $x$ of the
corresponding DDE via~\eqref{eq.udef}, uniqueness of the solution of
the DDE implies uniqueness for the initial boundary value problem.

\subsection{Abstract differential equation}\label{sec.absde}

The connection between the initial boundary value problem
\eqref{eq.bvp}--\eqref{eq.bvpic} and the evolution semigroup $\{S_t\}$ can
be made more explicit by re-interpreting the initial boundary value
problem as an ``abstract Cauchy problem'', \ie, an initial value
problem on the function space $C$.

Recall that the phase point $x_t$ for the DDE \eqref{eq.ddeivp} is
given by
\begin{equation}
  x_t(s) = x(t+s),
\end{equation}
where $x$ is a solution of the DDE.  The phase space trajectory
corresponding to this solution is a continuous curve $\{x_t: t \geq
0\} \subset C$.  Under suitable hypotheses on the function $f$ in the
DDE~\eqref{eq.ddeivp} this curve is differentiable.  That is, the time
derivative
\begin{equation}
  \frac{d}{dt} x_t = \lim_{h \to 0}\frac{x_{t+h} - x_t}{h}
\end{equation}
exists, where the limit is taken in the strong sense of convergence in
$C$.  In fact, we have:

\begin{thm} \label{thm.absde}
Suppose that $f: \Rn \times \Rn \to \Rn$ is continuous, and that $\phi
\in C$ is continuously differentiable and satisfies the splicing
condition~\eqref{eq.splicing}.  Let $x$ be the corresponding solution
of the DDE~\eqref{eq.ddeivp}, and let $x_t(s) = x(t+s)$, $s
\in [-1,0]$.  Then the strong derivative $\tfrac{d}{dt}x_t$
exists and satisfies $\tfrac{d}{dt}x_t=\Acal x_t$ where $\Acal: C \to
C$ is given by $\Acal: u \mapsto u'$.
\end{thm}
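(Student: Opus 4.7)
The plan is to reduce the claim about strong differentiability in $C$ to uniform differentiability of $x$ on successive delay intervals $[t-1,t]$, and then apply the mean value theorem together with uniform continuity of $x'$. The key observation is that if $x$ is continuously differentiable on $[-1,\infty)$, then $\mathcal{A}x_t(s)=x'(t+s)$, and the difference quotient $\frac{1}{h}(x_{t+h}-x_t)$ evaluated at $s$ is simply $\frac{1}{h}\bigl(x(t+s+h)-x(t+s)\bigr)$, whose pointwise limit is also $x'(t+s)$. The real content of the theorem is that this convergence is \emph{uniform} in $s\in[-1,0]$, which is what strong convergence in $C$ (equipped with the sup norm) requires.

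The first step is to establish the regularity needed: $x\in C^1([-1,\infty),\Rn)$. On $[-1,0]$, $x=\phi$ is $C^1$ by hypothesis. On $(0,\infty)$, the DDE gives $x'(t)=f(x(t),x(t-1))$; since $f$ is continuous and both $t\mapsto x(t)$ and $t\mapsto x(t-1)$ are continuous, $x'$ is continuous on $(0,\infty)$. At $t=0$, the splicing condition~\eqref{eq.splicing} gives
\begin{equation*}
\lim_{t\to 0^+}x'(t)=f\bigl(\phi(0),\phi(-1)\bigr)=\phi'(0)=\lim_{t\to 0^-}x'(t),
\end{equation*}
so the one-sided derivatives of $x$ at $0$ agree, and $x'$ extends continuously through $t=0$. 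Thus $x_t\in C^1([-1,0],\Rn)$ for every $t\geq 0$, which puts $x_t$ in the natural domain of $\mathcal{A}$ and makes $\mathcal{A}x_t(s)=x'(t+s)$ a well-defined element of $C$.

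The second step is the uniform convergence. Fix $t\geq 0$ and $h_0>0$. The interval $[t-1,t+h_0]$ is compact, so by the first step $x'$ is uniformly continuous on it. Given $\varepsilon>0$, choose $\delta\in(0,h_0)$ such that $|x'(\xi)-x'(\eta)|<\varepsilon$ whenever $\xi,\eta\in[t-1,t+h_0]$ with $|\xi-\eta|<\delta$. For $0<|h|<\delta$ and $s\in[-1,0]$, the mean value theorem (applied componentwise in $\Rn$) yields some $\xi_{h,s}$ between $t+s$ and $t+s+h$ with
\begin{equation*}
\Bigl|\tfrac{1}{h}\bigl(x(t+s+h)-x(t+s)\bigr)-x'(t+s)\Bigr|\leq \bigl|x'(\xi_{h,s})-x'(t+s)\bigr|<\varepsilon,
\end{equation*}
the bound being uniform in $s$. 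Taking the supremum over $s\in[-1,0]$ gives $\bigl\|\tfrac{1}{h}(x_{t+h}-x_t)-\mathcal{A}x_t\bigr\|_C<\varepsilon$, proving $\tfrac{d}{dt}x_t=\mathcal{A}x_t$ in the strong sense. (At $t=0$ the difference quotient should be interpreted as a right-hand limit, for which the argument is identical.)

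I expect the main obstacle to be the regularity at the left endpoint $t=0$: without the splicing condition the solution $x$ generally has a jump in $x'$ at $t=0$, so $\mathcal{A}x_0=\phi'$ would not match $\lim_{h\to 0^+}\tfrac{1}{h}(x_h-x_0)$ (which encodes $f(\phi(0),\phi(-1))$ at $s=0$), and uniform convergence on $[-1,0]$ would fail near $s=0$. The splicing hypothesis is precisely what rules this out, and checking that the invocation of uniform continuity of $x'$ is legitimate on a neighbourhood of $t=0$ is the subtle point; elsewhere on $[-1,\infty)$ the argument is routine.
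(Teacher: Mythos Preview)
Your proof is correct and follows essentially the same approach as the paper's: write the difference quotient minus $\mathcal{A}x_t$ in the sup norm, apply the mean value theorem, and invoke uniform continuity of $x'$ on a compact interval. You are in fact more careful than the paper, which simply asserts that $x$ is continuously differentiable on $[-1,\infty)$ under the given hypotheses, whereas you explicitly verify this using the splicing condition at $t=0$.
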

\begin{proof}
\begin{equation}
\begin{split}
\lim_{h \to 0} \Big\| \frac{u_{t+h} - u_t}{h} - \Acal u_t \Big\| & \\
= \lim_{h \to 0} \sup_{s \in [-1,0]} &\Big| \frac{x(t+h+s) - x(t+s)}{h} -
 x'(t+s) \Big|.
\end{split}
\end{equation}
Then by the mean value theorem,
\begin{equation}
  \lim_{h \to 0} \Big\| \frac{u_{t+h} - u_t}{h} - \Acal u_t \Big\|
  = \lim_{h \to 0} \sup_{s \in [-1,0]} | x'(t+c(h)+s) - x'(t+s) |
\end{equation}
for some $|c(h)| < |h|$.  Under the given hypotheses, $x$ is
continuously differentiable on $[-\tau,\infty)$.  Thus $x'$ is
continuous and hence uniformly continuous on any closed interval
containing $[t-1,1]$, so the limit above is zero.
\end{proof}

Thus, at least for continuously differentiable initial functions
satisfying the splicing condition, the trajectory $\{x_t: t \geq 0\}$
corresponding to $x_0=\phi$ is differentiable and satisfies
\begin{equation} \label{eq.absde}
  \frac{d}{dt} x_t = \Acal x_t.
\end{equation}
The operator \Acal\ is called the \emph{infinitesimal generator} of
the semigroup~\cite[Ch.\,7]{LM94}.  Equation~\eqref{eq.absde} can be
regarded as an ``abstract differential equation'', with the mapping $u
\mapsto \mathcal{A}u$ acting like a vector field on $C$.  Together
with the initial condition $x_0 = \phi$, it constitutes an ``abstract
Cauchy problem'', or initial value problem, on $C$.  The DDE semigroup
$\{S_t\}$ furnishes a solution of this initial value problem, $x_t =
S_t(\phi)$, $t \geq 0$.\footnote{If $\phi$ does not satisfy the
splicing condition, $x_t=S_t \phi$ can be interpreted as a mild
solution of~\eqref{eq.absde}, \ie, there is a sequence of functions
$\phi_n \in C$, converging to $\phi$, that do satisfy the splicing
condition, such that $S_t \phi_n$ converges uniformly to $S_t
\phi$~\cite{BM00,DGLW95}.}  Thus the action of the semigroup $S_t$ can be
interpreted as carrying the initial function $\phi$ along a trajectory
in $C$ that is an integral curve of the differential
equation~\eqref{eq.absde}.

The theory of abstract differential equations such as~\eqref{eq.absde}
is most fully developed in the case where the corresponding semigroup
turns out to be a family of \emph{linear} operators.  This is the
case, for instance, when the DDE~\eqref{eq.ddeivp} is
linear~\cite[p.\,194]{HV93}.  Then there is an existence and
uniqueness theory for initial value problems satisfying differential
equations like~\eqref{eq.absde} (the Hille-Yosida Theorem and its
relatives~\cite[Ch.\,2]{CP78}).  For our purposes a detailed
discussion of this theory is unwarranted.  Instead, in the following
we merely sketch its relevance to linear delay equations.

Note that the infinitesimal generator $\mathcal{A}$ is not defined on
all of $C$, so that it is not strictly valid to consider $\mathcal{A}$
as a vector field on $C$.  In fact, it is clear from the proof of
Theorem \ref{thm.absde} that $\mathcal{A}$ is defined only on the
domain
\begin{equation}
  D(\mathcal{A}) = \{ u \in C: u' \in C \text{ and } u'(0) =
  f\big(u(0),u(-1)\big) \},
\end{equation}
However, $D(\mathcal{A})$ is dense in $C$~\cite[p.\,194]{HV93}.  This,
together with restrictions on $\mathcal{A}$ that are satisfied if the
delay equation~\eqref{eq.dde1} is linear, implies that the initial
value problem
\begin{equation} \label{eq.absdeivp}
\begin{split}
  &\frac{d}{dt} x_t = \mathcal{A} x_t, \quad x_t \in D(\mathcal{A}),\\
  &x_0 = \phi \in D(\mathcal{A}),
\end{split}
\end{equation}
has a unique solution $\{x_t=S_t(\phi): t \geq 0\}$~\cite{DGLW95}.

\section{Perron-Frobenius Operator} \label{sec.ddefp}

Having determined how a delay differential equation defines a
dynamical system, we are in a position to approach one of the
fundamental problems posed in this thesis.  That is, given a system
whose evolution is determined by a DDE
\begin{equation} \label{eq.ddefp1}
  x'(t) = f\big(x(t), x(t-1)\big),
\end{equation}
and whose initial phase point $\phi \in C$ is not known but is given
instead by a probability distribution over all possible initial
states, how does the probability distribution for the phase point
evolve in time?  Alternatively, we could consider the statistical
formulation of the problem: given a large ensemble of independent
systems, each governed by~\eqref{eq.ddefp1}, and whose initial
functions are distributed according to some density over $C$, how does
this ensemble density evolve in time?  It is of particular interest to
characterize those probability distributions that are invariant under
the action of the DDE.

In a sense, the answer to this problem is simple and is provided by
the Perron-Frobenius operator formalism, introduced in
Chapter~\ref{ch.ergth}.  Suppose the initial distribution of phase
points is described by a probability measure $\mu$ on $C$.  That is,
the probability that the initial function $\phi$ is an element of a
given set $A \subset C$ (correspondingly, the fraction of the ensemble
whose initial functions are elements of $A$), is given by $\mu(A)$.
Then, after evolution by time $t$, the new distribution is described
by the measure $\nu$ given by
\begin{equation} \label{eq.pullback1}
  \nu = \mu \circ S_t^{-1},
\end{equation}
provided $S_t$ is a measurable transformation on $C$
(\cf\ Section~\ref{sec.ddesigalg}).  That is, after time $t$ the
probability that the phase point is an element of $A \subset C$ is
$\nu(A) = \mu( S_t^{-1}(A))$.  If the initial distribution of states
$u$ can be described by a density $\rho(u)$ with respect to some
measure $\lambda$, then after time $t$ the density will have evolved
to $P_t\rho$, where the Perron-Frobenius operator $P_t$ corresponding
to $S_t$ is defined by
\begin{equation} \label{eq.fpdde1}
  \int_A P_t \rho(u)\,d\lambda(u) = \int_{S_t^{-1}(A)}
  \rho(u)\,d\lambda(u) \quad \forall \text{ $\lambda$-measurable } A
  \subset C.
\end{equation}

Equations \eqref{eq.pullback1}--\eqref{eq.fpdde1} might appear to
answer the problem of the evolution of probability measures for DDEs.
However, they amount only to a formal answer---essentially a symbolic
restatement of the problem.  In fact, everything that is specific to a
given DDE is contained in the symbol $S_t^{-1}$.

Although the DDE can be expressed in terms of an evolution semigroup,
in none of its representations (\cf\ Section~\ref{sec.ddesg}) is there
an apparent way to invert the resulting transformation $S_t$.  It is
almost certain that such an inversion will be non-trivial, since
solutions of delay equations frequently cannot be uniquely extended
into the past~\cite{Dr77}, so that $S_t$ will not have a unique
inverse.  That is, $S_t^{-1}$ may have numerous branches that need to
be accounted for when evaluating $S_t^{-1}(A)$ in the Perron-Frobenius
equation~\eqref{eq.fpdde1}.  This is a serious barrier to deriving a
closed-form expression for the Perron-Frobenius operator $P_t$.

There are other subtle issues raised by
equations~\eqref{eq.pullback1}--\eqref{eq.fpdde1}.  The most apparent
difficulty is that the integrals in~\eqref{eq.fpdde1} are over sets in
a function space, and it is not immediately apparent how such
integrals can be carried out.  More fundamentally, it is not clear
what family of measures we are considering, and in particular what
subsets $A \subset C$ are measurable (\ie, what is the relevant
$\sigma$-algebra on $C$?).  Also, in equation~\eqref{eq.fpdde1} what
should be considered a natural choice for the measure $\lambda$ with
respect to which probability densities are to be defined?  For that
matter, does it make sense to talk about probability densities over
the function space $C$?  These issues are explored in the following
section.

\section{Probability in Infinite Dimensional Spaces} \label{sec.infprob}

Any discussion of an ergodic theory of delay equations will require a
theory of measure and integration on function spaces.  In particular
we need to discuss probability measures on the space $C$ of continuous
functions on the interval $[-1,0]$, since this is a natural phase
space for the DDE \eqref{eq.ddeivp}.  Colloquially speaking, we need
to make precise the somewhat non-intuitive notion of selecting a
\emph{random function} from $C$.

Measure-theoretic probability provides a sufficiently abstract setting
to accomplish this.  Recall from Chapter~\ref{ch.ergth} that we can
represent a random variable $x \in X$ by its associated probability
measure $\mu$, with the interpretation that for a given subset $A
\subset X$, $\mu(A)$ expresses the probability that $x \in A$.  To
ensure consistency with the axioms of probability, we cannot assign a
probability to just any subset of $X$.  Rather, $\mu$ must be defined
on an appropriate $\sigma$-algebra---a collection of so-called
\emph{measurable sets} (\cf\ Section~\ref{sec.mtheory}).  So choosing an
appropriate $\sigma$-algebra on $C$ is a necessary starting point.

\subsection{Appropriate $\sigma$-algebra}\label{sec.ddesigalg}

In real Euclidean spaces, the notion of measure derives from our
physical intuition of length, area, volume, and their generalizations
to higher dimensions.  Thus line segments in one dimension, and
rectangles in two dimensions, are natural candidates for inclusion in
the $\sigma$-algebras of choice for these spaces.  The natural choice
of $\sigma$-algebra would seem to be the smallest $\sigma$-algebra
that contains all such sets---that is, the $\sigma$-algebra generated
by these sets.  This is the so-called \emph{Borel $\sigma$-algebra},
which happens to coincide with the smallest $\sigma$-algebra that
contains all open subsets.

A similar approach leads to a natural choice of $\sigma$-algebra for
infinite dimensional spaces such as $C$.  That is, we take the Borel
$\sigma$-algebra generated by the metric topology on $C$.  With this
choice, many important subsets of $C$ such become measurable, \ie\ we
can assign meaningful probabilities to them:
\begin{itemize}
\item any open set in $C$
\item $\{u \in C:  u(s) \in (a,b) \;\forall s \in [-1,0]\}$; $a,b \in \Reals$
\item any $\epsilon$-ball $B_{\epsilon}(v) = \{u \in C: \|u-v\| < \epsilon\}$; $v \in C$, $\epsilon \in \Reals$ 
\end{itemize}

Besides achieving the measurability of important sets for analysis,
there is a more fundamental reason for choosing the Borel
$\sigma$-algebra.  Recall that studying the evolution of probability
measures under a given transformation makes sense only if the
transformation is \emph{measurable}.  Therefore, for our study of DDEs
it is essential to choose a $\sigma$-algebra on which the semigroup
$S_t$ defined by equation~\eqref{eq.ddesg} is measurable.  The
following establishes that the Borel $\sigma$-algebra accomplishes
this.
\begin{thm} \label{thm.ddemeas}
For every $t \geq 0$, $S_t: C \to C$ (\cf\ equation~\eqref{eq.ddesg})
is a measurable transformation on the Borel $\sigma$-algebra on $C$.
\end{thm}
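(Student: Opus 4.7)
The plan is to reduce measurability to continuity and invoke previously established results. Specifically, Theorem~\ref{thm.contmeas} guarantees that any continuous map between topological spaces is measurable with respect to the corresponding Borel $\sigma$-algebras, so it suffices to show that for each fixed $t \geq 0$ the map $\phi \mapsto S_t(\phi)$ is continuous on $C$.

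The continuity of $S_t$ for fixed $t$ is essentially already in hand. Theorem~\ref{thm.ddesg}(c) asserts joint continuity of $(t,\phi) \mapsto S_t(\phi)$ as a map from $\Rplus \times C$ into $C$. Restricting this joint continuous map to the slice $\{t\} \times C$ and composing with the obvious homeomorphism $C \cong \{t\} \times C$ yields continuity of the partial map $\phi \mapsto S_t(\phi)$. (The underlying input, of course, is Theorem~\ref{thm.rfdecont}: continuous dependence of solutions of the RFDE on initial data, from which joint continuity of $S_t$ was derived in the first place.)

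Putting these together, for each $t \geq 0$ the transformation $S_t: C \to C$ is continuous, and hence by Theorem~\ref{thm.contmeas} it is measurable with respect to the Borel $\sigma$-algebra generated by the sup-norm topology on $C$.

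There is no real obstacle here: the theorem is a direct corollary of Theorems~\ref{thm.contmeas} and~\ref{thm.ddesg}(c), and the short proof just amounts to noting this chain of implications. The substantive content has been done earlier, in the existence/uniqueness and continuous dependence results of Section~\ref{sec.ddetheory}; the present statement simply packages those facts in the measure-theoretic language needed to make sense of the pull-back formula $\nu = \mu \circ S_t^{-1}$ used in defining the evolution of probability measures and the Perron--Frobenius operator~\eqref{eq.fpdde1}.
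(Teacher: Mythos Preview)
Your proposal is correct and follows essentially the same approach as the paper: invoke the continuity of $S_t$ established in Theorem~\ref{thm.ddesg}(c) and then apply Theorem~\ref{thm.contmeas} to conclude Borel measurability. The paper's own proof is the one-line version of exactly this argument.
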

\begin{proof}
$S_t$ is continuous on $C$, by Theorem~\ref{thm.ddesg}(c), hence
measurable, by Theorem~\ref{thm.contmeas}.
\end{proof}

It may be the case that the Borel $\sigma$-algebra on $C$ is in fact
not the most natural choice in the context of a probabilistic approach
to DDEs.  Certainly, as demonstrated in the following sections,
measures on the Borel sets of infinite dimensional spaces do not
behave as we might like.  However, in light of the preceding
considerations, from now on we will consider only measures defined on
the Borel sets of $C$.

\subsection{Densities} \label{sec.infdimdens}

Recall that if a measure $\mu$ is absolutely continuous with respect to
a measure $\lambda$, then it can be expressed as
\begin{equation} \label{eq.densdef}
  \mu(A) = \int_A \rho \,d\lambda,
\end{equation}
where the integral is in the sense of Lebesgue, and $\rho \in
L^1(X,\lambda)$ is the density of $\mu$ with respect to $\lambda$.
Furthermore, any Lebesgue integrable function $\rho \in L^1(X,\lambda)$
with
\begin{equation} \label{eq.densnorm}
  \int \rho\,d\lambda = 1
\end{equation}
uniquely determines an absolutely continuous measure $\mu$
(\cf\ Section~\ref{sec.densities}).

Since the relations~\eqref{eq.densdef}--\eqref{eq.densnorm} require
only a $\sigma$-algebra and a measure $\lambda$ on $X$, they apply
equally well in the more abstract setting of infinite dimensional
spaces such as $C$.  That is, if $C$ is equipped with a
$\sigma$-algebra \Acal\ and measure $\lambda$ on \Acal, then the
function space $L^1(C)=L^1(C,\Acal,\lambda)$ is unambiguously defined
(\cf\ Section~\ref{sec.integral}), and any functional $\rho \in
L^1(C)$ determines an absolutely continuous measure on $C$.  However,
in this context the intuitive appeal of densities is lacking: it is
impossible to draw the graph of such a density functional.  Even
imagining a density on $C$ seems beyond the power of one's
imagination.

The analytical benefits of using densities also appear to be quite
limited in infinite dimensional spaces.  The connection between
measure theory and calculus in finite dimensions owes much to the
theory of integration, notably the fundamental theorem of calculus and
other theorems that facilitate calculations with integrals.  There is
no adequate theory of integration on function spaces that makes it
possible to evaluate integrals like~\eqref{eq.densdef} on $C$ (\cf\
comments in~\cite{LM92}).  A notable exception to this is Wiener
measure, although this does not seem to be adequate for our purposes;
see Section~\ref{sec.wien}, page~\pageref{sec.wien}.

Even allowing that a more powerful theory of integration may be
available in the future, there remain some inherent difficulties with
using densities to specify probability measures on
infinite dimensional spaces.  Equation~\eqref{eq.fpdde1} for the
evolution of a probability density $\rho$ under the action of a
semigroup $S_t$ is valid only if $S_t$ is non-singular.  That is,
pre-images under $S_t$ of $\lambda$-measure-zero sets must have
$\lambda$-measure zero.  It turns out to be difficult to guarantee
this.  In fact, on an infinite dimensional space, every absolutely
continuous measure fails to remain absolutely continuous under
arbitrary translations~\cite{Yam85}.  That is, for \emph{any} measure
$\lambda$ on $C$, there is some $v \in C$ for which the translation
\begin{equation}
  T: u \mapsto u + v
\end{equation}
is singular (in the measure-theoretic sense), and hence does not map
densities to densities.  If even translations do not lead to
well-defined density evolution, there is little hope of studying delay
equations with density functionals.

\subsection{Lack of a ``natural'' measure on $C$}

As if the preceding did not complicate matters enough, if we are to
work with densities on $C$ there remains the problem of choosing a
basic measure $\lambda$ with respect to which densities are to be
defined (\cf\ equation~\eqref{eq.densdef}).  This too turns out to be
problematic.

In real Euclidean spaces we are accustomed to taking Lebesgue measure
as the ``natural'' measure with respect to which densities are
defined.  That is, ``a random number distributed uniformly on the
interval $[0,1]$'' means ``a random variable on $[0,1]$ distributed
according to Lebesgue measure''.  Why is Lebesgue measure---of all
possible measures---the gold standard for representing the concept of
``uniformly distributed''?

The property of Lebesgue measure that selects it uniquely as the
natural measure on Euclidean spaces is its translation invariance.
Given a random variable $x$ uniformly distributed on $[0,1]$, we
expect that adding a constant $a$ to $x$ should result in a new random
variable, $x+a$, that is uniformly distributed on $[a,a+1]$, at least
according to what seems to be the common intuitive notion of
``uniformly distributed''.  More generally, a random variable
uniformly distributed on any set in \Rn\ should remain uniformly
distributed if translated by a constant vector.  Formally, the measure
$\lambda$ on \Rn\ that encapsulates uniform distribution should
satisfy
\begin{equation}
  \lambda(A) = \lambda(A+a), \quad \forall a \in \Rn, \; \forall
  \text{ measurable } A.
\end{equation}
Another way to say this is that $\lambda$ is invariant under the
translation group
\begin{equation}
  T_a: x \mapsto x - a.
\end{equation}
That is,
\begin{equation} \label{eq.transinv}
  \lambda = \lambda \circ T_a^{-1} \quad \forall a \in \Rn.
\end{equation}
Equation \eqref{eq.transinv} uniquely defines the Borel measure
$\lambda$ on the Borel $\sigma$-algebra on \Rn (which agrees with
Lebesgue measure on the Borel sets).  This is a specific instance of
Haar measure: every locally compact topological group (\eg, the
translation group just considered on \Rn) has a unique group-invariant
measure on the Borel $\sigma$-algebra, called the Haar measure, that
is non-zero on any open set~\cite[p.\,313]{Lang93}.

In light of these considerations, in choosing a natural measure on $C$
it seems reasonable to seek a translation-invariant measure.  After
all, we would like that a uniformly distributed ensemble of functions
in the unit ball in $C$ should remain uniformly distributed under
translation by any function in $C$.  Unfortunately the existence of a
Haar measure on $C$ is not guaranteed, since $C$ is not locally
compact.\footnote{A normed vector space is locally compact iff it is
finite dimensional~\cite[p.\,39]{Lang93}.}  In fact the situation is
worse than that, as the following theorem demonstrates.

\begin{thm}
Let $X$ be an infinite dimensional separable Banach space.  If
$\lambda$ is a non-zero translation-invariant measure on the Borel
sets of $X$, then $\lambda(A)=\infty$ for every open $A \subset X$.
\end{thm}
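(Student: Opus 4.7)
The plan is to reduce the claim to showing every open ball centered at the origin has infinite measure, then to exploit the non-local-compactness of an infinite dimensional Banach space to pack infinitely many disjoint equal-measure balls into any given ball.

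First, observe that any open $A \subset X$ contains some open ball $B_r(x) = \{y \in X: \|y-x\| < r\}$, and by translation invariance $\lambda(B_r(x)) = \lambda(B_r(0))$. So it suffices to prove $\lambda(B_r(0)) = \infty$ for every $r > 0$. The argument splits naturally into two steps.

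Step 1: show $\lambda(B_r(0)) > 0$ for every $r > 0$. Suppose for contradiction that $\lambda(B_r(0)) = 0$ for some $r$. Since $X$ is separable, pick a countable dense subset $\{x_n\} \subset X$; then $X = \bigcup_n B_r(x_n)$. By translation invariance each $\lambda(B_r(x_n)) = \lambda(B_r(0)) = 0$, so countable subadditivity gives $\lambda(X) = 0$, forcing $\lambda \equiv 0$ on the Borel sets by monotonicity. This contradicts the assumption that $\lambda$ is non-zero.

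Step 2 (the main step): show $\lambda(B_r(0)) = \infty$. Here I would invoke the infinite-dimensional geometry via Riesz's lemma, which asserts that in an infinite dimensional Banach space there is a sequence $\{u_n\}$ of unit vectors with $\|u_n - u_m\| \geq 1/2$ for all $n \neq m$. Rescale to produce a sequence $\{x_n\} \subset B_{r/2}(0)$ with $\|x_n - x_m\| \geq r/4$. Then the balls $B_{r/8}(x_n)$ are pairwise disjoint (since $r/8 + r/8 < r/4$) and all contained in $B_r(0)$ (since $\|x_n\| + r/8 < r/2 + r/8 < r$). Translation invariance gives $\lambda(B_{r/8}(x_n)) = \lambda(B_{r/8}(0))$, which is strictly positive by Step~1. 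Hence
\begin{equation*}
  \lambda(B_r(0)) \;\geq\; \sum_{n=1}^\infty \lambda\bigl(B_{r/8}(x_n)\bigr) \;=\; \sum_{n=1}^\infty \lambda\bigl(B_{r/8}(0)\bigr) \;=\; \infty.
\end{equation*}

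The main obstacle is Step 2, since this is precisely where infinite dimensionality must enter in an essential way; in finite dimensions Lebesgue measure is a non-trivial translation invariant Borel measure assigning finite values to bounded open sets, so any proof must fail there. Riesz's lemma (equivalently, non-compactness of the closed unit ball) is the cleanest tool that captures this distinction. Step 1 is routine but indispensable, because it rules out the pathological case of a non-zero measure that vanishes on all balls, which would make the packing estimate in Step 2 give only $0 \leq \infty$.
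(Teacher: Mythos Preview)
Your proof is correct and follows essentially the same route as the paper's: both combine a Riesz-lemma packing of infinitely many disjoint congruent balls inside a given ball (the paper cites \cite[p.\,134]{Fr70} for this fact) with a countable covering of $X$ by translated balls via separability. The only difference is organizational---the paper runs a single contradiction (assume a ball has positive finite measure, deduce the smaller packed balls have measure zero, then cover $X$ to get $\lambda(X)=0$), whereas you split positivity (Step~1) and infiniteness (Step~2) into separate lemmas---and the two arrangements are equivalent.
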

\begin{proof} (after~\cite{HSY92}.)
Let $B \subset X$ be an open ball of radius $\epsilon>0$, and suppose
$\lambda(B)>0$ is finite.  Because $X$ is infinite dimensional, there
is an infinite sequence $B_i$, $i=1,2,\ldots$ of disjoint open balls
$B_i \subset B$, each of radius $\epsilon/4$ (\cf\ the proof of Theorem
4.3.3 in~\cite[p.\,134]{Fr70}).  Because $\{B_i\}$ is a countable
disjoint collection with $\cup_i B_i \subset B$, we have
\begin{equation}
  \lambda(B) \geq \lambda(\cup_i B_i)
             =\sum_{i=1}^{\infty} \lambda(B_i),
\end{equation}
where $\lambda(B_i) = \lambda(B_1)$ by translation invariance.  Since
$\lambda(B)$ is finite, this implies that $\lambda(B_i)=0$ $\forall
i$.  Separability of $X$ implies that $X$ can be covered by a
countable collection of $\epsilon/4$-balls, each of which we have just
shown must have measure 0.  Hence $\lambda(X)=0$, a contradiction.
\end{proof}

Since we expect any reasonable measure to be non-zero at least on
some open sets, we can conclude that translation-invariance will not
suffice to select a natural measure on $C$.

Aside from making the definition of densities on $C$ ambiguous, the
absence of a natural measure undermines one of the most important
concepts in ergodic theory.  Recall from Section~\ref{sec.SRB} that an
SRB measure $\mu$ for a dynamical system $S_t$ on $X$ is one such
that, for any functional $\varphi \in L^1(X)$,
\begin{equation} \label{eq.srb1}
  \lim_{T \to \infty}\frac{1}{T}\int_0^T \varphi(S_t x)\,dt = \int
   \varphi\,d\mu
\end{equation}
for Lebesgue almost every $x$.  Thus the time average of $\varphi$
along almost every trajectory is equal to the spatial average of
$\varphi$ weighted with respect to $\mu$.  Because $\varphi(x)$
represents an arbitrary observable of the system, and $\mu$
encapsulates the asymptotic statistical behavior of $\varphi(x(t))$ on
almost every orbit of the system, is it widely accepted that an SRB
measure is \emph{the} relevant physical measure---the one that nature
reveals to the experimentalist.

The notion of ``almost every'' in \eqref{eq.srb1} is always
unquestioningly taken to mean ``Lebesgue almost every''.  As we have
seen, for infinite dimensional systems, and for delay equations in
particular, we have no natural analog of ``Lebesgue almost every'',
since there is no translation invariant measure to take the place of
Lebesgue measure.

That this ambiguity emerges at all is somewhat amusing, since the
notion of SRB measure was introduced on purely \emph{physical}
grounds.  The very definition of SRB measure requires that we make
precise the notion of ``physically relevant''---but for DDEs this
leads to considerations in the decidedly non-physical setting of
infinite dimensional geometry, where it appears to be an inherently
ambiguous term.

\subsection{Genericity and prevalence} \label{sec.prevalence}

Without a natural measure on $C$ to characterize a physically relevant
notion of ``almost every'', the definition of SRB measure for a delay
differential equation is problematic.  One way out of this dilemma is
to introduce a notion of almost every that does not depend on a
specific measure, such as the topological concept of
\emph{genericity}.  A property is said to be generic if it holds on a
residual set, that is a countable intersection of open dense sets.
The complement of a residual set is a set of ``first category'', hence
first category sets are topological analogs of sets of measure zero.
Although genericity provides one way to quantify the notion of almost
every in infinite dimensional spaces, it lacks the probabilistic
interpretation that we would like to have in the context of ergodic
theory.  More importantly, even in \Rn\ residual sets can have measure
zero~\cite{HSY92}, so using genericity in the definition of SRB
measure would be inconsistent with the accepted definition for finite
dimensional systems.

A more promising alternative is a translation-invariant probabilistic
notion of almost every called \emph{prevalence}~\cite{HSY92}:
\begin{defn}
Let $X$ be a Banach space equipped with its Borel $\sigma$-algebra
\Acal.  A Borel set $A \in \Acal$ is called \emph{shy} if there is a measure
$\mu$ on \Acal\ such that
\begin{itemize}
\item $0 < \mu(U) < \infty$ for some compact $U \subset X$, and
\item $\mu(A+x)=0$ $\;\forall x \in X$.
\end{itemize}
$A$ is called \emph{prevalent} if it is the complement of a shy set.
\end{defn}

Roughly speaking, a set is shy if for some nontrivial measure on $X$,
every translate of $A$ has measure zero.  Two key properties make
prevalence an attractive candidate for a notion of ``almost every''
appropriate to a definition of SRB measure for infinite dimensional
systems (for proofs see~\cite{HSY92}):

\begin{enumerate}
\item If $A$ is prevalent then any translate of $A$ is prevalent;
\ie\ prevalence is a translation-invariant property.
\item $A \subset \Rn$ is shy if and only if $A$ has Lebesgue measure
zero.
\end{enumerate}

The first property means prevalence is a natural or physical notion of
almost every in the sense discussed in the previous section.  The
second property guarantees that, in finite dimensions, a property
holds on a prevalent set if and only if it holds on a set of positive
Lebesgue measure.  Thus for finite dimensional systems the definition
of SRB measure (\cf\ Definition~\ref{def.SRB}, page~\pageref{def.SRB})
is unchanged is we substitute ``a prevalent set'' for ``a set of
positive Lebesgue measure''.  The novelty and significance of this
alternative definition is that it applies equally well to
\emph{infinite} dimensional systems.

Tools for proving shyness and prevalence are developed
in~\cite{HSY92}.  The following interesting results have been proved
(here we use ``almost every'' in the sense of ``in a prevalent set''):

\begin{itemize}
\item If $X$ is infinite dimensional then every compact subset of $X$ is shy.
\item Almost every element of $C$ is nowhere differentiable.
\item For $1 \leq p \leq \infty$ almost every $C^p$ map on \Rn\ has the property
that all of its periodic points are hyperbolic.
\item Almost every $f \in C([0,1],\Reals)$ satisfies
$\int_0^1 f(x)\,dx \neq 0$.
\end{itemize}

We are unaware of any applications of prevalence to the concept of SRB
measure.  This appears to be a promising direction for further
investigation.

\subsection{Wiener measure}\label{sec.wien}

As already noted, an adequate theory of integration on infinite
dimensional spaces in lacking.  Such a theory is needed if we are to
further develop the Perron-Frobenius operator formalism to
characterize the evolution of densities for DDEs, which requires a
theory of integration of functionals on the space $C$.  This
difficulty also arises in~\cite{LM92}, in the context of a different
approach to the evolution of densities for DDEs.

However, there is a notable exception worth mentioning.  There is one
probability measure (or family of measures) on a function space,
called Wiener measure, for which there is a substantial theory of
integration~\cite{Kac}.  This measure plays an important role in
quantum field theory (see \eg~\cite{Ryd85}), and is central to the
theory of stochastic differential equations~\cite[Ch.\,11]{LM94}.

Let
\begin{equation}
  C_0 = \{ u \in C([0,1],\Rn): u(0) = 0 \}.
\end{equation}
A Brownian motion\footnote{A Brownian motion is a continuous-time
analog of a random walk starting at the origin.  See \eg~\cite{LM94}.}
is a stochastic process that generates a random path or ``random
function'' $w \in C_0$ such that for a given $t \in [0,1]$, $w(t)$ has
Gaussian probability density~\cite{LM92}
\begin{equation}
  \rho(x_1,\ldots,x_n) = \frac{1}{(\sqrt{2 \pi t})^n} \exp\big[
    -(x_1^2 + \cdots + x_n^2) / (2 t) \big].
\end{equation}
Then, roughly speaking, Wiener measure $\mu_w$ assigns to a given
subset $A \subset C_0$ a measure equal to the probability that a
Brownian motion generates an element of $A$.

With Wiener measure it is possible to prove strong ergodic properties
(\eg~exactness) for a certain class of partial differential
equations~\cite{BK84,Rud85,Rud87,Rud88}.  The success of these
investigations, together with the considerable machinery that has been
developed around the Wiener measure, suggests that Wiener measure
might be a good choice for the measure of integration in the study of
other infinite dimensional systems such as delay equations.  However,
in contrast with the quantum field equations and the PDEs mentioned
above, the dynamical system $\{S_t\}$ corresponding to a delay
equation does not leave the space $C_0$ invariant.  That is, we cannot
study $S_t$ on $C_0$ alone.  Thus Wiener measure does not seem to be
adequate for our purposes.  Nevertheless, an approach based on Wiener
measure might be still possible, and this suggests a fruitful avenue
for further investigation.

\section{Conclusions}

In this chapter we have developed a framework in which an ergodic
treatment of delay differential equations might be developed.  This
provides a setting and terminology that will be needed for our
subsequent discussions of the ergodic properties of DDEs.

However, as far as the possibilities for the rigorous development of
an ergodic theory of DDEs are concerned, the main results of this
chapter are somewhat pessimistic.  The picture that emerges is a
characterization of DDEs as infinite dimensional dynamical systems on
the phase space $C$ of continuous functions on the interval $[-1,0]$.
With this characterization, an ergodic theory of DDEs is possible in
principle.  In such a theory the mathematical objects of primary
interest are probability measures on $C$.  This entails a theory of
measure and probability on infinite dimensional spaces.  As we have
seen, the foundations of this theory run aground on a number of
technical and interpretational difficulties including the following.
\begin{itemize}
\item Non-invertibility of the evolution semigroup $\{S_t\}$.
\item Likely singularity of $S_t$ with respect to most measures on $C$.
\item Lack of an adequate theory of integration on
infinite dimensional spaces.
\item Non-existence of a natural (\ie~translation-invariant) measure on $C$.
\item Ambiguity in the definition of SRB measure for infinite dimensional systems.
\end{itemize}

Some of these difficulties (\eg, with integration in infinite
dimensions) appear to require significant new mathematical tools that
are beyond the scope of this thesis.  Others (\eg, with the choice of
a natural measure on $C$ and the definition of SRB measure) are simply
ambiguities that arise when dynamical systems theory developed with
only finite-dimensional systems in mind is carried over to an
infinite dimensional setting.  Nevertheless, in the absence of
criteria by which these ambiguities could be resolved, we must content
ourselves with having carefully discussed the available alternatives.

In light of the foregoing the following chapters focus less on ergodic
formalism, in order to pursue more fruitful lines of inquiry.  In the
next chapter we turn to the practical problem of computing the
evolution of probability densities for the state $x(t) \in \Rn$ rather
than an abstract phase point in $C$.

\chapter{Density Evolution for Delay Equations}\label{ch.densev}

\begin{singlespacing}
\minitoc   
\end{singlespacing}
\newpage

In light of the results of the previous chapter, a comprehensive
treatment of delay equations within the ergodic theory of dynamical
systems is out of reach.  Nevertheless, a probabilistic treatment is
feasible if the dynamical systems formalism is abandoned, and this is
the approach taken in the present chapter.

This chapter again considers systems that can be modeled by a DDE of
the form
\begin{equation} \label{eq.dde2}
  x'(t) = f\big( x(t), x(t-1) \big), \quad t \geq 0,  \quad x(t) \in \Rn,
\end{equation}
where without loss of generality the ``delay time'' has been scaled to
one.  In contrast with the previous chapter, we now take the point of
view of an experimentalist, interpreting this equation as prescribing
the evolution of an observable quantity $x(t)
\in \Rn$, rather than a phase point in an abstract function space.  Thus we
imagine an experimental setting in which an ensemble of independent
systems evolves according to~\eqref{eq.dde2}, and seek a probabilistic
description of this ensemble in terms of the evolution of the density
$\rho(x,t)$ of the ensemble of solution values $x(t)$.  (Alternatively
we can think of $\rho(x,t)$ as a \emph{probability} distribution that
quantifies our uncertain knowledge of the state of a single system
governed by~\eqref{eq.dde2}.)

\begin{figure}
\begin{center}
\includegraphics[width=\figwidth]{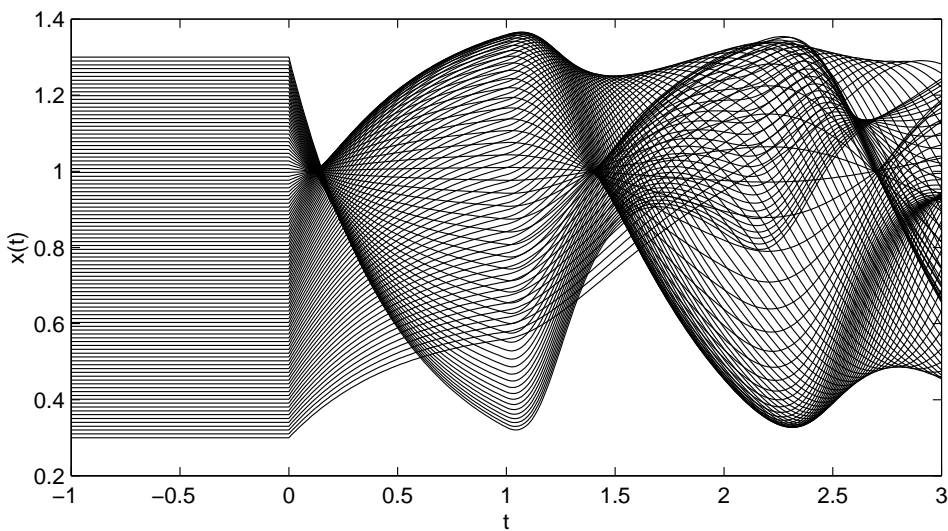}
\caption[An ensemble of $100$ solutions of the Mackey-Glass
equation, corresponding to an ensemble of constant initial
functions.]{An ensemble of $100$ solutions of the Mackey-Glass
equation~\eqref{eq.mg2}, corresponding to an ensemble of constant
initial functions with values uniformly distributed on the interval
$[0.3,1.3]$.}
\label{fig.densidea}
\end{center}
\end{figure}
Figure~\ref{fig.densidea} illustrates the problem we wish to consider.
The figure depicts an ensemble of $100$ solutions\footnote{Numerical
solutions were computed here using the solver DDE23~\cite{ST00}} of
the Mackey-Glass equation~\cite{MG77},
\begin{equation} \label{eq.mg2}
\begin{gathered}
  x'(t) = -\alpha x(t) + \beta \frac{x(t-1)}{1 + x(t-1)^{10}},\\
  \alpha=2, \quad \beta=4, \quad n=10,
\end{gathered}
\end{equation}
which was originally introduced to model oscillations in neutrophil
populations.  This equation has been the subject of much study because
of the variety of dynamical phenomena it exhibits.  The parameter
values chosen here correspond to the existence of a chaotic attractor.
The solutions shown in Figure~\ref{fig.densidea} correspond to an
ensemble of $100$ constant initial functions, whose values are
uniformly distributed on the interval $[0.3,1.3]$.  From the density
of solution curves on this graph, one can form an idea of the density
$\rho(x,t)$ of solution values $x(t)$ at any given time $t$.  For
example, at $t=3$ solutions are particularly dense near $x=0.45$,
$x=0.6$, $x=0.9$ and $x=1.2$.

The main question this chapter attempts to answer in the context of
Figure~\ref{fig.densidea} is the following.  If the density
$\rho_0(x)=\rho(x;0)$ of constant initial values $x$ at $t=0$ is
known, how can one determine (\ie, predict) the density $\rho(x;t)$
for times $t>0$?  The following section develops an appropriate
framework for the analysis of this problem.  In
Sections~\ref{sec.fpexplicit}--\ref{sec.approxfp} this framework is
used to develop various approaches to the evolution of densities.
Each of these approaches is essentially independent of the others, but
they are presented in an order that takes advantage of the interplay
between them.  Analytical techniques are considered in
Sections~\ref{sec.fpexplicit} and~\ref{sec.stepsdens};
Sections~\ref{sec.histos} and~\ref{sec.approxfp} focus on
computational approaches.

\clearpage
\section{Probabilistic Framework}\label{sec.nonauton}

Although equation~\eqref{eq.dde2} describes the evolution of a
finite-dimensional vector $x(t) \in \Rn$, the space of initial
conditions for this equation---the space $C$ of continuous functions
from $[-1,0]$ into \Rn---is infinite dimensional.  This is the main
source of difficulty in our attempts so far to develop a probabilistic
approach.  To make the problem more intuitive as well as
mathematically tractable, it is necessary to somehow restrict the
dimension of the set of ``allowable'' initial conditions.

\subsection{Restricted initial value problem}

The simplest such restriction would be to allow only initial functions
from some $n$-dimensional subspace of $C$, such as the space of
constant initial functions (\cf\ Figure~\ref{fig.densidea}).  Given
the plethora of different finite-dimensional subspaces available in
$C$, this restriction might seem excessive.  However, there is a
physical justification for such a restriction, since in an
experimental setting the initial preparation of the ensemble is
typically in an equilibrium state.  In this case we expect each of the
units in the ensemble will have a constant initial history, and thus
the subspace of constant initial functions is naturally selected
by the experiment.

There are a number of other ways that an experimental setting might
naturally select a finite-dimensional set of allowable initial
functions for~\eqref{eq.dde2}.  Since our hypothetical ensemble has
presumably not been in existence for all time, there must be some
process by which the individual initial histories are generated.
Since this process cannot be described by the governing delay
equation, it is reasonable to posit some other process that does
govern the initial histories $x(t)$ on the interval $[-1,0]$, and to
describe this process by an \emph{ordinary} differential equation.

For convenience, let the initial time for the DDE~\eqref{eq.dde2} be
$t=1$ rather than $t=0$.  Thus we consider the DDE
\begin{equation} \label{eq.ddet1}
  x'(t) = f\big( x(t), x(t-1) \big), \quad t \in [1,\infty)
\end{equation}
with initial function specified on the interval $[0,1]$.  Then the
corresponding initial value problem can be written
\begin{equation} \label{eq.augdde}
\begin{split}
  &x'(t) = \begin{cases}
              g\big( x(t) \big) & \text{if $t \in [0,1)$} \\
              f\big( x(t), x(t-1) \big) & \text{if $t \geq 1$},
           \end{cases} \\
  &x(0) = x_0,
\end{split}
\end{equation}
where $g: \Rn \to \Rn$ describes the process by which the initial
function is determined by the initial value $x(0)$.  Of course we
require that $x'=g(x)$ with initial value $x(0)=x_0$ have a unique
solution\footnote{\eg, it suffices to have $g$ bounded and
continuously differentiable~\cite{Dr77}.} on $[0,1]$, so
that~\eqref{eq.augdde} describes a deterministic process on
$[0,\infty)$.  In~\eqref{eq.augdde} the set of allowable initial
functions selected by $g$ is just the set of solutions of the ODE
$x'=g(x)$ on $[0,1]$.  This is a one-dimensional set parametrized by
the initial value $x_0$.  For example the space of constant initial
functions corresponds to $g=0$.

Even if the initial function is not determined by an ODE, we still
would like the set of allowable initial functions to be parametrized
by the initial value $x_0$, since specifying an ensemble of initial
values $x_0$ then determines an ensemble of initial functions, and
hence an ensemble of solutions of the given DDE.  Thus in the most
general case we wish to consider the DDE~\eqref{eq.ddet1} with an
initial function specified by
\begin{equation}
  x(t) = \psi(t,x_0), \quad t \in [0,1]
\end{equation}
for some function $\psi: \Reals \times \Rn \to \Rn$ which should have
the following properties:
\begin{itemize}
\item The function $t \mapsto \psi(t,x_0)$ (\ie~the initial
function corresponding to the initial value $x_0$) is continuous.
\item The mapping $x_0 \mapsto \psi(t,x_0)$ is a measurable, non-singular
transformation of \Rn, so if $x_0$ is distributed with density $\rho_0$
then the density of $x(t)=\psi(t,x_0)$ is well defined for each $t \in
[0,1]$ (\cf\ Section~\ref{sec.pfop}, page~\pageref{sec.pfop}).
\item $\psi(0,x_0)=x_0$, so the parameter $x_0$ defines
the initial value $x(0)$.
\end{itemize}
Every such $\psi$ determines a particular one-parameter family of
allowable initial functions in $C$.  If $\psi(t,x_0)$ is the solution
map for an ordinary differential equation $x'=g(x)$ (\ie, the function
$t \mapsto \psi(t,x_0)$ is the solution of the ODE with $x(0)=x_0$),
then it satisfies the conditions above.  For example the family of
constant initial functions corresponds to $\psi: (t,x) \mapsto x$,
which in turn corresponds to $g=0$.

Having parametrized the set of allowable initial functions according
to a particular function $\psi$, the initial value problem
corresponding to~\eqref{eq.ddet1} becomes
\begin{equation} \label{eq.ddeparam}
\begin{split}
  &x'(t) = f\big( x(t), x(t-1) \big), \quad t \geq 1 \\ &x(t) =
  \psi(t,x_0), \quad t \in [0,1], \\ &x(0) = x_0.
\end{split}
\end{equation}
Under suitable mild restrictions on $f$ (\cf\
Chapter~\ref{ch.framework}), for each $x_0 \in \Rn$ this problem
uniquely determines the evolution of $x(t)$ for $t \in [0,\infty)$.

In the following we restrict our attention to systems in which the
initial function is determined by an ordinary differential equation.
Thus the remainder of this chapter is concerned with probabilistic
approaches to the initial value problem~\eqref{eq.augdde} which we
will call the ``augmented DDE'', as distinguished from the
corresponding DDE~\eqref{eq.ddet1} with no restriction on the set of
allowable initial functions.

\subsection{Perron-Frobenius operator}\label{sec.augfp}

Let $S_t: \Rn \mapsto \Rn$ be the solution map for the augmented
DDE~\eqref{eq.augdde}.  That is,
\begin{equation} \label{eq.nonautonsg}
  S_t: x_0 \mapsto x(t), \quad t \in [0,\infty),
\end{equation}
where $x(t)$ is the (presumed unique) solution of~\eqref{eq.augdde}.
If an ensemble of initial values $x_0$ is specified with density
$\rho_0$, then the evolution of this density under the action of $S_t$
is given, in principle, by the corresponding Perron-Frobenius operator
$P_t: L^1(\Rn) \to L^1(\Rn)$ (\cf\ Section~\ref{sec.pfop}).  This
operator carries the initial density $\rho_0$ to the density $P_t\rho_0$
at time $t$, and is defined by the relation
\begin{equation}\label{eq.ddefp}
  \int_A P_t \rho_0(x) \, dx = \int_{S_t^{-1}(A)} \rho_0(x)\,dx \quad
  \text{$\forall$ Borel $A \subset \Rn$.}
\end{equation}

Recall that $P_t$ is well defined only if $S_t$ is a measurable,
nonsingular transformation.  In fact measurability is guaranteed
because for each $t$, $S_t$ is a continuous map on \Rn\ (this follows
from continuity with respect to initial conditions for both the ODE
and the DDE, \cf\ Theorem~\ref{thm.ddemeas}
page~\pageref{thm.ddemeas}).  However, non-singularity of $S_t$ is not
guaranteed for all $t$---indeed, Section~\ref{sec.fpexplicit} presents
a counter-example.

Note that the family of transformations $\{S_t: t \geq 0\}$ is not a
semigroup.  Consequently, neither is the family of Perron-Frobenius
operators $\{P_t\}$.  This is not a consequence of restricting the
allowable set of initial functions, but rather comes from viewing the
DDE as specifying an evolution in \Rn\ (rather than the function space
$C$).  In \Rn\ the augmented DDE~\eqref{eq.augdde} is non-autonomous,
in that an explicit time dependence appears in the term $x(t-1)$
(which acts as a forcing term).  This destroys the time-invariance
required by the semigroup property.  In short, the value of $x(t)$ at
a particular time is not sufficient to uniquely determine its
subsequent evolution---an obvious consequence of delayed dynamics.

The absence of the semigroup property for $S_t$ and $P_t$ has
important consequences.  For instance, it is not possible to express
$P_{t+t'}$ as a composition $P_{t} \circ P_{t'}$.  With the semigroup
property, to find $P_n$ for any integer $n$ it suffices to find $P_1$
and then express $P_n = (P_1)^n$.  Without the semigroup property this
construction fails, and finding $P_t$ for arbitrarily large $t$
becomes far less trivial.

The remainder of this chapter is concerned with the evolution of
densities for the augmented DDE~\eqref{eq.augdde}.  This amounts to
finding the corresponding Perron-Frobenius operator $P_t$.
Sections~\ref{sec.fpexplicit} and~\ref{sec.stepsdens} are concerned
with finding an analytical formula for $P_t$.
Sections~\ref{sec.histos} and~\ref{sec.approxfp} present numerical
approaches to approximating $P_t\rho_0$ for given initial densities
$\rho_0$.

\section{Explicit Solution Map}\label{sec.fpexplicit}

For some delay equations it is possible to find an explicit formula
for the Perron-Frobenius operator defined by
equations~\eqref{eq.augdde}
and~\eqref{eq.nonautonsg}--\eqref{eq.ddefp}.  This can be accomplished
by first finding an explicit formula for the transformation $S_t: x_0
\mapsto x(t)$, which requires that the general solution to the given
DDE be found.  Equation~\eqref{eq.ddefp} is then used to derive a
formula for $P_t$.  The following examples illustrate this procedure.

\begin{example} \label{ex.lin}
Consider the linear DDE
\begin{equation} \label{eq.fpexp1a}
  x'(t) = \alpha x(t-1), \quad t \geq 1,
\end{equation}
with the set of allowable initial functions on $[0,1]$ restricted to
constant functions, \ie,
\begin{equation} \label{eq.fpexp1b}
  x(t) = x_0, \quad t \in [0,1],
\end{equation}
with $x_0$ distributed according to a given initial density $\rho_0$.
Define the family of solution maps $\{S_t: t \geq 0\}$ by
\begin{equation}
  S_t: x_0 \mapsto x(t),
\end{equation}
where $x(t)$ is the solution
of~\eqref{eq.fpexp1a}--\eqref{eq.fpexp1b}.  Since the DDE does not
depend explicitly on $x(t)$, the method of steps (\cf\
Section~\ref{sec.methsteps} page~\pageref{sec.methsteps}) reduces to
iterating the following integral for $n=1,2,\ldots$,
\begin{equation} \label{eq.itint}
  x(t) = x(n) + \int_n^t \alpha x(s-1)\,ds, \quad t \in [n,n+1].
\end{equation}
Thus we obtain
\begin{equation}
\begin{split}
  S_t(x_0) &= \begin{cases}
    x_0   &  t \in [0,1] \\
    (\alpha t - \alpha + 1) x_0 &  t \in [1,2] \\
    (\tfrac{1}{2}\alpha^2 t^2 - 2 \alpha^2 t + \alpha t + 2 \alpha^2
       - \alpha + 1) x_0 &  t \in [2,3] \\
    \quad\vdots \\
    \beta_n(t) x_0 & t \in [n,n+1]
  \end{cases} \\
  &= \beta(t) x_0,
\end{split}
\end{equation}
where, from equation~\eqref{eq.itint}, $\beta_n(t)$ is a polynomial of
degree $n$.  Recall that the Perron-Frobenius operator corresponding
to $S_t$ is defined by
\begin{equation}\label{eq.fpexp1c}
  \int_A P_t \rho_0(x)\,dx = \int_{S_t^{-1}(A)} \rho_0(x)\,dx.
\end{equation}
Taking $A=[0,x]$ we have
\begin{equation}
  S_t^{-1}(A) = \begin{cases}
    [0,x/\beta(t)] & \text{if $\beta(t) > 0$} \\
    [x/\beta(t),0] & \text{if $\beta(t) < 0$},
  \end{cases}
\end{equation}
and equation~\eqref{eq.fpexp1c} becomes
\begin{equation}
  \int_0^x P_t \rho_0(s)\,ds = \begin{cases}
    \displaystyle \int_0^{x/\beta(t)} \rho_0(s)\,ds & \text{if $\beta(t) > 0$} \\
    \displaystyle \int_{x/\beta(t)}^0 \rho_0(s)\,ds & \text{if $\beta(t) < 0$.}
  \end{cases}
\end{equation}
Differentiating on both sides yields the explicit formula
\begin{equation} \label{eq.fpans1}
  (P_t \rho_0)(x) = \rho(x,t) = \frac{1}{|\beta(t)|} \rho_0\Big( \frac{x}{\beta(t)} \Big).
\end{equation}
Notice that $S_t$ is non-singular (hence $P_t$ is well defined) if and
only if $\beta(t)\neq 0$, which does not necessarily hold for all $t$.
In particular, for any $\alpha \leq -1/2$ there is a time
$t_{\ast}=1-1/\alpha
\in [1,2]$ at which $\beta(t_{\ast})=0$ and therefore
\begin{equation}
  S_{t_{\ast}}(x) = 0 \quad \forall x.
\end{equation}
That is, all solutions of~\eqref{eq.fpexp1a}--\eqref{eq.fpexp1b} pass
through $0$ at $t=t_{\ast}$.  In general this occurs whenever
$\beta(t)=0$.  At these times $S_t$ is singular and $P_t$ is
undefined, though it is clear that the ensemble of solutions is
described by a point mass concentrated at $x=0$.  In such cases it is
possible to give the interpretation $P_t \rho_0 \to \delta$ (the Dirac
delta function) as $\beta(t) \to 0$~(\cf\ \cite{NCV}
and~\cite[p.\,398]{LM94}).
\end{example}

In the previous example the Perron-Frobenius operator was easy to
construct because the solution map $S_t$ was one-to-one and easy to
invert.  The following example shows what happens for even slightly
more interesting DDEs, where the solution map is not necessarily
one-to-one.

\begin{example} \label{ex.quad}
Consider the DDE
\begin{equation}
  x'(t) = -x(t-1)^2, \quad t \geq 1,
\end{equation}
where again we allow only constant initial functions on $[0,1]$, so that
\begin{equation}
  x(t) = x_0, \quad t \in [0,1].
\end{equation}
With the solution map $S_t: x_0 \mapsto x(t)$ defined as before, the
method of steps yields
\begin{equation}
  S_t(x) = \begin{cases}
    x   &  t \in [0,1] \\
    x - (t-1) x^2  &  t \in [1,2] \\
    x - (t-1) x^2 + (t^2 - 4 t + 4) x^3 \\
   \quad +(-\tfrac{1}{3}t^3 + 2 t^2 - 4 t + \tfrac{8}{3}) x^4 &  t \in [2,3] \\
    \quad\vdots
  \end{cases}
\end{equation}
For $t \in [0,1]$, we have simply $P_t \rho_0 = \rho_0$ since $S_t$ is the
identity transformation.  For $t \in [1,2]$, take $A=(-\infty,x]$.
Then if $x \leq \tfrac{1}{4(t-1)}$,
\begin{equation}
  S_t^{-1}(A) = \bigg(-\infty, \frac{1-\sqrt{1-4(t-1)x}}{2(t-1)}\bigg]
      \cup      \bigg[\frac{1+\sqrt{1-4(t-1)x}}{2(t-1)}, \infty \bigg),
\end{equation}
and otherwise $S_t^{-1}(A) = \Reals$.  Differentiating with respect to
$x$ on both sides of~\eqref{eq.fpexp1c} then yields
\begin{equation} \label{eq.quadans}
\begin{split}
  (P_t \rho_0)(x) = \frac{1}{\sqrt{1-4(t-1)x}} \Big[
  & \rho_0\Big( \frac{1-\sqrt{1-4(t-1)x}}{2(t-1)} \Big) \\
  +& \rho_0\Big( \frac{1+\sqrt{1-4(t-1)x}}{2(t-1)} \Big) \Big]
\end{split}
\end{equation}
if $x \leq \tfrac{1}{4(t-1)}$, and $(P_t \rho_0)(x)=0$ otherwise.
Inverting $S_t$ becomes extremely difficult for $t \in [2,3]$, and
impossible for $t>3$ (since it would require explicit roots of a
fifth-order polynomial), so that it is not possible to derive an
explicit formula for $P_t$.
\end{example}

In each of the preceding examples, the solution map $S_t$ is a
differentiable transformation on \Reals.  For such transformations the
corresponding Perron-Frobenius operator $P_t$ can be expressed as
\begin{equation} \label{eq.fpline}
  (P_t \rho_0)(x) = \sum_{y \in S_t^{-1}\{x\}} \frac{\rho_0(y)}{|S_t'(y)|},
\end{equation}
where $S_t'(y)$ is understood to mean $\tfrac{d}{dy}S_t(y)$). Indeed,
this is frequently given as the \emph{definition} of the
Perron-Frobenius operator in studies of transformations of the real
line (see \eg~\cite{KH95,PY98}).  The results of
examples~\ref{ex.lin}--\ref{ex.quad} are in fact specific cases of
this result.

For more complicated delay equations than those considered in the
examples above, the difficulties in finding an explicit formula for
the Perron-Frobenius operator are twofold:
\begin{itemize}
\item It can be difficult to determine the general solution, and hence
the solution map $S_t$.  This was made easier in the examples by lack
of explicit dependence on $x(t)$, but in general the problem can be
difficult.
\item Determining pre-images $S_t$, as in equation~\eqref{eq.fpexp1c}
and~\eqref{eq.fpline}, can be quite difficult.
\end{itemize}
The second of these difficulties is the more imposing, especially as
$S_t$ is generally not a one-to-one transformation.  Even the simple
example~\ref{ex.quad} results in a solution map $S_t$ for which it is
impossible to find an expression for the pre-image $S_t^{-1}\{x\}$
that occurs in~\eqref{eq.fpline}.  For these reasons it is not
practical, in general, to construct Perron-Frobenius operators for
DDEs by the direct means of first constructing the solution map.

For many applications, an analytical solution of the problem will not
be possible, whereas a numerical approximation of the density
$\rho(x,t) \equiv (P_t\rho_0)(x)$ might suffice.  The following section
presents a simple method of computing such an approximation, by
directly simulating an ensemble of solutions.

\section{Ensemble Simulation}\label{sec.histos}

The simplest approach to approximating $P_t\rho_0$ for particular
initial densities $\rho_0$ is the ``brute force'' method of simulating
an actual ensemble of solutions.  That is, a large ensemble of initial
values $\{x_0^{(1)},\ldots,x_0^{(N)}\}$ is chosen at random from a
distribution with density $\rho_0$.  For each $x_0^{(i)}$ the
corresponding solution $x^{(i)}(t)=S_t(x_0^{(i)})$
of~\eqref{eq.augdde} is constructed (numerically, or by some
analytical formula).  Then for any given $t$ the density
$\rho(x,t)=(P_t\rho_0)(x)$ is approximated by a histogram of the set of
values $\{x^{(i)}(t),\ldots,x^{(N)}(t)\}$.  With reference to
Figure~\ref{fig.densidea}, this amounts to constructing a histogram of
solution values $x(t)$ plotted above a given value of $t$.

\begin{figure}
\begin{center}
\includegraphics[height=1.5in]{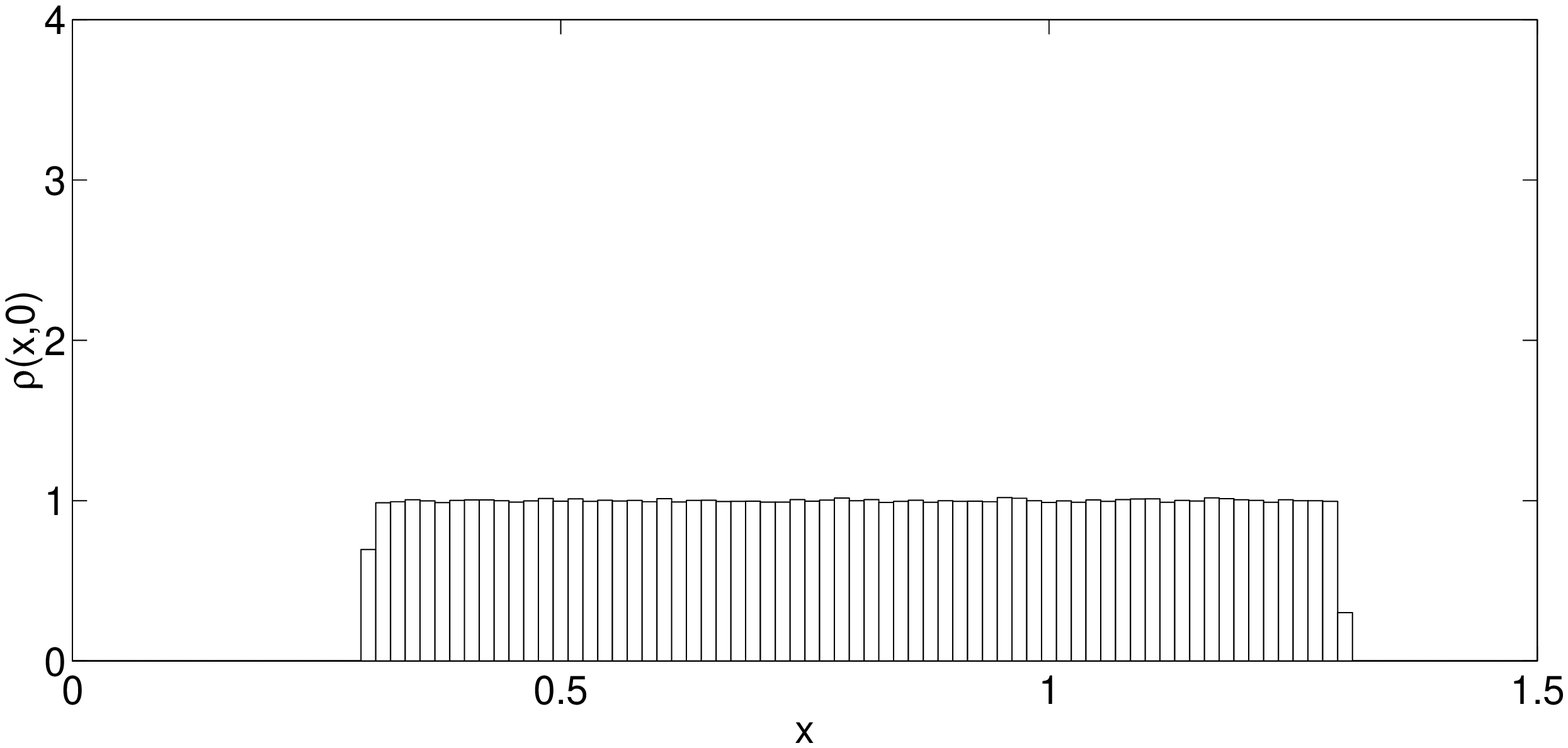} \\[0.1in]
\includegraphics[height=1.5in]{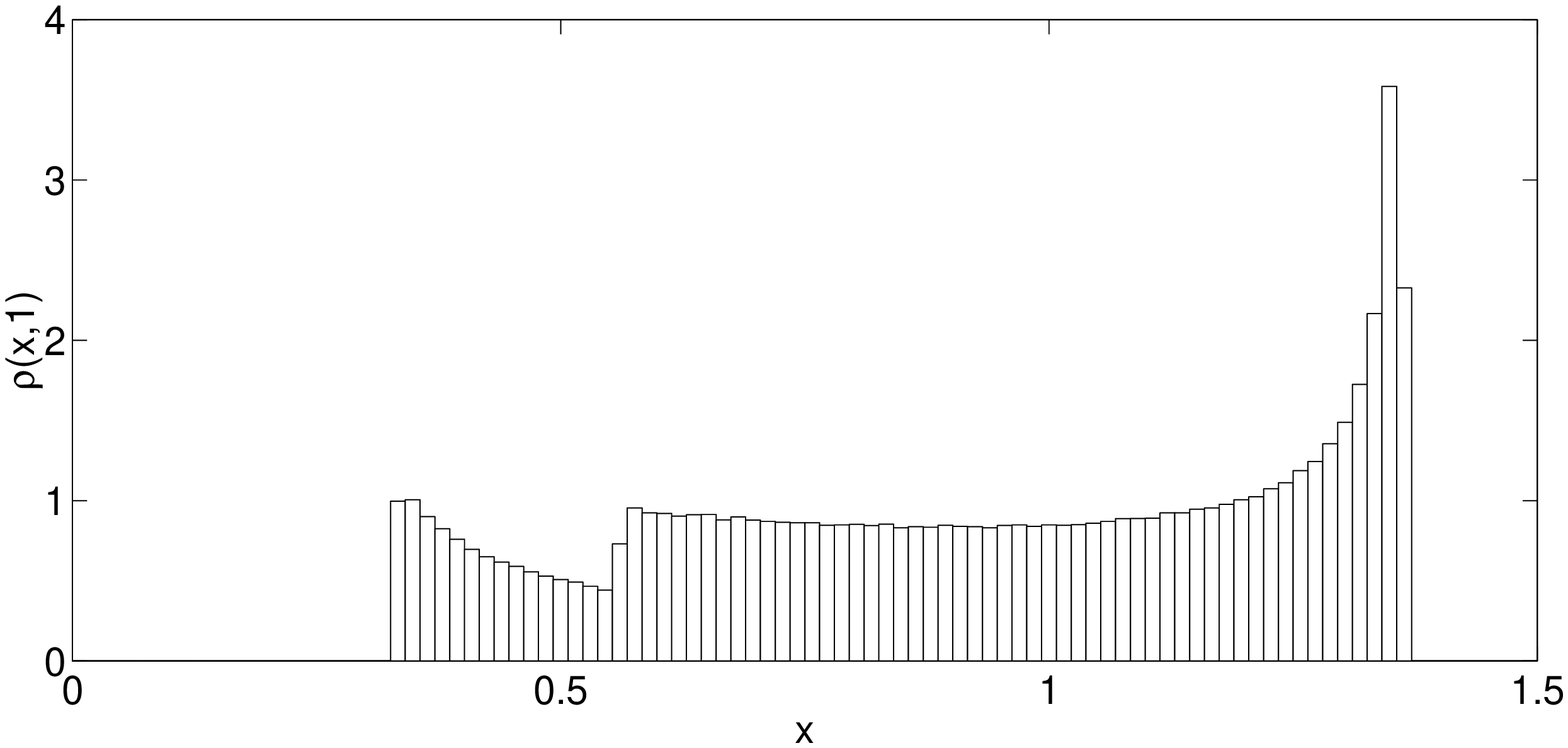} \\[0.1in]
\includegraphics[height=1.5in]{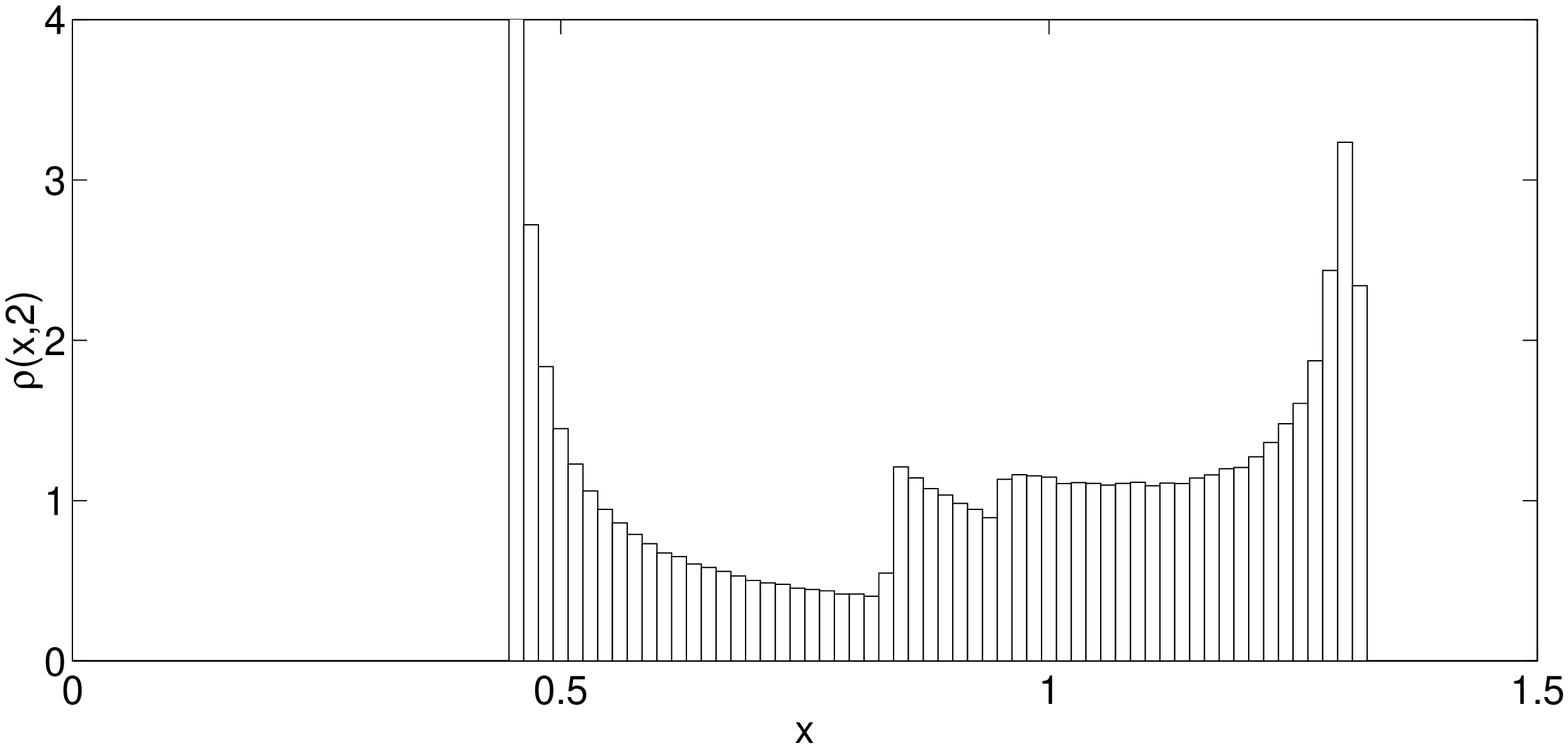} \\[0.1in]
\includegraphics[height=1.5in]{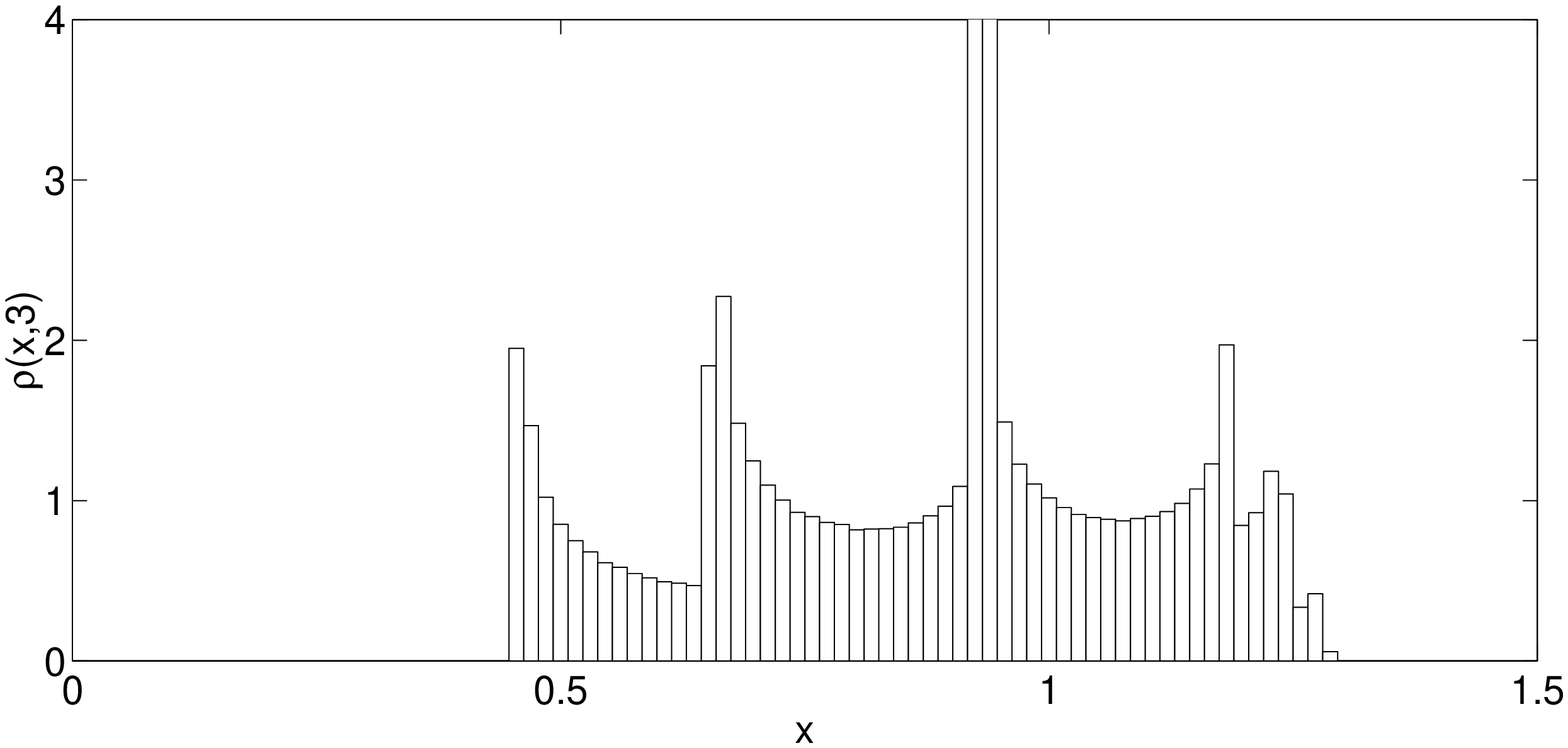}
\caption[Normalized histograms of $x(t)$, at times $t=0,1,2,3$, for an
ensemble of $10^6$ solutions of the Mackey-Glass
equation.]{Normalized histograms of $x(t)$, at times
$t=0,1,2,3$, for an ensemble of $10^6$ solutions of the Mackey-Glass
equation~\eqref{eq.mg2}.  The solutions correspond to an ensemble of
constant initial functions with values uniformly distributed on
$[0.3,1.3]$ (\cf\ Figure~\ref{fig.densidea}).}
\label{fig.densmg1}
\end{center}
\end{figure}
Figure~\ref{fig.densmg1} shows the results of such a computation
applied to the Mackey-Glass equation~\eqref{eq.mg2}.  As in
Figure~\ref{fig.densidea}, the initial ensemble consists of constant
functions (hence $g=0$ in~\eqref{eq.augdde}), with values uniformly
distributed on the interval $[0.3,1.3]$ (all initial values in this
interval are eventually attracted to the same chaotic attractor). That
is,
\begin{equation}
  \rho_0(x) = \begin{cases}
              1 & \text{if $x \in [0.3,1.3]$} \\
              0 & \text{otherwise}.
	    \end{cases}
\end{equation}
For each of $10^6$ initial values $x_0=x^{(i)}(0)$ sampled from this
distribution, an approximate solution $x^{(i)}(t)$ was computed
numerically.\footnote{Numerical solutions were computed using the
solver DDE23~\cite{ST00}.}  The sequence of graphs shown in
Figure~\ref{fig.densmg1} depict the resulting histograms of the
solution values $\{x^{(i)}(t): i = 1, \ldots, 10^6\}$ at times
$t=0,1,2,3$ (here we take the initial time for the DDE to be $t=0$, so
the initial function is specified on the interval $[-1,0]$).  The
relationship between these densities and the corresponding ensemble of
solutions shown in Figure~\ref{fig.densidea} is apparent on brief
inspection.  For example, jump discontinuities in the densities occur
at boundaries where the solutions in Figure~\ref{fig.densidea} overlay
one another, \eg, near $x=0.5$ at $t=1$.  High peaks, apparently
integrable singularities in the density, occur where the ensemble of
solutions in Figure~\ref{fig.densidea} ``folds over'' on itself,
\eg~near $x=1.3$ at $t=1$.  Some of these features are artifacts
resulting from discontinuities in the initial density, but others are
not.  See \eg\ Figure~\ref{fig.densmgb} which illustrates the
evolution of a Gaussian initial density.

\begin{figure}
\begin{center}
\includegraphics[height=1.5in]{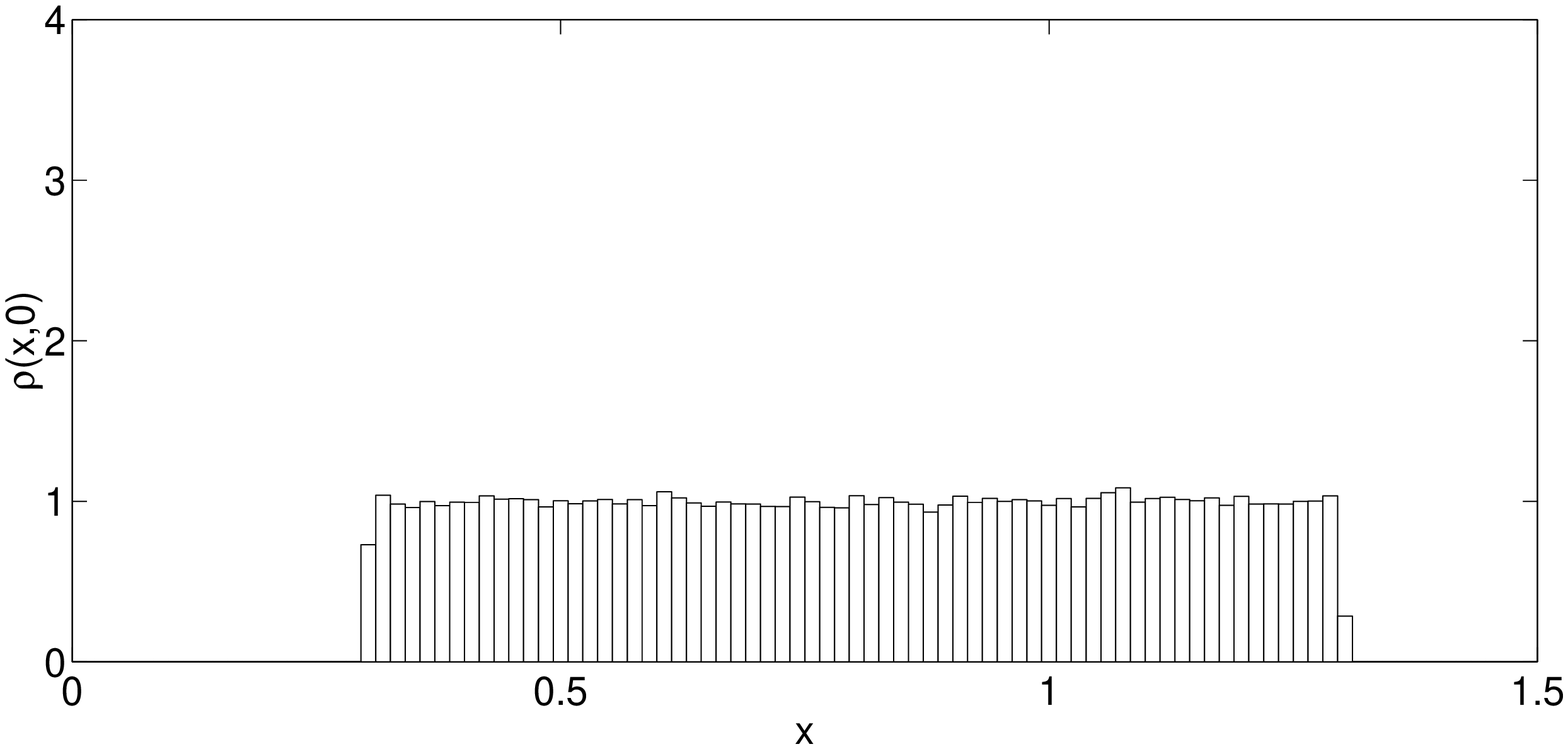} \\[0.1in]
\includegraphics[height=1.5in]{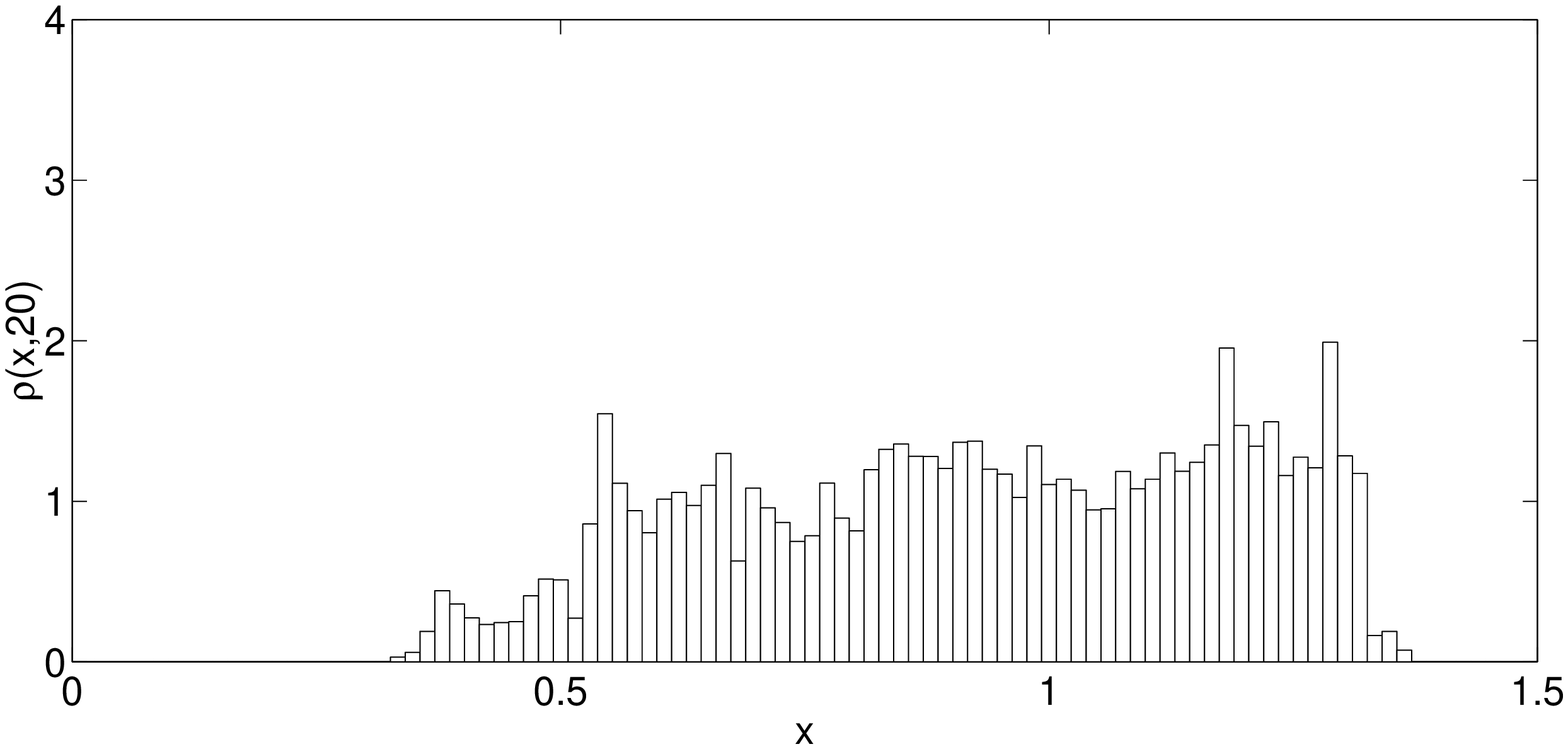} \\[0.1in]
\includegraphics[height=1.5in]{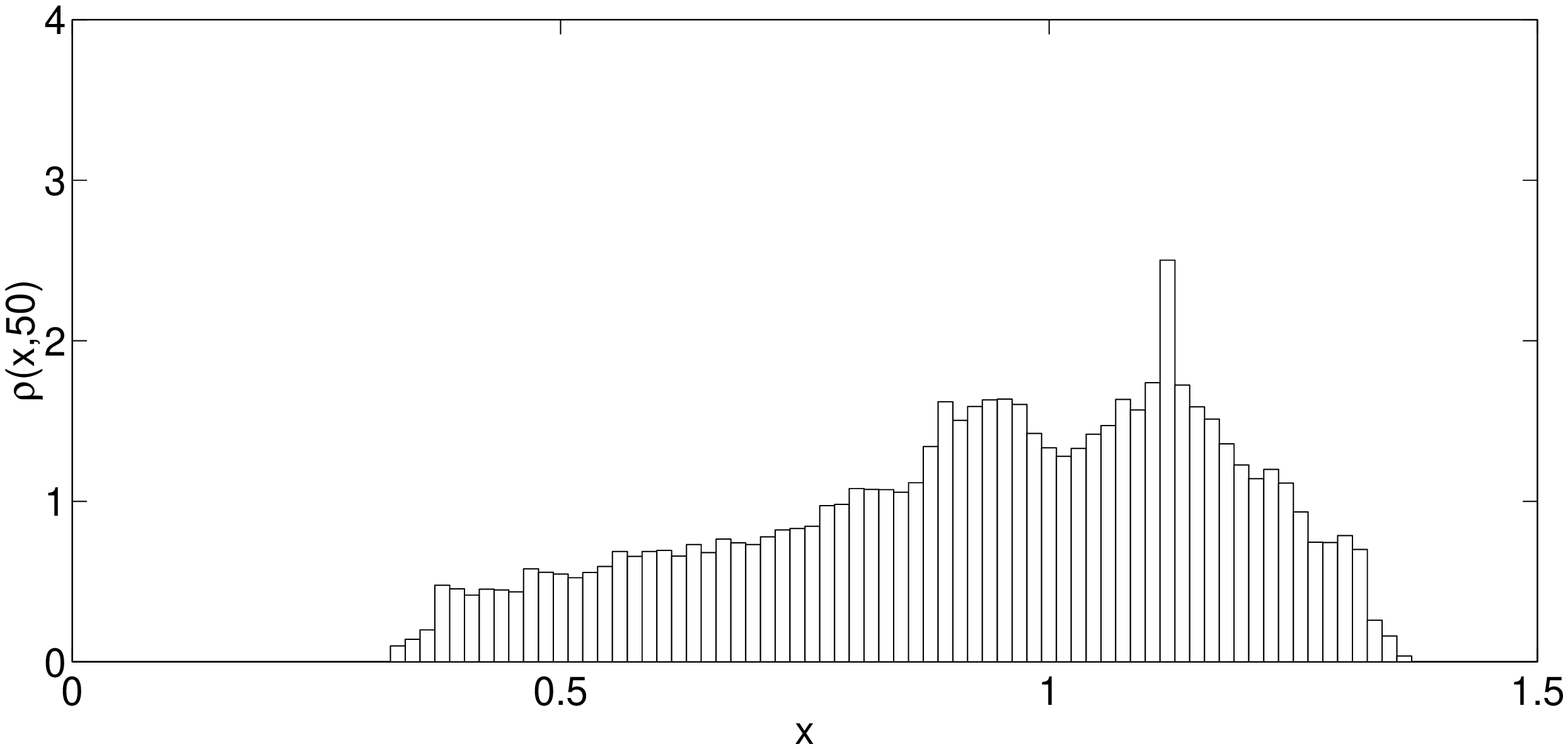} \\[0.1in]
\includegraphics[height=1.5in]{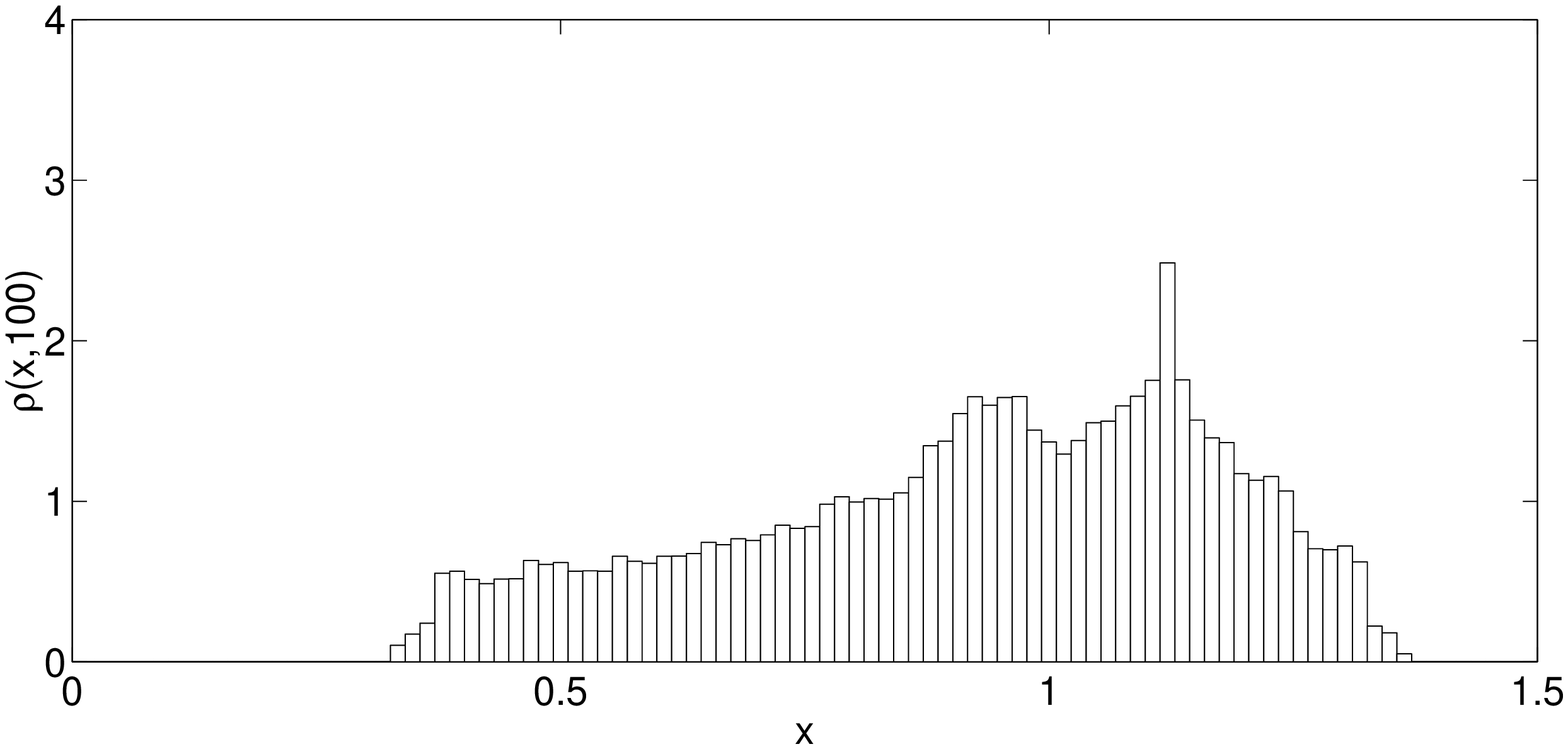}
\caption{Normalized histograms of $x(t)$, at times $t=0,20,50,100$,
for the same ensemble of solutions considered in
Figure~\ref{fig.densmg1}.}
\label{fig.densmg2}
\end{center}
\end{figure}
An interesting property of the Mackey-Glass equation is revealed when
the evolution of densities is carried to large times.
Figure~\ref{fig.densmg2} shows a sequence of histograms constructed at
times $t=0,20,50,100$, for the same ensemble considered in
Figure~\ref{fig.densmg1}.  It appears from this sequence that the
density $\rho(x,t)$ approaches a limiting density $\rho_{\ast}(x)$ as
$t \to \infty$.  That is, there appears to be an asymptotically stable
invariant density for this system.  The invariant density observed is
in fact independent of the initial density.  We will return to the
problem of characterizing such invariant densities for DDEs in
Chapter~\ref{ch.invdens}.

Note that convergence to the invariant density is relatively slow, for
example compared to maps on the interval where statistical convergence
occurs after only a few iterations of the Perron-Frobenius operator
(\cf\ Figures~\ref{fig.quadmapdensev1}--\ref{fig.quadmapdensev2}).
The behavior seen here is not typical of dynamical systems considered
elsewhere, and may have implications for the statistical mechanics of
systems with delayed dynamics, for example the neural ensemble
encoding mechanism proposed in~\cite{MM00} where rapid statistical
convergence plays an important role.

The brute force approach to densities has the tremendous advantage of
being easy to implement---it requires only a method for numerically
solving DDEs---and it is the obvious ``quick and dirty'' solution to
the problem.  However, it is a na\"{\i}ve approach, in that it
provides no insight into the process by which $\rho(x,t)$ evolves.
For example, the method offers only a heuristic explanation of the
discontinuities and singularities that appear in
Figure~\ref{fig.densmg1}.  Moreover, as shown below, constructing an
accurate histogram can require millions of samples $\{x^{(i)}(t)\}$,
hence millions of solutions of the DDE must be computed.  Especially
when approximating the evolution of densities for large $t$, the
amount of computation necessary can render the method practically
useless.

\subsubsection{Sampling requirements} \label{sec.sampling}

In order for the histogram of a solution ensemble
$\{x^{(1)}(t),\ldots,x^{(N)}(t)\}$ to provide an accurate
approximation of the actual density $\rho(x,t)$, the ensemble must be
sufficiently large.  Suppose an interval $B \subset \Reals$ is one of
the histogram bins.  The height of the histogram on $B$ is given by
\begin{equation}
  y = \#\{x^{(i)} \in B\},
\end{equation}
\ie, the number of the $x^{(i)}$ that lie in $B$.  The $x^{(i)}$ are
independent samples from a distribution with density $\rho$, so for
sufficiently large $n$ the fraction
\begin{equation}\label{eq.pest}
  \frac{y}{N} = \frac{1}{N} \sum_{i=1}^{N} 1_B(x_i)
\end{equation}
estimates (by the weak law of large numbers) the probability
\begin{equation}
  p=\int_B{\rho(x,t)\,dx}
\end{equation}
that a random number $x$ selected from this distribution will lie in
$B$.

The random variable $y$ takes integer values between $0$ and $N$, with
binomial probability distribution
\begin{equation}
  P(y) = \frac{N!}{y! (N-y)!} p^y (1-p)^{N-y}.
\end{equation}
For sufficiently large $N$, $P(y)$ can be approximated by a Gaussian
density with mean $Np$ and standard deviation $\sqrt{Np(1-p)}$.  Thus
the quantity $y/N$ (equation~\eqref{eq.pest}) will be distributed with
mean $p$ and standard deviation
\begin{equation} \label{eq.stdev}
  \sigma = \sqrt{\frac{p (1-p)}{N}}.
\end{equation}

Equation~\eqref{eq.stdev} predicts $O(1/\sqrt{N})$ convergence of the
``sample mean'' $y/N$ to the ``population mean'' $p$---a standard
result in sampling and measurement theory~\cite[p.\,36]{Baird}.
Suppose we wish $y/N$ to approximate $p$ within fractional error
$\delta$, with $95\%$ confidence.  Then we require that $2\sigma <
\delta p$, yielding (via equation \eqref{eq.stdev}) the sampling
requirement
\begin{equation}
  N > \frac{4(1-p)}{\delta^2 p}.
\end{equation}
Thus, for a moderately high-resolution histogram (say, with $100$
bins, so $p$ is on the order $10^{-2}$), $95\%$ confidence of accuracy
within fractional error $\delta=10^{-2}$ would require a sample of
size
\begin{equation}
  N \gtrsim \frac{4}{(10^{-2})^2 10^{-2}} = 4 \times 10^6.
\end{equation}

For some delay equations, and particularly when densities are to be
obtained for large $t$, this sample size requirement entails a
prohibitive computational cost (for example, constructing
Figure~\ref{fig.densmg2} required about 10 hours of computer time).
In such situations, the brute force approach to density evolution
becomes impractical.  This motivates the following section, which
develops a more efficient numerical method for computing the evolution
of densities for DDEs.

\section{Approximate Solution Map}\label{sec.approxfp}

The brute force approach to approximating densities is computationally
expensive, so a more efficient numerical method is desirable.
Developing such a method is the aim of the present section.  To
simplify the development, the method is presented only in the context
of one-dimensional delay equations (\ie, with solution variable
$x(t)\in\Reals$).  The generalization to higher dimensions is
straightforward, but requires more elaborate notation.

\subsection{Approximate Perron-Frobenius operator}

Consider the DDE initial value problem~\eqref{eq.augdde} for
$x(t)\in\Reals$, with solution map $S_t: x_0 \mapsto x(t)$, and
suppose an ensemble of initial values $x_0$ is specified with density
$\rho_0$.  Since $S_t$ is a continuous transformation of \Reals, it can
be approximated by a piecewise linear function.  Thus, suppose
$I=(a,b)$ is an interval containing the support of $\rho_0$, and define
a mesh of points $a=x_0 < x_1 < \cdots < x_k=b$ spanning $I$.  Let
$\tilde{S}_t:I\to\Reals$ be the transformation whose graph is a
straight line on each interval $(x_i,x_{i+1})$, and satisfies
\begin{equation}
  \tilde{S}_t(x_i) = S_t(x_i) \equiv y_i, \quad i=1,\ldots,k.
\end{equation}
Then $\tilde{S}_t$ furnishes a piecewise linear approximation of
$S_t$, and agrees with $S_t$ at each of the $x_i$.

Because $\tilde{S}_t$ is piecewise linear, it is almost-everywhere
differentiable.  Therefore the corresponding Perron-Frobenius operator
$\tilde{P}_t$ can be expressed as~\cite[Ch.\,12]{PY98}
\begin{equation} \label{eq.pfpwlin}
  (\tilde{P}_t \rho_0)(x) = \sum_{z \in \tilde{S}_t^{-1}\{x\}}
                          \frac{\rho_0(z)}{|\tilde{S}_t'(z)|},
\end{equation}
Since only those pre-images of $\{x\}$ that lie in the support of
$\rho_0$ give a non-zero contribution to the sum, we need consider only
those $z \in \tilde{S}_t^{-1}\{x\}$ that lie in some interval
$[x_i,x_{i+1})$.  On each such interval we have simply
\begin{equation}
  \tilde{S}_t'(z) = \frac{y_{i+1}-y_i}{x_{i+1}-x_i}.
\end{equation}
Furthermore, since $\tilde{S}_t$ is piecewise linear each $z \in
\tilde{S}_t^{-1}\{x\}$ can be found by linear interpolation.
Thus, for each interval $[y_i,y_{i+1})$ that contains $x$, there is
exactly one element $z \in \tilde{S}_t^{-1}\{x\}$, given by
\begin{equation} \label{eq.zdef}
  z = x_i + \frac{x_{i+1}-x_i}{y_{i+1}-y_i}(x-y_i).
\end{equation}
Figure~\ref{fig.pfpwlin} illustrates this procedure for determining
the set of pre-images of $\{x\}$ under a piecewise linear
transformation.
\begin{figure}
\begin{center}
\includegraphics[width=4in]{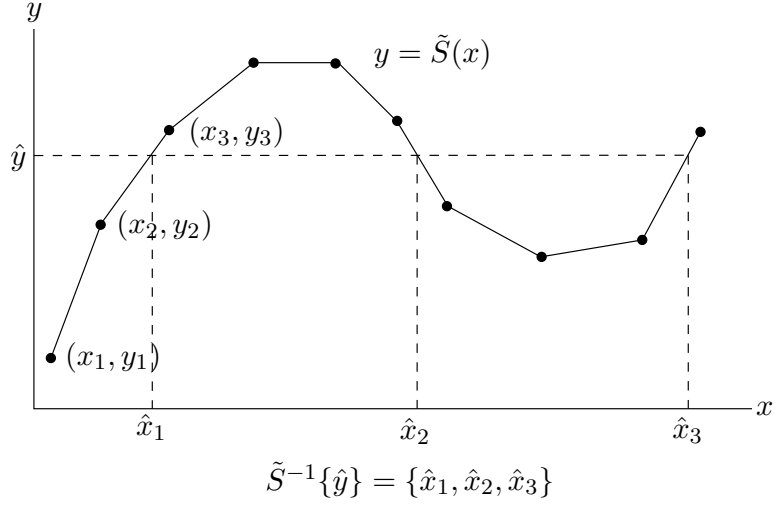}
\caption{Construction of the set of pre-images of a point $\hat{y}$ under
a piecewise linear transformation $\tilde{S}$.}
\label{fig.pfpwlin}
\end{center}
\end{figure}

\subsection{Algorithm}

The considerations above suggest the following algorithm for computing
an approximation of the transformed density $\rho(x,t)=(P_t\rho_0)(x)$.
\begin{enumerate} \label{densalg}
\item Specify a grid of closely spaced points $x_1 < x_2 < \cdots <
x_k \in \Reals$, such that the interval $(x_1,x_k)$ contains the
support of $\rho_0$.
\item Compute (\eg, by numerical solution of~\eqref{eq.augdde}) 
the sequence of values $\{y_i=S_t(x_i): i=1,\ldots,k\}$.
\item \label{densalg2}Specify a grid of points $\tilde{x}_1 <
\tilde{x}_2 < \cdots < \tilde{x}_p  \in \Reals$, at which the density
$\rho_i=\rho_0(\tilde{x}_i,t)$ is to be approximated.
\item Initialize $\rho_i=0$ for $i=1,\ldots,p$.
\item For each $i\in\{1,2,\ldots,(k-1)\}$ determine which, if any, of
the $\tilde{x}_j$ lie in the interval $[y_i,y_{i+1})$ (or the interval
$(y_{i+1},y_i]$ if $y_{i+1}<y_i$).  For each such $\tilde{x}_j$,
compute
\begin{equation}
  z = x_i + \frac{x_{i+1}-x_i}{y_{i+1}-y_i}(\tilde{x}_j - y_i),
\end{equation}
and increment $\rho_j$ by the quantity
\begin{equation}
  \rho_0(z) \frac{|x_{i+1}-x_i|}{|y_{i+1}-y_i|}.
\end{equation}
\end{enumerate}

This algorithm constructs a vector $(\rho_i: i=1,\ldots,p)$
approximating the density $\rho(x,t)$ at points $\{\tilde{x}_i\}$.
Steps 1--2 define the piecewise linear approximation $\tilde{S}_t$.
Steps 3--5 evaluate~\eqref{eq.pfpwlin} at each of the points
$\tilde{x}_i$.  Notice that steps 1--2 are decoupled from 3--5 in that
the initial density $\rho_0$ enters only in steps 3--5, after the
approximating transformation $\tilde{S}_t$ has already been
determined.

\subsubsection{Example}

Figures~\ref{fig.mgmap}--\ref{fig.denscompare} illustrate the results
of applying this algorithm to the Mackey-Glass
equation~\eqref{eq.mg2}.  As before, the equation is restricted to
constant initial functions (hence $g=0$ in~\eqref{eq.augdde}), and the
initial density $\rho_0$ corresponds to an ensemble of initial values
uniformly distributed on the interval $[0.3,1.3]$.
Figure~\ref{fig.mgmap} shows graphs of the approximating
transformation $\tilde{S}_t\approx S_t$, at times $t=1,2,3,4$.  These
were obtained by using the numerical solver DDE23~\cite{ST00} to
compute values $y_i=S_t(x_i)$ for a uniform grid of 1000 initial
values $x_i$ in the interval $[0,1.5]$.
\begin{figure}
\begin{center}
\includegraphics[height=1.6in]{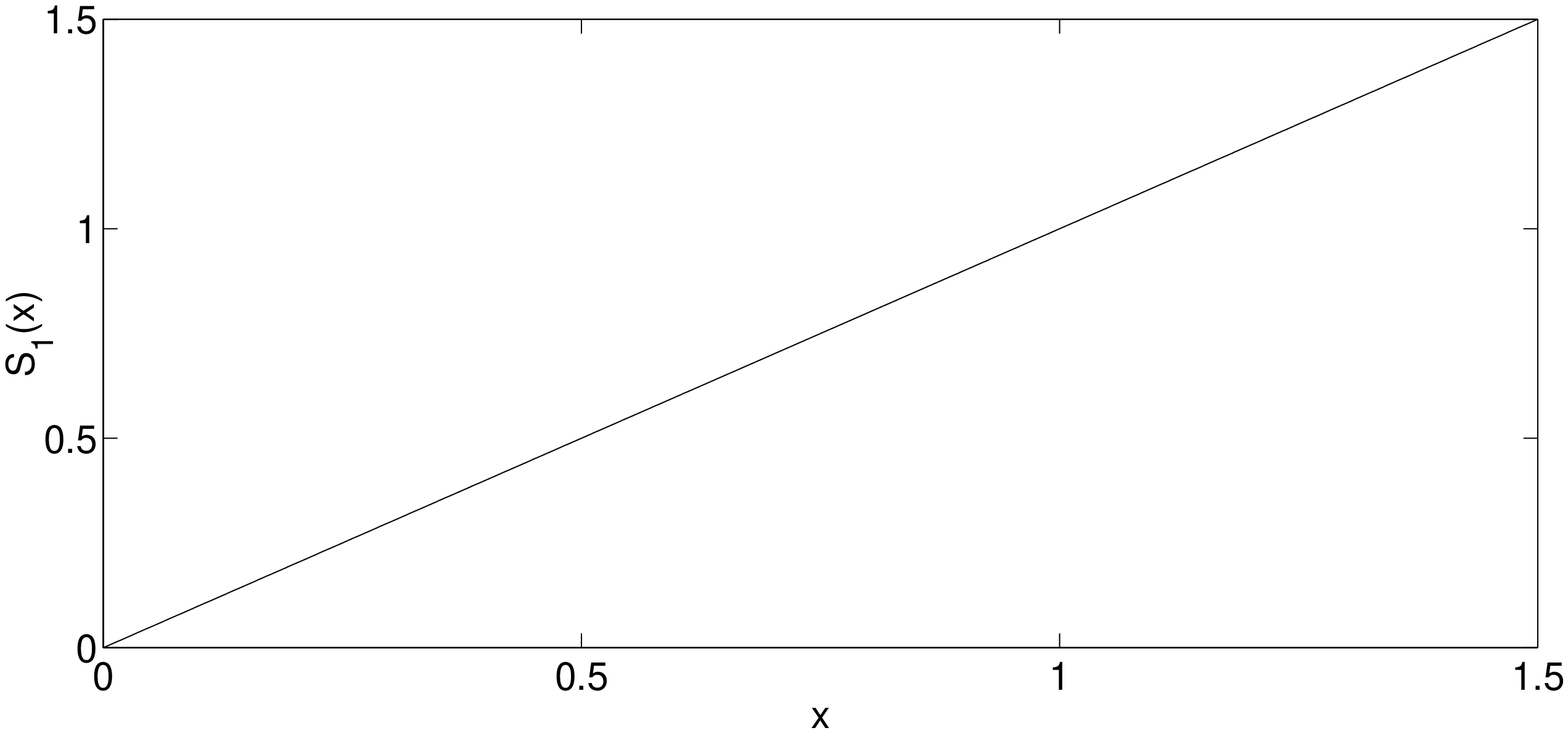} \\[0.1in]
\includegraphics[height=1.6in]{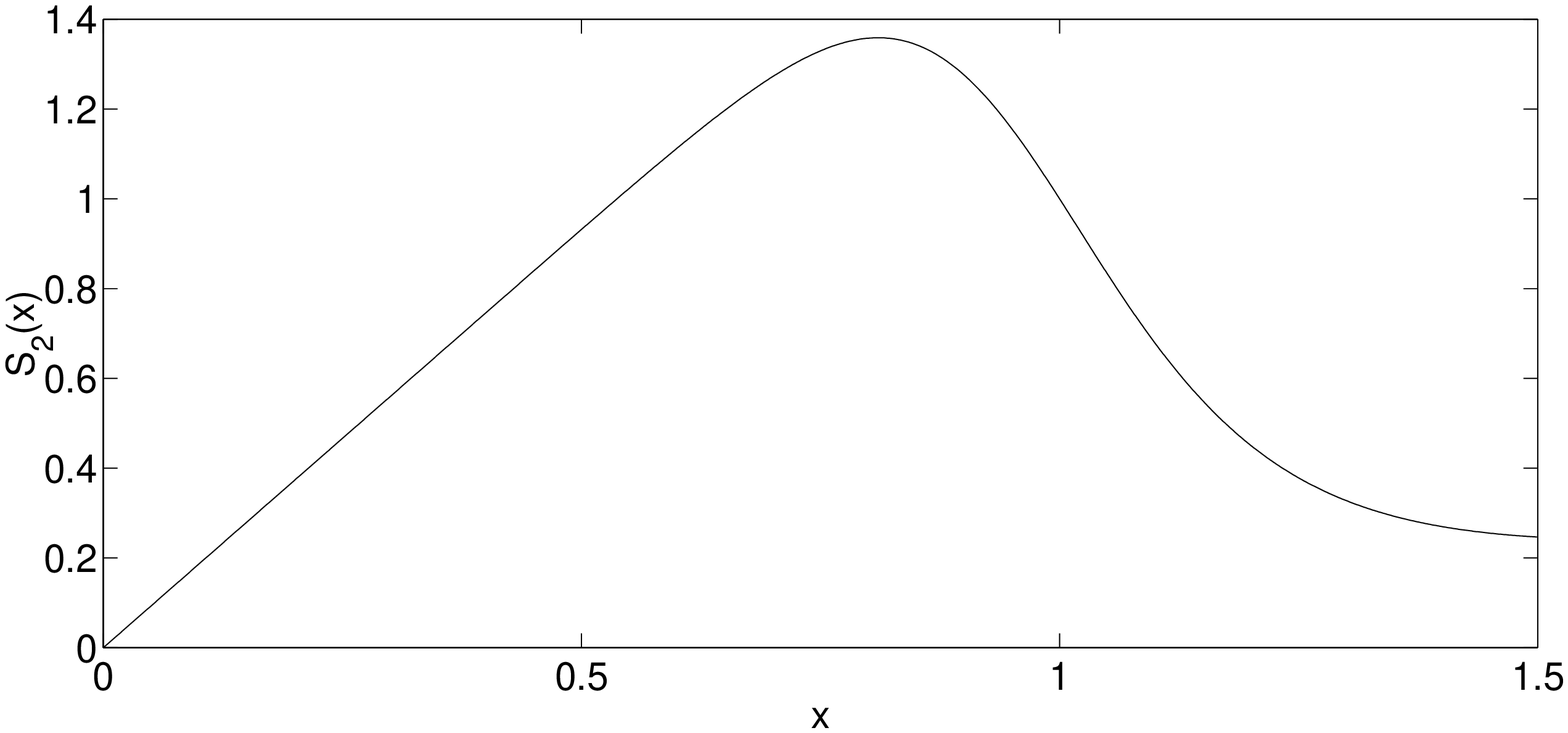} \\[0.1in]
\includegraphics[height=1.6in]{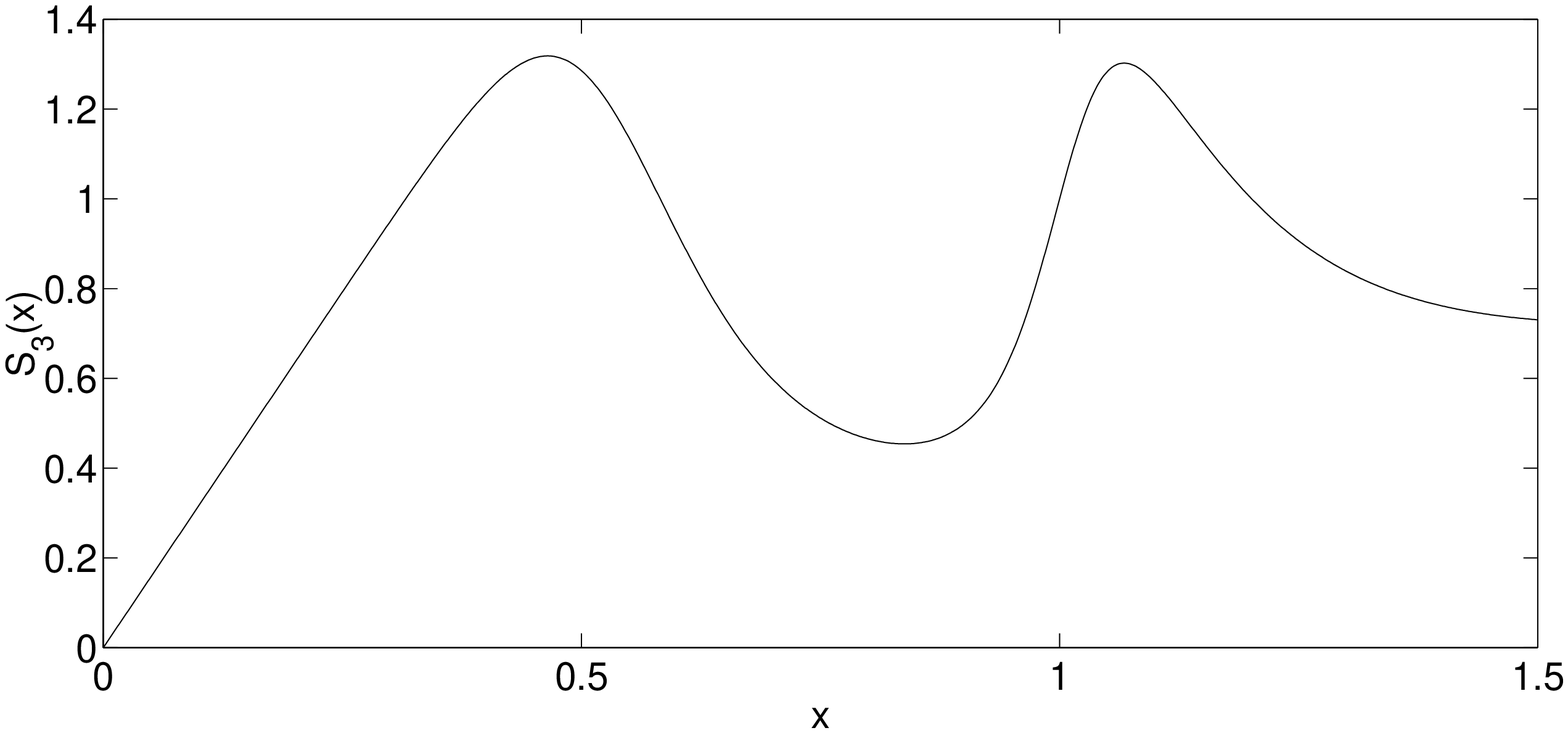} \\[0.1in]
\includegraphics[height=1.6in]{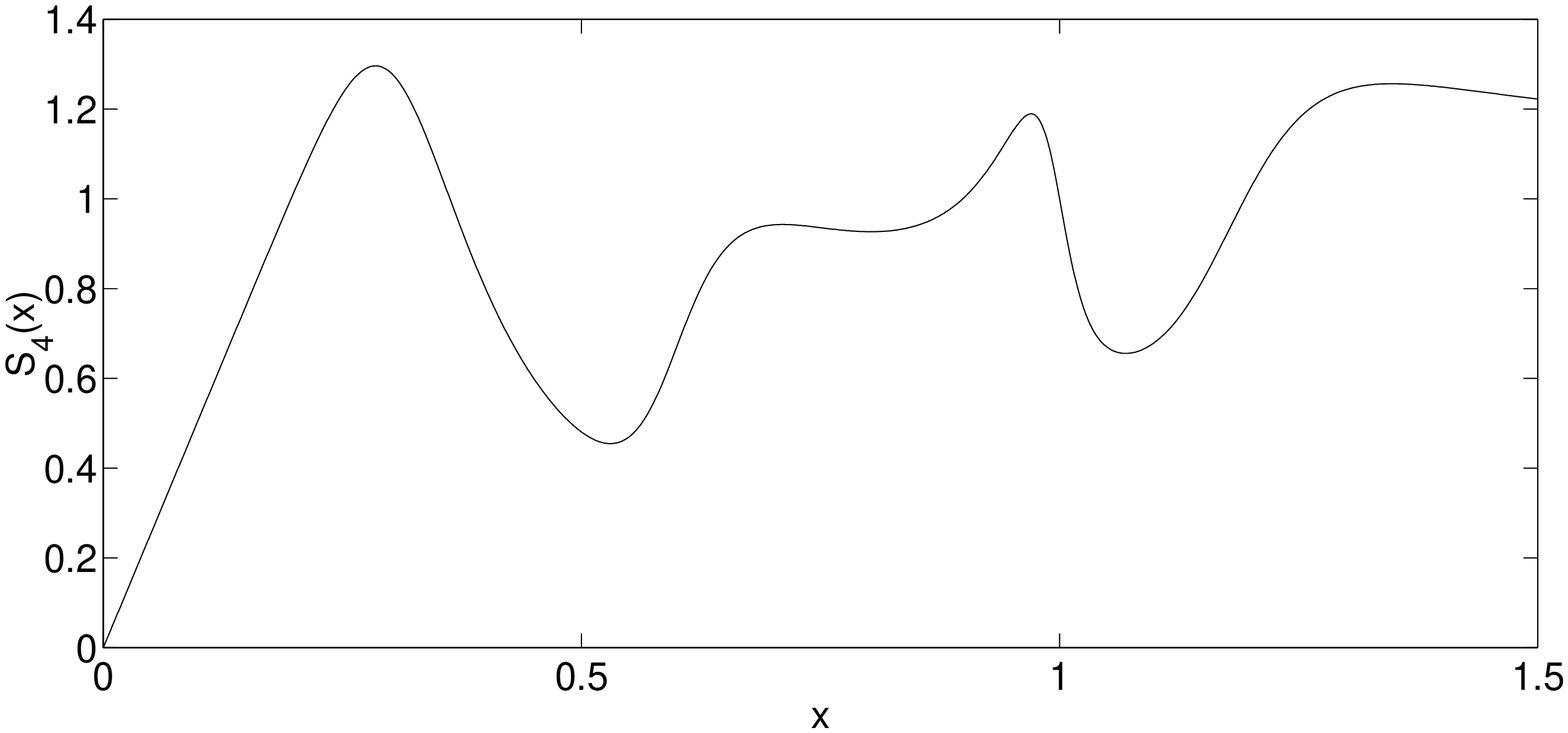}
\caption[Approximate solution maps for the Mackey-Glass equation restricted
to constant initial functions.]{Approximate solution maps $\tilde{S}_t
\approx S_t$ at times $t=1,2,3,4$, for the Mackey-Glass
equation~\eqref{eq.mg2} restricted to constant initial functions on
$[0,1]$.}
\label{fig.mgmap}
\end{center}
\end{figure}

\begin{figure}
\begin{center}
\includegraphics[height=1.5in]{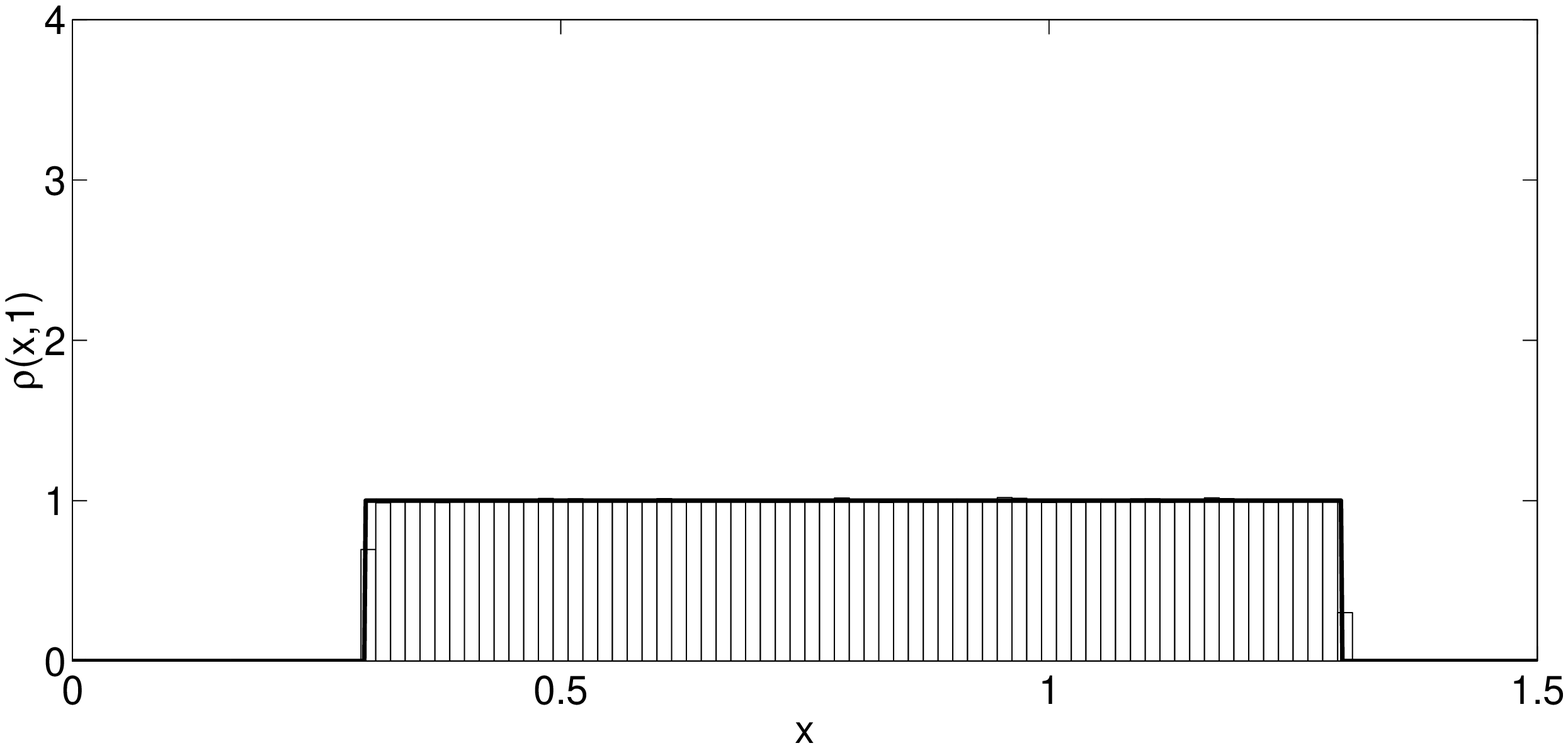} \\[0.1in]
\includegraphics[height=1.5in]{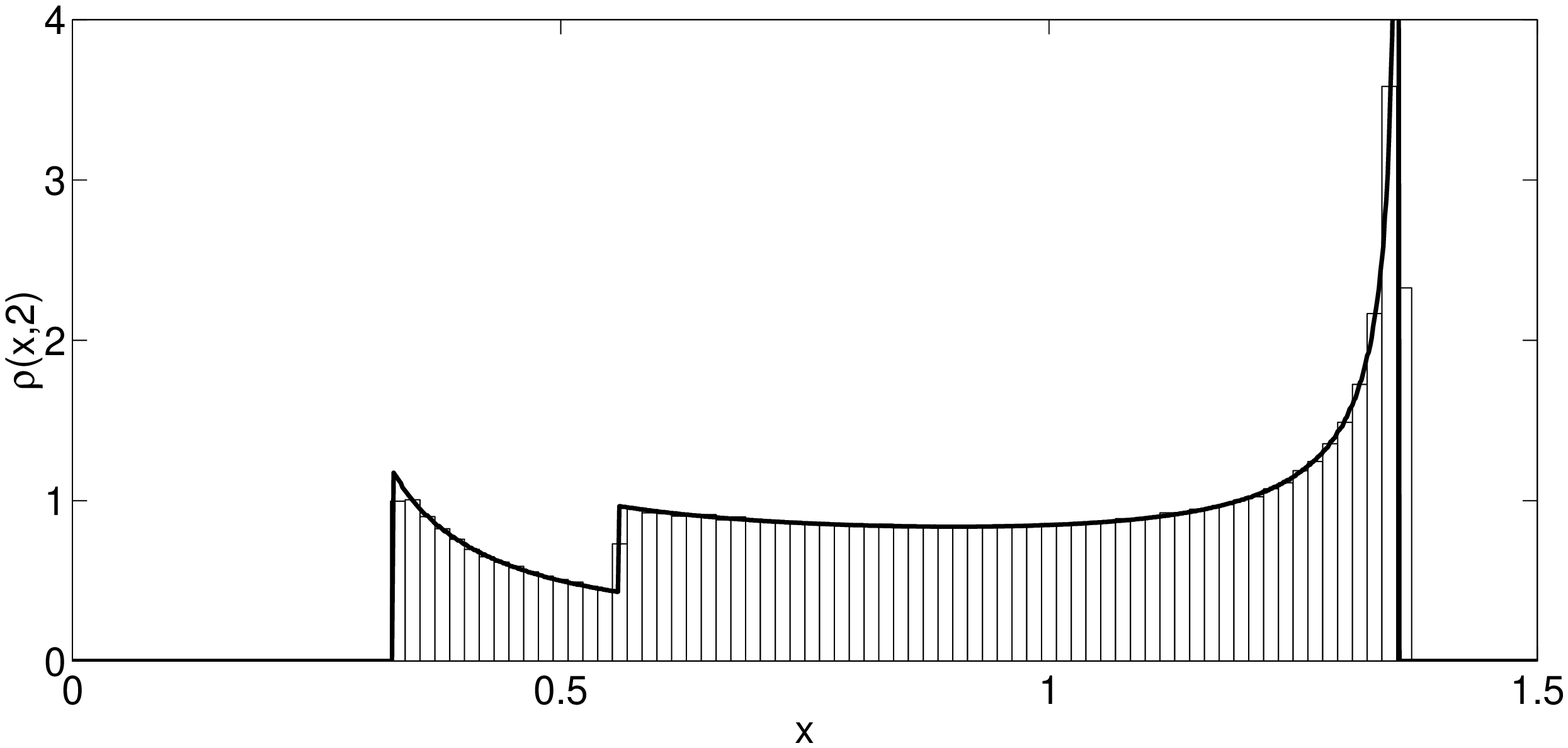} \\[0.1in]
\includegraphics[height=1.5in]{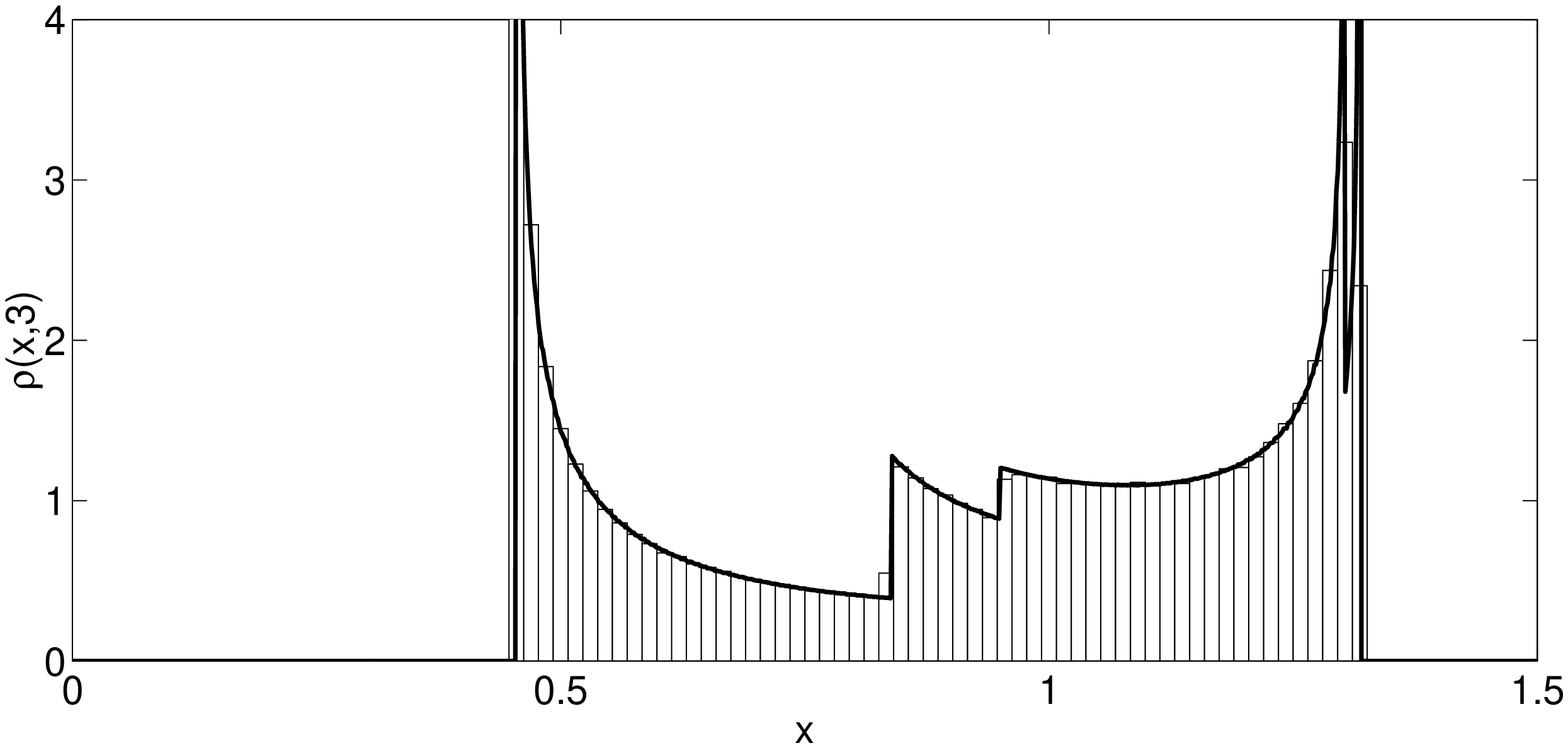} \\[0.1in]
\includegraphics[height=1.5in]{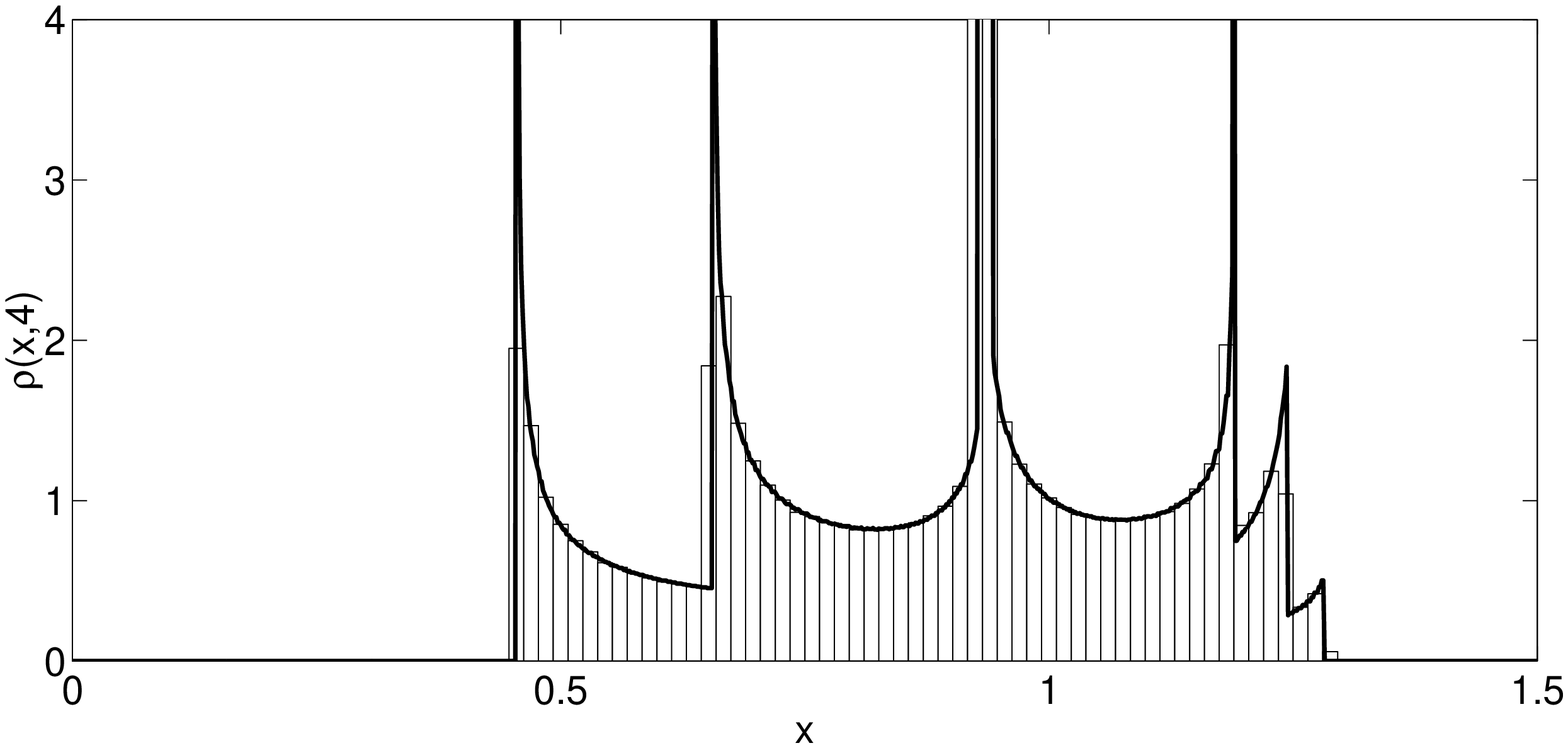}
\caption[Computed densities $\rho(x,t)$ for the Mackey-Glass
equation restricted to constant initial functions.  Densities were
computed using the algorithm on page~\pageref{densalg}.]{Computed
densities $\rho(x,t)$ for the Mackey-Glass equation~\eqref{eq.mg2}
restricted to constant initial functions.  Densities were computed
using the algorithm on page~\pageref{densalg}, and are shown (heavy
curves) together with the corresponding histograms from
Figure~\ref{fig.densmg1}, obtained by ``brute force'' ensemble
simulation.}
\label{fig.denscompare}
\end{center}
\end{figure}

\begin{figure}
\begin{center}
\includegraphics[height=1.24in]{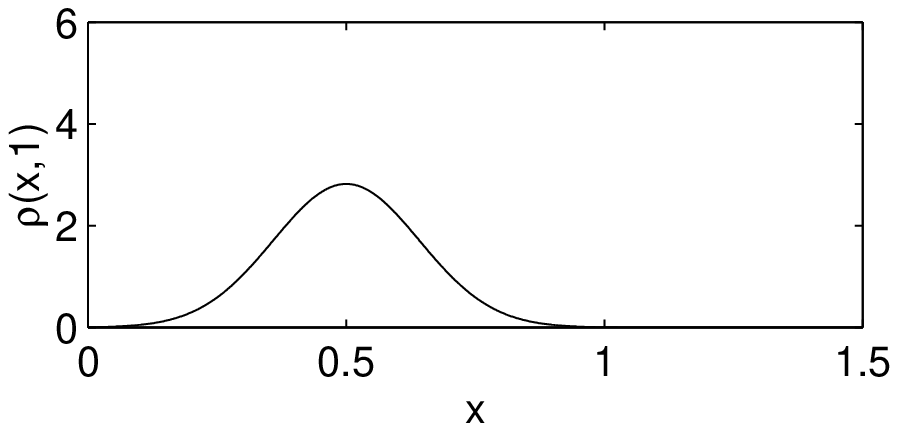} \\[0.1in]
\includegraphics[height=1.24in]{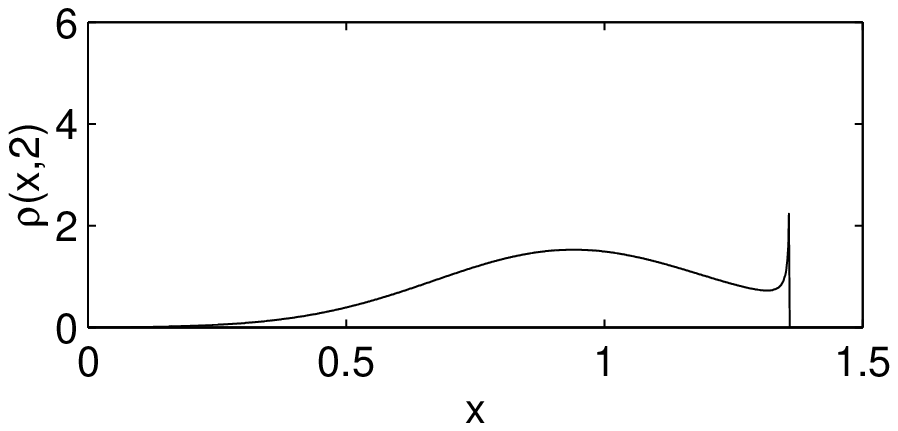} \\[0.1in]
\includegraphics[height=1.24in]{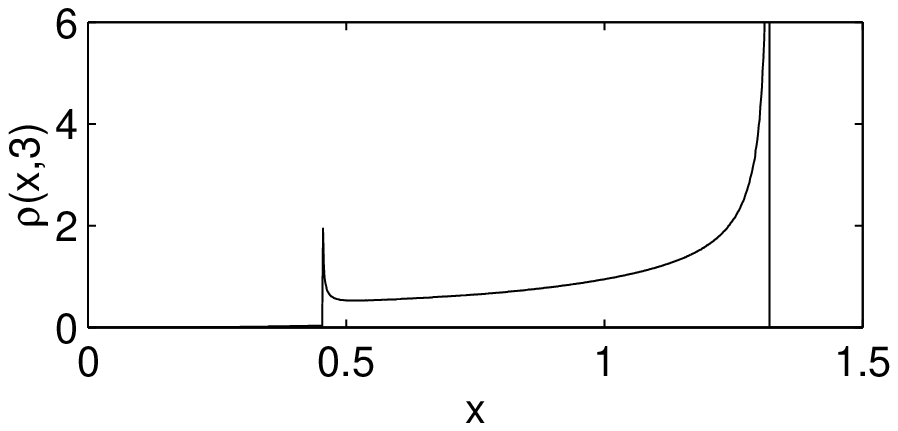} \\[0.1in]
\includegraphics[height=1.24in]{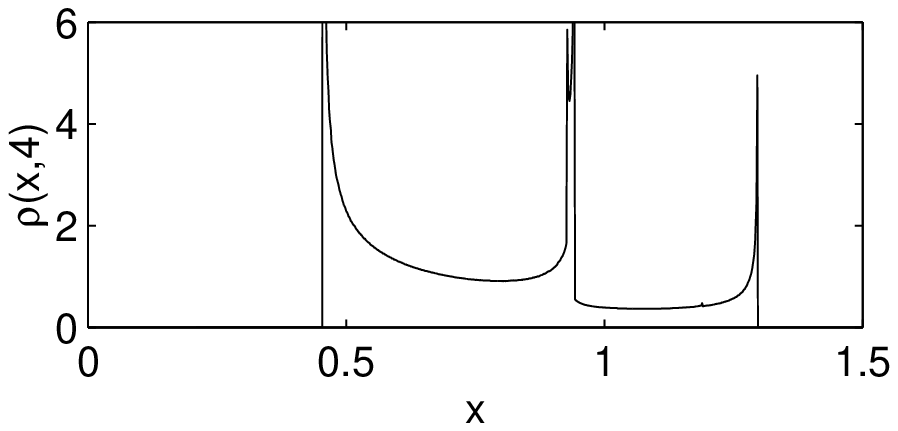} \\[0.1in]
\includegraphics[height=1.24in]{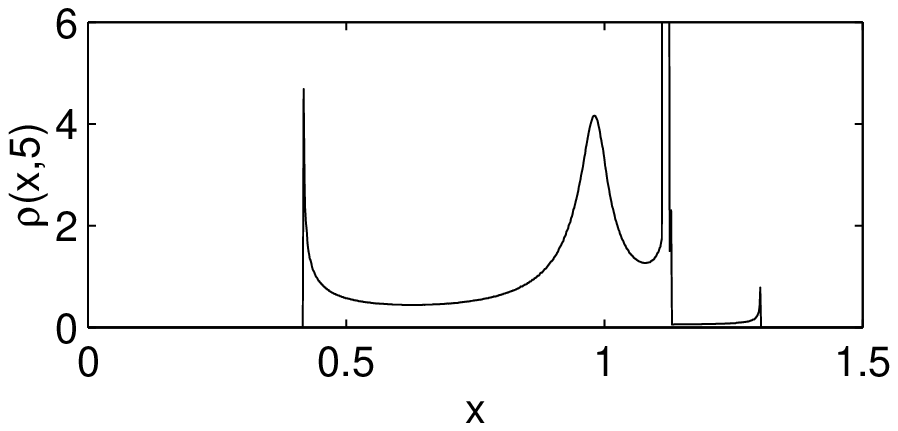}
\caption{As Figure~\ref{fig.denscompare}, with a different initial density.}
\label{fig.densmgb}
\end{center}
\end{figure}

\subsection{Discussion}

The algorithm presented here is superior in a number of respects to
the ``brute force'' ensemble simulation approach of
Section~\ref{sec.histos}.  In the brute force approach, a sufficient
number of solutions of the DDE must be computed to ensure adequate
statistical sampling.  Here, one solution of the given DDE is computed
for each point on the grid $\{x_i\}$ used to define the piecewise
linear approximation of $S_t$.  Figure~\ref{fig.denscompare} was
generated using $k=1000$ such points.  In fact even $k=100$ yields an
approximate $\rho(x,t)$ with accuracy on the order of that obtained by
the brute force approach with an ensemble of $10^6$ solutions.  This
is a dramatic computational saving, and is the primary benefit of the
method developed here.

The present method has the further advantage that the computation of
$\rho(x,t)$ (page~\pageref{densalg2}, steps 3--5) is decoupled from
the solution of the DDE (steps 1--2).  Since steps 1--2 are
independent of the initial density, a set of solutions of the DDE only
needs to be computed once to construct the approximation
$\tilde{S}_t$.  Subsequently, the evolution of any number of different
initial densities can be computed by steps 3--5.  By contrast, in the
brute force approach, computing the evolution of each different
initial density requires the computation of a new ensemble of
solutions of the DDE.

\begin{figure}
\begin{center}
\includegraphics[width=\figwidth]{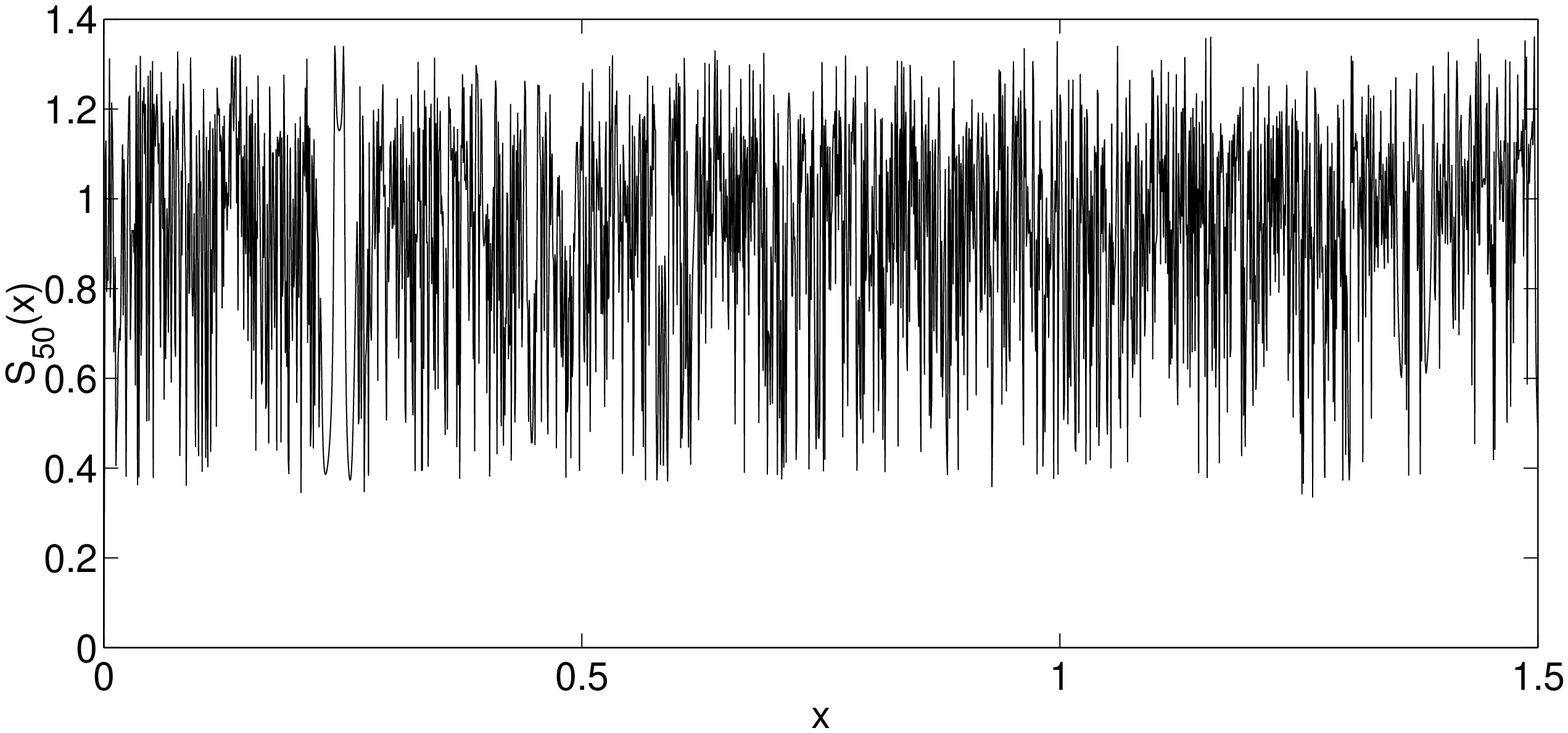}
\caption[Approximate solution map $\tilde{S}_{50}$ for the
Mackey-Glass equation restricted to constant initial
functions.]{Approximate solution map $\tilde{S}_{50}$ for the
Mackey-Glass equation~\eqref{eq.mg2} restricted to constant initial
functions (evaluated on a grid of $2000$ initial values $x$).}
\label{fig.mgmaplong}
\end{center}
\end{figure}

\begin{figure}
\begin{center}
\includegraphics[width=3in]{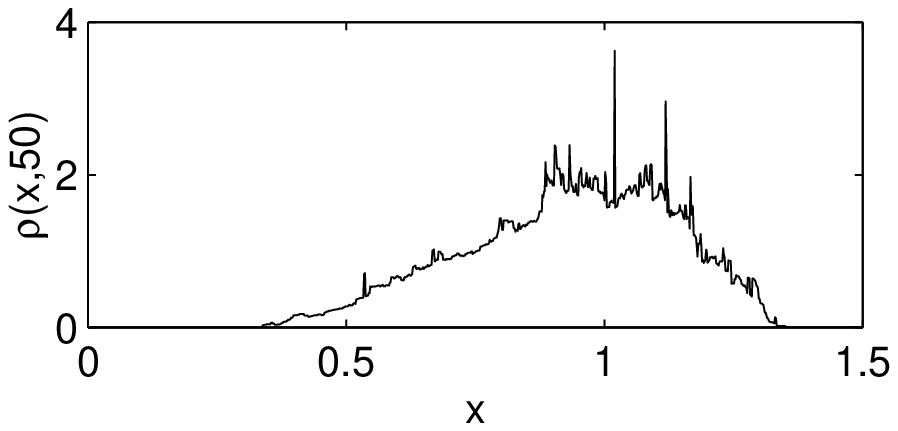}
\caption[Computed density $\rho(x,t)$ at time $t=50$, for the
Mackey-Glass equation restricted to constant initial
functions.]{Computed density $\rho(x,t)$ at time $t=50$, for the
Mackey-Glass equation~\eqref{eq.mg2} restricted to constant initial
functions.}
\label{fig.densmglong}
\end{center}
\end{figure}

Unfortunately it is not possible to evolve densities arbitrarily far
forward in time in this way, at least for delay equations with chaotic
dynamics.  Because of the stretching and folding of phase space
typical of chaotic systems, for large $t$ the solution map $S_t$
acquires a very complex structure.  This is illustrated in
Figure~\ref{fig.mgmaplong}, which shows the approximate solution map
$\tilde{S}_{50}$ for the Mackey-Glass equation~\eqref{eq.mg2}.  Owing
both to this fine structure and sensitivity to initial conditions, as
$t$ increases it eventually becomes impossible to obtain a reasonable
approximation of $S_t$ using finite-precision arithmetic.  This
difficulty is not a function of the accuracy of the numerical method
for integrating the DDE, but is rather a consequence of the complex
dynamics of the DDE itself.

Surprisingly, even though $\tilde{S}_t$ is a poor approximation of
$S_t$ for large $t$, it nevertheless appears to retain information
about the ensemble dynamics.  Figure~\ref{fig.densmglong} shows a
density $\rho(x,t)$ evolved forward to time $t=50$, again for the
Mackey-Glass equation~\eqref{eq.mg2} restricted to constant initial
functions.  This density was computed using the algorithm above, for
the solution map $\tilde{S}_{50}$ shown in Figure~\ref{fig.mgmaplong},
and the same initial density as in Figure~\ref{fig.densmgb}.  The
result shows a remarkable agreement with the corresponding density
computed by ``brute force'' ensemble simulation, shown in
Figure~\ref{fig.densmg2}, page~\pageref{fig.densmg2}.  Thus it appears
that the present method can provide an approximation of the same
asymptotic density as that found by direct ensemble simulation, while
requiring about $2$ orders of magnitude less computation time than the
ensemble simulation approach.

\clearpage

\section{Evolution Equation for Densities} \label{sec.stepsdens}

In section~\ref{sec.fpexplicit} Perron-Frobenius operators were
derived by first finding an explicit formula for the solution map
$S_t$.  This is an awkward and difficult intermediate step.  Rather it
would be nice if, in the spirit of equation~\eqref{eq.pfflow} for the
evolution of a density under the action of a flow defined by an
ordinary differential equation, one could derive an evolution equation
for the density itself.  This approach to density evolution for DDEs
is the subject of the present section.

Consider the augmented DDE initial value problem
\begin{equation} \label{eq.augdde2}
\begin{split}
  &x'(t) = \begin{cases}
    g\big( x(t) \big) & t \in [0,1) \\
    f\big( x(t), x(t-1) \big) & t \geq 1
          \end{cases} \\
  &x(0) = x_0,
\end{split}
\end{equation}
with $x(t) \in \Reals$, and suppose that an ensemble of initial values
$x_0$ is specified with density $\rho_0$.  We would like to derive an
evolution equation for the density $\rho(x,t)$ of the corresponding
ensemble of solutions $x(t)$.

There is an important preliminary observation to be made.  Ideally, we
would like to derive an evolution equation of the form
\begin{equation} \label{eq.ideal}
  \frac{d\rho}{dt} = \{\text{some operator}\}(\rho).
\end{equation}
However, $\rho$ cannot satisfy such an equation.  This is because the
family of solution maps $\{S_t\}$ for equation~\eqref{eq.augdde2} does
not form a semigroup (\cf\ remarks at the end of
Section~\ref{sec.augfp}).  That is, the density $\rho$ cannot be
sufficient to determine its own evolution, as in~\eqref{eq.ideal},
because the values $x(t)$ in the ensemble it describes are
insufficient to determine \emph{their} own evolution.  This difficulty
arises because $\rho$ does not contain information about the past
states of the ensemble, which is necessary to determine the evolution
of the ensemble under~\eqref{eq.augdde2}.  Thus, any solution to the
problem must take a form other than~\eqref{eq.ideal}.

\subsection{ODE system}

The method of steps is sometimes used to write a DDE as a system of
ordinary differential equations.  This is a promising connection, as
we already know how densities evolve for ODEs (\cf\
Section~\ref{sec.fpode}, page~\pageref{sec.fpode}).

\subsubsection{Method of steps}

Let $x(t)$ be a solution of~\eqref{eq.augdde2}, and define for
$n=0,1,2,\ldots$ the functions
\begin{equation} \label{eq.odesys1}
  y_n(s) = x(n + s), \quad s \in [0,1].
\end{equation}
Figure~\ref{fig.methsteps1} illustrates this relationship between the
$y_n$ and $x$.
\begin{figure}
\begin{center}
\includegraphics[width=4in]{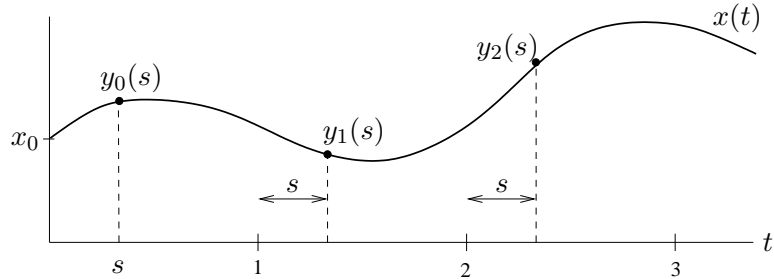}
\caption{Relationship between the DDE solution $x(t)$ and the
variables $y_n(s)$ defined in the method of steps,
equation~\eqref{eq.odesys1}.}
\label{fig.methsteps1}
\end{center}
\end{figure}
Since $x(t)$ satisfies~\eqref{eq.augdde2}, it follows that for $n \geq
1$,
\begin{equation} \label{eq.odesys2}
  y_n'(s) = f\big(y_n(s),y_{n-1}(s)\big), \quad n=1,2,\ldots,
\end{equation}
and $y_0$ satisfies
\begin{equation} \label{eq.odesys3}
  y_0' = g(y_0).
\end{equation}
Thus the augmented DDE becomes a system of evolution equations for the
$y_n$, together with the set of compatibility or boundary conditions
\begin{equation} \label{eq.compat}
\begin{split}
  &y_0(0) = x_0 \\
  &y_n(0) = y_{n-1}(1), \quad n=1,2,\ldots
\end{split}
\end{equation}
The ODE system \eqref{eq.odesys2}--\eqref{eq.odesys3}, together with
these compatibility conditions, can be solved sequentially to yield
the solution $x(t)$ of the DDE up to any finite time.  This is
essentially the method of steps for solving the DDE (\cf\
Section~\ref{sec.methsteps}, page~\pageref{sec.methsteps}).

If the system of ODEs~\eqref{eq.odesys2}--\eqref{eq.odesys3} could be
taken together as a vector field $F$ in $\Reals^{N+1}$, then an
ensemble of solutions of the DDE could be represented
via~\eqref{eq.odesys1} as an ensemble of vectors $y =
(y_0,\ldots,y_N)$, each carried along the flow induced by $F$.  The
density $\eta(y,t)$ of such an ensemble would evolve according to a
continuity equation (\cf\ Section~\ref{sec.fpode})
\begin{equation}
  \frac{\partial \eta}{\partial t} = -\nabla \cdot (\eta F).
\end{equation}
\cite{Mpriv} suggests this as an avenue to a probabilistic treatment of DDEs.
However, it is unclear how to ensure the compatibility
conditions~\eqref{eq.compat} are satisfied by every vector in the
ensemble, or how to determine the initial $(N+1)$-dimensional density
$\eta(y,0)$ of this ensemble in terms of a given density of initial
values $x_0$ in~\eqref{eq.augdde2}.  In short there is no obvious way
to treat equations~\eqref{eq.odesys2}--\eqref{eq.odesys3}
simultaneously rather than sequentially.  The following modified setup
is one way to avoid these difficulties.

\subsubsection{Modified method of steps}

Any solution of the DDE problem~\eqref{eq.augdde2} can be extended
unambiguously to all $t < 0$ by setting
\begin{equation}
  x(t) = x_0, \quad t < 0,
\end{equation}
so that $x'(t)=0$ for all $t<0$.  For $n=0,1,2,\ldots$ let functions
$y_n$ be defined by
\begin{equation} \label{eq.yndef}
  y_n(t) = x(t-n), \quad t \geq 0.
\end{equation}
Figure~\ref{fig.methsteps2} illustrates the relationship between the
$y_n$ and $x$.  On substitution into equation~\eqref{eq.augdde2} we
find that for $t \in [m,m+1]$, $m=0,1,2,\ldots$, the $y_n(t)$ satisfy
\begin{equation} \label{eq.vf}
  y_n' = \begin{cases}
            f(y_n,y_{n+1})   & \text{if $n<m$} \\
            g(y_m)           & \text{if $n=m$} \\
            0                & \text{if $n>m$}.
	 \end{cases}
\end{equation}
Thus, for fixed $N$ the vector $y(t)=\big(y_0(t),\ldots,y_N(t)\big)$
satisfies an ordinary differential equation
\begin{equation} \label{eq.stepsys}
  y' = F(t,y),
\end{equation}
where the vector field $F: \Reals \times \Reals^{N+1} \to \Reals^{N+1}$ is
given by
\begin{equation} \label{eq.stepfield}
\begin{gathered}
  F(t,y_0,\ldots,y_N) = (y_0',\ldots,y_N'), \\
  \text{with} \quad y_n' = \begin{cases}
           0                & \text{if $t < n$} \\
           g(y_n)           & \text{if $t \in [n,n+1)$} \\
           f(y_n,y_{n+1})   & \text{if $t \geq n+1$}.
         \end{cases}
\end{gathered}
\end{equation}
\begin{figure}
\begin{center}
\includegraphics[width=\figwidth]{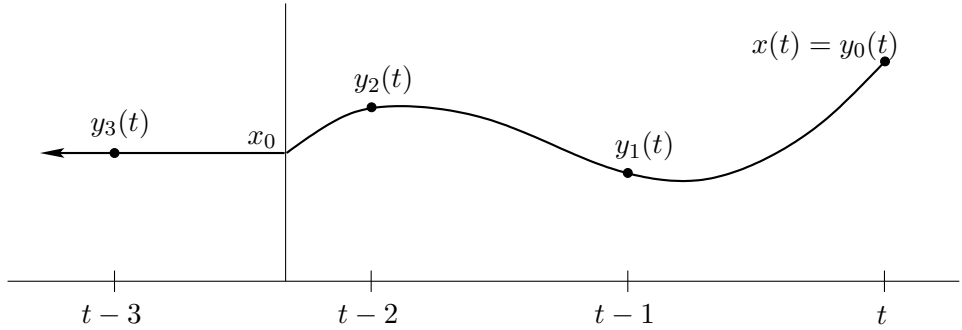}
\caption{Relationship between the DDE solution $x(t)$ and the
variables $y_n(t)$ defined in the modified method of steps,
equation~\eqref{eq.yndef}.}
\label{fig.methsteps2}
\end{center}
\end{figure}

Some remarks about the ODE
system~\eqref{eq.stepsys}--\eqref{eq.stepfield} are in order:
\begin{itemize}
\item For fixed $N>0$, the right-hand side $F(t,y)$, and hence the
solution $y(t)$, is defined only for $0 \leq t \leq N+1$.
\item For any given initial vector $y(0)=(y_0(0),\ldots,y_N(0))$,
equation~\eqref{eq.stepsys} has a unique solution $y(t)$ defined for
$0 \leq t \leq N+1$, provided we have existence and uniqueness for the
original DDE problem~\eqref{eq.augdde2}.
\item $F(t,y)$ is piecewise constant in time.  That is, for each
$m=0,1,2,\ldots$, the vector field $F(t,y)=F(y)$ is independent of $t
\in [m,m+1)$, and induces a flow in $\Reals^{N+1}$ that carries the
solution $y(t)$ forward from $t=m$ to $t=m+1$.  Thus we can speak
of~\eqref{eq.stepsys} as defining a sequence of flows in
$\Reals^{N+1}$.
\end{itemize}

The ODE system~\eqref{eq.stepsys} gives a representation of the method
of steps as an evolution equation in $\Reals^{N+1}$.  Indeed, the
solution $x(t)$ of the DDE is given, up to time $t=N+1$, by
$x(t)=y_0(t)$ where $y(t)$ is the solution of~\eqref{eq.stepsys}
corresponding to the initial condition $y(0)=(x_0,\ldots,x_0)$.  Thus
the DDE problem~\eqref{eq.augdde2} is equivalent to the initial value
problem
\begin{equation} \label{eq.odeivp}
\begin{split}
  &y' = F(y,t), \quad y(t) \in \Reals^{N+1}, \;0 \leq t \leq N+1, \\
  &y(0) = (x_0,\ldots,x_0),
\end{split}
\end{equation}
with the identification $x(t)=y_0(t)$.

\subsection{Continuity equation}

Having established the equivalence of the DDE
system~\eqref{eq.augdde2} with the ODE system~\eqref{eq.odeivp}, we can
proceed to the probabilistic treatment of DDEs using techniques
developed for ODEs.

Suppose an ensemble of initial vectors $y \in \Reals^{N+1}$ is given,
with $(N+1)$-dimensional density $\eta(y,t)$.  Then under the sequence
of flows induced by the vector field $F(y,t)$, this density evolves
according to a continuity equation (\cf\ Section~\ref{sec.fpode}),
\begin{equation} \label{eq.cont1}
  \frac{\partial \eta(y,t)}{\partial t} = -\nabla \cdot \big(\eta(y,t) F(y,t)\big),
\end{equation}
where $\nabla=(\partial/\partial y_0,\ldots,\partial/\partial y_N)$.
The initial density $\eta(y,0)$ derives from the density $\rho_0$ of
initial values $x_0$ for the DDE~\eqref{eq.augdde2}.  That is, an
ensemble of initial values $x_0$ with density $\rho_0$ corresponds to an
ensemble of initial vectors $y=(x_0,\ldots,x_0)$ in $\Reals^{N+1}$,
with ``density''
\begin{equation} \label{eq.deltaic}
  \eta(y_0,\ldots,y_N;0) = \rho_0(y_0) \delta(y_0-y_1) \delta(y_0-y_2) \cdots
                                     \delta(y_0-y_N),
\end{equation}
where $\delta$ is the Dirac delta function.  This corresponds to a
line mass concentrated on the line $y_0=y_1=\cdots=y_N$ with linear
density $\rho_0(y_0)$.  Thus the problem of density evolution for DDEs
becomes a problem of determining how this line mass is redistributed
by the flow induced by~\eqref{eq.stepsys}.

With singular initial data such as~\eqref{eq.deltaic}, strong
solutions of the continuity equation~\eqref{eq.cont1} do not exist.
However, \eqref{eq.cont1} can be interpreted in a weak sense that
makes it possible to define ``solutions'' that satisfy initial
conditions like~\eqref{eq.deltaic} (\cf\ Section~\ref{sec.fpode}).
Such a weak solution can be obtained using the method of
characteristics.

\subsection{Method of characteristics} \label{sec.ddemethchar}

Consider the initial value problem
\begin{equation} \label{eq.cont3}
\begin{gathered}
  \frac{\partial\eta(y,t)}{\partial t} + \nabla \cdot
  \big( F(y,t) \eta(y,t) \big)=0, \quad t \geq 0, \\
  \eta(y,0) = g(y)
\end{gathered}
\end{equation}
for the unknown function $\eta(y,t)$, with $y=(y_0,\ldots,y_N)$.
Assume provisionally that $F$ and $\eta$ are differentiable in $y$, so
the divergence operator can be expanded (by the product rule) to yield
\begin{equation} \label{eq.cont2}
  \frac{\partial\eta}{\partial t} + \eta \nabla\cdot F + F \cdot
  \nabla \eta = 0.
\end{equation}

Let a curve $\Gamma \subset \Reals^{N+2}$ (that is, in
$(y_0,\ldots,y_N,t)$-space) be parametrized by smooth functions
\begin{equation}
  y_0=y_0(t), \;\ldots\; , y_N=y_N(t)
\end{equation}
defined for all $t \geq 0$, and parametrize the value of $\eta$ on
$\Gamma$ by
\begin{equation} \label{eq.etaparam}
  \eta(t) = \eta\big( y_0(t),\ldots,y_N(t), t \big).
\end{equation}
(This slight abuse of notation helps clarify the following
development.)  Differentiating~\eqref{eq.etaparam} yields
\begin{equation}
  \frac{d\eta}{dt} = \nabla \eta \cdot \frac{dy}{dt}
                   + \frac{\partial \eta}{\partial t}.
\end{equation}
Thus, if the functions $y(t)$, $\eta(t)$ satisfy
\begin{gather}
  \frac{dy}{dt} = F\big( y(t),t \big) \label{eq.chardef} \\
  \frac{d\eta}{dt} = -\eta(t) \nabla \cdot F\big( y(t),t \big)
                    \label{eq.charev}
\end{gather}
for all $t \geq 0$, then $\eta$ as given by
equation~\eqref{eq.etaparam} satisfies the PDE~\eqref{eq.cont2} at
every point on $\Gamma$.  In fact any solution of the ODE
system~\eqref{eq.chardef}--\eqref{eq.charev} furnishes a solution of
the PDE~\eqref{eq.cont2} on a particular curve $\Gamma$.  In
particular, if $y$, $\eta$ are solutions of this system corresponding
to initial values
\begin{equation}
\begin{gathered}
  y(0) = (y_0, \ldots, y_N) \\
  \eta(0) = g\big( y_0, \ldots, y_N \big),
\end{gathered}
\end{equation}
then $\Gamma$ intersects the hyperplane $t=0$ at the point
$y=(y_0,\ldots,y_N)$, where $\eta(0)$ agrees with the initial data
given by $g$, so that $\eta(t)$ gives the solution of~\eqref{eq.cont3}
at every point of $\Gamma$.

The family of curves $\Gamma$ that satisfy~\eqref{eq.chardef} are
called the \emph{characteristics} of the PDE \eqref{eq.cont2}.  As we
have seen, the characteristics have the geometric interpretation that
an initial datum specified at $(y_0,\ldots,y_N,0)$ is propagated along
the characteristic that passes through this point.  Not surprisingly,
the characteristic curves of~\eqref{eq.cont3} coincide with the
integral curves of the vector field $F$ (\cf\
equation~\eqref{eq.chardef}).  That is, initial data are propagated
along streamlines of the induced flow.

If the characteristics foliate $\Reals^{N+2}$, every point
$(y_0,\ldots,y_N,t)\in\Reals^{N+2}$ has a characteristic curve passing
through it.  Then the solution of~\eqref{eq.cont3} can found at any
point, by using~\eqref{eq.chardef}--\eqref{eq.charev} to obtain the
solution on the characteristic curve through that point.  The usual
procedure for obtaining the solution $\eta(y,t)$ at
$P=(y_0,\ldots,y_N,t)$ is as follows.

\begin{enumerate} \label{charalg}
\item Determine the characteristic curve $\Gamma$ through $P$ and
follow it ``backward'' in time to find the point $Q$ on $\Gamma$ at
$t=0$.
\item Evaluate $g$ at $Q$ to determine the initial value $\eta(0)$ on
$\Gamma$.
\item\label{intstep} With this value of $\eta(0)$, integrate
equation~\eqref{eq.charev} forward along $\Gamma$ to $P$, at which
point the value of $\eta(t)$ is the solution of~\eqref{eq.cont3}.
\end{enumerate}
Figure~\ref{fig.charidea} gives a schematic illustration of the
method.
\begin{figure}
\begin{center}
\includegraphics[width=\figwidth]{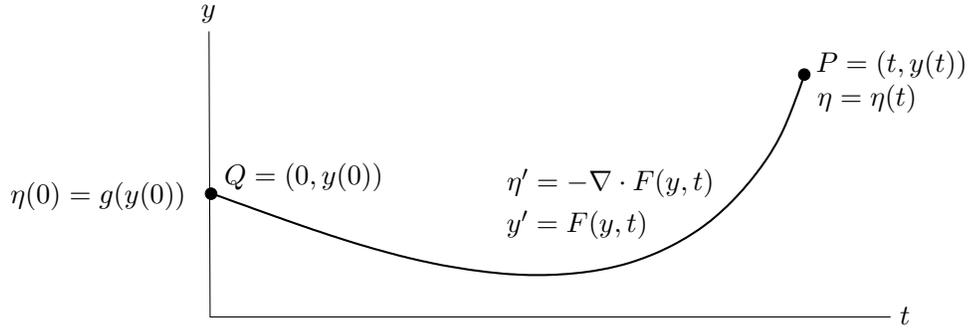}
\caption{Method of characteristics for the continuity
equation~\eqref{eq.cont3}: propagation of initial data along an
integral curve of $y'=F(y,t)$.}
\label{fig.charidea}
\end{center}
\end{figure}

Although the derivation assumes differentiability of $\eta$ (hence
also of $g$), the method itself does not rely on any special
properties of these functions---it requires only integration of the
vector field $F$ and evaluation of $g$.  Hence the method can be
applied even if $g$ is discontinuous, or singular as
in~\eqref{eq.deltaic}.  However, in such cases the resulting function
$\eta$ must be interpreted as a weak solution~\cite{CP88,Zaud83}.

Supposing the solution $\eta(y,t)$ of the initial value
problem~\eqref{eq.cont1}--\eqref{eq.deltaic} to have been found---\eg,
by the method of characteristics---the corresponding density
$\rho(x,t)$ of DDE solutions $x(t)$ can be determined as follows.
Since $x(t)=y_0(t)$ (\cf\ equation~\eqref{eq.yndef}), the density of
$x$ is identified with the density of $y_0$.  This density is
determined by integrating $\eta$ over all components except $y_0$,
\ie,
\begin{equation} \label{eq.rhoint}
  \rho(y_0,t) = \idotsint \eta(y_0,y_1,\ldots,y_N;t)\,dy_1 \cdots dy_N.
\end{equation}

\subsubsection{Alternative formulation}

There is another way to formulate the evolution of the density
$\eta(y,t)$, that turns out to be equivalent to the method of
characteristics and serves to illuminate the method above.
It also provides an explicit formula (actually a codification of the
algorithm on page~\pageref{charalg}) for the solution $\eta(y,t)$.

Recall that the vector $y(t)$ evolves according to a system of
ODEs~\eqref{eq.stepsys}--\eqref{eq.stepfield}.  Let $y(t)$ be the
solution of this system with initial value $y(0)$, and define the
corresponding family of solution maps
$\hat{S}_t:\Reals^{N+1}\to\Reals^{N+1}$ by
\begin{equation}
  \hat{S}_t: y(0) \mapsto y(t)
\end{equation}
(to be distinguished from the solution map $S_t$ for the DDE, defined
by~\eqref{eq.nonautonsg}).  As $y(t)$ evolves under the action of
$\hat{S}_t$, the density $\eta(y,t)$ evolves according to the
corresponding Perron-Frobenius operator $\hat{P}_t: \eta(y,0) \mapsto
\eta(y,t)$, defined by
\begin{equation} \label{eq.fpdefsys}
  \int_A \hat{P}_t \eta(y,0)\,d^{N+1}y = \int_{\hat{S}_t^{-1}(A)}
  \eta(y,t)\,d^{N+1}y \quad \text{$\forall$ Borel $A \subset
  \Reals^{N+1}$}.
\end{equation}

Recall that $\hat{S}_t$ can be represented as a composition of flows
on the intervals $[m,m+1]$, $m=0,1,2,\ldots$, so it is one-to-one on
$\Reals^{N+1}$ and has an inverse (which can be found by reversing the
sequence of flows).  This makes possible the change of variables
$z=S_t(y)$ in~\eqref{eq.fpdefsys}, which by Theorem~3.2.1
of~\cite{LM94} becomes
\begin{equation}
  \int_A \hat{P}_t \eta(y,0)\,d^{N+1}y = \int_A
  \eta\big( \hat{S}_t^{-1}(z),t \big) J_t^{-1}(z)\,d^{N+1}z.
\end{equation}
Since $A$ is arbitrary, this implies the following explicit formula
for $P_t$,
\begin{equation} \label{eq.etafp}
  P_t \eta(y,0) = \eta(y,t) = \eta\big( \hat{S}_t^{-1}(y), 0 \big)
             J_t^{-1}(y).
\end{equation}
Here $J_t^{-1}$ is the density of the measure $\lambda\circ S_t^{-1}$
with respect to Lebesgue measure $\lambda$~\cite[p.\,46]{LM94}
(also~\cite[Section 5.1]{KH95}).  If $\hat{S}_t$ and $\hat{S}_t^{-1}$
are differentiable transformations\footnote{It suffices that the
vector field $F(t,y)$ be smooth in $y$~\cite[p.\,19]{Kuz95}.} of
$\Reals^{N+1}$ then $J_t^{-1}$ is just the determinant of the Jacobian
matrix $D\hat{S}_t^{-1}$,
\begin{equation}
\begin{gathered}
  J_t^{-1}(y) = \det \big( D\hat{S}_t^{-1}(y) \big), \\
  J_t(y) = \det \big( D\hat{S_t}(y) \big).
\end{gathered}
\end{equation}
In this case the formula~\eqref{eq.etafp} can be seen as a
multi-dimensional analog of~\eqref{eq.fpline}, in the case of
invertible $S_t$.

Notice that $\hat{S}_t$ effects the translation of a point $(y(0),0)$
along a characteristic curve $\Gamma$ to $(y(t),t)$.  Similarly
$\hat{S}_t^{-1}$ effects a translation backward along $\Gamma$ to
$t=0$.  This draws the connection between~\eqref{eq.etafp} and the
method of characteristics (page~\pageref{charalg}): the point $Q$
(where the initial density is evaluated) is identified with the point
$(\hat{S}_t^{-1}(y),0)$ in~\eqref{eq.etafp}.

The factor $J_t^{-1}(y)$ also has a geometric interpretation: it is
the factor by which the volume of an infinitesimal volume element at
$y(t)$ increases under transportation by $\hat{S}_t^{-1}$.  This
factor can equivalently be understood as resulting from
step~\ref{intstep} of the method of characteristics algorithm, since
$\nabla\cdot F(y,t)$ is the instantaneous growth rate of an
infinitesimal volume at $y$ as it is transported by the flow $S_t$
induced by $F$.  Conservation of mass requires that the density
supported on an infinitesimal volume element decrease in proportion to
the volume growth, \ie, by the factor $J_t^{-1}(y)$.  This provides a
geometrical explanation of the term $J_t^{-1}(y)$ in~\eqref{eq.etafp}.

\subsubsection{Examples}

To illustrate this approach to the evolution of densities for DDEs, we
revisit examples~\ref{ex.lin} and~\ref{ex.quad}, for which analytical
solutions are obtainable for the densities $\eta(y,t)$ and
$\rho(x,t)$.

\begin{example} \label{ex.linb}
Consider again the linear DDE of example~\ref{ex.lin}
(page~\pageref{ex.lin}),
\begin{equation}
  x'(t) = \alpha x(t-1), \quad t \geq 1,
\end{equation}
with initial data restricted to constant initial functions on $[0,1]$,
and the initial value $x(0)$ distributed with density $\rho_0(x)$.  The
vector $y=(y_0,y_1,y_2)$ defined by~\eqref{eq.yndef} satisfies a
differential equation $y'=F(y,t)$ for $t \in [0,3]$, with
\begin{equation} \label{eq.sysex1}
\begin{gathered}
  y_n' = \begin{cases}
           0                & \text{if $t < n+1$} \\
           \alpha y_{n+1}   & \text{if $n+1 \leq t \leq 3$}.
         \end{cases}
\end{gathered}
\end{equation}
Under the action of this system, the density $\eta(y,t)$ of an
ensemble of vectors $y \in \Reals^3$ evolves according to the
continuity equation~\eqref{eq.cont1}. whose characteristic curves are
streamlines of the flow induced by~\eqref{eq.sysex1}.  The solution of
this system is readily obtained (\eg, using Maple), and the solution
map $\hat{S}_t:y(0) \mapsto y(t)$ found to be
\begin{equation} \label{eq.linbSt}
  \hat{S}_t(y) = \begin{cases}
    (y_0, y_1, y_2)                                  & \text{if $t \in [0,1)$} \\
    \big( y_0 + \alpha (t-1) y_1, \;y_1, \;y_2 \big) & \text{if $t \in [1,2)$} \\
    \big( y_0 + \alpha (t-1) y_1 +
      \big( \tfrac{1}{2}\alpha^2 t^2 -
       2 \alpha^2 (t-1) \big) y_2, \\
      \quad y_1 + \alpha (t-2) y_2, \; y_2 \big)        & \text{if $t \in [2,3]$.}
  \end{cases}
\end{equation}
This linear transformation is easily inverted to yield
\begin{equation}
  \hat{S}_t^{-1}(y) = \begin{cases}
    (y_0,y_1,y_2)                                 & \text{if $t \in [0,1)$} \\
    \big( y_0 - \alpha(t-1)y_1,\;y_1,\;y_2 \big)  & \text{if $t \in [1,2)$} \\
    \big( y_0 - \alpha(t-1)y_1 +
      \alpha^2 (\tfrac{1}{2} t^2 - t) y_2,\\
     \quad y_1 - \alpha (t-2) y_2, y_2 \big)      & \text{if $t \in [2,3]$.}
  \end{cases}
\end{equation}
The Jacobian of this transformation is
\begin{equation}
  J_t^{-1}(y) = \det\big( D \hat{S}_t^{-1}(y) \big) = 1 \quad \forall t \in [0,3]
\end{equation}
(\ie, $\hat{S}_t^{-1}$ is volume-preserving).  The initial ensemble of
vectors $y$ has ``density'' $\eta(y,0)$ is given by
\begin{equation}
  \eta(y_0,y_1,y_2,0) = \rho_0(y_0) \delta(y_0-y_1) \delta(y_0-y_2),
\end{equation}
hence equation~\eqref{eq.etafp} gives
\begin{equation}
  \eta(y,t) = \begin{cases}
    \rho_0(y_0) \delta(y_0-y_1) \delta(y_0-y_2)  & \text{if $t \in [0,1)$} \\
    \rho_0\big( y_0-\alpha(t-1)y_1 \big)
       \delta\big( y_0 - \alpha(t-1) y_1 - y_1 \big) \cdot \\
 \quad \delta\big( y_0 - \alpha(t-1) y_1 - y_2 \big) & \text{if $t \in [1,2)$} \\
    \rho_0\big( y_0 - \alpha(t-1)y_1 + \alpha^2(\tfrac{1}{2} t^2 - t) y_2 \big)\cdot\\
 \quad \delta\big( [y_0 - \alpha(t-1)y_1 + \alpha^2(\tfrac{1}{2} t^2 - t) y_2] -\\
      \qquad       [y_1 - \alpha (t-2) y_2] \big) \cdot\\
 \quad \delta\big(  [y_0 - \alpha(t-1)y_1 + \alpha^2(\tfrac{1}{2} t^2 - t) y_2] - y_2 \big)
  & \text{if $t \in [2,3]$.}
  \end{cases}
\end{equation}
Integrating over $y_1$, $y_2$ with $x=y_0$ we obtain for $t \in
[0,1)$,
\begin{equation}
\begin{split}
  \rho(x,t) &= \iint  \rho_0(x) \delta(x-y_1) \delta(x-y_2) \,dy_1\,dy_2 \\
            &= \rho_0(x),
\end{split}
\end{equation}
(hence $P_t\rho=\rho$ as expected, since $S_t$ is just the identity
transformation).  For $t \in [1,2)$,
\begin{equation}
\begin{split}
  \rho(x,t) &= \iint \rho_0\big( x-\alpha(t-1)y_1 \big) \delta\big(
       x - \alpha(t-1) y_1 - y_1 \big) \cdot \\
            & \hspace{0.75in} \delta\big( x - \alpha(t-1) y_1 - y_2 \big)\,dy_1\,dy_2 \\
   &= \int \rho_0\big( x-\alpha(t-1)y_1 \big) \delta\big(
       x - \alpha(t-1) y_1 - y_1 \big) \,dy_1 \\
   &= \frac{1}{|1+\alpha(t-1)|}\int\rho_0\Big( x - \alpha(t-1)\frac{x-z}{1 + \alpha(t-1)} \Big)
       \delta(z) \,dz \\
   &= \frac{1}{|1+\alpha(t-1)|}\rho_0\Big( x - \alpha(t-1) \frac{x}{1 + \alpha(t-1)} \Big) \\
   &= \frac{1}{|1+\alpha(t-1)|}\rho_0\Big( \frac{x}{1 + \alpha(t-1)} \Big).
\end{split}
\end{equation}
This agrees with the result~\eqref{eq.fpans1} of example~\ref{ex.lin},
which was obtained by a different method.  For $t \in [2,3]$ the
integral for $\rho(x,t)$ becomes too complicated to be worth writing
out fully here, but its result also agrees with~\eqref{eq.fpans1}.
\end{example}

\begin{example} \label{ex.quadb}
Consider again the DDE of example~\ref{ex.quad} (page~\pageref{ex.quad}),
\begin{equation}
  x'(t) = -x(t-1)^2, \quad t \geq 1,
\end{equation}
with initial data restricted to constant initial functions on $[0,1]$,
and the initial value $x(0)$ distributed with density $\rho_0(x)$.  The
vector $y=(y_0,y_1,y_2)$ defined as in~\eqref{eq.yndef} satisfies a
differential equation $y'=F(y,t)$ for $t \in [0,3]$, with
\begin{equation} \label{eq.sysex2}
\begin{gathered}
  y_n' = \begin{cases}
           0            & \text{if $t < n+1$} \\
           -y_{n+1}^2   & \text{if $n+1 \leq t \leq 3$}.
         \end{cases}
\end{gathered}
\end{equation}
Solving this system (\eg, using Maple), and defining the solution map
$\hat{S}_t: y(0) \mapsto y(t)$ yields
\begin{equation} \label{eq.quadbSt}
\hat{S}_t(y) = \begin{cases}
   (y_0,\;y_1,\;y_2)                 & \text{if $t \in [0,1)$} \\
   \big( y_0 - (t-1) y_1^2, \;y_1, \;y_2 \big) & \text{if $t \in [1,2)$} \\
   \big(y_0 - (t-1) y_1^2 + (t^2 - 4 t + 4) y_1 y_2^2 - \\
    \quad (\tfrac{1}{3}t^3 - 2 t^2 + 4 t - \tfrac{8}{3}) y_2^4, \;
   y_1 - (t-2)y_2^2, \; y_2 \big)    & \text{if $t \in [2,3]$}.
  \end{cases}
\end{equation}
Inverting this transformation yields
\begin{equation}
  \hat{S}_t^{-1}(y) = \begin{cases}
    (y_0, \;y_1, \;y_2)   & \text{if $t \in [0,1)$} \\
    \big( y_0 + (t-1) y_1^2, \;y_1, \;y_2 \big)   & \text{if $t \in [1,2)$} \\
    \big(y_0 + (t-1) y_1^2 + (t^2 - 2 t) y_1 y_2^2 +
       ( \tfrac{1}{3}t^3 - t^2 + \tfrac{4}{3} ) y_2^4, \\
    \qquad y_1 + (t-2) y_2^2, \; y_2 \big) & \text{if $t \in [2,3]$.}
  \end{cases}
\end{equation}
The Jacobian of this transformation is again
\begin{equation}
  J_t^{-1}(y) = \det\big(D\hat{S}_t^{-1}(y)\big) = 1 \quad \forall t \in [0,3].
\end{equation}
The initial density $\eta(y,0)$ is given by
\begin{equation}
  \eta(y,0) = \rho_0(y_0) \delta(y_0-y_1) \delta(y_1-y_2),
\end{equation}
so equation~\eqref{eq.etafp} gives
\begin{equation}
  \eta(y,t) = \begin{cases}
    \rho_0(y_0) \delta(y_0-y_1) \delta(y_1-y_2)  & \text{if $t \in [0,1)$} \\
    \rho_0\big(y_0 + (t-1)y_1^2\big) \delta\big(y_0 + (t-1)y_1^2 - y_1\big) \delta(y_1-y_2) &
        \text{if $t \in [1,2)$} \\
    \rho_0\big( y_0 + (t-1) y_1^2 + (t^2 - 2 t) y_1 y_2^2 +
       ( \tfrac{1}{3}t^3 - t^2 + \tfrac{4}{3} ) y_2^4 \big) \cdot \\
    \quad \delta\big( [y_0 + (t-1) y_1^2 + (t^2 - 2 t) y_1 y_2^2 +
       ( \tfrac{1}{3}t^3 - t^2 + \tfrac{4}{3} ) y_2^4] \\
         \qquad - [y_1 + (t-2) y_2^2] \big) \cdot \\
    \quad \delta\big(  [y_1 + (t-2) y_2^2] - y_2 \big) & \text{if $t \in [2,3]$}.
  \end{cases}
\end{equation}
Finally, $\rho(x,t)$ is obtained by integrating $\eta(y,t)$ over
$y_1$, $y_2$.  For $t \in [0,1)$,
\begin{equation}
\begin{split}
  \rho(x,t) &= \iint  \rho_0(x) \delta(x-y_1) \delta(y_1-y_2) \,dy_1\,dy_2 \\
            &= \rho_0(x),
\end{split}
\end{equation}
as expected.  For $t \in [1,2)$,
\begin{equation}
\begin{split}
  \rho(x,t) &= \iint \rho_0\big(x + (t-1)y_1^2\big) \delta\big(x + (t-1)y_1^2 - y_1\big)
                      \delta(y_1-y_2)\,dy_1\,dy_2 \\
  &= \int \rho_0\big(x + (t-1)y_1^2\big) \delta\big(x + (t-1)y_1^2 - y_1\big) \,dy_1 \\
  &= \sum_{\{y_1: x + (t-1)y_1^2 - y_1 = 0\}}
      \frac{\rho_0\big(x + (t-1)y_1^2\big)}{|2 (t-1) y_1 - 1|} \\
  &= \frac{1}{\sqrt{1-4(t-1)x}} \Big[
  \rho_0\Big( \frac{1-\sqrt{1-4(t-1)x}}{2(t-1)} \Big) \\
  &\hspace{1.5in}+ \rho_0\Big( \frac{1+\sqrt{1-4(t-1)x}}{2(t-1)} \Big) \Big].
\end{split}
\end{equation}
This is identical to the result of example~\ref{ex.quad}.  For $t \in
[2,3)$ it would be extremely difficult to find an explicit formula for
$\rho(x,t)$, since the final integration requires solving a quartic
equation.  For $t > 3$, a quintic equation must be solved, so finding
an explicit formula for $\rho(x,t)$ appears to be impossible.
\end{example}

\begin{remark}
In each of the examples above, the transformation $\hat{S}_t$ was
found to be volume-preserving, so that the Jacobian $J_t^{-1}(y)=1$ in
equation~\eqref{eq.etafp}.  This could have been anticipated from
equation~\eqref{eq.charev} for the evolution of the density along a
characteristic curve of the continuity equation, since by
equation~\eqref{eq.vf},
\begin{equation}
  \nabla \cdot F(y,t) = D_1 f(y_0,y_1) + \cdots + D_1 f(y_{m-1},y_m) +
                      g'(y_m)
\end{equation}
for $t \in [m,m+1)$.  Thus, if $f$ is independent of its first
argument and $g'=0$ (as in the examples above), then $\nabla \cdot
F(y,t)=0$, so that the sequence of flows constituting $S_t$ are all
volume-preserving.
\end{remark}

\subsection{Geometric interpretation} \label{sec.geom}

As the examples above suggest, for all but the simplest delay
equations an analytical treatment of the density evolution problem is
difficult, and perhaps impossible.  Nevertheless, the approach
developed above does provide some geometrical insight even when an
analytical approach fails.

Recall that up to any finite time $t \leq N+1$ the DDE
problem~\eqref{eq.augdde} can be represented by an ordinary
differential equation
\begin{equation} \label{eq.stepsys2}
  y'=F(y,t), \quad y(t) \in \Reals^{N+1}, \quad t \geq 0,
\end{equation}
with $F$ defined by~\eqref{eq.stepfield}.  An ensemble of initial
values with density $\rho_0$ corresponds to an ensemble of initial
vectors $y$ with $(N+1)$-dimensional ``density''
\begin{equation} \label{eq.deltaic2}
  \eta(y_0,\ldots,y_N;0) = \rho_0(y_0) \delta(y_0-y_1) \delta(y_1-y_2) \cdots
                                     \delta(y_{N-1}-y_N),
\end{equation}
representing a line mass concentrated on the line $y_0=y_1=\cdots=y_N$
in $\Reals^{N+1}$.  Under evolution by~\eqref{eq.stepsys2}, \ie, under
transformation by the solution map~$\hat{S}_t$, this line mass is
redistributed.  This transportation of a line mass under $\hat{S}_t$
is illustrated in Figure~\ref{fig.denscurveidea}.  After evolution by
time $t$, $\eta(y,t)$ is supported on a one-dimensional curve that is
the image of this line under $\hat{S}_t$.  We will call this curve the
``density support curve''.  It is a continuous, non-self-intersecting
curve in $\Reals^{N+1}$, owing to continuity and invertibility of
$\hat{S}_t$.

\begin{figure}
\begin{center}
\includegraphics[width=2.55in]{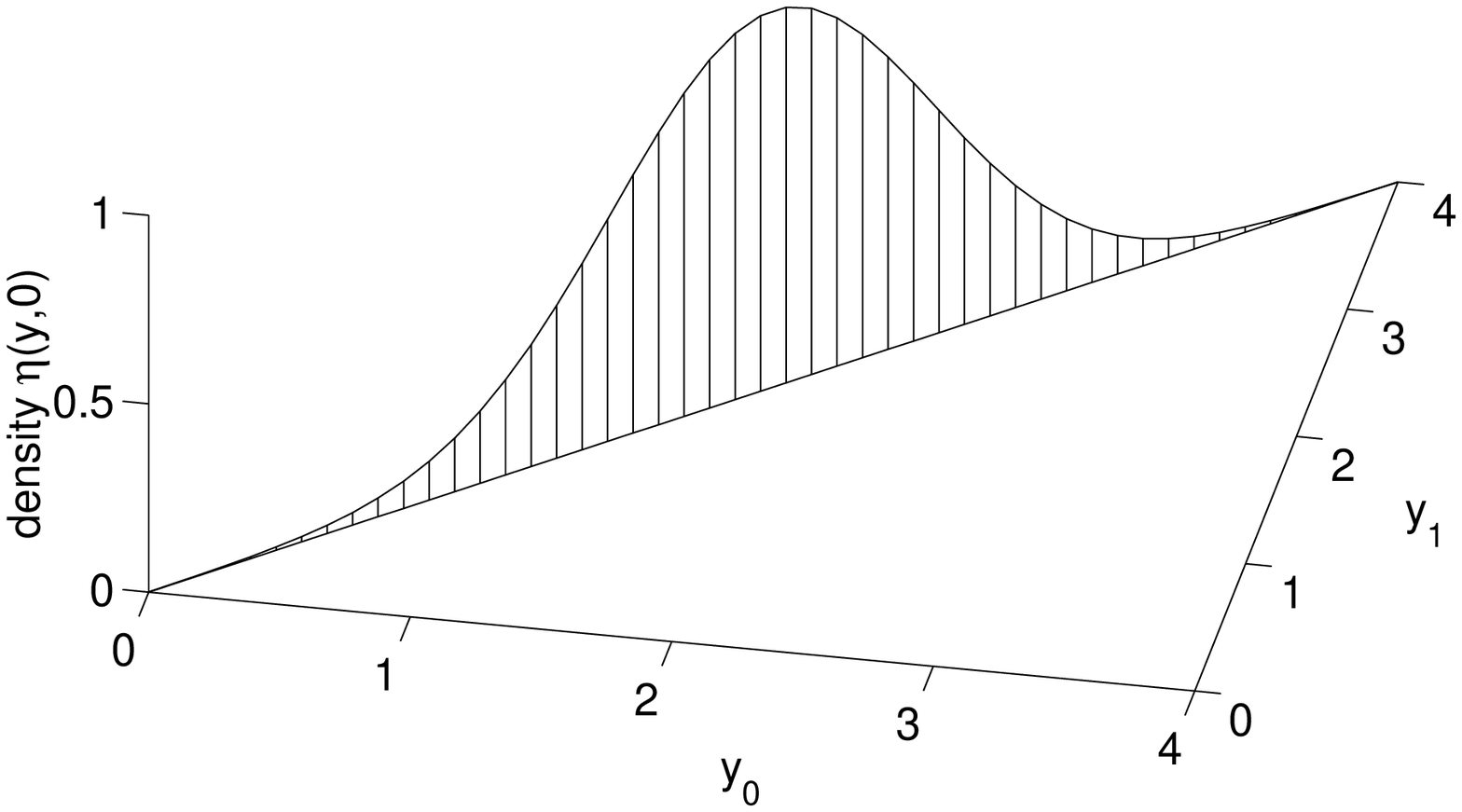}\hfill
\includegraphics[width=2.55in]{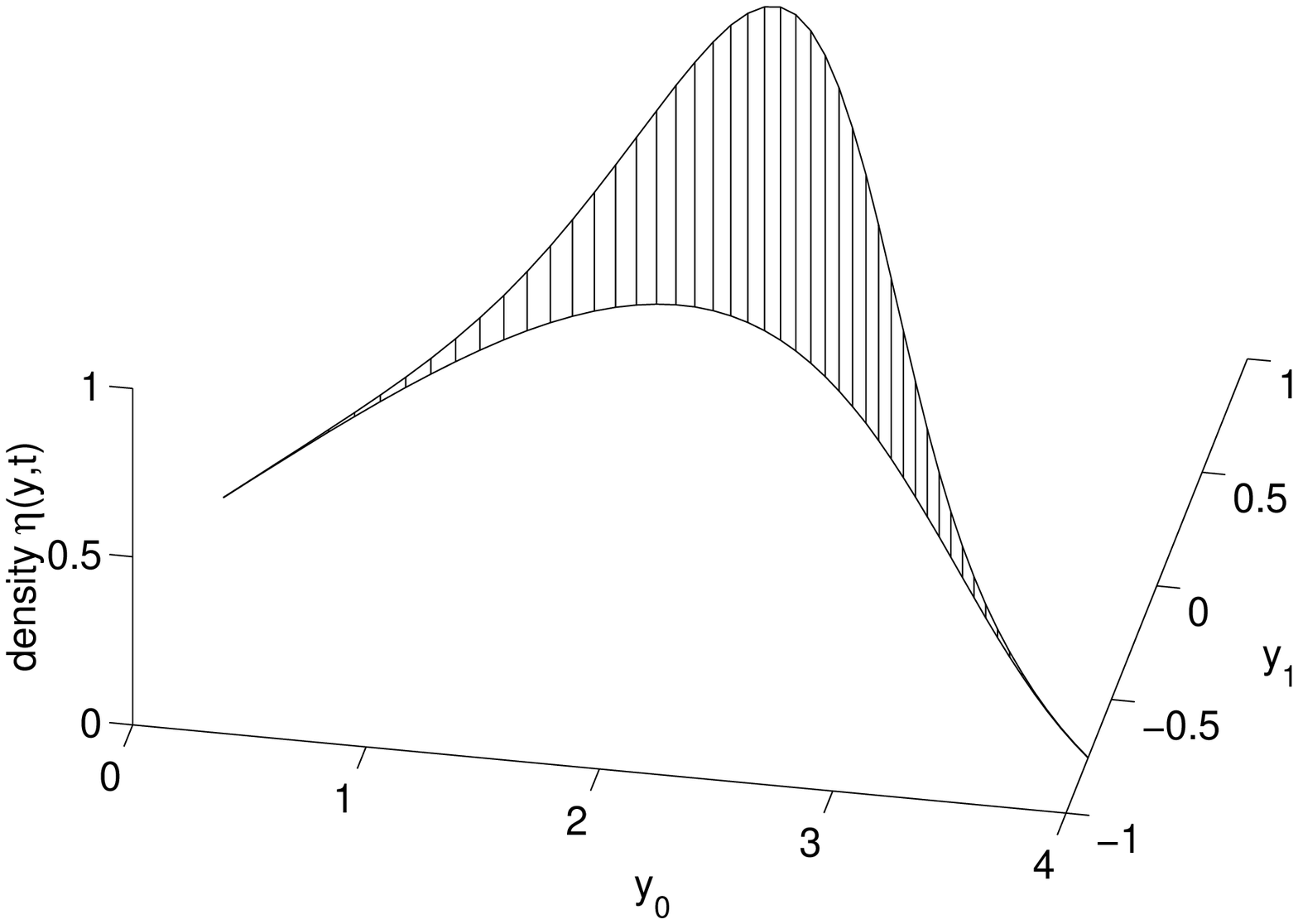}\\
\hfill(a)\hfill\hfill(b)\hspace*{\fill}
\caption[Transportation of a line mass under a transformation of the
$(y_0,y_1)$-plane.]{Transportation of a line mass under a
transformation of the $(y_0,y_1)$-plane.  (a) Initial mass distributed
on the line $y_0=y_1$.  (b) Mass distribution after transformation.}
\label{fig.denscurveidea}
\end{center}
\end{figure}

\begin{example}
Explicit representations can be found for the density support curves
in the previous examples.  Consider the linear DDE of example
\ref{ex.linb} (page \pageref{ex.linb}), for which the transformation
$\tilde{S}_t$ is given explicitly by equation~\eqref{eq.linbSt}.  The
initial density support curve $y_0=y_1=y_2$ can be represented
parametrically as
\begin{equation}
  \Gamma = \{ (s,s,s): s \in \Reals \}.
\end{equation}
Then
\begin{equation}
\begin{split}
  \tilde{S}_t(\Gamma) &= \big\{ \tilde{S}_t(y): y \in \Gamma \big\} \\
    &= \big\{ y(s) = (y_1, y_2, y_3)(s) : s \in \Reals \big\}
\end{split}
\end{equation}
where
\begin{equation}
  ( y_1, y_2, y_3 )(s) = \begin{cases}
    (s, s, s)                                  & t \in [0,1) \\
    \big( s + \alpha (t-1) s, \;s, \;s \big)   & t \in [1,2) \\
    \big( s + \alpha (t-1) s +
      \big( \tfrac{1}{2}\alpha^2 t^2 -
       2 \alpha^2 (t-1) \big) s, \\
      \quad s + \alpha (t-2) s, \; s \big)     & t \in [2,3).
       \end{cases}
\end{equation}
For this DDE, at any given time the density support curve is a
straight line, a consequence of linearity of $\tilde{S}_t$.
\end{example}

\begin{example}
Consider the quadratic DDE of example~\ref{ex.quadb} (page
\pageref{ex.quadb}), for which the transformation $\tilde{S}_t$ is given
explicitly by equation~\eqref{eq.quadbSt}.  Representing the initial
density support curve $\Gamma$ parametrically as in the previous
example, we have
\begin{equation}
  \tilde{S}_t(\Gamma) = \big\{ y(s) = (y_1, y_2, y_3)(s) : s \in \Reals \big\}
\end{equation}
where
\begin{equation}
  (y_1,y_2,y_3)(s) = \begin{cases}
   (s,\;s,\;s)                 & t \in [0,1) \\
   \big( s - (t-1) s^2, \;s, \;s \big) & t \in [1,2) \\
   \big( s - (t-1) s^2 + (t^2 - 4 t + 4) s^3 - \\
    \quad (\tfrac{1}{3}t^3 - 2 t^2 + 4 t - \tfrac{8}{3}) s^4, \;
   s - (t-2)s^2, \; s \big)    & t \in [2,3].
  \end{cases}
\end{equation}
\end{example}

\bigskip

Thus, the density evolution method developed in
section~\ref{sec.ddemethchar} amounts to keeping track of the
evolution of the density support curve under the action
of~\eqref{eq.stepsys2}.  For the purposes of a numerical
implementation, this curve can be represented by a set of points
$\{y^{(1)}(t),y^{(2)}(t),\ldots,y^{(k)}(t)\}\subset\Reals^{N+1}$ (\eg,
representing a piecewise-linear approximation of the curve).  The
$y^{(i)}(t)$ are images under $\hat{S}_t$ of points $y^{(i)}(0)$ of
the form
\begin{equation} \label{eq.yidef}
  y^{(i)}(0) = \big(x_0^{(i)}, \ldots, x_0^{(i)}\big) \in \Reals^{N+1},
\end{equation}
which lie on the initial density support curve $y_0=y_1=\cdots=y_N$.
Thus the points $y^{(i)}(t)$ can be determined by integrating
(numerically) each of these initial points forward
under~\eqref{eq.stepsys2}.  With sufficiently closely spaced points
$x_0^{(i)}$ in the support of the initial density $\rho_0(x)$, the
resulting set of $y^{(i)}(t)$ should provide a good approximation of
the density support curve, and the mass distribution on it (see
Figure~\ref{fig.denscurve1}).
\begin{figure}
\begin{center}
\includegraphics[width=\figwidth]{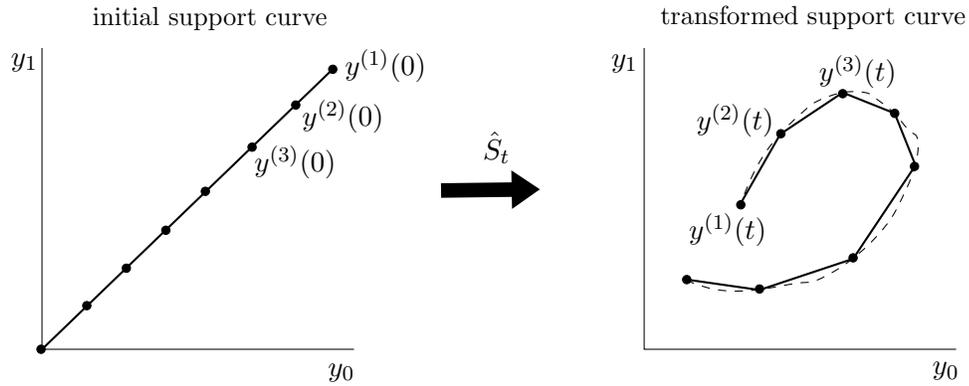}
\caption[Approximating the image under $\hat{S}_t$ of the density
support curve.]{Approximating the image under $\hat{S}_t$ of the
density support curve, by following the evolution
under~\eqref{eq.stepsys2} of a set of points $\{y^{(i)}(t)\}$ that
represent the nodes of a piecewise linear curve.  Here the curve is
shown projected onto the $(y_0,y_1)$-plane.}
\label{fig.denscurve1}
\end{center}
\end{figure}

Figure~\ref{fig.mgdenscurve} illustrates the results of applying this
idea to the Mackey-Glass equation~\eqref{eq.mg2}, for an ensemble of
constant initial functions with values distributed on the interval
$[0.3,1.3]$, as in Figures~\ref{fig.densidea}--\ref{fig.densmg1}.
Thus the initial density support curve is the part of the line
$y_0=y_1=\cdots$ with $0.3 \leq y_0 \leq 1.3$.

The first row of figure~\ref{fig.mgdenscurve} shows the sequence of
density support curves obtained at times $t=1,2,3,4$, projected onto
the $(y_0,y_1)$-plane (as a result of this projection, some of the
curves intersect themselves).  The second row shows the corresponding
densities $\rho(x,t)$ from Figure~\ref{fig.densmg1}.  These densities
can be interpreted as resulting from projecting the mass supported on
the corresponding density support curve onto the $y_0$-axis.  With
this interpretation, the density support curves provide an obvious
geometrical interpretation of the structures observed in the
corresponding densities $\rho(x,t)$.  Discrete jumps in the density
occur at the endpoints of the transformed density support curve, and
the maxima (singularities) correspond to turning points of the
transformed density support curve.

\begin{landscape}
\begin{figure}
\vspace*{0.5in}
\includegraphics[width=1.85in]{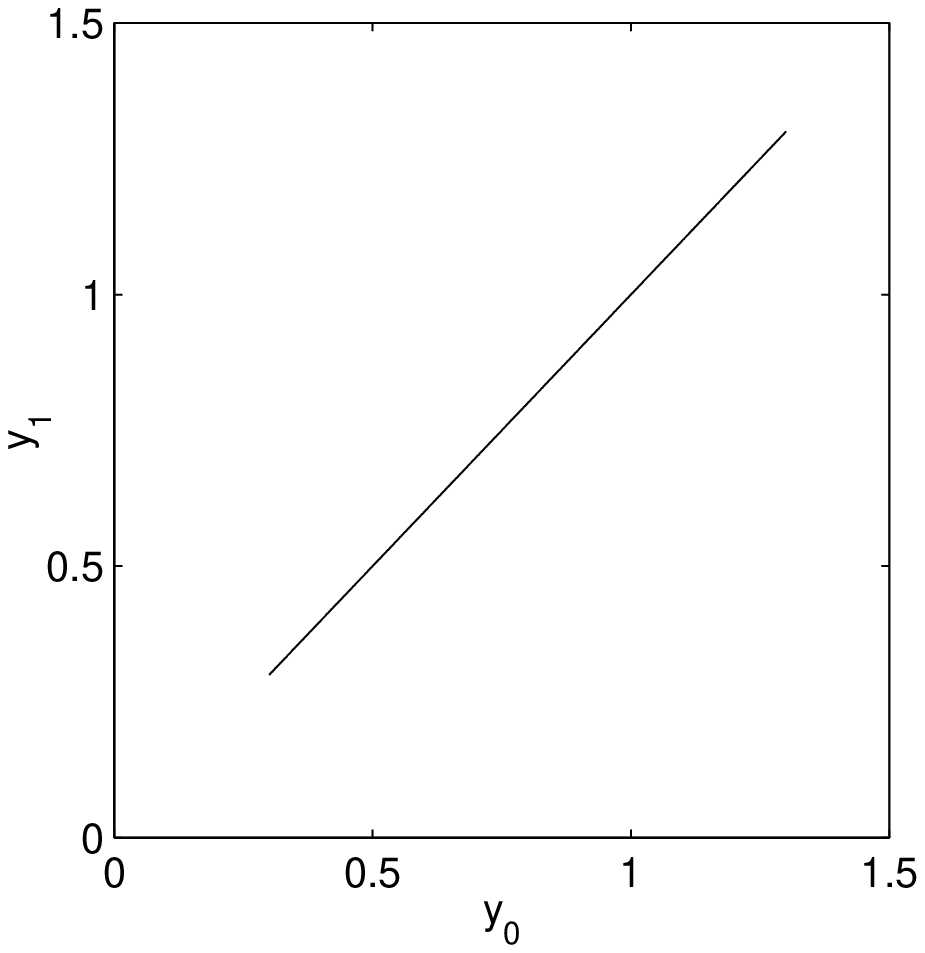}\hfill
\includegraphics[width=1.85in]{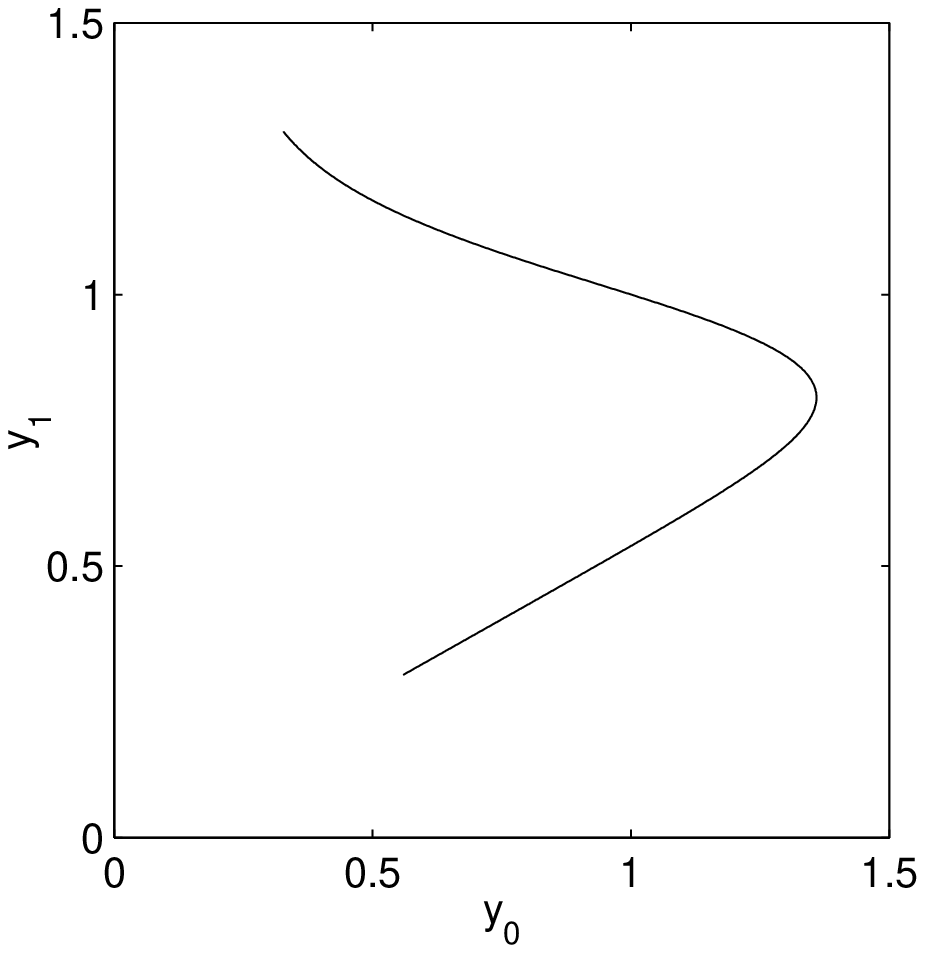}\hfill
\includegraphics[width=1.85in]{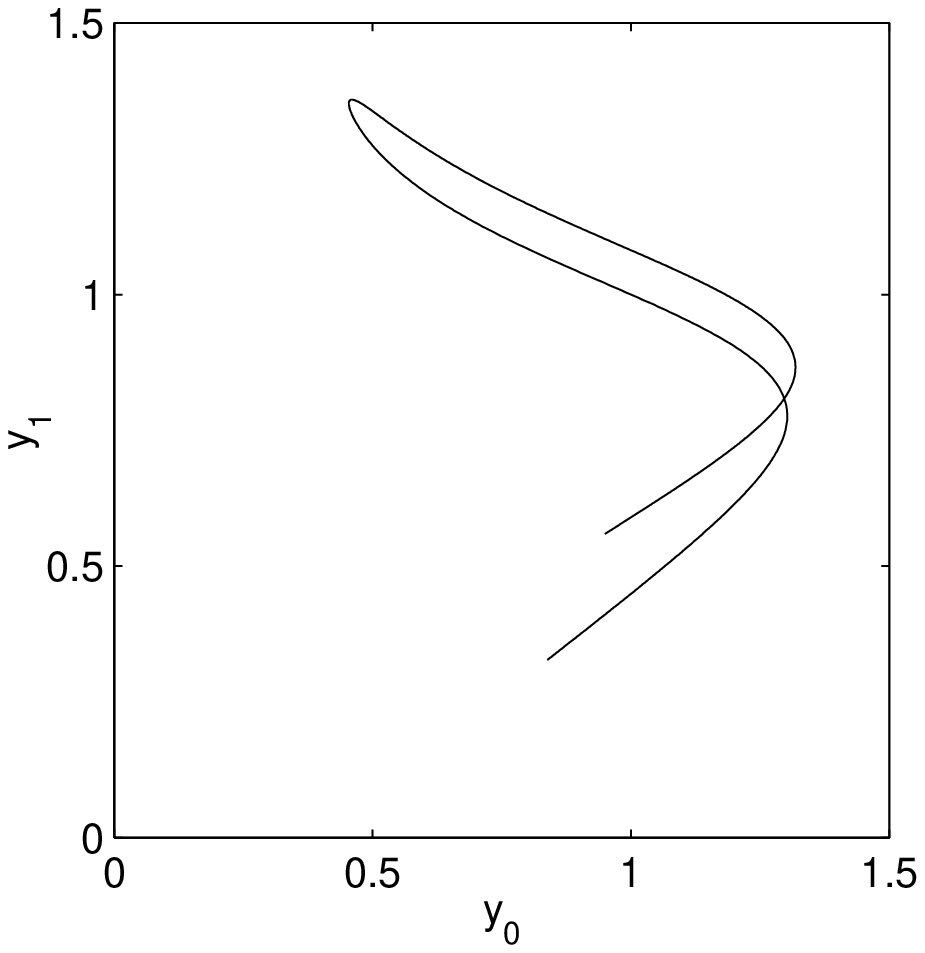}\hfill
\includegraphics[width=1.85in]{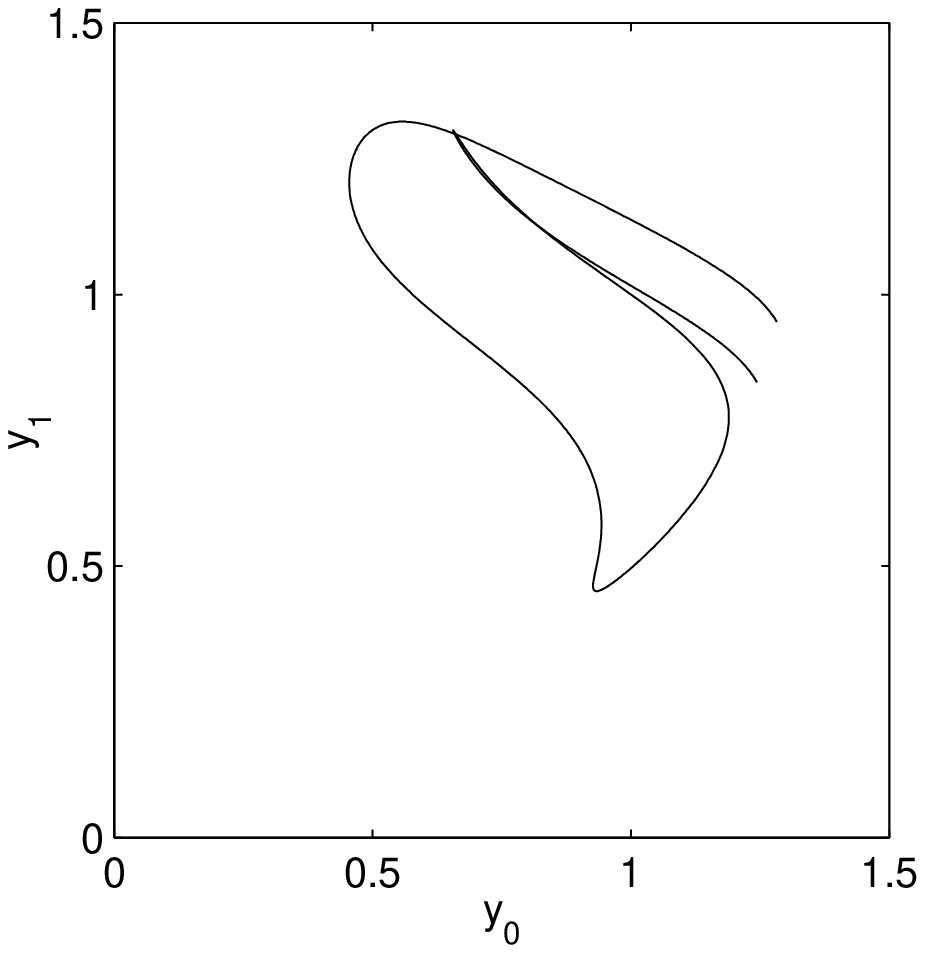}\\[0.25in]
\hspace*{0.0cm}
\includegraphics[width=1.8in]{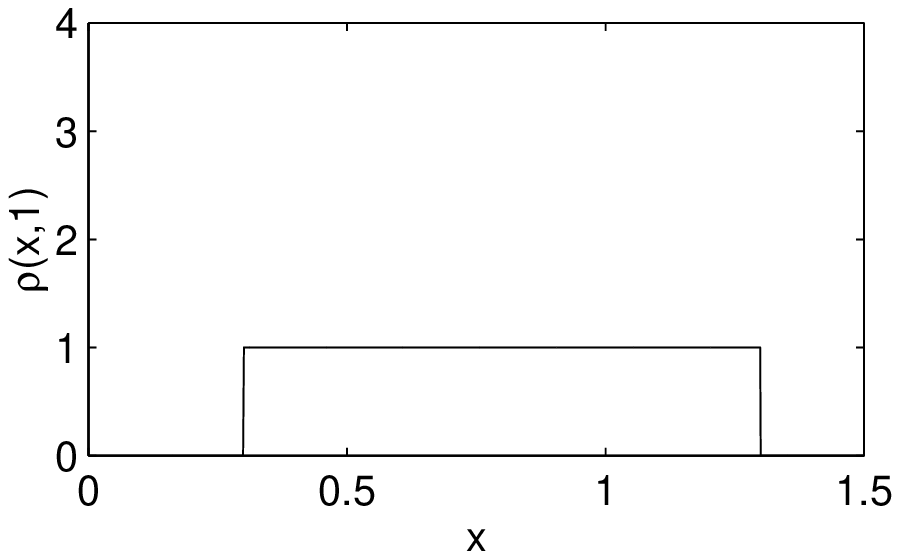}\hfill
\includegraphics[width=1.8in]{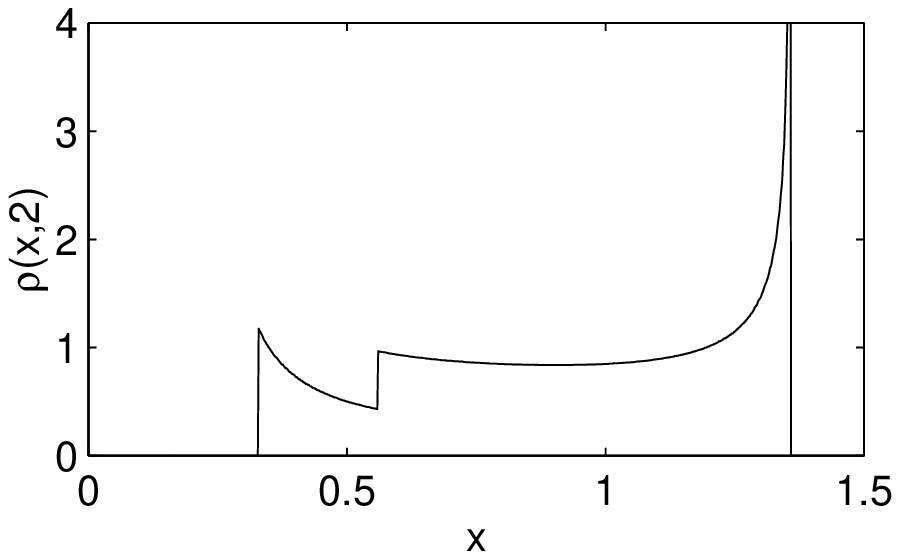}\hfill
\includegraphics[width=1.8in]{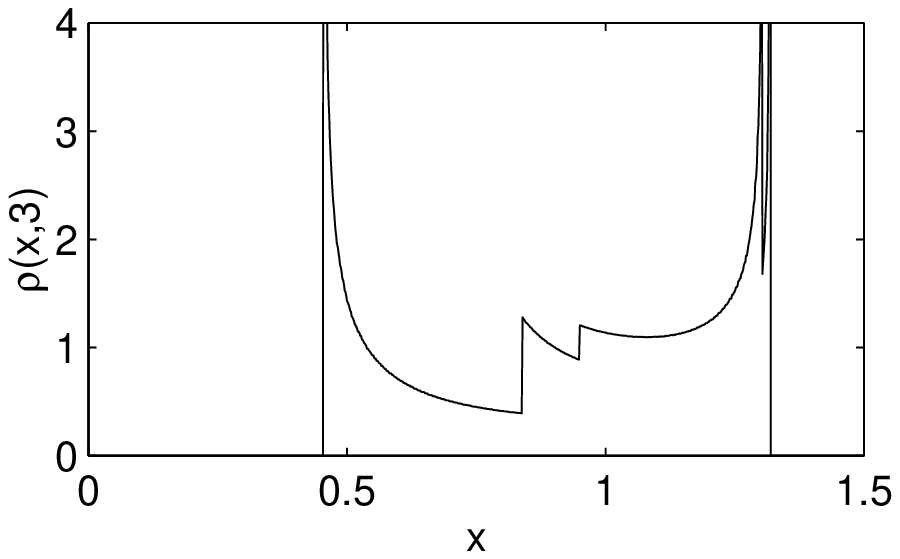}\hfill
\includegraphics[width=1.8in]{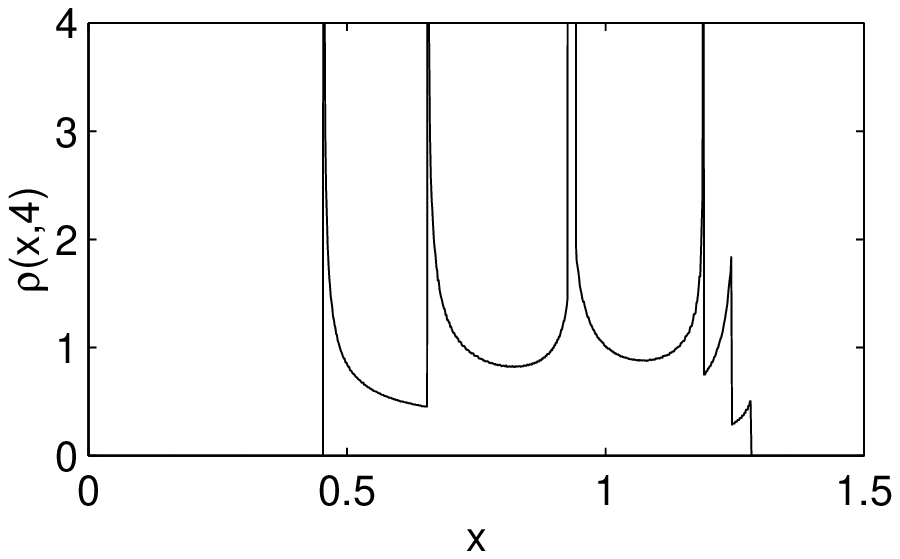}
\caption[Density support curves
for the Mackey-Glass equation restricted to constant initial
functions.]{Density support curves (projected onto the
$(y_0,y_1)$-plane) for the Mackey-Glass equation~\eqref{eq.mg2}
restricted to constant initial functions.  The second row of
graphs shows the corresponding computed densities from
Figure~\ref{fig.denscompare}.}
\label{fig.mgdenscurve}
\end{figure}
\end{landscape}
\begin{figure}
\begin{center}
\includegraphics[width=\figwidth]{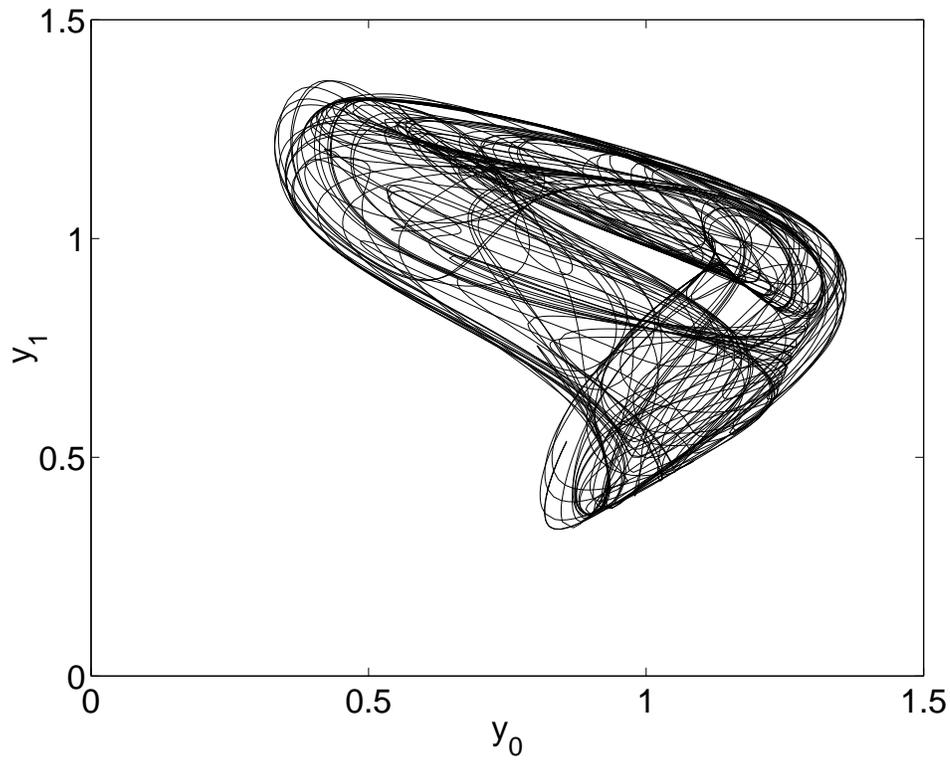}
\caption[Density support curve for the Mackey-Glass equation restricted to constant
initial functions, evolved forward in time to $t=20$.]{Density support
curve (projected onto the $(y_0,y_1)$-plane) for the Mackey-Glass
equation~\eqref{eq.mg2} restricted to constant initial functions,
evolved forward in time to $t=20$.}
\label{fig.denscurvemglong}
\end{center}
\end{figure}

For large $t$ the transformed density support curve becomes very
complicated.  Figure~\ref{fig.denscurvemglong} shows the density
support curve at time $t=20$, which follows in the sequence of
Figure~\ref{fig.mgdenscurve}.  As with the solution map shown of
Figure~\ref{fig.mgmaplong}, the complexity of this curve results from
the repeated stretching and folding that occurs under the dynamics of
the DDE.  Because of this complexity it is difficult to provide a
clear geometric interpretation of the corresponding density, as was
possible for small times as in Figure~\ref{fig.mgdenscurve}.  Also,
just as with the solution map, determining the density support curve
numerically becomes problematic (in fact impossible using finite
precision) for large times.

\clearpage
\section{Conclusions}

This chapter has developed a number of approaches to the evolution of
densities for delay equations.  To place the problem in a more
intuitive and mathematically tractable setting, we have considered
delay equations for which ensembles of initial functions are
restricted to some finite-dimensional set.  This results in a family
of measurable solution maps $S_t: \Rn \to \Rn$ for which a
Perron-Frobenius operator can be defined.  However, this restriction
destroys the semigroup structure of the family of solution maps, so
any results that are derived for such systems will have limited
application to an ergodic theory of delay equations.

At least for simple DDEs, the analytical techniques considered in
sections~\ref{sec.fpexplicit} and~\ref{sec.stepsdens} can be used to
derive explicit formulae for the Perron-Frobenius operator
corresponding to $S_t$ (examples~\ref{ex.lin}--\ref{ex.quad}).  For
more complicated equations, non-invertibility of $S_t$ makes both
methods difficult to apply.  As such, there appears to be little hope,
using these methods, of an analytical approach to the evolution of
densities for delay equations with interesting statistical properties,
such as the Mackey-Glass equation and others that exhibit chaotic
solutions.  Nevertheless, Section~\ref{sec.stepsdens} provides an
intuitive model for the evolution of densities for DDEs, in terms of
the transportation of a line mass by a sequence of flows in
$\Reals^N$.  This model gives some geometrical insight into results of
numerical approaches applied to more complicated DDEs (\eg,
Figure~\ref{fig.mgdenscurve}).

In the absence of a generally applicable analytical method, it is
desirable to have an effective computational (numerical) approach to
the evolution of densities for DDEs.  Of the numerical methods
considered (Sections~\ref{sec.histos} and~\ref{sec.approxfp}), the
simplest is the ``brute force'' method of simulating a large ensemble
of solutions, for an ensemble of initial values chosen at random in
accordance with the initial density.  Because this method relies on
adequate statistical sampling to obtain accurate results, it is
computationally intensive to the point of being impractical for many
applications.  This limitation motivates Section~\ref{sec.approxfp},
which develops a method based on a piecewise linear approximation of
the solution map $S_t$, and provides a much more efficient approach to
computing the evolution of densities.

Much of the interest in a probabilistic approach to delay equations is
with regard to their asymptotic statistical properties.  Asymptotic
densities, such as those observed for the Mackey-Glass equation in
Figure~\ref{fig.densmg2}, quantify states of statistical equilibrium.
That is, they describe the long-term equilibrium distribution of
ensembles of systems governed by DDEs.  The same densities also
characterize the long-time statistics of individual solutions.  Thus
invariant densities are important from the points of view of both
statistical mechanics and ergodic theory.

Asymptotic densities can be found by evolving an initial density
forward to large time until the asymptotic statistics become apparent.
Unfortunately none of the methods considered in this chapter, other
than the ``brute force'' ensemble simulation method, is well suited to
evolving densities to large times.  As the time increases, so does the
complexity of the solution map for the DDE (\cf\
Figures~\ref{fig.mgmaplong} and~\ref{fig.denscurvemglong}, and the
examples of Section~\ref{sec.fpexplicit}).  The dimension of the
system increases with time as well (\cf\ Section~\ref{sec.stepsdens}).
These seem to be fundamental obstacles to developing effective (\ie\
fast) numerical techniques for evolving densities to large times and
thereby obtaining asymptotic densities.  None of the methods developed
so far provides a viable alternative to the computationally intensive
ensemble simulation approach.  Other approaches to obtaining
asymptotic densities, which do not rely on evolving densities to large
times, are investigated in next chapter.

\chapter{Asymptotic Densities}\label{ch.invdens}

\begin{singlespacing}
\minitoc   
\end{singlespacing}
\newpage

Chapter~\ref{ch.densev} considered density evolution for the augmented
DDE problem
\begin{equation} \label{eq.dde3}
\begin{split}
  &x'(t) = \begin{cases}
    g\big( x(t) \big) & t \in [0,1] \\
    f\big( x(t), x(t-1) \big) & t \geq 1,
          \end{cases} \\
  &x(0) = x_0,
\end{split}
\end{equation}
which determines the evolution of a quantity
$x(t)\in\Rn$.\footnote{As before, we assume that sufficient
conditions are satisfied to guarantee existence and uniqueness of
solutions, as well as continuity of solutions with respect to the
initial value.}  The role of the function $g$ here is to restrict the
DDE to a particular family of allowable initial functions parametrized
by $x_0$, each a solution of $x'=g(x)$ on $[0,1]$.  An ensemble of
initial values $x_0$ with density $\rho_0$ generates a corresponding
ensemble of solutions $x$ of~\eqref{eq.dde3}, with density $\rho(x,t)$
at time $t$.  Chapter~\ref{ch.densev} considered the problem of
determining the evolution of this density.

Numerical simulation of solution ensembles for the Mackey-Glass
equation (\cf\ Figure~\ref{fig.densmg2}, page~\pageref{fig.densmg2})
suggests that $\rho(x,t)$ approaches a limiting density $\rhostar(x)$
as $t \to \infty$.  In fact, in multiple simulations with different
initial densities and different families of allowable initial
functions (corresponding to different choices of the function $g$),
this same limiting density is observed.  It appears, then, that the
asymptotic density \rhostar\ is an intrinsic property of the DDE.
This same phenomenon can be observed in other delay equations, of
which section~\ref{sec.invex} presents some examples.  These
observations motivate the present chapter, the purpose of which is
to formulate an interpretation of asymptotic densities for DDEs and
to investigate methods for computing such densities.

Section~\ref{sec.invinterp} suggests a theoretical framework to
account for the existence of asymptotic densities for delay equations.
We find that the existence of \rhostar\ is consistent with the
existence of an SRB measure (Definition~\ref{def.SRB}), $\mu$, for the
corresponding infinite dimensional dynamical system.  In this
interpretation \rhostar\ can be seen as the projection of $\mu$ onto
the finite-dimensional space \Rn\ in which the solution variable (\ie,
the physical state) $x(t)$ is observed.

The existence of an asymptotic density has practical significance in
that it characterizes the asymptotic behavior of any ensemble governed
by a given DDE.  Moreover, \rhostar\ also appears to characterize the
asymptotic statistics of every ``typical'' solution of the DDE.  In
light of the important role played by asymptotic densities, it is
desirable to characterize and if possible compute them.
Sections~\ref{sec.ulam}--\ref{sec.approxinv} explore the problem of
computing asymptotic densities for DDEs.  Two methods are considered,
both based on previously published techniques that have proved
successful in the context of some finite-dimensional dynamical
systems.  Section~\ref{sec.ulam} presents an adaptation of the
well-studied ``Ulam's method'', while section~\ref{sec.approxinv}
develops a ``self-consistent Perron-Frobenius operator'' method.

\section[Existence of Asymptotic Densities: Ensemble Simulation]{Existence of Asymptotic Densities:\\ Ensemble Simulation}\label{sec.invex}

The simplest and most direct approach to estimating asymptotic
densities is to actually simulate large ensembles of solutions and
investigate their asymptotic statistics, as we have done already in
Figure~\ref{fig.densmg2} for the Mackey-Glass equation.  Even if
finite-precision numerical simulations do not give meaningful
predictions of the fate of individual solutions, there is reason to be
optimistic about the accuracy of statistics collected on large
ensembles of solutions (see section~\ref{sec.shadowing}).

In each of the following examples an ensemble of $10^6$ solutions has
been simulated for a given delay equation\footnote{Numerical solutions
have been performed using the numerical DDE solver DDE23~\cite{ST00}},
and histograms constructed from the ensemble of solution values $x(t)$
for some large $t$.  In each case this histogram, which approximates
the ensemble density $\rho(x,t)$, appears to approach a limiting
density $\rhostar(x)$ as $t \to \infty$.  This same limiting density
is obtained independent of the initial density and the particular
family of allowable initial functions (determined by $g$
in~\eqref{eq.dde3}).

It must be emphasized that asymptotic regularity of the ensemble
dynamics does not imply regularity of individual solutions.  On the
contrary, statistical regularity is closely tied to disordered
behavior of individual trajectories (\cf\ section~\ref{sec.ergth}).
The asymptotic density characterizes a statistical rather than a
dynamical equilibrium.  To emphasize this point, in each example below
we illustrate a single long-time solution typical of other solutions
represented in the ensemble, confirming that the solutions themselves
appear to exhibit a random character, despite the eventual invariance
the ensemble density.

\begin{example}[Mackey-Glass equation] \label{ex.mg}
Densities for the Mackey-Glass equation~\cite{MG77}
\begin{equation} \label{eq.mg3}
\begin{gathered}
  x'(t) = -\alpha x(t) + \beta \frac{x(t-1)}{1 + x(t-1)^n},\\
  \alpha=2, \quad \beta=4, \quad n=10,
\end{gathered}
\end{equation}
were estimated by ensemble simulation in Chapter~\ref{ch.densev} (\cf\
Figures~\ref{fig.densmg1}--\ref{fig.densmg2}).  The histogram
approximating the limiting density $\rho_{\ast}$, together with part
of a typical asymptotic solution, is shown in Figure~\ref{fig.invmg}
\begin{figure}
\begin{center}
\includegraphics[width=4in]{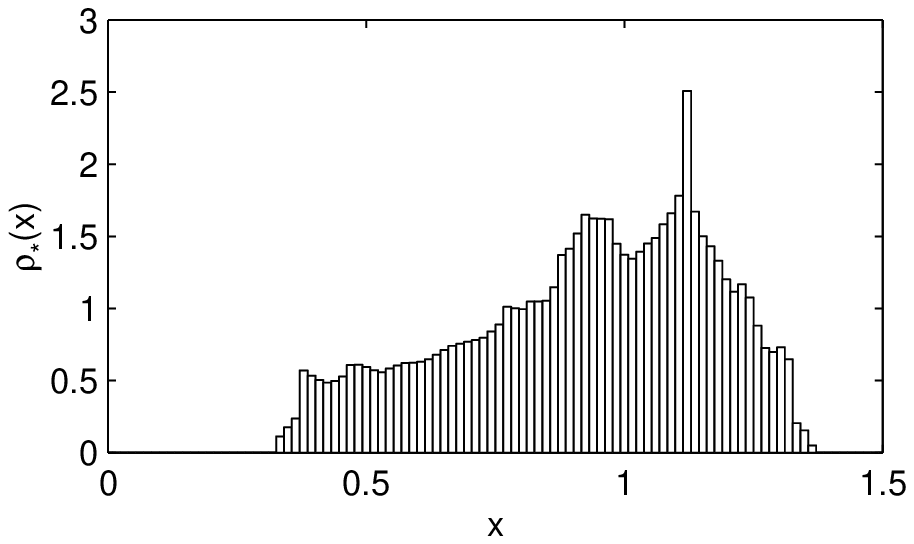}\\[0.25in]
\hspace*{0.1in}\includegraphics[width=4in]{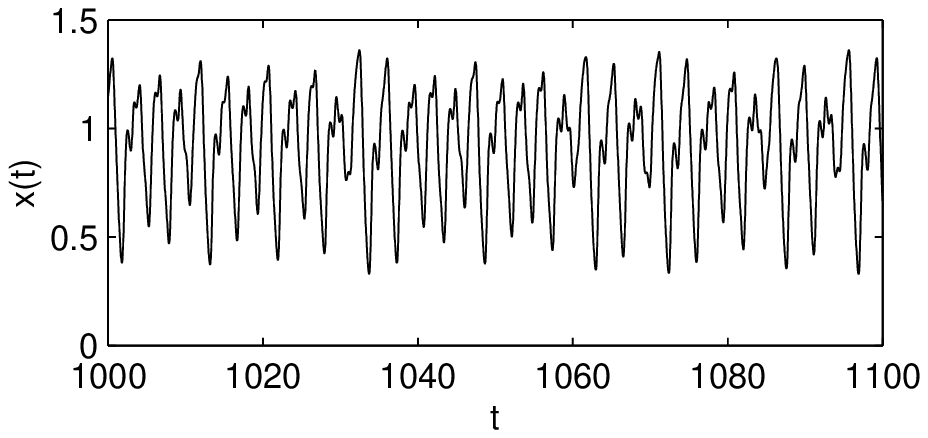}
\caption[Histogram approximating the asymptotic density for the
Mackey-Glass equation, obtained from a simulated ensemble of $10^6$
large-time numerical solutions.  Also shown is a typical solution
represented in the ensemble.]{Histogram approximating the asymptotic
density for the Mackey-Glass equation~\eqref{eq.mg3}, obtained from a
simulated ensemble of $10^6$ large-time numerical solutions.  Also
shown is a single solution typical of those represented in the
ensemble at large times.}
\label{fig.invmg}
\end{center}
\end{figure}
\end{example}

\begin{example}[Piecewise-constant nonlinearity] \label{ex.pwc}
The delay equation
\begin{equation} \label{eq.pwc}
\begin{split}
  &x'(t) = -\alpha x(t) + F\big( x(t-1) \big), \\
  &F(x) = \begin{cases} c & \text{if $x \in [x_1,x_2]$} \\
    0 & \text{otherwise}.
  \end{cases}
\end{split}
\end{equation}
has been studied previously in \cite{adH85,HM82,HW83,LML93}.  Despite
the simplicity of the piecewise-constant feedback term, solutions of
this equation are known to exhibit a wide variety of behaviors as the
parameters are varied.

Equation~\eqref{eq.pwc} can be reduced to an ordinary differential on
a sequence of intervals, on each of which it easy to construct an
analytical solution, \viz,
\begin{equation}
  x(t) = \begin{cases}
    \tfrac{c}{\alpha} +
 \big( x(t_0)-\tfrac{c}{\alpha} \big) e^{-\alpha(t-t_0)} &
 \text{if $x(t-1) \in [x_1,x_2]$} \\
 x(t_0) e^{-\alpha(t-t_0)} & \text{otherwise}.
  \end{cases}
\end{equation}
The dependence on $x(t-1)$ occurs only through the ``crossing times''
at which $x(t-1)=x_1$ or $x_2$ (where the forcing term switches on or
off).  In fact, the solution of~\eqref{eq.pwc} for $t \geq 1$ is
uniquely determined by the values $x(0)$, $x(1)$, and the set of
crossing times in the interval $[0,1]$.  This simplification
facilitates an analytical treatment, to the extent that the existence
of limit cycles, a homoclinic orbit, and chaos (in the sense of Li and
Yorke~\cite{LY75}) have been proved for certain
parameters~\cite{HM82}.  It has also been proved~\cite{adH85} that,
for certain parameters, the map governing the evolution of the
crossing times is exact (\cf\ definition~\ref{defn.exact},
page~\pageref{defn.exact}).  To date this is the only rigorous result
on strong ergodic properties for a delay differential equation.

Figure~\ref{fig.invpwc} shows the asymptotic density obtained by a
histogram of $10^6$ long-time solutions of~\eqref{eq.pwc}, with
parameter values $x_1=1$, $x_2=2$, $\alpha=6$, $c=24$ (which were also
considered in~\cite{HM82}).  The typical form of an asymptotic
solution is also shown.
\begin{figure}[p]
\begin{center}
\includegraphics[width=4in]{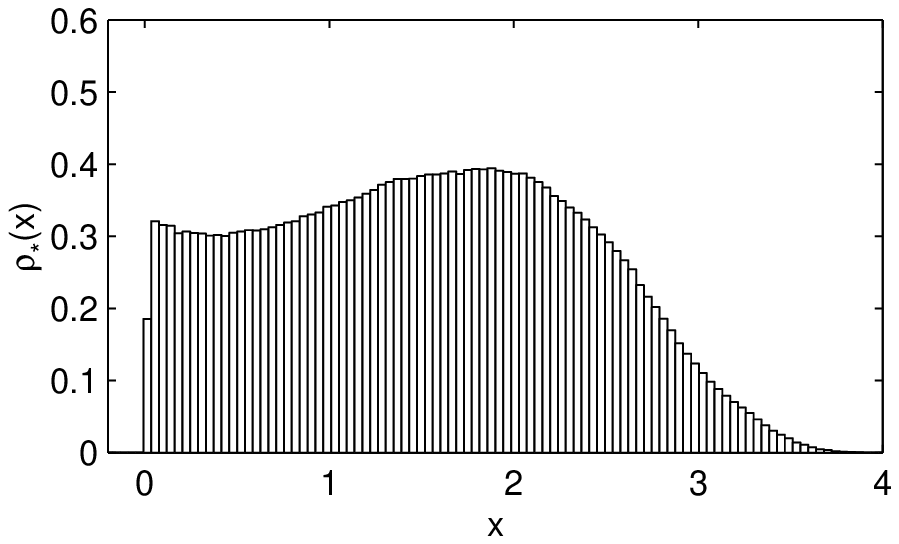}\\[0.25in]
\hspace*{0.3in}\includegraphics[width=4in]{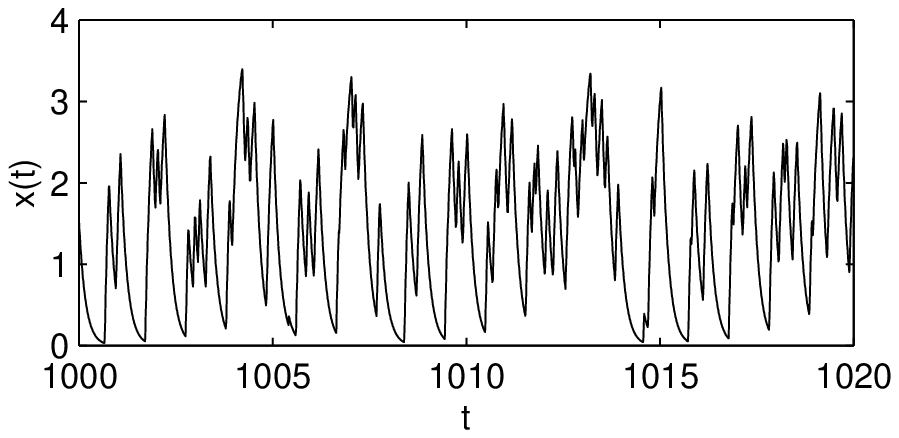}
\caption[Histogram approximating the asymptotic density for the delay
equation with piecewise-constant feedback, obtained from a simulated
ensemble of $10^6$ large-time numerical solutions.  Also shown is a
typical solution represented in the ensemble.]{Histogram approximating
the asymptotic density for the delay equation~\eqref{eq.pwc} with
piecewise-constant feedback, obtained from a simulated ensemble of
$10^6$ large-time numerical solutions.  Also shown is a typical
solution represented in the ensemble.}
\label{fig.invpwc}
\end{center}
\end{figure}
\end{example}

\begin{figure}[p]
\begin{center}
\includegraphics[width=4in]{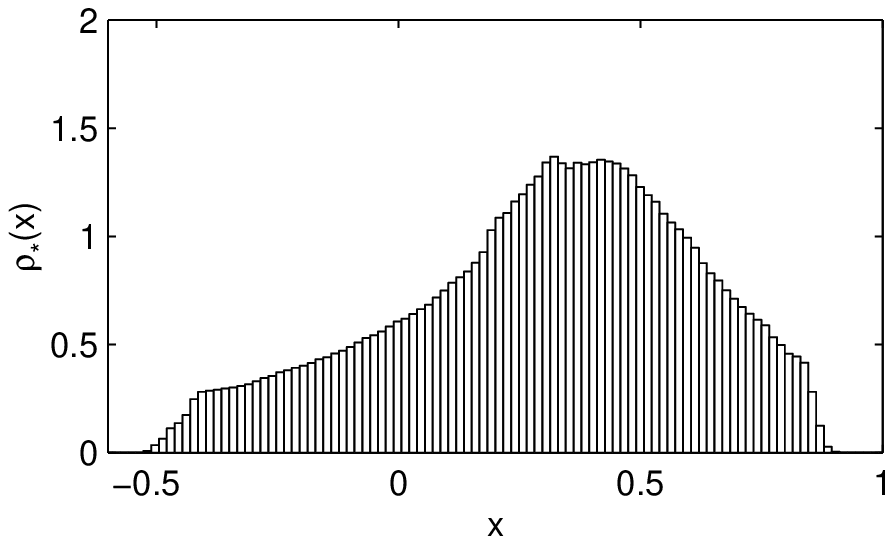}\\[0.25in]
\hspace*{0.1in}\includegraphics[width=4in]{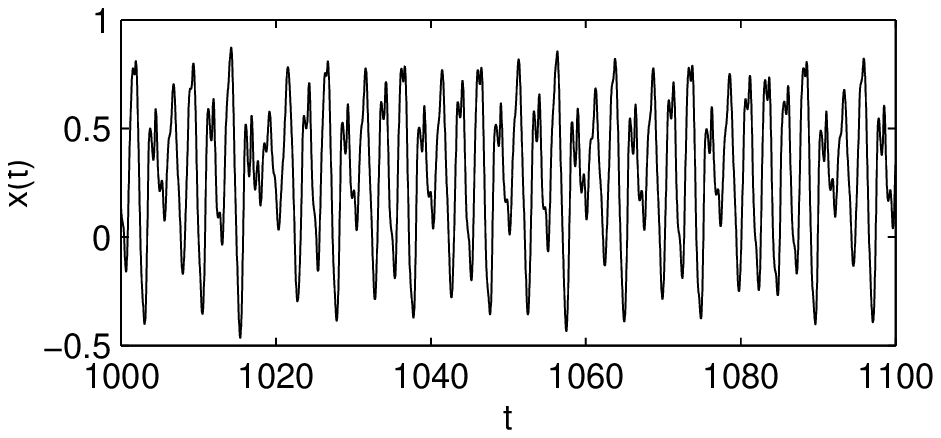}
\caption[Histogram approximating the asymptotic density for the delay
equation with piecewise-linear feedback, obtained from a simulated
ensemble of $10^6$ large-time numerical solutions.  Also shown is a
typical solution represented in the ensemble.]{Histogram approximating
the asymptotic density for the delay equation~\eqref{eq.pwl} with
piecewise-linear feedback ($\eps=0.3$), obtained from a simulated
ensemble of $10^6$ large-time numerical solutions.  Also shown is a
typical solution represented in the ensemble.}
\label{fig.invpwl}
\end{center}
\end{figure}
\begin{example}[``Tent map'' nonlinearity] \label{ex.pwl}
The delay equation
\begin{equation} \label{eq.pwl}
  \eps x'(t) = -x(t) + 1 - 1.9 |x(t-1)|.
\end{equation}
has been previously been studied from a probabilistic point of view
in~\cite{Ersh91}.  For sufficiently small $\eps$ the solutions of
DDE~\eqref{eq.pwl} appear to have chaotic solutions.
Figure~\ref{fig.invpwl} shows the asymptotic density obtained by a
histogram over $10^6$ large time solutions, together with a segment of
a typical long-time solution when $\eps=0.3$.
\end{example}

\clearpage
\section{Ergodic Theoretic Interpretation} \label{sec.invinterp}

The existence of asymptotic densities in the examples above can be
explained in terms of ergodic properties of the corresponding
dynamical system on $C([-1,0])$.  While we are not in a position to
prove that a given DDE actually possesses strong ergodic properties,
this at least provides one framework for interpreting the dynamics of
ensembles of DDE solutions.

\subsection{DDE solution is the trace of a dynamical system} \label{sec.trace1}

Recall (\cf\ Chapter~\ref{ch.framework}) that the DDE
\begin{equation} \label{eq.dde4}
  x'(t) = f\big( x(t), x(t-1) \big), \quad x(t) \in \Rn, \quad t \geq 0,
\end{equation}
can be interpreted as a dynamical system on the phase space $C$ of
continuous functions from $[-1,0]$ into \Rn.  The corresponding
semigroup $\{S_t: t \geq 0\}$ of evolution operators $S_t:C \to C$
defined by
\begin{equation} \label{eq.ddesg2}
  (S_t \phi)(s) = x(t+s), \quad s \in [-1,0],
\end{equation}
where $x$ is the solution of~\eqref{eq.dde4} with initial function
\begin{equation}
  x(t) = \phi(t), \quad t \in [-1,0].
\end{equation}

For any given initial function $\phi \in C$ there corresponds a
trajectory $\{S_t\phi: t \geq 0\} \subset C$ for this dynamical
system.  Let $x_t \in C$ denote the phase point on this trajectory at
time $t$.  That is, $x_t$ is the function
\begin{equation} \label{eq.xtdef}
  x_t(s) \equiv ( S_t \phi )(s) = x(t+s), \quad s \in [-1,0].
\end{equation}
Then $x(t)$ can be expressed as
\begin{equation}
  x(t) = \pi(x_t),
\end{equation}
where the functional $\pi: C \to \Rn$ is given by
\begin{equation} \label{eq.pidef}
  \pi(u) = u(0).
\end{equation}
This gives an interpretation of the solution variable $x(t)$ as the
image under $\pi$ of the phase point of the corresponding dynamical
system.

In general if $\{S_t: t \geq 0\}$ is a dynamical system on $Y$ then a
function $h: \Reals \to X$ is called a \emph{trace} of $\{S_t\}$ if
there is a continuous map $\pi: Y \to X$ and a $y \in Y$ such that
\begin{equation}
  h(t) = \pi\big( S_t(y) \big), \quad t \geq 0.
\end{equation}
In other words, $h(t)$ is the continuous image under $\pi$ of some
trajectory of the dynamical system.  A familiar example of a trace is
the projection onto two dimensions of a trajectory of a
three-dimensional dynamical system.  See~\cite[p.\,193]{LM94} for
further discussion of this concept.

Thus we can interpret a solution $x(t)$ of the DDE~\eqref{eq.dde4} as
a trace of the corresponding dynamical system.  Indeed,
equation~\eqref{eq.ddesg2} together with~\eqref{eq.pidef} gives
\begin{equation}
  x(t) = \pi(S_t \phi ), \quad t \geq 0,
\end{equation}
so $x(t)$ is the trace of the trajectory of $\{S_t\}$ through $\phi$.
It is readily verified that $\pi$ is continuous if $C$ is equipped
with the sup norm.  With this interpretation it is straightforward to
show how various properties of the dynamical system $\{S_t\}$ are
manifested as corresponding properties of solutions of the DDE.  In
particular we have the following:
\begin{itemize}
\item If $\{S_t\}$ has an invariant measure, $\mu$, then for an
ensemble of phase points $x_t \in C$ distributed according to $\mu$,
the corresponding ensemble of DDE solutions $x=\pi(x_t)$ will be
distributed according to the probability measure
$\pi(\mu)\equiv\mu\circ\pi^{-1}$ on \Rn, and this distribution will
be invariant under the dynamics.
\item If $\{S_t\}$ has an attractor $\Lambda \subset C$, then $x(t)$
lies asymptotically on the image $\pi(\Lambda)\subset\Rn$.
\item If $\Lambda$ supports an SRB measure $\mu$, then we expect that
any solution ensemble will be asymptotically distributed according to
the measure $\pi(\mu)$.  This provides an explanation of the
convergence of ensemble histograms in the examples above.
\end{itemize}
The following sections explore these connections in greater detail.

\subsection{Evidence of an invariant measure} \label{sec.ddeinv}

In the examples above, by simulating ensembles of solutions we have
found evidence for the existence of asymptotic densities for delay
equations.  That is, for a given density of initial values $x_0$ in
the DDE initial value problem~\eqref{eq.dde3}, the density $\rho(x,t)$
of the ensemble of solution values $x$ at time $t$ evolves toward a
seemingly unique density $\rhostar(x)$ as $t \to \infty$.

It is tempting to use the terminology of chapter~\ref{ch.ergth} and
call the limiting density \rhostar\ an ``invariant density''.  This
turns out to be inappropriate, since \rhostar\ is in fact not truly
invariant in the strict sense already defined: if an ensemble of
initial values $x_0$ is distributed with density \rhostar, the
subsequent evolution of the ensemble density $\rho(x,t)$ does
\emph{not} agree with \rhostar\ for all time.
Figure~\ref{fig.mgnoninv} illustrates this fact.  Here we consider the
Mackey-Glass equation~\eqref{eq.mg3} restricted to constant initial
functions (hence $g=0$ in~\eqref{eq.dde3}).  An ensemble of $10^6$
solutions has been simulated, corresponding to an ensemble of initial
values $x_0$ distributed according to the asymptotic density \rhostar\
shown in Figure~\ref{fig.invmg}.  From the resulting sequence of
histograms, which approximate $\rho(x,t)$ at times $t=0,1,2$ and
$100$, it is apparent that $\rho(x,t)$ initially diverges from
\rhostar.  Hence \rhostar\ is not appropriately described as an
invariant density.

Nevertheless, as can be seen from the histogram representing
$\rho(x,100)$ in Figure~\ref{fig.mgnoninv}, $\rho(x,t)$ does appear to
eventually converge to \rhostar\ once again.  As $t$ increases beyond
about $100$ the ensemble histograms (not shown here) agree with
\rhostar.  Thus it seems appropriate to call \rhostar\ an
``asymptotically invariant'' density.
\begin{figure}
\begin{center}
\includegraphics[height=1.6in]{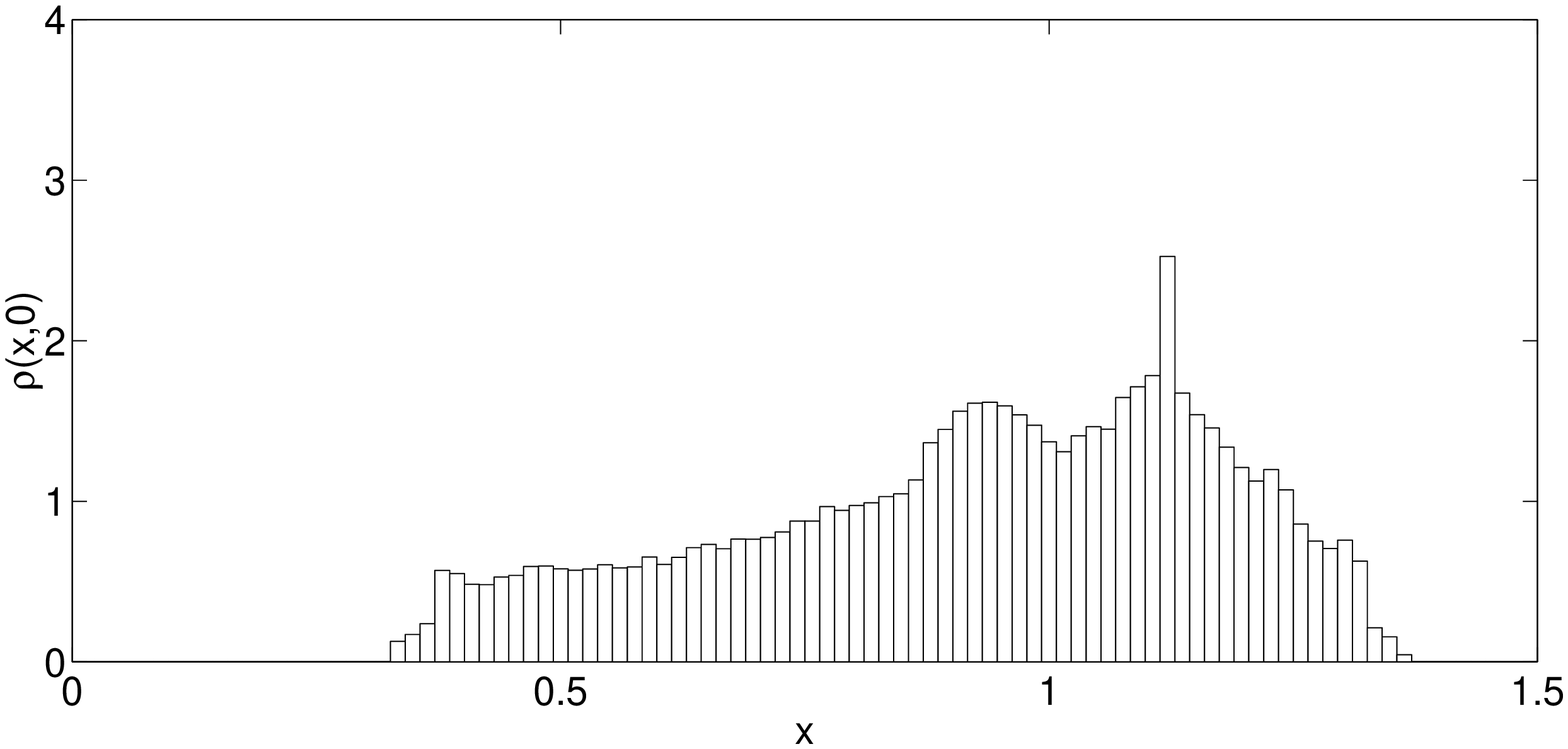} \\[0.1in]
\includegraphics[height=1.6in]{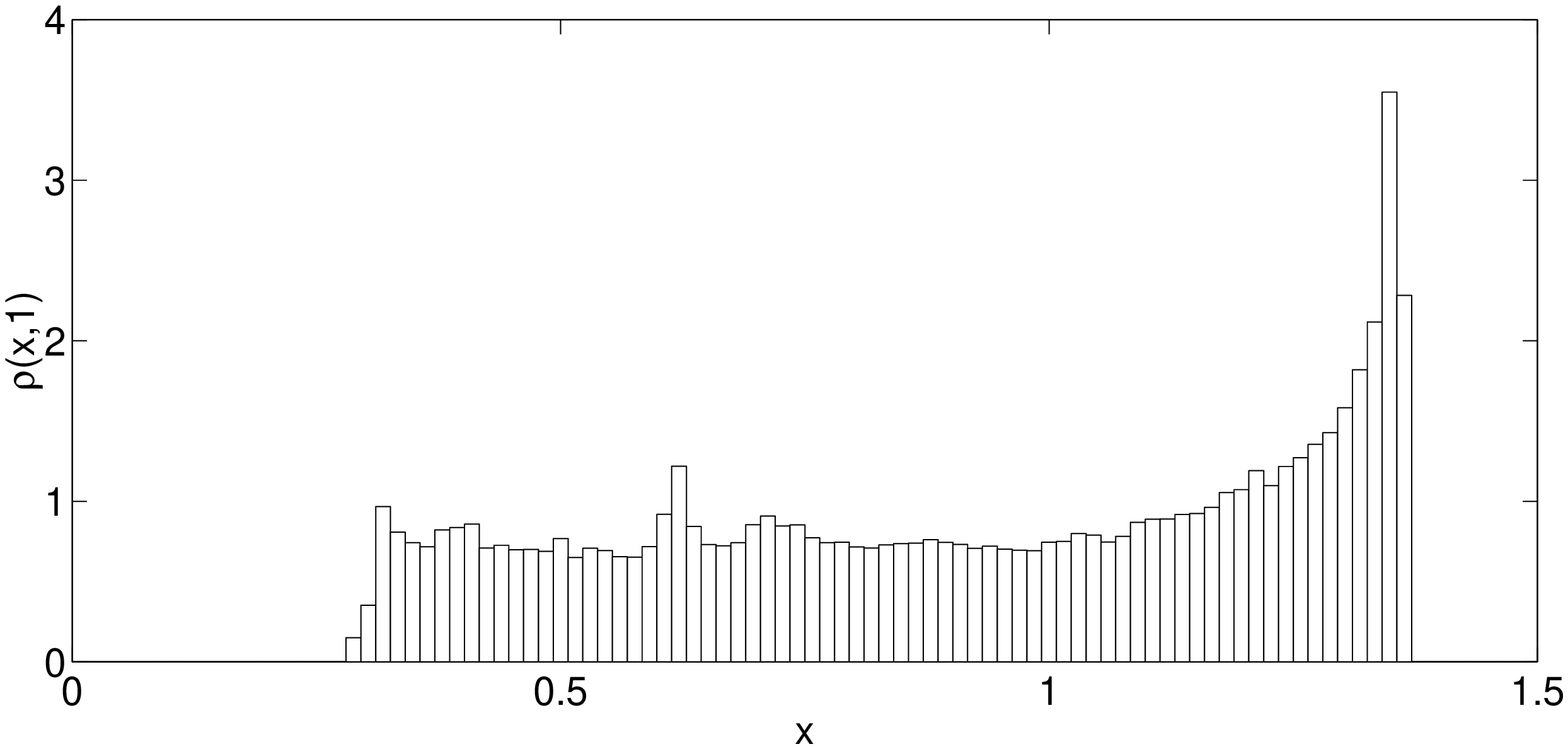} \\[0.1in]
\includegraphics[height=1.6in]{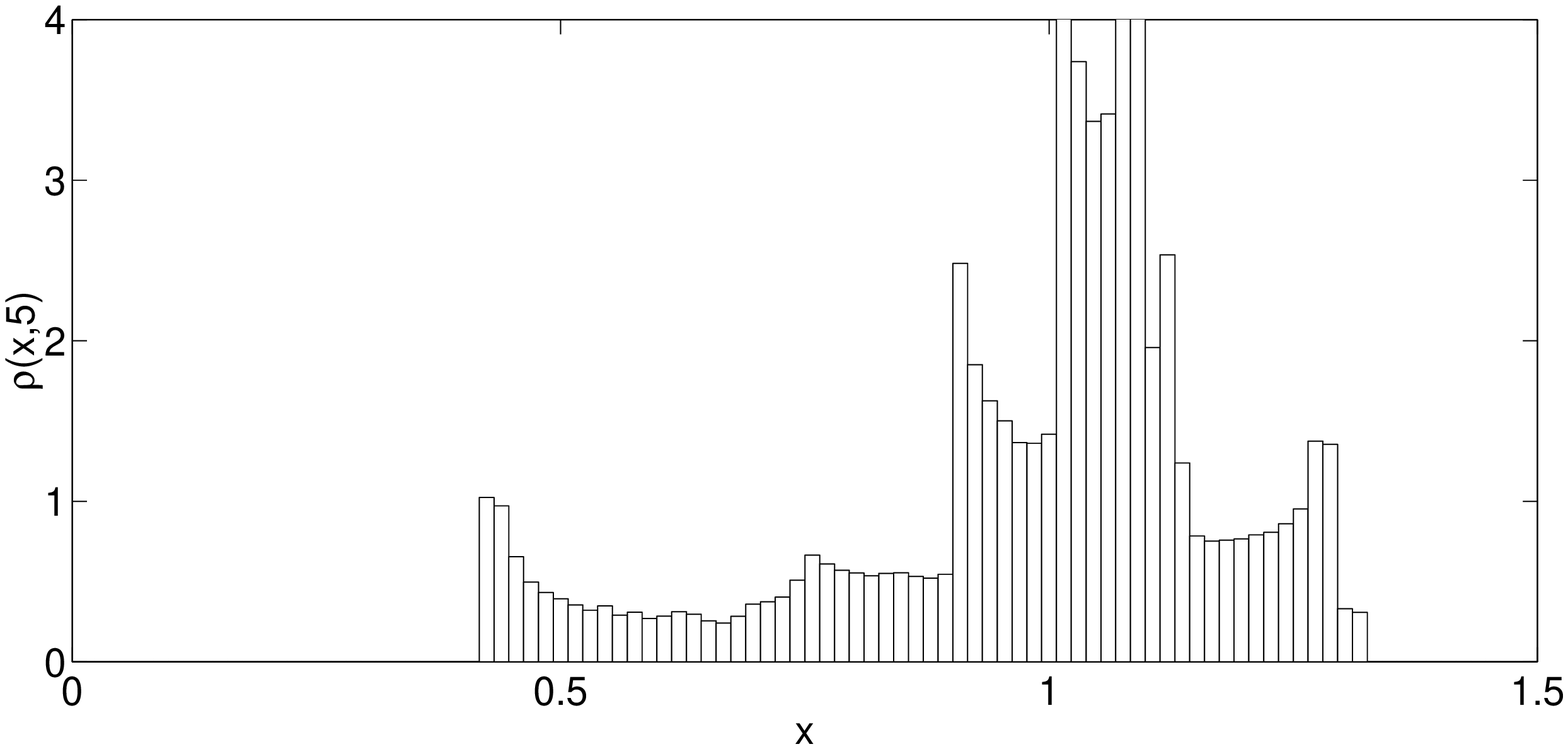} \\[0.1in]
\includegraphics[height=1.6in]{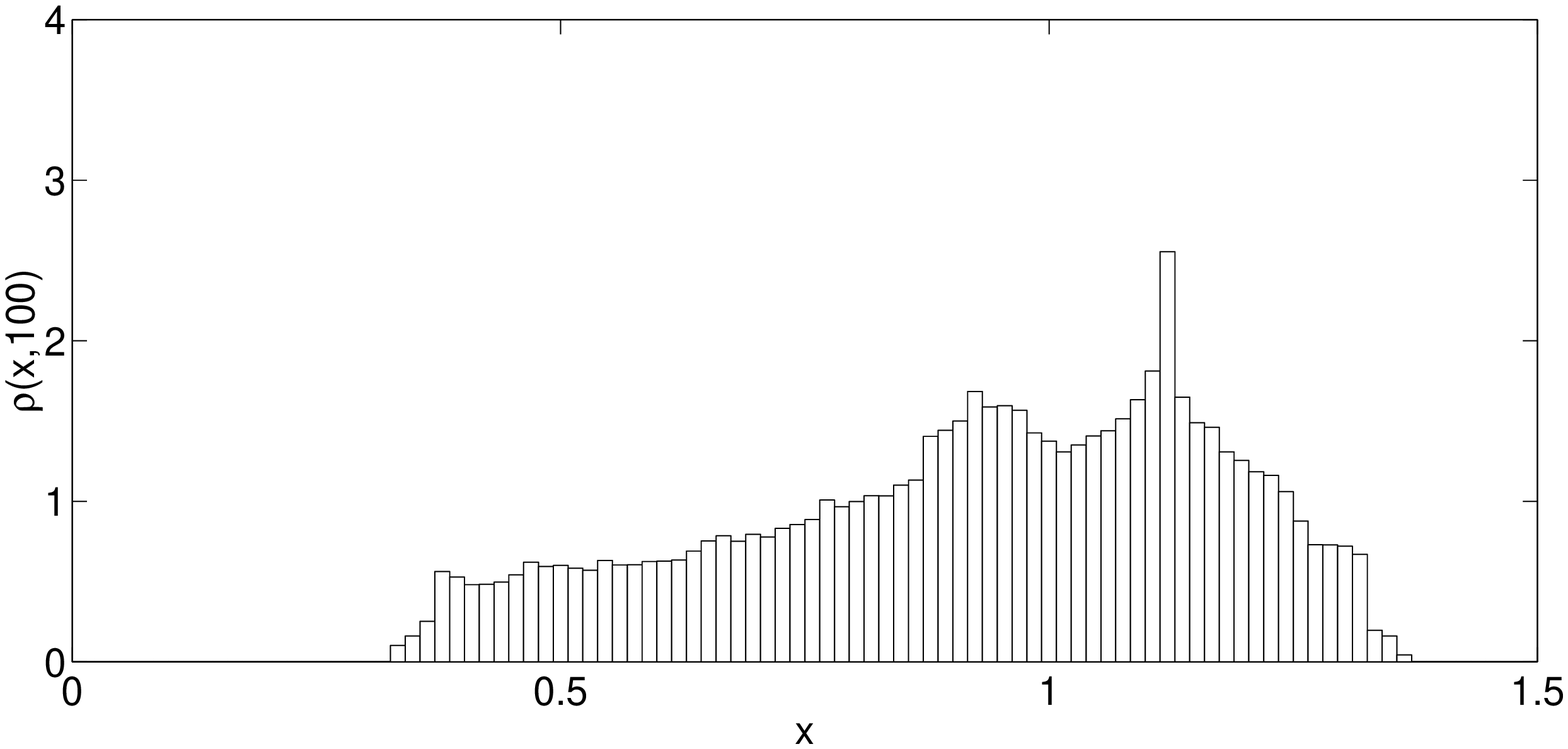}
\caption[Histograms of solution values $x(t)$ for an ensemble of $10^6$
solutions of the Mackey-Glass equation, corresponding to an ensemble
of constant initial functions with values distributed according to the
asymptotic density \rhostar.]{Histograms of solution values $x(t)$
for an ensemble of $10^6$ solutions of the Mackey-Glass
equation~\eqref{eq.mg3}, corresponding to an ensemble of constant
initial functions with values distributed according to the asymptotic
density \rhostar\ depicted in Figure~\ref{fig.invmg}.  Histograms
are shown for times $t=0,1,5,100$.}
\label{fig.mgnoninv}
\end{center}
\end{figure}

This phenomenon has a simple explanation if the infinite dimensional
dynamical system corresponding to the delay equation has an invariant
measure.  Indeed, suppose the dynamical system $\{S_t: t \geq 0\}$ (as
defined in equation~\eqref{eq.ddesg2}) has an invariant measure
\mustar, \ie,
\begin{equation}
  \mustar = S_t(\mustar) \equiv \mustar \circ S_t^{-1} \quad \forall t
  \geq 0.
\end{equation}
If an ensemble of phase points $x_t \in C$ is distributed according to
\mustar, then the corresponding ensemble of solution values $x(t) \in \Rn$
will be distributed according to the measure
\begin{equation}
  \etastar = \pi(\mustar) \equiv \mustar \circ \pi^{-1},
\end{equation}
since $x(t) = \pi(x_t)$ where the trace map $\pi: C \to \Rn$ is
defined by equation~\eqref{eq.pidef}.  Under the evolution prescribed
by the delay equation the distribution of solution values $x(t)$ does
not change with time since, under evolution by $S_t$, \etastar\
transforms to
\begin{equation}
  \pi\big( S_t (\mustar) \big) = \pi( \mustar ) = \etastar.
\end{equation}
This is just what we see with the asymptotic histograms shown in
Figures~\ref{fig.invmg}--\ref{fig.invpwl}, and suggests the
interpretation of these histograms as approximating measures \etastar\
that are just projections of \emph{invariant} measures \mustar\ on the
phase space $C$.

As discussed above, \etastar\ cannot be considered an invariant
measure for the DDE, since an ensemble of initial values distributed
according to \etastar\ does not necessarily remain distributed
according to \etastar.  This can now be understood as a consequence of
the fact that the trace map $\pi$ is not one-to-one.  That is, there
can be measures on $C$, other than \mustar, whose images under $\pi$
coincide with \etastar.  In general these measures will not be
invariant.  In particular, there is an ensemble of constant initial
functions with values distributed according to \etastar\
(corresponding to the first histogram in Figure~\ref{fig.mgnoninv}),
but this ensemble is not invariant under $S_t$.  As this ensemble
evolves, its projection under $\pi$ (corresponding to the subsequent
histograms in Figure~\ref{fig.mgnoninv}) diverge from \etastar.

The foregoing considerations show that by interpreting a DDE solution
$x(t)$ as the trace of the corresponding dynamical system $\{S_t\}$ in
$C$, we obtain a framework in which statistical properties of DDE
solutions can be understood in terms of statistical properties of
$\{S_t\}$.  In particular, existence of an invariant measure \mustar\
for $\{S_t\}$ implies existence of a corresponding measure \etastar\
on \Rn\ that is invariant under the DDE dynamics.  This suggests one
explanation for the origin of the asymptotic densities shown in
Figures~\ref{fig.invmg}--\ref{fig.invpwl}.  The following sections
show how the trace map $\pi$ carries over other properties of
trajectories in $C$ to corresponding properties of DDE solutions in
\Rn.

\subsection{Existence of an attractor}

\begin{thm}
Suppose $F: \Reals \to \Reals$ maps an interval $I=(-k,k)$ into itself
for some $k$.  Let the semigroup $\{S_t: t \geq 0\}$ be defined
by~\eqref{eq.ddesg2} for the delay equation
\begin{equation} \label{eq.ddeattr}
  x'(t) = -\alpha x(t) + F\big( x(t-1) \big), \quad t \geq 1
\end{equation}
with $\alpha \geq 1$.  Then $\{S_t\}$ has an attractor, $\Lambda$.
\nomenclature[L]{$\Lambda$}{Attractor of a dynamical system}%
That is~\cite{Ruelle81}, there is a compact set $\Lambda \subset C$
and a neighborhood $U$ of $\Lambda$ such that
\begin{enumerate}[(a)]
\item For every neighborhood $V$ of $\Lambda$, $S_t(U) \subset V$ for
all sufficiently large $t$.
\item $S_t(\Lambda) = \Lambda \; \forall t \geq 0$.
\item $\Lambda = \bigcap_{t \geq 0} S_t(U)$.
\end{enumerate}
\end{thm}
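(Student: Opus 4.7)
\medskip

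The plan is to construct a bounded, positively invariant absorbing set in $C$ whose forward trajectories are eventually trapped in a compact set, and then to define $\Lambda$ as the intersection of forward images of this absorbing set (i.e., as an $\omega$-limit set). The three hypotheses to be exploited are: (i) $F(I)\subseteq I$, which caps the magnitude of the nonlinear feedback; (ii) $\alpha\ge 1$, which provides linear dissipation sufficient to balance that feedback; and (iii) the delay structure itself, which after one delay time produces smoothing in the form of a uniform derivative bound.

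First I would establish the existence of a bounded absorbing set. Writing the solution of~\eqref{eq.ddeattr} by the variation-of-constants formula
\begin{equation*}
  x(t) = x(0) e^{-\alpha t} + \int_0^t e^{-\alpha(t-s)} F(x(s-1))\,ds, \quad t\ge 1,
\end{equation*}
and using $|F(x(s-1))|<k$ whenever $x(s-1)\in I$, together with $\alpha\ge 1$, one sees that the open ball $U=\{\phi\in C:\|\phi\|<k\}$ is positively invariant: $S_t(U)\subseteq U$ for all $t\ge 0$. A slight enlargement gives a neighborhood of $U$ that is also absorbed into $U$ within finite time, so $U$ is absorbing for a nontrivial neighborhood in $C$.

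Next I would prove a compactness (smoothing) property: for every $t\ge 1$ the set $S_t(U)$ is relatively compact in $C$. For $\phi\in U$ and $t\ge 1$, the solution $x$ satisfies $|x(s)|<k$ and hence $|x'(s)|\le \alpha k + k$ on $[t-1,t]$, so the family $\{S_t\phi:\phi\in U\}$ is uniformly bounded and equicontinuous on $[-1,0]$. Arzel\`a--Ascoli then gives relative compactness. This is the single step I expect to be the main substantive point, because everything else in dissipative dynamical systems theory relies on this kind of eventual compactness to replace the local compactness of phase space that is unavailable in $C$.

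Finally, with absorption plus eventual compactness in hand, I would define
\begin{equation*}
  \Lambda \;=\; \bigcap_{t\ge 0}\, \overline{\bigcup_{s\ge t} S_s(U)}
\end{equation*}
and verify properties (a)--(c) by the standard $\omega$-limit arguments (see e.g.\ Hale's theory of infinite-dimensional dissipative systems). Compactness of $\Lambda$ and property (a) follow because, once the trajectories enter the compact closure of $S_1(U)$, the nested intersection defining $\Lambda$ shrinks into every neighborhood of itself; invariance (b) follows from continuity of each $S_t$ together with the semigroup property and the fact that any $\psi\in\Lambda$ is a limit of $S_{s_n}\phi_n$ with $s_n\to\infty$, so $S_t\psi$ is also such a limit; and (c) reduces to a relabeling since, by positive invariance, $\overline{\bigcup_{s\ge t}S_s(U)}\subseteq \overline{S_{t-1}(U)}$ for $t\ge 1$, giving $\Lambda=\bigcap_{t\ge 0}\overline{S_t(U)}$ as required.
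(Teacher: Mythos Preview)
Your proposal is correct and follows essentially the same route as the paper: establish a positively invariant open ball in $C$, use the derivative bound on $[t-1,t]$ together with Arzel\`a--Ascoli to get relative compactness of $S_t(U)$ for $t\ge 1$, and then obtain $\Lambda$ as the intersection of the forward images. The only cosmetic differences are that the paper takes the smaller ball $U=\{u\in C:\|u\|<k/\alpha\}$ (which makes the invariance estimate come out as an exact equality $k/\alpha$) and invokes \cite[Prop.~3.2]{Ruelle81} directly for properties (a)--(c) rather than sketching the $\omega$-limit argument.
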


\begin{proof}
(Adapted from a sketch given in~\cite{Ersh91}).
By~\cite[Prop.\,3.2]{Ruelle81} it is sufficient to show that for some
open $U \subset C$, for all $t>1$ $S_t(U)$ is relatively compact and contained in $U$.
To this end let
\begin{equation}
  U = \{u \in C: |u| < k/\alpha\}.
\end{equation}
Recall (Section~\ref{sec.explmap}) that the time-one map $S_1: C \to
C$ can be written
\begin{equation} \label{eq.expmap}
  (S_1 u)(s) = u(0) e^{-\alpha(s+1)} + \int_{-1}^s {e^{\alpha(t-s)}
   F\big( u(t) \big) \, dt}, \quad s \in [-1,0].
\end{equation}
Thus for $u \in U$ we have
\begin{equation}
\begin{split}
  \big|(S_1 u)(s)\big|
   &\leq \Big| u(1) e^{-\alpha (s+1)} \Big| + \Big| \int_{-1}^s {e^{\alpha(t-s)}
       F\big( u(t) \big) \, dt} \Big| \\
   &< \frac{k}{\alpha} e^{-\alpha (s+1)} + \int_{-1}^s {e^{\alpha(t-s)} k \, dt} \\
   &= \frac{k}{\alpha} e^{-\alpha (s+1)} + \frac{k}{\alpha} \big(1 - e^{-\alpha (s+1)}\big) \\
   &= k/\alpha.
\end{split} 
\end{equation}
By~\eqref{eq.ddesg2} we have, $\forall t \in [0,1]$,
\begin{equation}
  (S_t \phi)(s) = \begin{cases} \phi(s+t) & \text{if $s \in [-1,-t]$} \\
                 (S_1 \phi)(s+t-1) & \text{if $s \in (-t,0]$}
	       \end{cases}
\end{equation}
so that $|(S_t u)(s)| < k/\alpha$ $\forall t \in [0,1]$.  The
semigroup property then implies
\begin{equation}
  S_t(U) \subseteq U \quad \forall t \geq 0.
\end{equation}
Furthermore, if $\phi \in U$ then the solution $x(t)$
of~\eqref{eq.ddeattr} with initial function $\phi$ satisfies, for $t
\in [0,1]$,
\begin{equation}
\begin{split}
  |x'(t)| &\leq |-\alpha x(t)| + |F\big(\phi(t-1)\big)| \\
          &\leq \alpha |x(t)| + k,
\end{split}
\end{equation}
so for $u \in U$ we have
\begin{equation}
  (S_t u)'(s) \leq \alpha \frac{k}{\alpha} + k = 2 k.
\end{equation}
Thus for $t \geq 1$ every element of $S_t(U)$ has a bounded
derivative, hence the set $S_t(U)$ is equicontinuous and therefore
relatively compact (by the Arzela-Ascoli
Theorem~\cite[p.\,57]{Lang93}).  Thus by~\cite[Prop.\,3.2]{Ruelle81}
the set
\begin{equation}
  \Lambda = \bigcap_{t \geq 0} S_t(U)
\end{equation}
is compact and satisfies (a)--(c) above.
\end{proof}

In each of the examples given in Section~\ref{sec.invex} the delay
equation satisfies the conditions of the theorem above (in particular
it suffices that $F$ be bounded), so the corresponding dynamical
system has a compact attractor $\Lambda \subset C$.  Thus for any
initial function $\phi$ in some open ball $U \subset C$, the
trajectory $\{S_t \phi: t \geq 0\} \subset C$ lies asymptotically on
(or near) $\Lambda$.  The proof gives an explicit formula for the
radius of $U$ in terms of $\alpha$ and the radius of the ``maximal
invariant interval'' $I$ such that $F(I) \subseteq I$.  The full basin
of attraction of $\Lambda$, $W=\bigcup_{t \geq 0} S_t^{-1}(U)$, might
actually be much larger than $U$.

Since any initial function $\phi \in W$ has
$\text{dist}(S_t(\phi),\Lambda)
\to 0$ as $t \to \infty$, continuity of the trace map $\pi$ implies
that
\begin{equation}
  \text{dist}\big(x(t),\pi(\Lambda)\big) = \text{dist}\big(\pi(S_t
   \phi),\pi(\Lambda)\big) \to 0 \quad \text{as} \quad t \to \infty.
\end{equation}
That is, the solution $x(t)$ corresponding to the initial function $\phi$
lies asymptotically in the image of $\Lambda$ under the mapping $\pi:
C \to \Rn$.

If $S_t$ possesses an invariant measure \mustar\ describing the
asymptotic statistics of its trajectories then, since trajectories in
$C$ lie asymptotically on $\Lambda$, \mustar\ will be concentrated on
$\Lambda$.  According to the previous section, there is a
corresponding measure $\etastar = \pi(\mustar)$ on \Rn\ that is
invariant under the dynamics, and this measure will be concentrated on
$\pi(\Lambda)$.  In particular, as the following section shows, if
$\Lambda$ carries an SRB measure that characterizes the distribution
of orbits on $\Lambda$, then the image of this measure under $\pi$
describes the asymptotic statistics of typical solutions $x(t)$.

\subsection{Evidence of an SRB measure} \label{sec.srbev}

Each asymptotic density \rhostar\ shown in
Figures~\ref{fig.invmg}--\ref{fig.invpwl} is constructed from an
ensemble of solutions $x(t)$ of a given DDE, evaluated at a particular
(large) time.  If we instead sample values $\{x_n = x(nh):
n=0,\ldots,N\}$ along a \emph{single} solution $x(t)$, where $h$ is
some fixed time increment (\eg\ the time step for numerical
integration), the histogram of these values approaches
\rhostar\ as $N \to \infty$.

Figure~\ref{fig.solhistpwl} illustrates this phenomenon.  Here we
consider the DDE~\eqref{eq.pwl} with piecewise-linear feedback,
restricted to constant initial functions (hence $g=0$
in~\eqref{eq.dde3}).  For an arbitrarily chosen initial value $x_0$ in
the DDE problem~\eqref{eq.dde3} we have computed a single numerical
solution and constructed histograms as described above.  Comparison
with Figure~\ref{fig.invpwl} shows good agreement between the
asymptotic histogram obtained as $N \to \infty$, and the asymptotic
histogram obtained by ensemble simulation. This behavior can also be
seen in the DDEs of examples~\ref{ex.mg} and~\ref{ex.pwc}.  Moreover,
the asymptotic histogram thus obtained seems to be independent of the
initial value $x_0$, with the exception of initial values on
equilibrium solutions of the DDE (\eg\ $x_0=0$ generates the zero
solution of the Mackey-Glass equation~\eqref{eq.mg3}, hence a trivial
histogram concentrated at the origin).
\begin{figure}
\begin{center}
\includegraphics[height=1.6in]{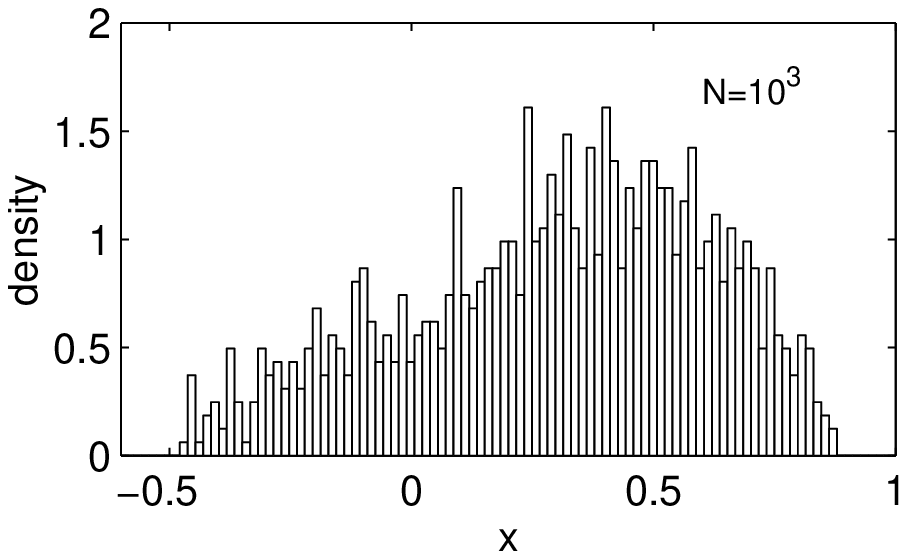} \\[0.1in]
\includegraphics[height=1.6in]{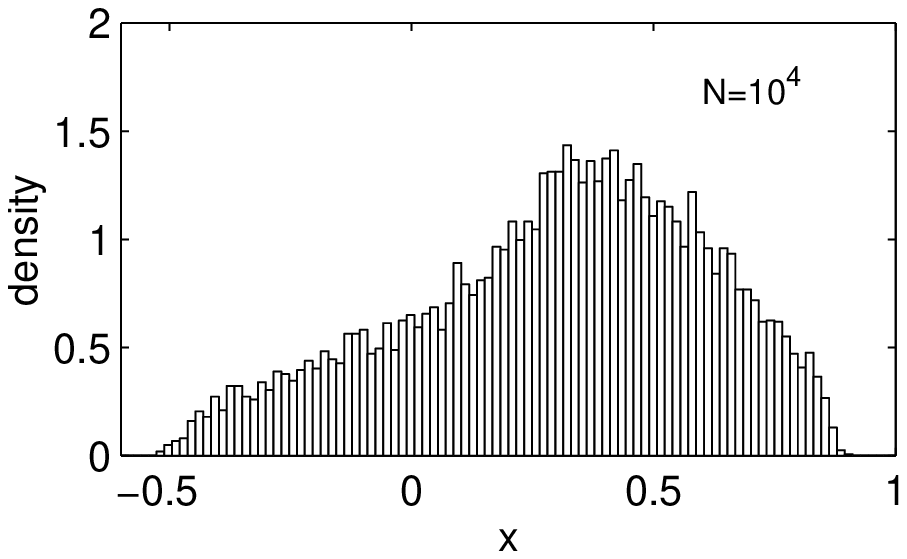} \\[0.1in]
\includegraphics[height=1.6in]{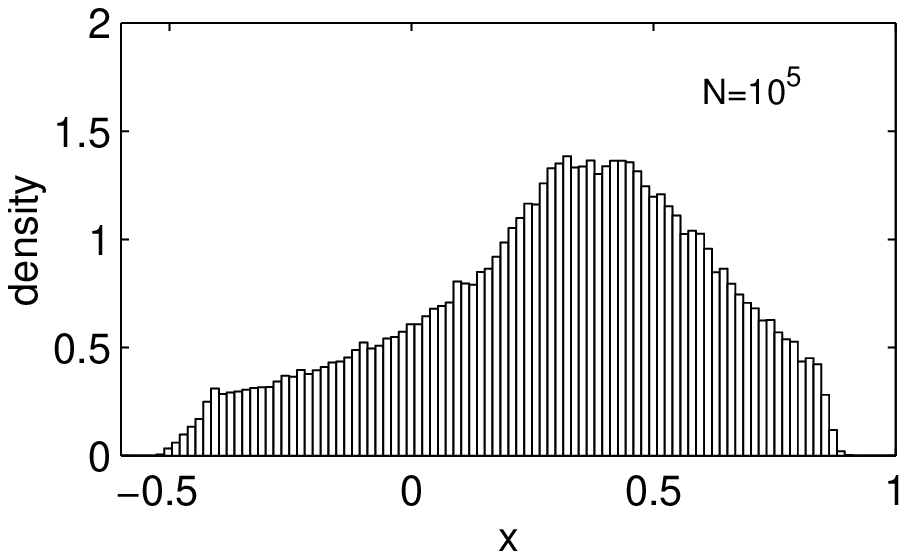} \\[0.1in]
\includegraphics[height=1.6in]{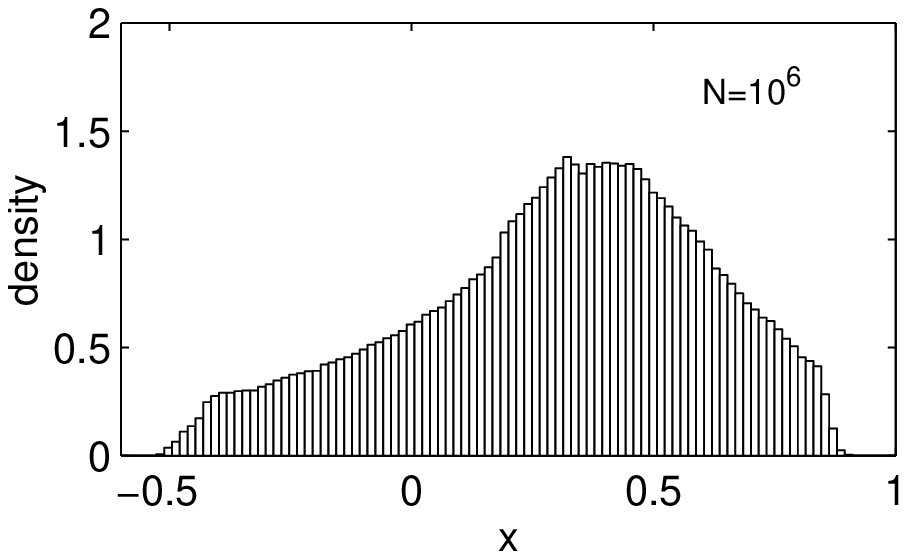}
\caption{Histograms of solution values $\{x_n = x(nh): n=0,\ldots,N\}$ for a
single numerical solution of the DDE~\eqref{eq.pwl} with fixed time
step $h$.}
\label{fig.solhistpwl}
\end{center}
\end{figure}

Convergence of a histogram along a single solution requires existence
of the time average
\begin{equation} \label{eq.ddebinav}
  \lim_{N \to \infty} \frac{\#\{n = 1,\ldots,N: x_n \in A\}}{N}
  = \lim_{N \to \infty} \frac{1}{N} \sum_{k=1}^N 1_A(x_k),
\end{equation}
for each histogram bin $A \subset \Reals$.  This behavior is indeed
expected, for solutions corresponding to $\mu$-almost every initial
function $\phi$, if the DDE's attractor supports an ergodic invariant
measure $\mu$ on $C$ (\cf\ equation~\eqref{eq.binav},
page~\pageref{eq.binav}).  However, the set of allowable (\eg\
constant) initial functions selected by~\eqref{eq.dde3} are not on the
attractor, hence not in the support of the supposed ergodic measure
$\mu$.  The fact that the time average~\eqref{eq.ddebinav} exists all
the same suggests a property stronger than ergodicity.  We conjecture
that the density \rhostar\ characterizes the asymptotic statistics of
every ``typical'' solution $x(t)$, with ``typical'' taken in the sense
of ``on a set of positive Lebesgue measure''.  This hypothesis is
similar to the existence of an SRB measure (\cf\
Section~\ref{sec.SRB}).

If the sequence $\{x_n = x(nh): n \geq 0 \}$ does in fact have a
well-defined asymptotic distribution according to a probability
measure \etastar, then convergence of histograms of $\{x_n\}$ to
\etastar\ can be expressed as.
\begin{equation} \label{eq.histconv}
  \frac{1}{N} \sum_{n=1}^N \delta_{x_n}(A) \longrightarrow
  \eta_{\ast}(A) \quad \text{as} \quad N \to \infty,
\end{equation}
for each measurable $A$, where $\delta_{x_n}$ is the probability
measure corresponding to a point mass at $x_n$:
\begin{equation}
  \delta_{x}(A) = \begin{cases}
	1 & \text{if $x \in A$} \\
        0 & \text{otherwise.}
		  \end{cases}
\end{equation}
The authors of~\cite{BB03} show that~\eqref{eq.histconv} implies
\begin{equation} \label{eq.SRBdde}
   \frac{1}{N}\sum_{i=1}^N \varphi\big(x_n\big) \longrightarrow \int \varphi \,d\eta_{\ast}.
\end{equation}
for any bounded continuous function $\varphi:\Reals\to\Reals$.  Thus
we have (supposedly) that the average of $\varphi$ along any typical
solution $x(t)$ is given by the average of $\varphi$ with respect to
$\eta_{\ast}$.  Equation~\eqref{eq.SRBdde} is just defining condition
for \etastar\ to be an SRB measure (\cf\ Section~\ref{sec.SRB}
page~\pageref{sec.SRB}), except that $\{x_n\}$ is not the orbit of a
dynamical system, but rather the trace of a dynamical system.

In fact, if the dynamical system $\{S_t\}$ corresponding to the DDE
does possess an SRB measure $\mu_{\ast}$, then $\eta_{\ast}$ is just
the image of $\mu_{\ast}$ under the trace map $\pi$.  To see this,
suppose $\mu_{\ast}$ is an SRB measure for $\{S_t\}$, \ie, for any
bounded continuous functional $\psi: C \to \Reals$,
\begin{equation} \label{eq.tavg2}
 \lim_{N \to \infty}\frac{1}{N} \sum_{k=1}^N \psi(S^k \phi) =
 \int_C \psi\,d\mu_{\ast}
\end{equation}
for all initial functions $\phi$ in a set of positive
$m$-measure.\footnote{We suppose $m$ is a measure on $C$ that provides
the relevant notion of ``almost every''.  As discussed in
Section~\ref{sec.infprob} the appropriate choice of $m$ is ambiguous,
so we leave it unspecified.}  Here $S=S_h$ is the time-$h$ map.  Let
$x$ be the DDE solution corresponding to initial function $\phi$.
Then for the particular functional $\psi=\varphi\circ\pi$, where
$\varphi: \Rn \to \Reals$ is bounded and continuous, and $\pi: C \to
\Rn$ is the trace map~\eqref{eq.pidef}, equation~\eqref{eq.tavg2}
gives
\begin{equation}
\begin{split}
 \lim_{N \to \infty}\frac{1}{N} \sum_{k=1}^N \varphi (x_n)
 &= \lim_{N \to \infty}\frac{1}{N} \sum_{k=1}^N \varphi\big( \pi(S^k(\phi) \big) \\
 &= \int_C \varphi \circ \pi\,d\mu_{\ast} \\
 &= \int_{\Rn} \varphi \,d (\mu_{\ast}\circ\pi^{-1}).
\end{split}
\end{equation}
The last line follows from measurability of $\pi$ and
Theorem~\ref{thm.cov} (page~\pageref{thm.cov}).  Comparison with
equation~\eqref{eq.SRBdde} gives (via the Riesz Representation
Theorem~\cite{Hal74})
\begin{equation}
  \eta_{\ast} = \pi(\mu_{\ast}) \equiv \mu_{\ast}\circ\pi^{-1}.
\end{equation}
That is, \etastar\ is just the image of \mustar\ under the
trace map $\pi:C \to \Rn$.  Thus the existence of an SRB measure
\mustar for $\{S_t\}$ implies the convergence of histograms, and
more generally the existence of time averages~\eqref{eq.SRBdde}, where
the asymptotic the measure \etastar\ can now be interpreted as the
projection $\pi(\mustar)$ of \mustar\ onto \Rn.

The supposed existence of an SRB measure for the infinite dimensional
dynamical system $\{S_t\}$ is only a conjecture supported by numerical
evidence.  At present a proof appears to be unattainable.  Indeed,
justifying~\eqref{eq.tavg2} is a formidable task even for
finite-dimensional dynamical systems~\cite{ER85,Via01}.  For infinite
dimensional systems, even the definition of SRB measure, and in
particular the appropriate choice of reference measure $m$, is
ambiguous.  Nevertheless, supposing the existence of an SRB measure
does provide a plausible framework that explains the apparent
existence of asymptotic densities for ensembles of solutions of some
DDEs.  This model is helpful to the discussion in the following
sections, where we consider methods of computing these asymptotic
densities.

\clearpage
\subsection{Higher dimensional traces} \label{sec.trace2}

The trace map $\pi: C \to \Rn$ defined by (\cf\
Section~\ref{sec.trace1})
\begin{equation}
  \pi(u) = u(0)
\end{equation}
is a natural way to construct finite-dimensional images of objects in
$C$.  In particular if $\Lambda \subset C$ is an attractor for the
dynamical system $\{S_t\}$ corresponding to the delay
equation~\eqref{eq.dde3} then $\pi(\Lambda$) gives a
finite-dimensional picture of $\Lambda$, in the space \Rn\ of the
solution variable $x(t)$.  If $\{S_t\}$ has an invariant measure $\mu$
then the measure $\pi(\mu)$ on \Rn\ describes the statistics of an
ensemble of solutions $x(t)$, and is also invariant under the
dynamics.  Since $\pi$ is many-to-one, some information is lost in
these projections.  Indeed, if $x(t) \in \Reals$ then $\pi(\Lambda)$
is just an interval, and gives little information about the structure
of $\Lambda$.

If one was to choose a different mapping $\pi: C \to \Reals^M$ with $M
> n$ then intuitively $\pi(\Lambda)$ should give a more accurate
picture of $\Lambda$.  Ideally we would like $\pi$ to be one-to-one on
$\Lambda$, in which cased case the image $\pi(\Lambda)$ is called an
\emph{embedding} of $\Lambda$ in $\Reals^M$~\cite{HK99,SYC91}.  The
following theorem establishes that for $M$ sufficiently large, almost
\emph{any} $\pi: C \to \Reals^M$ in a certain class will yield an embedding of a
given finite-dimensional set in $C$.
\begin{thm}[after \cite{HK99}]
Let $X$ be a Banach space, $A \subset X$ a compact set with
box-counting dimension $D$.  If $M > 2D$ then almost every\footnote{In
the sense of prevalence~\cite{HSY92}, \cf\
Section~\ref{sec.prevalence}.} bounded linear function $\pi: X \to
\Reals^M$ is one-to-one on $A$.
\end{thm}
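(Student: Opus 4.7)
The plan is to use the prevalence framework from Section~\ref{sec.prevalence}: show that the ``bad set''
\begin{equation*}
  B=\{\pi\in\mathcal{L}(X,\Reals^M):\pi\big|_A\text{ not injective}\}
\end{equation*}
is shy in $\mathcal{L}(X,\Reals^M)$. Since $\pi$ is linear, $\pi$ is injective on $A$ iff $\ker\pi\cap(A-A)=\{0\}$, where the difference set $A-A=\{x-y:x,y\in A\}$ is compact. A key preliminary fact is that $\dim_B(A-A)\leq 2D$: covering $A$ by $N(\epsilon)$ balls of radius $\epsilon$ at centers $\{x_i\}$ yields a cover of $A-A$ by $N(\epsilon)^2$ balls of radius $2\epsilon$ at centers $\{x_i-x_j\}$, so the exponential covering rate at most doubles.

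First I would construct a finite-dimensional probe $P\subset\mathcal{L}(X,\Reals^M)$ equipped with Lebesgue measure. Choose bounded linear functionals $\ell_1,\ldots,\ell_N\in X^*$ that quantitatively separate points of $A-A$ in the sense that $\max_j|\ell_j(z)|\geq c\|z\|$ for all $z$ in the separable subspace $\overline{\mathrm{span}}(A-A)$. Such a finite family exists by Hahn--Banach together with compactness of the unit sphere of a finite-dimensional approximation of this subspace. Let $P$ be the $MN$-dimensional space of maps $p=(p_1,\ldots,p_M)$ with each $p_j=\sum_k a_{jk}\ell_k$, equipped with Lebesgue measure on the coefficients restricted to a unit ball $U\subset P$ so that the probe measure is finite, positive, and compactly supported.

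The main step is a Fubini-type slicing estimate. Fix $\pi\in\mathcal{L}(X,\Reals^M)$ and decompose $A-A\setminus\{0\}$ into dyadic annuli $A_n=\{z:2^{-n}\leq\|z\|<2^{-n+1}\}$. For each $z\in A_n$ the set $V_z=\{p\in P:(\pi+p)(z)=0\}$ is an affine subspace of codimension exactly $M$, because the $M$ defining equations use disjoint coefficients $a_{jk}$ and the normal vector $(\ell_1(z),\ldots,\ell_N(z))$ is nonzero by the separation hypothesis. A standard volume bound gives
\begin{equation*}
  \mathrm{Leb}\big(\{p\in U:|(\pi+p)(z)|<\epsilon\}\big)\leq C_1\,(\epsilon/\|z\|)^M.
\end{equation*}
Covering $A_n$ by balls of radius $\epsilon$, and noting that $(\pi+p)(z)=0$ at some $z$ in such a ball forces $|(\pi+p)(z_0)|\leq C_2\epsilon$ at the center $z_0$, the bad-$p$ measure coming from $A_n$ is at most $C_3\,2^{nM}\,\epsilon^{M}\,N_{A-A}(\epsilon)$. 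Since $M>2D$, pick $\delta>0$ with $2D+\delta<M$; then $N_{A-A}(\epsilon)\leq\epsilon^{-(2D+\delta)}$ for small $\epsilon$, and the bound becomes $C_3\,2^{nM}\,\epsilon^{M-2D-\delta}\to 0$. Intersecting over $\epsilon\to 0$ and summing over the countably many $n$ shows that $\{p\in U:\pi+p\in B\}$ has Lebesgue measure zero, witnessing that $B$ is shy.

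The main obstacle is establishing the uniform quantitative separation $\max_j|\ell_j(z)|\geq c\|z\|$ on a \emph{finite} family $\{\ell_j\}\subset X^*$: without it the codimension-$M$ slice estimate degenerates. The dyadic annular decomposition isolates the issue of $z\to 0$ and shows the separation bound need only hold on a compact piece of the unit sphere in $\overline{\mathrm{span}}(A-A)$, which in turn follows from compactness of $A-A$ and the fact that $X^*$ separates points. Once this is in hand, verifying that $B$ is Borel and that the probe measure satisfies the definition of a transverse measure is routine.
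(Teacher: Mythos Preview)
The paper does not supply its own proof of this theorem; it is stated ``after \cite{HK99}'' and simply cited, so there is nothing in the paper to compare your argument against.

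That said, your proposed proof has a genuine gap at the step you yourself flag as the main obstacle. You claim one can choose finitely many functionals $\ell_1,\ldots,\ell_N\in X^*$ with $\max_j|\ell_j(z)|\geq c\|z\|$ for all $z$ in $\overline{\mathrm{span}}(A-A)$. But such an inequality forces the map $z\mapsto(\ell_1(z),\ldots,\ell_N(z))$ to be bounded below on that subspace, hence a linear isomorphism onto its image in $\Reals^N$; this would make $\overline{\mathrm{span}}(A-A)$ at most $N$-dimensional. For the compact attractors motivating this section (e.g.\ the Mackey-Glass attractor) the linear span of $A-A$ is infinite dimensional, so no such finite family exists. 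Your attempted rescue---that one only needs the bound on ``a compact piece of the unit sphere''---fails for the same reason: the set of normalized directions $\{z/\|z\|:z\in A-A,\ z\neq 0\}$ is generally not precompact (take $A=\{0\}\cup\{e_n/n\}$ in $\ell^2$, which is compact with box dimension $1$, yet its direction set contains the orthonormal sequence $\{e_n\}$).

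The standard repair, and essentially what Hunt--Kaloshin do, is not to insist on a single probe. Each dyadic annulus $A_n=\{z\in A-A:2^{-n}\leq\|z\|<2^{-n+1}\}$ \emph{is} compact, so for fixed $n$ finitely many functionals do separate $A_n$ from $0$ with a uniform constant; your slicing estimate then shows $B_n=\{\pi:\ker\pi\cap A_n\neq\emptyset\}$ is shy using an $n$-dependent probe. Since a countable union of shy sets is shy, $B=\bigcup_n B_n$ is shy. The only change to your outline is to allow the probe to vary with $n$.
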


To illustrate how a multi-dimensional trace map can be used to
visualize the attractor of a DDE, we consider the mapping $\pi: C \to
\Reals^2$ defined by
\begin{equation}
  \pi(u) = \big( u(-1), u(0) \big).
\end{equation}
If $x_t = S_t(\phi) \in C$ is a phase point on a trajectory of
$\{S_t\}$ then we have (by equation~\eqref{eq.xtdef})
\begin{equation} \label{eq.trace2d}
\begin{split}
  \pi(x_t) &= \big( x_t(-1), x_t(0) \big) \\
           &= \big( x(t-1), x(t) \big)
\end{split}
\end{equation}
where $x(t)$ is the solution of the DDE with initial function $\phi$.
The image $\pi(\Lambda)$ can be approximated by computing a numerical
solution of the given DDE and plotting the set of points
$\{\big(x(t-1),x(t)\big): t \geq T\}$ in the plane (where $T$ is a
sufficiently large time for transients to die out, \ie\ for
$S_t(\phi)$ to approach $\Lambda$).  This amounts to plotting $x(t)$
vs.\ $x(t-1)$; in the literature this is occasionally done to
construct phase plots of DDE solutions in the ``pseudo phase space''
$\Reals^2$.  The results of this procedure applied to the examples in
Section~\ref{sec.invex} are shown in Figures~\ref{fig.att2dmg},
\ref{fig.att2dpwc} and~\ref{fig.att2dpwl}, respectively.  Even in
these $2$-dimensional images an intricate (presumably fractal)
structure of the attractor is apparent.

The previous section gave evidence of an SRB measure $\mu_{\ast}$
supported on $\Lambda$, and showed how the histograms depicted in
Figures~\ref{fig.invmg}--\ref{fig.invpwl} can be interpreted as
approximations of the image of $\mu_{\ast}$ under $\pi: u \mapsto
u(0)$.  Just as with the attractor $\Lambda$, a more accurate image of
$\mu_{\ast}$ is obtained under the $2$-dimensional trace
map~\eqref{eq.trace2d}.  The measure $\pi(\mu_{\ast})$ can be
approximated by computing a typical numerical solution of the given
DDE and constructing a two-dimensional histogram of the sequence of
vectors $\{\big(x(t-1),x(t)\big): t > T\}$.  The results of this
procedure applied to the examples of Section~\ref{sec.invex} are shown
in Figures~\ref{fig.inv2dmg}, \ref{fig.inv2dpwc}
and~\ref{fig.inv2dpwl}, respectively.  In each figure, part of the
$(x(t-1),x(t))$-plane is divided into a grid of rectangles (the
histogram bins).  Each rectangle is uniformly shaded with a level of
grayscale intensity that indicates the histogram height, which
approximates the measure $\pi(\mu_{\ast})$ of that rectangle.

\begin{figure}[p]
\begin{center}
\includegraphics[height=2.8in]{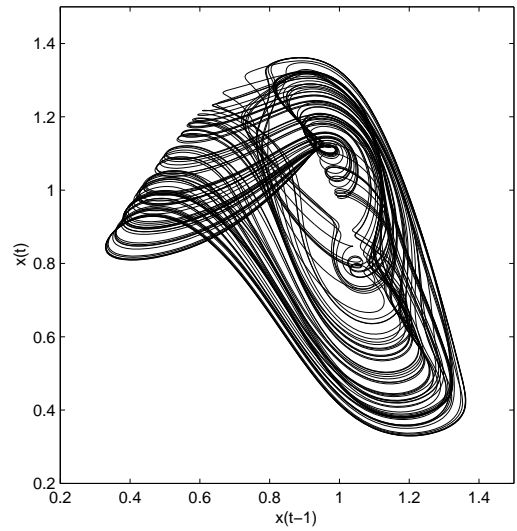}
\caption[Numerical approximation of the image $\pi(\Lambda)$ of the
attractor $\Lambda \subset C$ of the Mackey-Glass equation.]{Numerical
approximation of the image $\pi(\Lambda)$ of the attractor $\Lambda
\subset C$ of the Mackey-Glass equation~\eqref{eq.mg3}, under the
trace map~\eqref{eq.trace2d}.}
\label{fig.att2dmg}
\end{center}
\end{figure}
\begin{figure}[p]
\begin{center}
\includegraphics[height=2.8in]{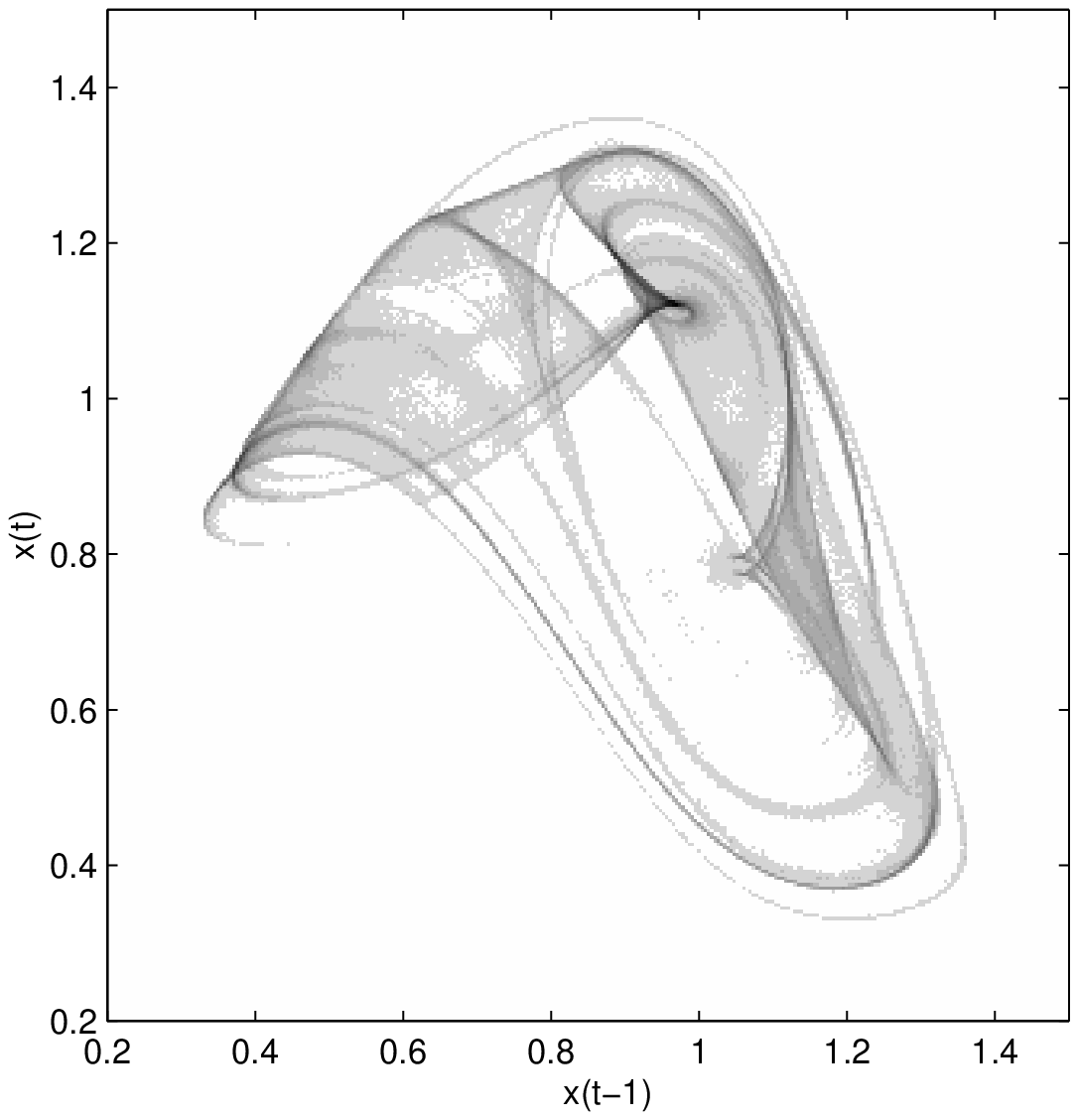}
\caption[Two-dimensional histogram approximating the projection
$\pi(\mu_{\ast})$ of the supposed SRB measure $\mu_{\ast}$ for the
Mackey-Glass equation.]{Two-dimensional histogram approximating the
projection $\pi(\mu_{\ast})$ of the supposed SRB measure $\mu_{\ast}$
for the Mackey-Glass equation~\eqref{eq.mg3}, under the trace
map~\eqref{eq.trace2d}.}
\label{fig.inv2dmg}
\end{center}
\end{figure}
\begin{figure}[p]
\begin{center}
\includegraphics[height=2.8in]{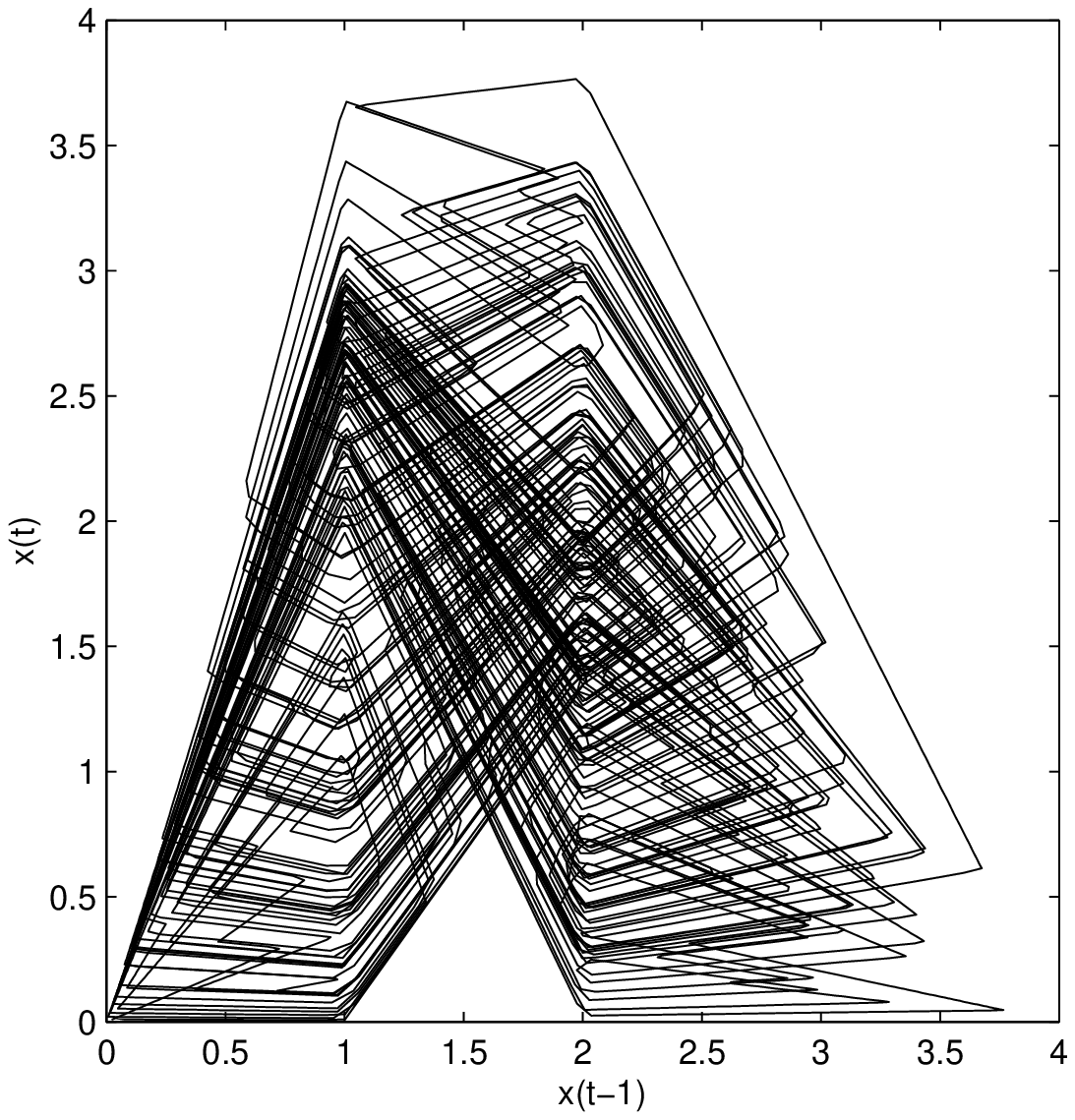}
\caption[Numerical approximation of the image $\pi(\Lambda)$ of the
attractor $\Lambda \subset C$ for the delay
equation~\eqref{eq.pwc}.]{Numerical approximation of the image
$\pi(\Lambda)$ of the attractor $\Lambda \subset C$ for the delay
equation~\eqref{eq.pwc}, under the trace map~\eqref{eq.trace2d}.}
\label{fig.att2dpwc}
\end{center}
\end{figure}
\begin{figure}[p]
\begin{center}
\includegraphics[height=2.8in]{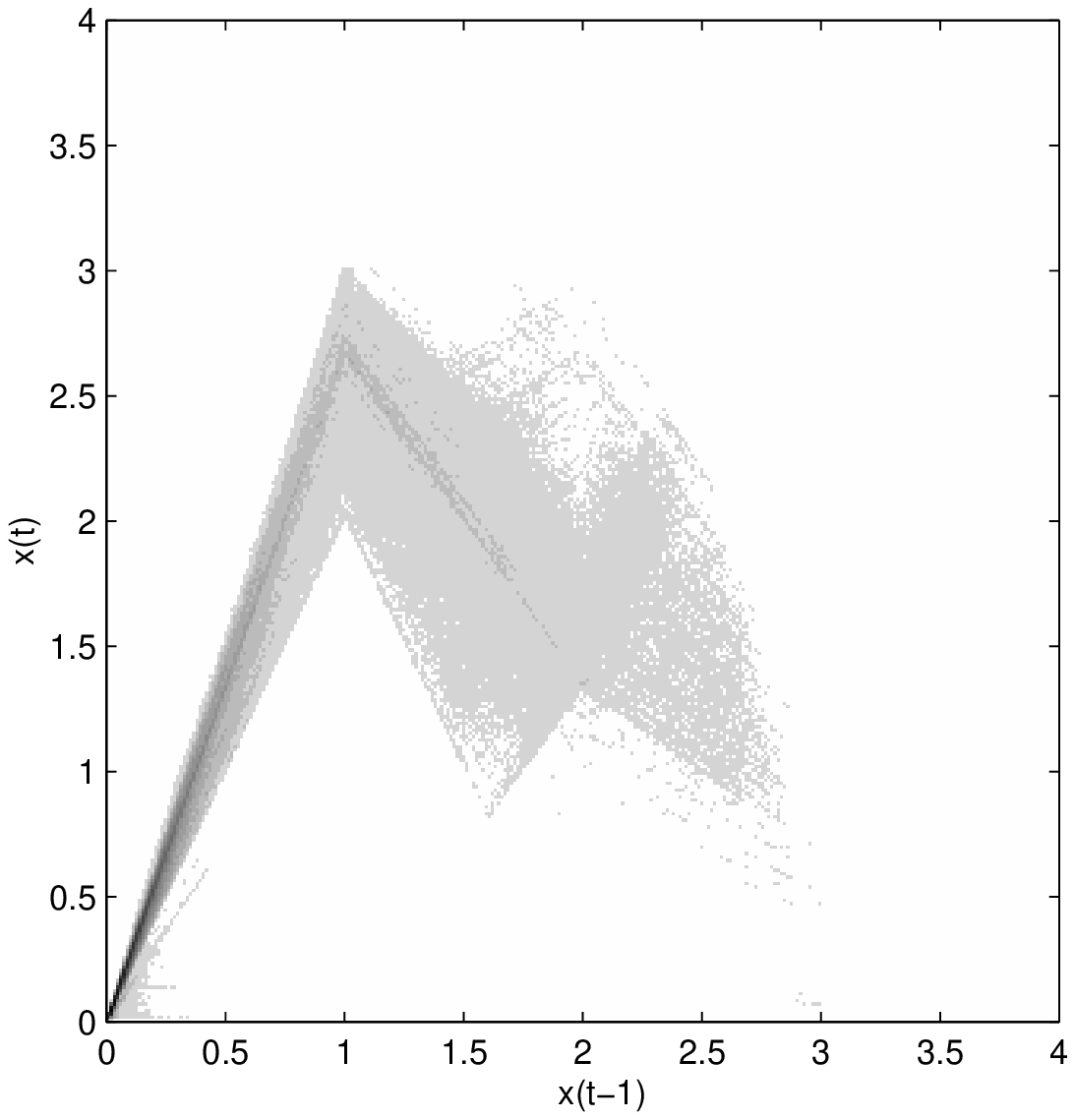}
\caption[Two-dimensional histogram approximating the projection
$\pi(\mu_{\ast})$ of the supposed SRB measure $\mu_{\ast}$ for the
delay equation~\eqref{eq.pwc}.]{Two-dimensional histogram
approximating the projection $\pi(\mu_{\ast})$ of the supposed SRB
measure $\mu_{\ast}$ for the delay equation~\eqref{eq.pwc}, under the
trace map~\eqref{eq.trace2d}.}
\label{fig.inv2dpwc}
\end{center}
\end{figure}
\begin{figure}[p]
\begin{center}
\includegraphics[height=2.8in]{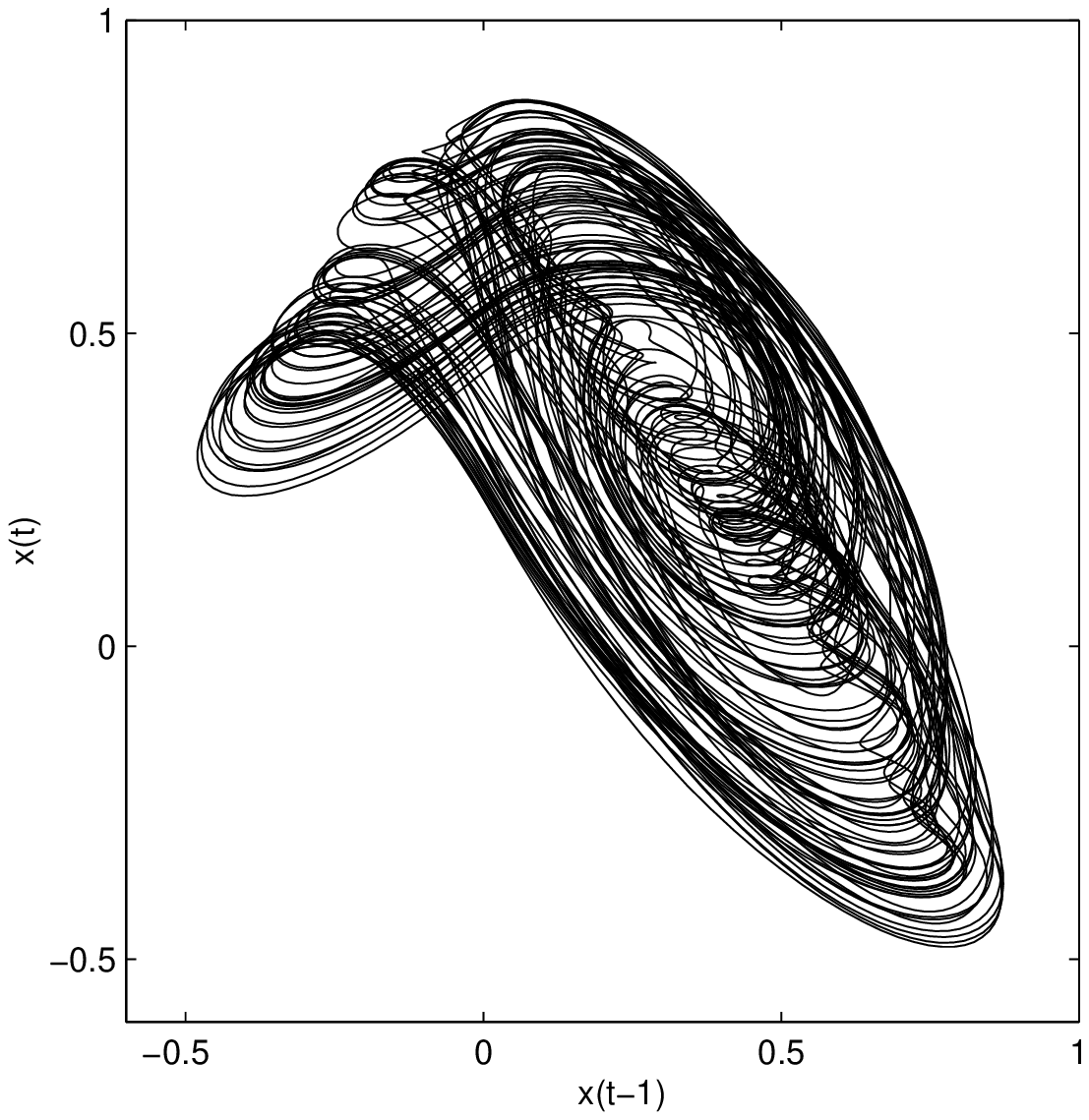}
\caption[Numerical approximation of the image $\pi(\Lambda)$ of the
attractor $\Lambda \subset C$ of the delay
equation~\eqref{eq.pwl}.]{Numerical approximation of the image
$\pi(\Lambda)$ of the attractor $\Lambda \subset C$ of the delay
equation~\eqref{eq.pwl}, under the trace map~\eqref{eq.trace2d}.}
\label{fig.att2dpwl}
\end{center}
\end{figure}
\begin{figure}[p]
\begin{center}
\includegraphics[height=2.8in]{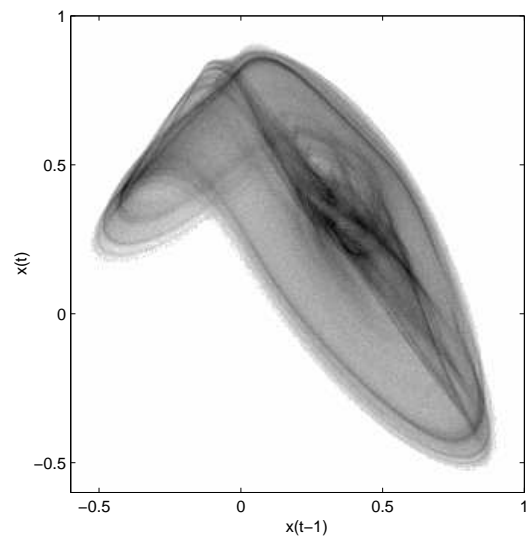}
\caption[Two-dimensional histogram approximating the projection
$\pi(\mu_{\ast})$ of the supposed SRB measure $\mu_{\ast}$ for the
delay equation~\eqref{eq.pwl}.]{Two-dimensional histogram
approximating the projection $\pi(\mu_{\ast})$ of the supposed SRB
measure $\mu_{\ast}$ for the delay equation~\eqref{eq.pwl}, under the
trace map~\eqref{eq.trace2d}.}
\label{fig.inv2dpwl}
\end{center}
\end{figure}

\clearpage
\section{Ulam's Method}\label{sec.ulam}

The remainder of this chapter is concerned with methods for computing
asymptotic densities for delay equations.  The methods considered so
far are of the ``brute force'' type: simulating large ensembles of
solutions (Section~\ref{sec.invex}) and computing statistics on a
single long solution (Section~\ref{sec.trace2}).  The utility of these
methods is limited by their large sampling requirements, owing to the
slow $O(1/\sqrt{N})$ rate of convergence of histograms as the number
$N$ of solutions is increased (see p.\,\pageref{sec.sampling}).  In
the following sections we seek more efficient methods.

A preliminary observation is in order, related to the remark at the
beginning of Section~\ref{sec.stepsdens}.  The obvious approach to
finding asymptotic densities is to begin with an evolution equation
for $\rho(x,t)$, of the form
\begin{equation}
  \frac{d}{dt}\rho = \{\text{some operator}\}(\rho).
\end{equation}
Invariance of \rhostar\ could then be characterized by setting the
left-hand side equal to zero, resulting in the equation
\begin{equation}
  \{\text{some operator}\}(\rhostar) = 0,
\end{equation}
which hopefully could be solved, at least approximately, for \rhostar.
This approach fails since, for reasons discussed in
Section~\ref{sec.stepsdens}, $\rho(x,t)$ cannot be described by an
evolution equation of the desired form (or, for that matter, by any
evolution equation in terms of $\rho(x,t)$ alone).  For that matter,
\rhostar\ cannot even be considered invariant in the above sense
(\cf\ Section~\ref{sec.ddeinv}).  At best, we can only apply this
approach to an \emph{approximate} evolution equation for $\rho$.  One
such possibility is considered in Section~\ref{sec.approxinv}.

An alternative approach, and probably the best known technique for
approximating invariant measures for dynamical systems, is Ulam's
method~\cite{Ulam60,Li76,DL91}.  The following section presents the
basic idea, after which we consider how the method might be adapted to
delay differential equations.

\subsection{Stochastic approximation of dynamical systems}

Let a discrete-time dynamical system be defined by iterates of a map
$S: X \to X$ ($X \subset \Rn$) and suppose $S$ has an invariant
measure $\mu$ (for continuous time systems, the following applies to a
suitable discrete-time map, \eg\ the time-one map).  For a given
partition $\Acal=\{A_1,\ldots,A_n\}$ of $X$, let $p_i(k)$ denote the
probability at time $k$ that the system state $x(k) = S^k(x) \in A_i$.

Ulam's method approximates the evolution of the probability vector
$p(k) =$\linebreak $(p_1(k),\ldots,p_n(k))$ by a Markov chain
\begin{equation} \label{eq.markov}
  p(k+1) = Pp(k),
\end{equation}
where $P$ is a transition matrix that models the dynamics of $S$.  The
basic idea is to ignore the details of the dynamics within each $A_i$,
and instead consider the ``coarse-grained'' dynamics with respect to
the partition.  Thus, given that $x(k) \in A_j$, we suppose that
$x(k)$ is equally likely to be anywhere in $A_j$, \ie, $x(k)$ is
distributed according to normalized Lebesgue measure $\lambda$ on
$A_j$.  Then the ``transition probability'' that $x(k+1) \in A_i$ is
\begin{equation} \label{eq.transmat}
  P_{ij} = \frac{ \lambda(A_j \cap S^{-1} A_i) }{ \lambda(A_j) },
\end{equation}
\ie, the fraction (with respect to Lebesgue measure) of $A_j$ that
is mapped by $S$ into $A_i$.  This defines the $n \times n$ transition
matrix $P$ for the Markov chain~\eqref{eq.markov}, a stochastic
process that (hopefully) approximates the probabilistic dynamics of
$S$, in the following sense.

Each probability vector $p=p(k)$ defines a probability measure $\nu$
on $X$ such that $\nu(A_i)=p_i$, and in general
\begin{equation} \label{eq.nuapprox}
  \nu(A) = \sum_{i=1}^n \frac{\lambda(A \cap A_i)}{\lambda(A_i)}\cdot p_i.
\end{equation}
This measure has piecewise constant density
\begin{equation} \label{eq.densapprox}
  \rho(x) = \frac{p_i}{\lambda(A_i)} \quad \text{if $x \in A_i$}.
\end{equation}
Thus the evolution equation~\eqref{eq.markov} for $p(k)$ implicitly
defines a sequence of piecewise constant densities, approximating a
sequence of densities evolving under the action of $S$.  To be more
precise, $P$ can be interpreted as a projection of the
Perron-Frobenius operator $\tilde{P}$ corresponding to $S$ onto the
space of piecewise constant densities (with respect to the partition
\Acal).  Formally, $P$ can be written in terms of $\tilde{P}$
as~\cite{Li76}
\begin{equation}
  P (Q f) = Q (\tilde{P} f) \quad \forall f \in L^1(X),
\end{equation}
where the operator $Q$ projects $f$ to the piecewise constant function
$Qf\in L^1(X)$ given by
\begin{equation}
  (Q f)(x) = \sum_{i=1}^n\frac{\int_{A_i}
  f\,d\lambda}{\lambda(A_i)}\cdot 1_{A_i}(x).
\end{equation}

Since $P$ approximates the Perron-Frobenius operator $\tilde{P}$, it
can be hoped that a fixed point of $\tilde{P}$ (\ie, an invariant
density for $S$) can be approximated by a fixed point of $P$.  That
is, suppose that $p$ satisfies
\begin{equation}
  p = P p
\end{equation}
\ie, $p$ is an eigenvector of $P$ with eigenvalue $1$, normalized so
that $\sum p_i = 1$.  The corresponding piecewise constant density
defined by equation~\eqref{eq.densapprox} then approximates the density
of the invariant measure $\mu$.  Ulam conjectured~\cite{Ulam60} that
as the partition is refined ($\lambda(A_i) \to 0$), the sequence of
approximations obtained in this way should converge to the fixed point
of $\tilde{P}$, \ie, the true invariant density.  This conjecture has
in fact been proved for particular cases, such as piecewise expanding
transformations on intervals~\cite{Li76} and on rectangles
in~\Rn~\cite{DZ96}

\subsection{Application to delay equations}

In the case of delay differential equations, we are concerned with an
invariant measure $\mu$ for a dynamical system $S_t$ on the space
$C([-1,0])$.  If $\mu$ has support in some bounded set $X \subset C$
(\eg, when $\mu$ is an SRB measure supported on a compact attractor
$\Lambda$) then in principle Ulam's method could be applied to the
time-one map $S=S_1$ and for some partition $\Acal$ of $X$.  However,
it is unclear (\cf\ Chapter~\ref{ch.framework}) what measure on $C$ is
an appropriate analog of Lebesgue measure in the definition of the
transition matrix $P$ (equation~\eqref{eq.transmat}).  For now,
suppose we do have some such reference measure, $m$, such that $0 <
m(A_i) < \infty$ for each $i$.  Then, in analogy
with~\eqref{eq.transmat} the matrix $P$ with elements
\begin{equation} \label{eq.transmat2}
  P_{ij} = \frac{ m(A_j \cap S^{-1} A_i) }{ m(A_j) }
\end{equation}
defines a Markov chain $p(k+1) = P p(k)$ that hopefully models the
dynamics of $S$.  In particular, we can hope that a fixed point $p$ of
$P$ approximates the invariant measure $\mu$ on \Acal, \ie, $p_i
\approx \mu(A_i)$.

Suppose, in accordance with the framework of
Section~\ref{sec.invinterp}, that $\etastar=\pi(\mustar)$ is the
measure on \Rn\ corresponding to the asymptotic density \rhostar\
(where $\pi: C \to \Rn$ is defined by~\eqref{eq.pidef}) and that the
support of \etastar\ is contained in some bounded $B \subset \Rn$.
Then, for a given partition $\mathcal{B}=\{B_1,\ldots,B_r\}$ of $B$, a
careful choice of \Acal\ permits an interpretation $p$ as a piecewise
constant approximation of \rhostar\ with respect to $\mathcal{B}$.
The following shows how this can be done.

Note that \etastar\ has support in the bounded set $B=\pi(X)$.  Let
$\mathcal{B}=\{B_1,\ldots,B_r\}$ be a partition of $B$ on which we
wish to approximate \etastar\ by a piecewise constant density.  Define
a partition $\Acal$ of $X$ by
\begin{equation}
  A_i = \pi^{-1}(B_i), \quad i=1,\ldots,r,
\end{equation}
or more explicitly,
\begin{equation}
  A_i = \{u \in C: u(0) \in B_i\}.
\end{equation}
Then, since $\etastar = \mustar\circ\pi^{-1}$, we have
\begin{equation}
  \mustar(A_i) = \etastar(B_i).
\end{equation}
Thus if we use Ulam's method to obtain a vector $p=Pp$ of
probabilities $p_i \approx \mustar(A_i)$ then we also have $p_i \approx
\etastar(B_i)$, yielding a piecewise constant approximation
\begin{equation} \label{eq.rhoastapprox}
  \rhostar(x) \approx \frac{p_i}{\lambda(B_i)} \quad \text{if $x
  \in B_i$}
\end{equation}
of the density \rhostar.

The immediate difficulties in implementing this method are in
evaluating the elements of the transition matrix
(equation~\eqref{eq.transmat2}), where we must compute the pre-images
$S^{-1}(A_i)$ under the infinite dimensional map $S$ (\eg, as given
explicitly in equation~\eqref{eq.expmap}) and evaluate the reference
measure $m$ of $S^{-1}(A_i)$.  Both of these difficulties can be
circumvented if we take as the reference measure $m$ the invariant
measure \mustar.  If \mustar\ is an SRB measure (which seems to be the
case in the examples of Section~\ref{sec.invex}) then for any bounded
continuous $\psi: C \to \Reals$ we have
\begin{equation} \label{eq.ddeSRB2}
  \lim_{N \to \infty}\frac{1}{N} \sum_{k=1}^N \psi\big(S^k(\phi)\big) =
  \int \psi\,d\mustar
\end{equation}
for every initial function $\phi$ in some set of positive $m$-measure.
In particular, for $\psi=1_A$ we have\footnote{There is a technical
difficulty here: $1_A$ is not continuous so~\eqref{eq.ddeSRB2} does
not strictly apply with $\psi=1_A$.  However, we can approximate $1_A$
from below by continuous functions $\{\psi_n\}$ with $\psi_n \to 1_A$
$\mustar$-almost everywhere, so $\int \psi_n \, d\mustar \to \int 1_A \, d\mustar$
by the Lebesgue dominated convergence theorem~\cite[p.\,22]{LM94}.
See \cite[p.\,134]{KH95} for further details.}
\begin{equation}
\begin{split}
  \frac{1}{N} \sum_{k=1}^N 1_A (S^k(\phi)) &= \frac{1}{N} \# \{k \in
  1,\ldots,N: S^k(\phi) \in A\} \\ &\xrightarrow{N \to \infty} \int
  1_A\,d\mustar = \int_A d\mustar = \mustar(A).
\end{split}
\end{equation}
Then for sufficiently large $N$ and for any typical initial function
$\phi$, we can approximate $\mustar(A_i)$ by
\begin{equation}
\begin{split}
  \mustar(A_i) &\approx \frac{1}{N} \# \{k \in 1,\ldots,N: S^k(\phi) \in A_i \} \\
  &= \frac{1}{N} \# \{k \in 1,\ldots,N: (S^k \phi)(0) \in B_i \} \\
  &= \frac{1}{N} \# \{k \in 1,\ldots,N: x_k \in B_i \},
\end{split}
\end{equation}
where $x_k=\big(S^k(\phi)\big)(0)=x(k)$ and $x(t)$ is the solution of the DDE
with initial function $\phi$.  Thus, to construct the transition matrix
$P$, we compute (\eg, numerically) a long sequence $\{x_k = x(k):
k=1,\ldots,N\}$ along a single solution $x(t)$ of the DDE, and
approximate $P_{ij}$ in equation~\eqref{eq.transmat2} by (see also
\eg\ \cite{Bollt})
\begin{equation} \label{eq.transmat3}
  P_{ij} = \frac{\# \{ k: x_k \in B_j \text{ and } x_{k+1} \in B_i\}}
              {\#\{k: x_k \in B_j \}},
\end{equation}
assuming $\mathcal{B}$ has been chosen so that $\{x_k\} \cap B_j \neq
\emptyset$ $\forall j$.

The transition matrix $P$ defines a Markov chain that approximates the
asymptotic probabilistic dynamics of the given DDE.  In particular, we
hope that a fixed point $p$ of $P$ will provide a piecewise constant
approximation of the asymptotic density $\rho_{\ast}$,
via~\eqref{eq.rhoastapprox}.  Indeed, when applied to the examples of
Section~\ref{sec.invex} this method identically reproduces the
asymptotic density found by computing a histogram of $\{x_k\}$ (\ie\
as in Section~\ref{sec.srbev}, Figure~\ref{fig.solhistpwl}).

It turns out that this must be the case.  Let $p=(p_1,\ldots,p_r)$
represent the normalized histogram of $\{x_k\}$ taken with respect to
the partition $\mathcal{B}$, \ie,
\begin{equation}
  p_i = \frac{1}{N}\#\{k \in 1,\ldots,N : x_k \in B_i\}
\end{equation}
A simple calculation shows that
\begin{equation}
\begin{split}
  (P p)_i &= \sum_{j=1}^r P_{ij} p_j \\
          &=\sum_{j=1}^r \frac{\#\{k: x_k \in B_j \text{ and } x_{k+1} \in B_i\}}
                   {\#\{k: x_k \in B_j \}} \cdot \frac{\#\{k: x_k \in B_j\}}{N} \\
          &=\sum_{j=1}^r \frac{\#\{k: x_k \in B_j \text{ and } x_{k+1} \in B_i\}}{N}\\
          &= \frac{\#\{k: x_{k+1} \in B_i\}}{N} \\
          &= \frac{\#\{k: x_k \in B_i\} \pm 1}{N} \\
          &= p_i \pm \frac{1}{N} \; \longrightarrow \; p_i \; \text{as} \; N \to \infty.
\end{split}
\end{equation}
Thus $p$ is (almost) a fixed point of $P$, and Ulam's method as
applied here simply reproduces the results of computing a histogram
along a single solution of the given DDE.  Essentially, the
construction is circular and no new information about the asymptotic
density is gained.

For lack of any other reasonable reference measure $m$ on $C$ that we
can evaluate (or approximate), our formulation of Ulam's method for
DDEs does not provide an independent estimate of the asymptotic
density.  However, the Markov chain $p \mapsto Pp$ defined
by~\eqref{eq.transmat3} is interesting in its own right, as a simple
model of the asymptotic probabilistic dynamics of the given DDE.  An
intuitive way to represent such a Markov chain is to graph the matrix
of transition probabilities $P$.
Figures~\ref{fig.markovmg}--\ref{fig.markovpwl} give examples of such
plots, with $P$ computed as in equation~\eqref{eq.transmat3}, for each
of the delay equations considered in the examples of
Section~\ref{sec.invex}.  For each figure the support of the
asymptotic density in \Reals\ has been partitioned into $100$
intervals $B_i$, $i=1,\ldots,100$, of equal length.  Each rectangle
$B_j \times B_i$ in the plane is shaded uniformly with grayscale level
indicating the probability $P_{ij}$ of transition from $x(t)
\in B_j$ to $x(t+1) \in B_i$.  Thus darker rectangles indicate likely transitions;
white regions indicate transitions that never occur, at least
asymptotically.

Figures~\ref{fig.markovmg}--\ref{fig.markovpwl} are remarkably similar
to Figures~\ref{fig.inv2dmg}, \ref{fig.inv2dpwc}
and~\ref{fig.inv2dpwl}.  There is in fact an intimate connection
between these figures, owing to the fact that the numerator in
equation~\eqref{eq.transmat3} is equivalent to a two-dimensional
histogram of the sequence of vectors $\{\big(x(t),x(t-1)\big):
t=0,1,\ldots\}$, with bins $\{B_j \times B_i: i, j=1,\ldots,N\}$.  The
entries of the matrix $P$ are therefore identical to the heights of
the corresponding two-dimensional histograms in
Figures~\ref{fig.inv2dmg}, \ref{fig.inv2dpwc} and~\ref{fig.inv2dpwl},
except that each column of $P$ is normalized so that $\sum_{i=1}^r
P_{ij} = 1$.
\begin{figure}
\begin{center}
\includegraphics[width=3in]{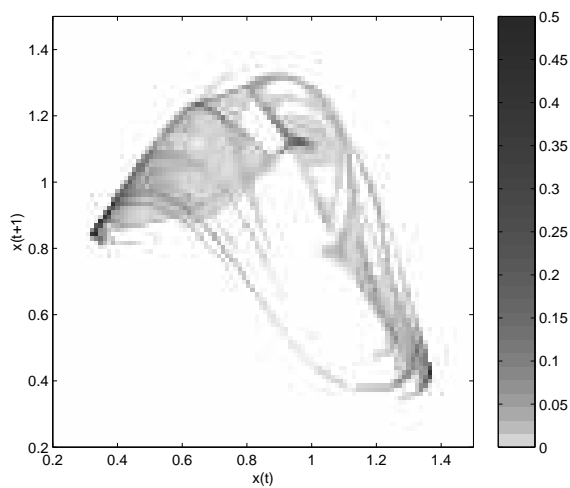}
\caption[Graphical representation of the matrix of transition
probabilities for the Markov chain approximating the asymptotic
dynamics of the Mackey-Glass equation.]{Graphical representation of
the matrix $P$ of transition probabilities (defined by
equation~\eqref{eq.transmat3}) for the Markov chain approximating the
asymptotic dynamics of the Mackey-Glass equation~\eqref{eq.mg3}.}
\label{fig.markovmg}
\end{center}
\end{figure}
\begin{figure}
\begin{center}
\includegraphics[width=3in]{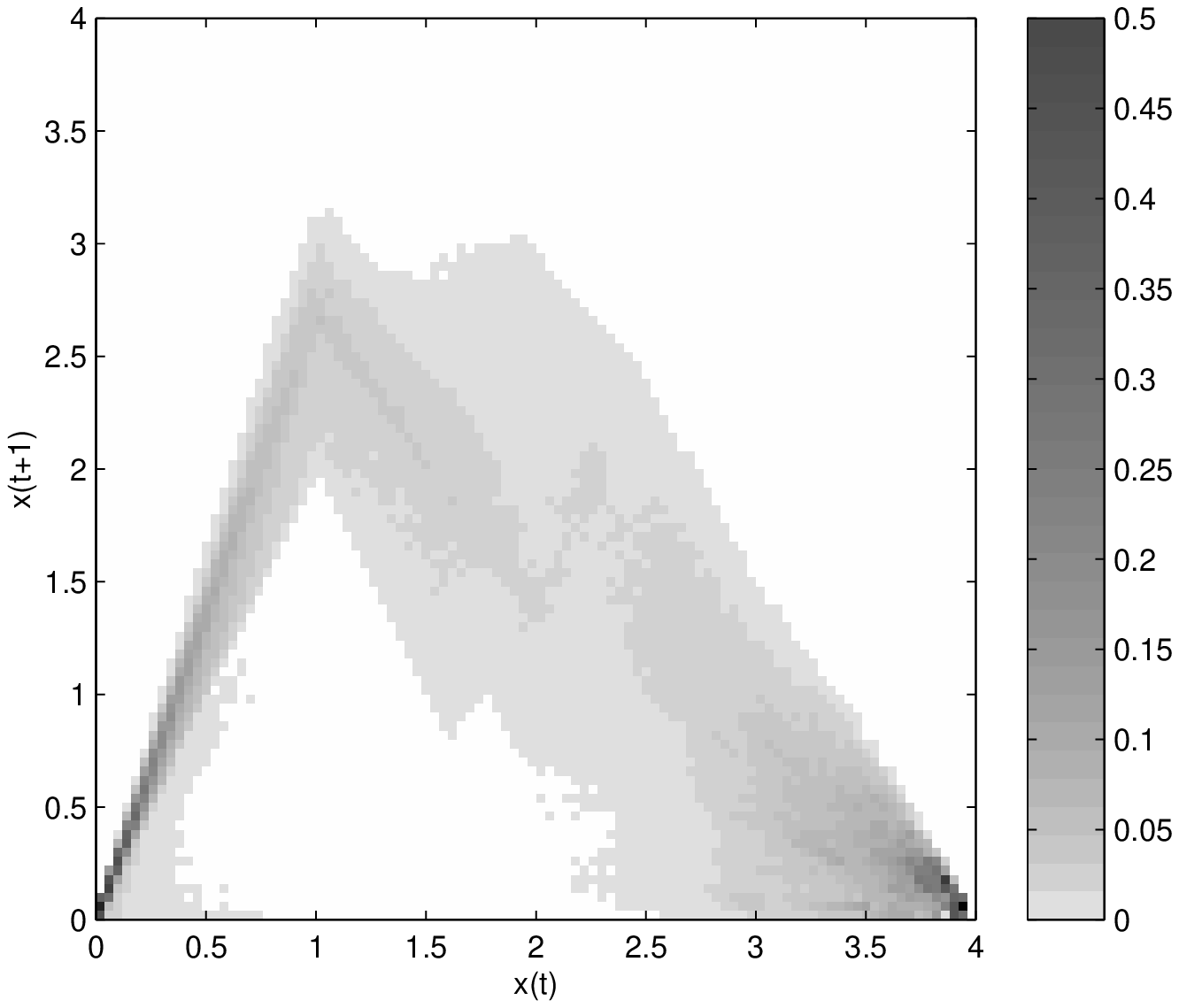}
\caption[Graphical representation of the matrix of transition
probabilities for the Markov chain approximating the asymptotic
dynamics of the delay equation~\eqref{eq.pwc}.]{Graphical
representation of the matrix $P$ of transition probabilities (defined by
equation~\eqref{eq.transmat3}) for the Markov chain approximating the
asymptotic dynamics of the delay equation~\eqref{eq.pwc}.}
\label{fig.markovpwc}
\end{center}
\end{figure}
\begin{figure}
\begin{center}
\includegraphics[width=3in]{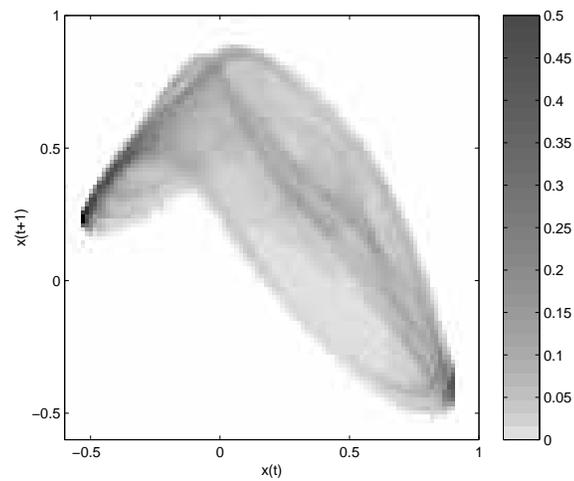}
\caption[Graphical representation of the matrix of transition
probabilities for the Markov chain approximating the asymptotic
dynamics of the delay equation~\eqref{eq.pwl}.]{Graphical
representation of the matrix $P$ of transition probabilities (defined
by equation~\eqref{eq.transmat3}) for the Markov chain approximating
the asymptotic dynamics of the delay equation~\eqref{eq.pwl}.}
\label{fig.markovpwl}
\end{center}
\end{figure}

\clearpage
\section{Fixed Points of Approximate Markov Operators}\label{sec.approxinv}

Ulam's method and its generalizations are the only techniques we are
aware of for approximating invariant measures of general dynamical
systems.  However, Lepri \etal~\cite{Lep93} present a method
specifically aimed at approximating invariant measures for
discrete-time systems with delayed dynamics, based on a similar method
for coupled map lattices~\cite{Kan89}.  This seems like a promising
approach to asymptotic densities for delay differential equations.  In
this section we consider the application of the method in~\cite{Lep93}
to a discretized approximation of a particular class of DDEs.

\subsection{Approximate Markov operator}

Consider the DDE\footnote{The ideas of this section generalize in a
straightforward way to DDEs in \Rn, but this requires more complex
notation.  To simplify the presentation we will restrict our attention
to DDEs in one dimension only.}
\begin{equation} \label{eq.dde5}
  x'(t) = -\alpha x(t) + f\big( x(t-1) \big), \quad t \geq 0, \quad x(t) \in \Reals,
\end{equation}
with initial function
\begin{equation} \label{eq.ic5}
  x(t) = \phi(t), \quad t \in [-1,0].
\end{equation}
Euler discretization of~\eqref{eq.dde5} with time step $h=1/N$ yields
\begin{equation}
  \frac{x(t+h)-x(t)}{h} \approx -\alpha x(t) + f\big( x(t-1) \big),
\end{equation}
which gives the explicit formula
\begin{equation} \label{eq.euler1}
  x_{n+1} = (1-\alpha h) x_n + h f( x_{n-N} )
\end{equation}
for the approximate solution $x_n = x(t_n)$ at the ``mesh points''
$t_n = n h$, $n=0,1,2,\ldots$.  Together with initial values $x_i =
\phi(t_i)$, $i=-N,\ldots,0$, this formula can be iterated to construct a
sequence $\{x_n: n=0,1,2,\ldots\}$ that approximates the solution
of~\eqref{eq.dde5}--\eqref{eq.ic5}.

The problem we consider here is to estimate the asymptotic density
$\rhostar(x)$ (supposing one exists) of an ensemble of systems
evolving under~\eqref{eq.euler1}.  This is just the problem considered
in~\cite{Lep93} for more general discrete-time systems with delayed
dynamics.  Their approach is easily adapted to the particular
system~\eqref{eq.euler1} as follows.

With
\begin{equation}
  y_n = x_{n-N}
\end{equation}
equation~\eqref{eq.euler1} becomes
\begin{equation}
  x_{n+1} = T (x_n, y_n),
\end{equation}
where
\begin{equation} \label{eq.Tdef}
  T: (x,y) \mapsto (1-\alpha h) x + h f(y).
\end{equation}
If $f: \Reals \to \Reals$ is measurable then $T$ is a measurable,
nonsingular transformation from $\Reals^2$ into \Reals.  Let
$\rho_2(x,y;n)$ be the density at time $n$ of the pair $(x_n,y_n)$,
for an ensemble of sequences governed by~\eqref{eq.euler1}.  Then in
analogy with the definition of the Perron-Frobenius operator, the
ensemble of values $x_{n+1}=T(x_n,y_n)$ will be distributed with
one-dimensional density $\rho_1(x;n+1)$ satisfying
\begin{equation}
  \int_A{\rho_1(x;n+1)\,dx} = \int_{T^{-1}(A)}{\rho_2(x,y;n)\,dx\,dy}
	\qquad \forall \text{ Borel } A \subset \Reals.
\end{equation}
For $A=(-\infty,s)$ we have
\begin{equation}
  T^{-1}(A) = \Big\{(x,y) \in \Reals^2: x < \frac{s - h f(y)}{1-\alpha h}\Big\},
\end{equation}
so that
\begin{equation}
  \int_{-\infty}^s{\rho_1(x;n+1)\,dx} =
	\int_{-\infty}^{\infty}\int_{-\infty}^{\frac{s-h f(y)}{1-\alpha h}}
	{\rho_2(x,y;n)\,dx\,dy}.
\end{equation}
Differentiating with respect to $s$ yields the explicit formula
\begin{equation} \label{eq.expl}
  \rho_1(x;n+1) =  \frac{1}{1-\alpha h} \int_{-\infty}^{\infty}
  \rho_2\Big( \frac{x - h f(y)}{1-\alpha h},y ; n\Big) \, dy.
\end{equation}

If the densities $\rho_{1\ast}(x)$ and $\rho_{2\ast}(x,y)$ are
invariant under the process defined by~\eqref{eq.euler1} then we can
drop the dependence on $n$ to yield
\begin{equation} \label{eq.expinv1}
  \rho_{1\ast}(x) =  \frac{1}{1-\alpha h} \int_{-\infty}^{\infty}
  \rho_{2\ast} \Big( \frac{x - h f(y)}{1-\alpha h},y\Big) \, dy.
\end{equation}
As it stands, this equation cannot be used on its own to determine
$\rho_{1\ast}$, since it requires prior knowledge of the
$2$-dimensional density $\rho_{2\ast}$.  With a similar approach it is
possible write an analogous equation defining $\rho_{2\ast}$, but this
in turn requires knowledge of the $3$-dimensional density
$\rho_{3\ast}$ of the triple $(x_n,x_{n-N},x_{n-2N})$.  In general,
the evolution equation for the density $\rho_{k}$ of the $k$-tuple
$(x_n,x_{n-N},\ldots,x_{n-(k-1)N})$ requires knowledge of
$\rho_{(k+1)}$, so that we obtain an open recursion relation for the
corresponding invariant densities $\rho_{k\ast}$.

To close this open recursion relation so it can be solved for
$\rho_{1\ast}$, one can make an approximation whereby for some $k$,
$\rho_{k\ast}$ can be expressed in terms of the $\{\rho_{j\ast}: j
\leq k\}$.  The simplest such approximation is the factorization
\begin{equation} \label{eq.factor}
  \rho_{2\ast}(x,y) = \rho_{1\ast}(x) \rho_{1\ast}(y).
\end{equation}
This amounts to assuming that $x$ and $y$ are independent, \ie,
uncorrelated.  It is a ``self-consistent'' approximation, in that $x$
and $y$ are both supposed to be distributed according to
$\rho_{1\ast}$.  This is exactly what we expect if $\rho_{1\ast}$ is
an invariant density: if $x_n$ is distributed according to
$\rho_{1\ast}$ for all sufficiently large $n$, then so must be
$y_n=x_{n-N}$.  This approximation closes equation~\eqref{eq.expinv1},
which becomes
\begin{equation} \label{eq.Qdef}
  \rho_{1\ast}(x) = \frac{1}{1-\alpha h} \int_{-\infty}^{\infty}
  \rho_{1\ast} \Big( \frac{x - h f(y)}{1-\alpha h}\Big) \,
  \rho_{1\ast}(y) \, dy \equiv (Q \rho_{1\ast})(x).
\end{equation}

By its construction $Q$ maps densities to densities, but it is not a
Markov operator since it is nonlinear.  Nevertheless, in~\cite{Kan89}
the operator analogous to $Q$ is called a ``self-consistent
Perron-Frobenius operator''.  $Q$ can be interpreted intuitively as
the operator that effects the evolution of densities under the action
of~\eqref{eq.euler1} with the assumption that at each time step,
$x_{n-N}$ is a random variable independent of $x_n$ and distributed
with the same density as $x_n$.

Since $Q$ approximates, in some sense, the probabilistic dynamics of
the Euler discretization of the given DDE, it is hoped that a density
that is invariant under $Q$ (\ie, a fixed point of $Q$) will
approximate the asymptotic density \rhostar, \eg\ as observed in
Figures~\ref{fig.invmg}--\ref{fig.invpwl}.  One approach to
approximating the solution of the operator equation
$\rhostar=Q\rhostar$ is fixed point iteration: if a sequence of
densities $\{u_{n+1}=Q u_n\}$ can be found that converges in $L^1$,
then the limit $\rhostar=\lim_{n\to\infty}u_n$ furnishes a solution
of~\eqref{eq.Qdef}.\footnote{Actually this requires continuity of $Q$,
which seems to require restricting $Q$ to $L^{\infty}$.  So far we
have not found a satisfactory proof.}  In practice this iteration is
carried out numerically.

\subsection{Numerical implementation}

The integral in~\eqref{eq.Qdef} resembles a convolution.  In fact by
the change of variables
\begin{gather} \label{eq.vw}
  v(x) = \frac{1}{1-\alpha h}\: u\Big(\frac{x}{1-\alpha h}\Big) \\
  w(x) = \sum_{s \in f^{-1}\{x/h\}} \frac{u(s)}{h |f'(s)|},
\end{gather}
$Q(u)$ can be written as a convolution integral,
\begin{equation} \label{eq.conv}
  (Qu)(x) = \int_{-\infty}^{\infty} v(x-z) w(z) dz.
\end{equation}
This integral can be approximated numerically as follows.

Let the densities $u$, $Q(u)$, and $w$ be approximated by the
corresponding vectors of values they assume on a uniform grid
$\{x_1,\ldots,x_M\}$,
\begin{equation}
  x_i = x_1 + (i-1) \Delta x, \quad i = 1,\ldots,M,
\end{equation}
and designate a vector of weights $\{\alpha_j\}$ appropriate for
numerical quadrature on this grid (\eg, by Simpson's
Rule~\cite[p.\,134]{NumRec92}).  Then $(Qu)(x_i)$ can be approximated
by
\begin{equation} \label{eq.disc}
\begin{split}
  (Qu)_i = (Qu)(x_i) &\approx \sum_{j=1}^M v(x_i - x_j) w(x_j)
	        \alpha_j \\ &= \sum_{j=1}^M v\big( (i-j) \Delta x
	        \big) w(x_j) \alpha_j \\ &= \sum_{j=1}^M v(y_{i-j+M})
	        w(x_j) \alpha_j \\ &= \sum_{j=1}^M v_{i+M-j} w_j
	        \alpha_j,
\end{split}
\end{equation}
where
\begin{equation}
  y_k = (k - M) \Delta x, \quad k = 1,\dots,2M-1,
\end{equation}
and the vectors
\begin{equation}
\begin{split}
 v_k &= v(y_k) \\ w_j &= w(x_j).
\end{split}
\end{equation}
are evaluated according to~\eqref{eq.vw}, using interpolation of the
$u_i=u(x_i)$.  The final line of~\eqref{eq.disc} is a discrete
convolution, representing a moving average of length-$M$ windows of
$\{v_k\}$ with respect to the vector of weights $\{w_j \alpha_j\}$.
Using standard techniques~\cite[ch.\,12]{NumRec92}, the $(Qu)_i$ can
then be evaluated efficiently using a Fast Fourier Transform.

\subsection{Case study}

The delay equation~\eqref{eq.dde5} with piecewise linear feedback term
\begin{equation} \label{eq.pwlinf}
\begin{gathered}
  f(x) = g(x)/\eps \\
  g(x) = 1 - 1.9|x|
\end{gathered}
\end{equation}
and $\alpha=1/\eps$ was found in Example~\ref{ex.pwl} to exhibit an
asymptotic density \rhostar\ when $\eps=0.3$ (\cf\
Figure~\ref{fig.invpwl}).  However, when the method described above is
applied to this equation, it does not yield an approximation of
\rhostar.  Instead, for any initial density $u$, iterating $u \mapsto
Qu$ results in convergence toward a point mass concentrated at
$x_{\ast}=1/2.9$, which is readily seen to be an unstable fixed point
of the map $x \mapsto g(x)$.

Given the success of this method for estimating invariant densities
for other systems with delayed dynamics~\cite{Lep93} this result is
surprising, but a partial explanation can be advanced as follows.  The
assumption implicit in the factorization~\eqref{eq.factor} essentially
removes any explicit delay from the dynamics of the
discretization~\eqref{eq.euler1}, since the delayed coordinate
$y_n=x_{n-N}$ is always assumed to have the same density as $x_n$.  In
other words, the delayed coordinate is being modeled by a stochastic
variable distributed like $x_n$.  Iteration of $u \mapsto Qu$ gives a
probabilistic description of the map $(x,y) \mapsto T(x,y)$ where at
every iteration $y$ is assumed to have the same distribution as $x$.
This is sort of (but not quite) like evolving a density under the
one-dimensional map $x \mapsto T(x,x)$.  From equation~\eqref{eq.Tdef}
we have
\begin{equation}
  T(x,x) = x + \frac{h}{\eps}\big( g(x) - x \big),
\end{equation}
so that $x \mapsto T(x,x)$ has the same fixed points as $g$.  In
particular the stability of the fixed point $x_{\ast}=1/2.9$ is
determined by
\begin{equation}
\begin{split}
  \left|\frac{d}{dx_{\ast}}T(x_{\ast},x_{\ast})\right|
        &= \left| 1 + \frac{h}{\eps} \big( g'(x_{\ast}) - 1 \big) \right| \\
        &= \left| 1 + \frac{h}{\eps} ( -1.9 - 1 ) \right| \\
        &= \left| 1 - 2.9\frac{h}{\eps} \right| < 1 \quad \text{for $h \ll \eps$}.
\end{split}
\end{equation}
Thus the fixed point $x_{\ast}=1/2.9$ of $T$ is stable, hence the
apparent convergence of densities to a point mass $\delta_{x_{\ast}}$.

A similar phenomenon occurs when fixed-point iteration is attempted to
solve~\eqref{eq.Qdef} in the case of either the Mackey-Glass
equation~\eqref{eq.mg3} or the delay equation~\eqref{eq.pwc} with
piecewise constant feedback.  In neither case does the method yield an
approximation of the asymptotic density found by ensemble simulation
(\cf\ Figures~\ref{fig.invmg} and~\ref{fig.invpwc}), but yields rather
a point mass concentrated as a fixed point of $T(x,x)$.

Evidently the instability of the dynamics of the DDE~\eqref{eq.dde5}
is delay-induced.  Indeed, with zero delay and $f$ defined as
in~\eqref{eq.pwlinf} the DDE becomes
\begin{equation}
  x' = F(x) \equiv -\frac{1}{\eps} x + \frac{1}{\eps} \big(1 - 1.9|x|\big),
\end{equation}
which has a single stable fixed point $x_{\ast}=1/2.9$.  It is easily
shown that the situation is similar with the other DDEs considered in
examples~\ref{ex.mg} and~\ref{ex.pwc}: with zero delay the asymptotic
dynamics are trivial.  Instability is essential to the existence of a
nontrivial invariant density.  Since the assumption
in~\eqref{eq.factor} effectively removes the delay, the resulting
condition~\eqref{eq.Qdef} reasonably does not provide an approximation
of the observed asymptotic density \rhostar.  The examples considered
in~\cite{Lep93} exhibit chaotic behavior even for zero delay; this
helps explain why they did not encounter the difficulties we find
here.

\subsection{Second-order method} \label{sec.2ord}

A less restrictive assumption than~\eqref{eq.factor}, specifically one
that retains the essential delay in the dynamics, might yield an
effective method of approximating \rhostar.  One possibility,
suggested in~\cite{Lep93} as a more accurate variant on the original
method, is to make a different approximation that truncates the
recursion relationship for the $\rho_{k\ast}$ at some $k > 2$.  The
following gives a sketch of how this might be done.

With $y_n=x_{n-N}$ and $z_n=x_{n-2N}$, the discretization~\eqref{eq.euler1} becomes
\begin{equation}
\begin{split}
  (x_{n+1},y_{n+1}) &= \big( (1-\alpha h) x_n + h f( y_n ),
                             (1-\alpha h) y_n + h f( z_n ) \big) \\
  &= \big( T(x_n,y_n),T(y_n,z_n) \big) \\
  &\equiv \tilde{T}(x_n,y_n,z_n).
\end{split}
\end{equation}
Then $\tilde{T}$ is a measurable, nonsingular transformation from
$\Reals^3$ into $\Reals^2$ .Let $\rho_3(x,y,z;n)$ be the density of
the triple $(x_n,y_n,z_n)$, for an ensemble of sequences governed
by~\eqref{eq.euler1}.  The ensemble of pairs $(x_{n+1},y_{n+1})$ will
be distributed with $2$-dimensional density $\rho_2(x,y;n+1)$
satisfying
\begin{equation} \label{eq.PFTtilde}
\int_A{\rho_2(x,y;n+1)\,dx\,dy} =
  \int_{\tilde{T}^{-1}(A)}{\rho_3(x,y,z;n)\,dx\,dy\,dz} \qquad \forall
	\text{ Borel } A \subset \Reals^2.
\end{equation}
Since
\begin{equation}
  \tilde{T}(x,y,z) \in (-\infty,r] \times (-\infty,s] \implies
	\begin{cases}
	  x < (r - h f(y)) / (1-\alpha h) \\
		y < (s - h f(z)) / (1-\alpha h),
  \end{cases}		
\end{equation}
equation~\eqref{eq.PFTtilde} yields (with $A = (-\infty,r] \times (-\infty,s]$)
\begin{equation}
  \int_{-\infty}^s \int_{-\infty}^r \rho_2(x,y;n+1)\,dx\,dy =
	\int_{-\infty}^{\infty}
	\int_{-\infty}^{\frac{s - h f(z)}{1-\alpha h}}
	\int_{-\infty}^{\frac{r - h f(y)}{1-\alpha h}}
	\rho_3(x,y,z;n) \, dx\,dy\,dz.
\end{equation}
Differentiating with respect to $r$ and $s$ yields the explicit formula
\begin{equation} \label{eq.Q2def}
\begin{split}
  \rho_2(x,y;n+1) &= \frac{1}{(1-\alpha h)^2} \int_{-\infty}^{\infty} \rho_3\Big(
	  \frac{x-h f( \frac{y - h f(z)}{1-\alpha h} )}{1-\alpha h},
    \frac{y-h f(z)}{1-\alpha h}, z; n \Big) \,dz  \\
  &= \frac{1}{(1-\alpha h)^2} \int_{-\infty}^{\infty} \rho_3\Big(
	G\big(x, G(y,z)\big), G(y,z), z; n \Big) \, dz
\end{split}
\end{equation}
where
\begin{equation}
  G(y,z) = (y - h f(z))/(1-\alpha h).
\end{equation}
If the densities $\rho_{2\ast}(x,y)$ and $\rho_{3\ast}(x,y,z)$ are
invariant under the dynamics then we can drop the dependence on $n$.
To close this equation requires an approximation whereby
$\rho_{3\ast}$ can be written in terms of $\rho_{2\ast}$.  One
possibility, suggested in~\cite{Lep93}, is the factorization
\begin{equation} \label{eq:factor2}
  \rho_{3\ast}(x,y,z) = \frac{\rho_{2\ast}(x,y) \;
  \rho_{2\ast}(y,z)}{\rho_{1\ast}(y)},
\end{equation}
where
\begin{equation} \label{eq.rho1proj}
  \rho_{1\ast}(y) = \int \rho_{2\ast}(x,y)\,dx.
\end{equation}
Thus both $(x_n,x_{n-N})$ and $(x_{n-N},x_{n-2N})$ are distributed
with the same density $\rho_{2\ast}$; this is consistent with
$\rho_{2\ast}$ being invariant under the dynamics.  Substituting these
relationships into~\eqref{eq.Q2def} yields a nonlinear operator
equation
\begin{equation} \label{eq.Qsolve}
  \rho_{2\ast} = Q \rho_{2\ast}
\end{equation}
for the two-point density $\rho_{2\ast}$.  Solving this equation by
fixed point iteration approximates the evolution of a two-point
density for the system~\eqref{eq.euler1}, where at each iteration the
pair $(x_{n-N},x_{n-2N})$ is assumed to have the same density as
$(x_n,x_{n-N})$.  Since this method retains the inherent delay in the
dynamics, it is possible that~\eqref{eq.Qsolve} will have a solution
$\rho_{2\ast}$, whereby equation~\eqref{eq.rho1proj} gives an
approximation of the invariant density $\rho_{\ast}=\rho_{1\ast}$.
However, the computational complexity is much greater than in the
previous method, and to date we have not developed an implementation.

\subsection{Continuous-time formulation}

\newcommand{\ut}{\ensuremath{u_t}}
\newcommand{\ustar}{\ensuremath{u_{\ast}}}
\newcommand{\vstar}{\ensuremath{v_{\ast}}}

In the previous sections we considered a discretized version of a
given delay differential equation, to which the method developed
in~\cite{Lep93} could be directly applied.  However, it seems more
natural to avoid the discretization step altogether.  It is in fact
possible to formulate a continuous-time approach analogous to the
discrete time method in~\cite{Lep93}, as illustrated below.

Consider the delay equation
\begin{equation} \label{eq.dde6}
  x'(t) = f\big( x(t), x(t-1) \big), \quad x(t) \in \Rn, \quad t \geq 0,
\end{equation}
written as
\begin{equation} \label{eq.ode2n}
  x'(t) = f\big( x(t), y(t) \big)
\end{equation}
where $y(t)=x(t-1)$.  For any given value of $y=y(t)$ this is just an
ordinary differential equation prescribing the flow of $x(t)$ along a
vector field $x \mapsto f(x,y)$, in which $y$ acts as a fixed
parameter.  Of course this interpretation is valid only
instantaneously, \ie, at a particular time, since $y(t)$ itself
changes in time.  If $\rho_1(x;t)$ is the density at time $t$ of an
ensemble of solutions $x(t)$ that all share the \emph{same} value of
$y(t)$, then the transportation of $\rho_1$ along this flow is described
by a continuity equation (\cf\ Section~\ref{sec.fpode}),
\begin{equation} \label{eq.yfix}
  \frac{\partial \rho_1(x;t)}{\partial t} = -\nabla_x \cdot
  (\rho_1(x;t) f(x,y)),
\end{equation}
where $\nabla_x = (\partial/\partial x_1,\ldots,\partial/\partial
x_n)$.

Suppose now that we have an ensemble of solutions without any
restriction on $y(t)$, and that the ensemble of pairs
$\big(x(t),y(t)\big)$ is distributed with $2$-dimensional density
$\rho_2(x,y;t)$.  It is helpful to think of this ensemble as being
partitioned into sub-ensembles according to the value of $y(t)$.  Then
as the ensemble evolves under the action of~\eqref{eq.dde6}, each
sub-ensemble contributes an increment to $\rho_1$ according
to~\eqref{eq.yfix}.  Proceeding heuristically, the total increment to
$\rho_1$ can be found by summing equation~\eqref{eq.yfix} over these
sub-ensembles, \ie,
\begin{equation} \label{eq.heurist}
  \frac{\partial \rho_1(x;t)}{\partial t} =
    -\int{\nabla_x \cdot \big( \rho_2(x,y;t) \, f(x,y) \big) \, dy}.
\end{equation}

A less heuristic derivation of this result is as follows.  Suppose
that vectors $(x,y)\in\Reals^{2n}$ are distributed with density
$\rho_2(x,y;t)$, and that $x$ evolves according to
equation~\eqref{eq.ode2n}.  Let
\begin{equation}
  \rho_1(x;t) = \int \rho(x,y;t) \, dy
\end{equation}
denote the ``collapsed'' density of the ensemble of values $x$.  For
the purposes of determining the instantaneous increment of
$\rho_1(x;t)$ under the action of~\eqref{eq.ode2n}, the dynamics of
$y(t)$ are a second-order effect and can be ignored.  Thus we can make
the simplifying assumption that $y'(t) = 0$, and write
\begin{equation}
  \frac{d}{dt} \begin{bmatrix} x(t) \\
                               y(t)
               \end{bmatrix} =
               \begin{bmatrix} f\big( x(t), y(t) \big) \\
                               0
               \end{bmatrix}
  \equiv H\big( x(t), y(t) \big),
\end{equation}
which gives the flow of $(x(t),y(t)) \in \Reals^{2n}$ along the vector
field $H: \Reals^{2n} \to \Reals^{2n}$.  Under transportation by this
flow, the density $\rho_2(x,y;t)$ evolves according to the continuity
equation
\begin{equation}
\begin{split}
  \frac{\partial \rho_2(x,y;t)}{\partial t} &= -\nabla \cdot \big( \rho_2(x,y;t) H(x,y) \big) \\
  &= -\nabla_x \cdot \big( \rho_2(x,y;t) f(x,y) \big)
    -\nabla_y \cdot \big( \rho_2(x,y;t) \cdot  0 \big) \\
  &= -\nabla_x \cdot \big( \rho_2(x,y;t) f(x,y) \big),
\end{split}
\end{equation}
where $\nabla_y = (\partial/\partial y_1,\ldots,\partial/\partial
y_n)$.  Then we have
\begin{equation} \label{eq.drho1}
\begin{split}
  \frac{\partial}{\partial t} \rho_1(x;t) &= \int \frac{\partial}{\partial t} \rho_2(x,y;t)\,dy\\
  &= -\int \nabla_x \cdot \big( \rho_2(x,y;t) f(x,y) \big) \, dy,
\end{split}
\end{equation}
in agreement with~\eqref{eq.heurist}.

If the one- and two-point densities $\rho_{1\ast}(x)$ and
$\rho_{2\ast}(x,y)$ are invariant under the dynamics then
equation~\eqref{eq.drho1} gives
\begin{equation} \label{eq.invcond}
  \int{\nabla_x \cdot \big( \rho_{2\ast}(x,y) f(x,y) \big) \, dy} = 0.
\end{equation}
Assuming again that $\rho_{2\ast}$ can be factored as
in~\eqref{eq.factor}, this yields the condition
\begin{equation}
  \int{\rho_{1\ast}(y) \;\nabla_x \cdot \big( \rho_{1\ast}(x) f(x,y) \big) \, dy} = 0,
\end{equation}
which is analogous to the discrete-time condition~\eqref{eq.Qdef}.  It
is unclear whether this relationship uniquely determines (within a
constant multiple) a unique approximate invariant density
$\rho_{1\ast}$.

However, here again we have resorted to an assumption
(equation~\eqref{eq.factor}) that effectively removes the explicit
delay from the dynamics.  As discussed in the previous section, this
assumption leads to trivial asymptotic dynamics.  Without a more
sophisticated approach that retains the essential delay, it seems
unwarranted to pursue these ideas further.

\clearpage
\section{Conclusions}

For a variety of delay differential equations, numerically computed
solution ensembles appear to converge to an asymptotic distribution,
described by an asymptotic measure $\eta$ on \Rn.  This phenomenon can
be understood in terms of ergodic properties of the associated
infinite dimensional dynamical system $\{S_t\}$ on $C$.  In the
examples considered, $\{S_t\}$ is known to possess a compact attractor
$\Lambda$: any ensemble of trajectories starting in the basin of
attraction of $\Lambda$ will, asymptotically, be distributed on
$\Lambda$.  The numerical evidence supports the existence of a natural
invariant probability measure (\ie, an SRB measure) $\mu_{\ast}$
supported on $\Lambda$.  This serves to explain the convergence of
solution ensembles to a particular asymptotic distribution $\eta$, as
well as the fact that averages along ``typical'' individual solutions
of the DDE coincide with spatial averages or expectations with respect
to this same measure.

The practical and theoretical importance of invariant measures, and
SRB measures especially, makes the computation of invariant measures
for DDEs a desirable goal.  However, an effective solution to this
problem remains elusive.  Previously published methods of estimating
invariant measures for dynamical systems do not adapt well to delay
equations.

Ulam's method---the most widely known technique for estimating
invariant measures---can be formulated for DDEs in such a way that it
yields an approximation of the asymptotic measure $\eta$.  This
approximation turns out to be identical to the histogram of a time
series generated by a typical solution of the given DDE.  Thus, at
least in our formulation, Ulam's method \emph{per se} is not a useful
approach to DDEs but merely points to the fact that if the desired
invariant measure is an SRB measure, then it can be estimated by
computing a histogram along a single long-time solution.  This is, in
fact, a far more efficient method than the ensemble simulation
approach, in which on the order of $10^6$ individual solutions must be
computed.

An alternative approach to estimating invariant measures for DDEs is
the ``self-consistent Perron-Frobenius operator'' method
of~\cite{Lep93}.  A suitable discretization of a given DDE yields a
discrete-time system of the type to which this method applies.
Somewhat surprisingly, however, a straightforward application of the
method fails to generate the desired approximate invariant density.

\chapter{Transient Chaos}\label{ch.transient}

\begin{singlespacing}
\minitoc   
\end{singlespacing}
\newpage

\section{Introduction}

Studies of chaotic dynamical systems have focused mainly on
persistent, or attracting chaos---\ie, on systems that possess a
chaotic attractor.  The phenomenon of
\emph{transient} chaos has aroused less interest despite its
ubiquity~\cite{KG85}.  Systems exhibiting transient chaos have the
distinguishing feature that their evolutions are very irregular
(chaotic) during a transient period, but eventually become periodic.
This behavior is seen in many physical systems, including fluid
dynamics
\cite{Ahl80,Darby95} and chaotic scattering
\cite{scattering}, as well as in mathematical dynamical systems such
as the H{\'e}non map \cite{GOY83,Hen76}, the Lorenz system
\cite{Lor63,YY79}, and the forced damped pendulum \cite{BGOY88}.

There appear to be a number of underlying universal features of
transient chaos, despite the diversity of its manifestations; see
\cite{Tel90} for a review.  The central notion is the existence in
phase space of an unstable invariant set on which the dynamics are
chaotic (\eg, in the sense of Li and Yorke~\cite{LY75}).  Such a set
is called a \emph{strange repeller} or \emph{chaotic saddle}, since
the instability is typically of saddle type.  Except for its
instability, this set plays a role similar to that of a strange
attractor: the dynamics on the repeller are closely related to the
irregularity of nearby trajectories.

The generally accepted model of the phase space dynamics underlying
transient chaos is as follows.  A typical initial phase point is
attracted, under the system dynamics, along the stable manifold of the
chaotic saddle.  The trajectory subsequently wanders in a neighborhood
of the saddle for some time, during which it exhibits the dynamics
associated with the saddle.  Eventually it exits along the saddle's
unstable manifold, and arrives asymptotically at one of the system's
attractors (typically a periodic orbit or equilibrium point, but
possibly a chaotic attractor).  Because the saddle is the dynamical
invariant that determines the behavior of trajectories during their
transient phase, its structure and the dynamics on it are the objects
of primary interest in the analysis of transient chaos.

To date there has been no published account of transient chaos in
delay differential equations.\footnote{\cite{KG85} references
unpublished work by P.\ Grassberger and I.\ Procaccia.}  However, a
number of published results would suggest that transient chaos occurs
in some DDEs.  The results that motivated the present study were the
observations in \cite{LML93} and \cite{AE87}\footnote{In the DDE
studied in~\cite{AE87} fractal basins are present even in the absence
of a delay.  Our primary interest here is in systems where the chaotic
dynamics are ``delay induced'', \ie, arise only in the presence of an
intrinsic delay in the dynamics.}\ of fractal basins of attraction (a
hallmark of transient chaos~\cite{Tel90}) in delay equations.
Transverse homoclinic orbits, which imply the existence of a chaotic
saddle and Yorke-type chaos~\cite{Guck83}, have been proved to exist
in some DDEs~\cite{HM82, HW83, Wal81}.  Hale and
Sternberg~\cite{Hale88} found numerical evidence of transverse
homoclinic orbits in the Mackey-Glass equation~\cite{MG77}.  These
results have all been presented amid discussions of attracting chaos,
whereas transient chaos in DDEs has not been specifically
investigated.

Aside from the importance of delay equations in describing natural and
industrial processes, transient chaos in DDEs has special relevance to
the study of infinite dimensional dynamical systems.  Because
numerical integration of DDEs is relatively easy as compared, for
example, with partial differential equations, Farmer \cite{Farm82}
pointed out that DDEs make convenient prototypical models for the
study of attractors of infinite dimensional systems.  In the same
spirit, DDEs could serve as simple models for the study of
\emph{transient} chaos in infinite dimensional systems.

The following section provides numerical evidence of transient chaos
in delay equations of the form
\begin{equation} \label{eq.ddetype}
  x'(t) = -\alpha x(t) + F\big( x(t-1) \big).
\end{equation}
We extend the results of \cite{LML93} and show how the existence of
fractal basins of attraction can be used to find solutions with
long-lived chaotic transients in a first-order DDE having only
periodic attractors.  In addition (and in contradiction with the
negative result of~\cite{LML93}) we find parameter sets of the
Mackey-Glass equation that yield fractal basins of attraction, and we
illustrate the existence of long-lived chaotic transients for this
system.

Numerical analysis of transient chaos (\eg, approximation of the
saddle, its dimension, Lyapunov exponents, entropy, \etc.)\ requires a
method for computing trajectories on (or very near) the chaotic
saddle.  Various methods have been proposed and applied to
finite-dimensional systems~\cite{BGOY88,KG85,Nusse89,Sweet01}.  There
are no published accounts of attempts to apply these methods to
infinite dimensional systems such as DDEs.  In section
\ref{sec.analysis} we develop an adaptation of the
``stagger-and-step'' method~\cite{Sweet01} and apply it to the DDEs
for which we have found evidence of transient chaos.  Having
constructed a numerical approximation of the saddle, we illustrate
graphical methods for visualizing the saddle, and characterize its
geometry quantitatively using standard methods for estimating ergodic
parameters such as Lyapunov exponents and fractal dimensions.

\section{Evidence of Transient Chaos} \label{sec.transev}

\subsection{Fractal basins of attraction}

The (necessarily open) set of all initial phase points eventually
asymptotic to a given attractor is called that attractor's \emph{basin
of attraction}.  In a system with multistability, \ie\ one that
possesses more than one attractor, the points that do not lie in any
basin of attraction constitute the \emph{basin boundary}.  The basin
boundary is necessarily invariant under the system dynamics.  If this
set is fractal (\ie\ has non-integer capacity dimension) then the
dynamics in a neighborhood of the boundary exhibits sensitivity to
initial conditions~\cite{MGOY85}, hence transient chaos.  This
happens, for example, if there is a ``horseshoe''~\cite{KH95,Smale67}
in the dynamics (\eg\ if there is a transverse homoclinic
orbit~\cite{Guck83}); in this case the basin boundary has a
Cantor-like structure.  However, multistability and existence of a
fractal basin boundary are not necessary for transient chaos: \eg\ the
H{\'e}non map~\cite{Hen76} for some parameter values exhibits
transient chaos but has only a single attractor (at
infinity)~\cite{Nusse91}.

Unstable invariant sets such as unstable fixed points and chaotic
saddles and their stable manifolds must lie within the basin boundary,
since they are not in any basin of attraction.  The basin boundary can
consist entirely of the stable manifolds of unstable invariant sets,
but this need not be the case~\cite{MGOY85}.

Multistability and fractal basin boundaries for delay differential
equations are reported in \cite{AE87,LML93}.  However, the connection
of this observation to the possible existence of transient chaos has
not been investigated.  In the following we reproduce the fractal
basin boundaries for the DDE considered in~\cite{LML93}, and also
provide evidence of fractal basins in the much-studied Mackey-Glass
equation~\cite{MG77}.

\subsubsection{DDE with piecewise-constant feedback}

In \cite{LML93}, Losson \etal\ studied the delay equation
\begin{equation} \label{eq.pwc2}
\begin{split}
  &x'(t) = -\alpha x(t) + F\big( x(t-1) \big), \\
  &F(x) = \begin{cases} c & \text{if $x \in [x_1,x_2]$} \\
    0 & \text{otherwise}.
  \end{cases}
\end{split}
\end{equation}
For parameters $\alpha=3.25$, $c=20.5$, $x_1=1$ and $x_2=2$, they
found\footnote{There seems to be an error in~\cite{LML93}, which gives
$\alpha=3.75$.}\ three coexisting attracting periodic solutions.
Figure~\ref{fig.persolspwc} illustrates these solutions, together with
the zero solution which is also attracting, found by numerically
integrating\footnote{Using the Fortran code DKLAG6~\cite{Thomp}.}\
equation~\eqref{eq.pwc2} to large $t$ with different initial
functions.
\begin{figure}
\begin{center}
\includegraphics[width=\figwidth]{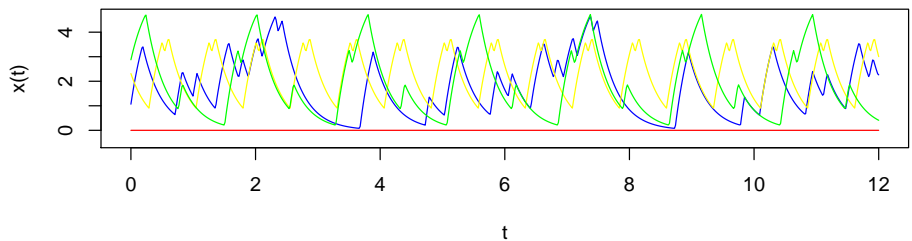}
\caption[Four coexisting attracting periodic solutions of the delay
equation~\eqref{eq.pwc2}.]{Four coexisting attracting periodic
solutions (including the trivial solution) of the delay
equation~\eqref{eq.pwc2} with parameters $\alpha=3.25$, $c=20.5$,
$x_1=1$ and $x_2=2$.}
\label{fig.persolspwc}
\end{center}
\end{figure}

Recall that the phase space of equation~\eqref{eq.pwc2}, considered as
a dynamical system $\{S_t: t \geq 0\}$ (\cf\
Chapter~\ref{ch.framework}), is the space $C$ of continuous
real-valued functions on the interval $[-1,0]$.  The phase point $x_t
\in C$ at time $t$ is the solution history,
\begin{equation} \label{eq.transstate}
  x_t(s) = x(t+s), \quad s \in [-1,0].
\end{equation}
Each of the solutions shown in Figure~\ref{fig.persolspwc} corresponds
to a periodic orbit in $\Gamma \subset C$.  Each such $\Gamma$ has a
corresponding basin of attraction, which is the set of initial
functions in $C$ that are asymptotic to $\Gamma$ under the action of
$S_t$ as $t \to \infty$.  Thus basins of attraction for DDEs are
subsets of the infinite dimensional space $C$.  Consequently,
visualizing the basins of attraction and their boundaries is
problematic.

One possibility for visualizing the basins of attraction of the
DDE~\eqref{eq.pwc2} is to visualize ``cross-sections'' through $C$.
Consider for example the subspace of $C$ spanned by the functions $s
\mapsto 1$ and $s \mapsto s$, that is, functions of the form
\begin{equation} \label{eq.icsubspace1}
  \phi(s) = A + B s, \quad s \in [-1,0].
\end{equation}
These constitute a two-dimensional subspace $\Sigma$ of $C$,
parametrized by coordinates $(A,B) \in \Reals^2$.  For a given
attracting periodic orbit $\Gamma$ of~\eqref{eq.pwc2}, the
intersection of its basin of attraction with $\Sigma$ can be
approximated numerically and visualized by the following method. For a
given point in the $(A,B)$-plane, numerically
integrate~\eqref{eq.pwc2} with the corresponding initial function
$\phi$ of the form~\eqref{eq.icsubspace1}.  If the resulting orbit is
asymptotic to $\Gamma$ then $\phi$ is in the basin of attraction of
$\Gamma$, so plot the point $(A,B)$.  Repeating this procedure for a
grid of points in the $(A,B)$-plane yields a picture approximating
part of the basin of attraction's intersection with $\Sigma$.

If a different color is associated with each of the various basins of
attraction then all four basins of attraction can be visualized on a
single graph, as in Figure \ref{fig.pwcbasins1}.  This figure shows
the results of the procedure above, carried out for the delay
equation~\eqref{eq.pwc2}.  Here we have made a slight change from
equation~\eqref{eq.icsubspace1} and taken initial functions on the
interval $[0,1]$ of the form
\begin{equation}
  x(t) = A + B t, \quad t \in [0,1],
\end{equation}
or equivalently
\begin{equation}
  \phi(s) = A + B (s + 1), \quad s \in [-1,0].
\end{equation}
Figure~\ref{fig.pwcbasins1} corresponds to Figure 11 of~\cite{LML93},
except for a different choice of axes.  As already pointed out
in~\cite{LML93}, the resulting image suggests that the basin boundary
is a fractal set with Cantor-like structure.  This is supported by
evidence given in~\cite{Hale88} of the presence of a transverse
homoclinic orbit, and implies the existence of transiently chaotic
solutions.  Indeed in section~\ref{sec.longtrans} we are able, with
the help of Figure~\ref{fig.pwcbasins1}, to find solutions
of~\eqref{eq.pwc2} that exhibit long chaotic transients.

\begin{figure}
\begin{center}
\includegraphics[height=3.5in]{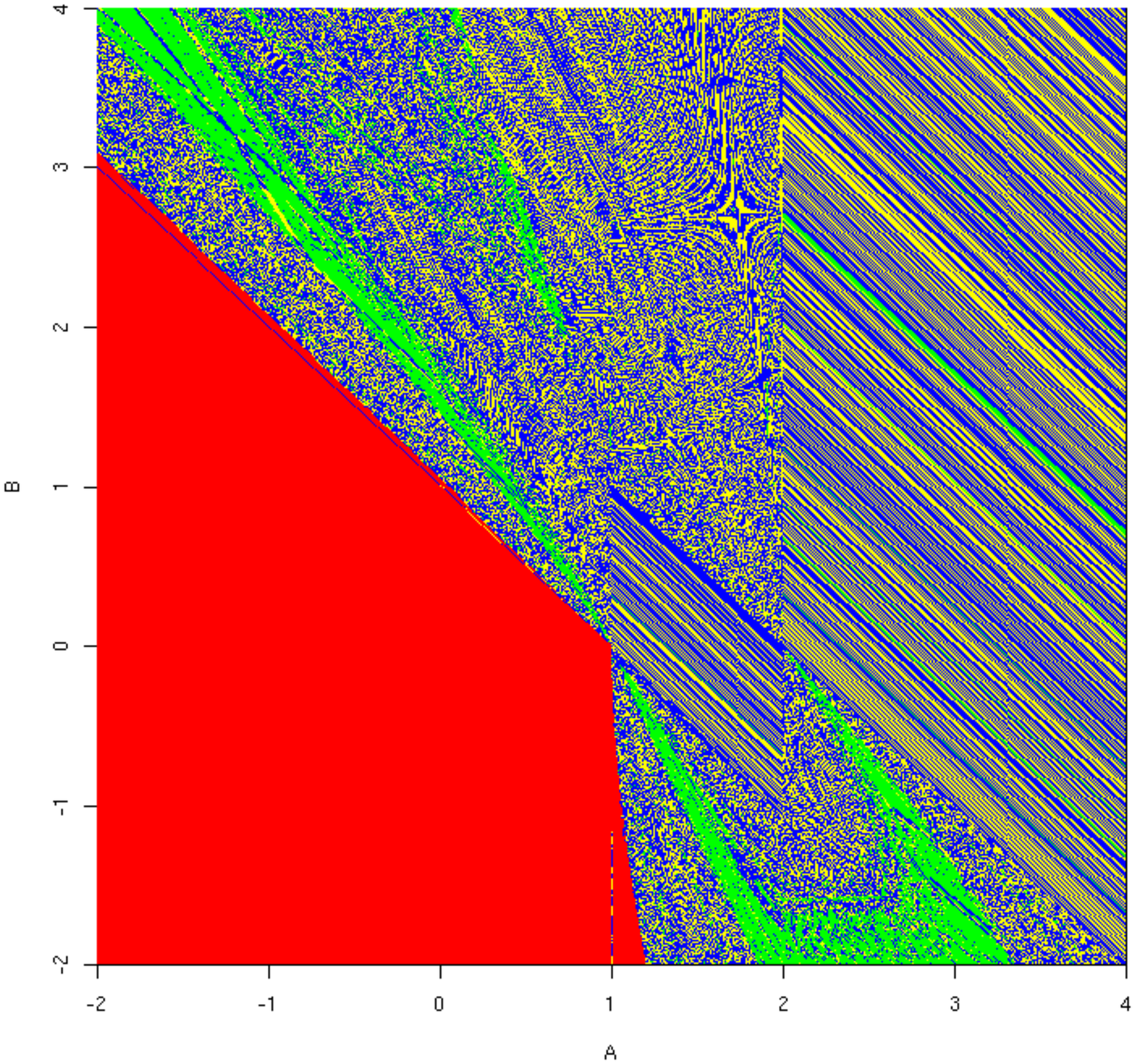}\\
\includegraphics[height=3.5in]{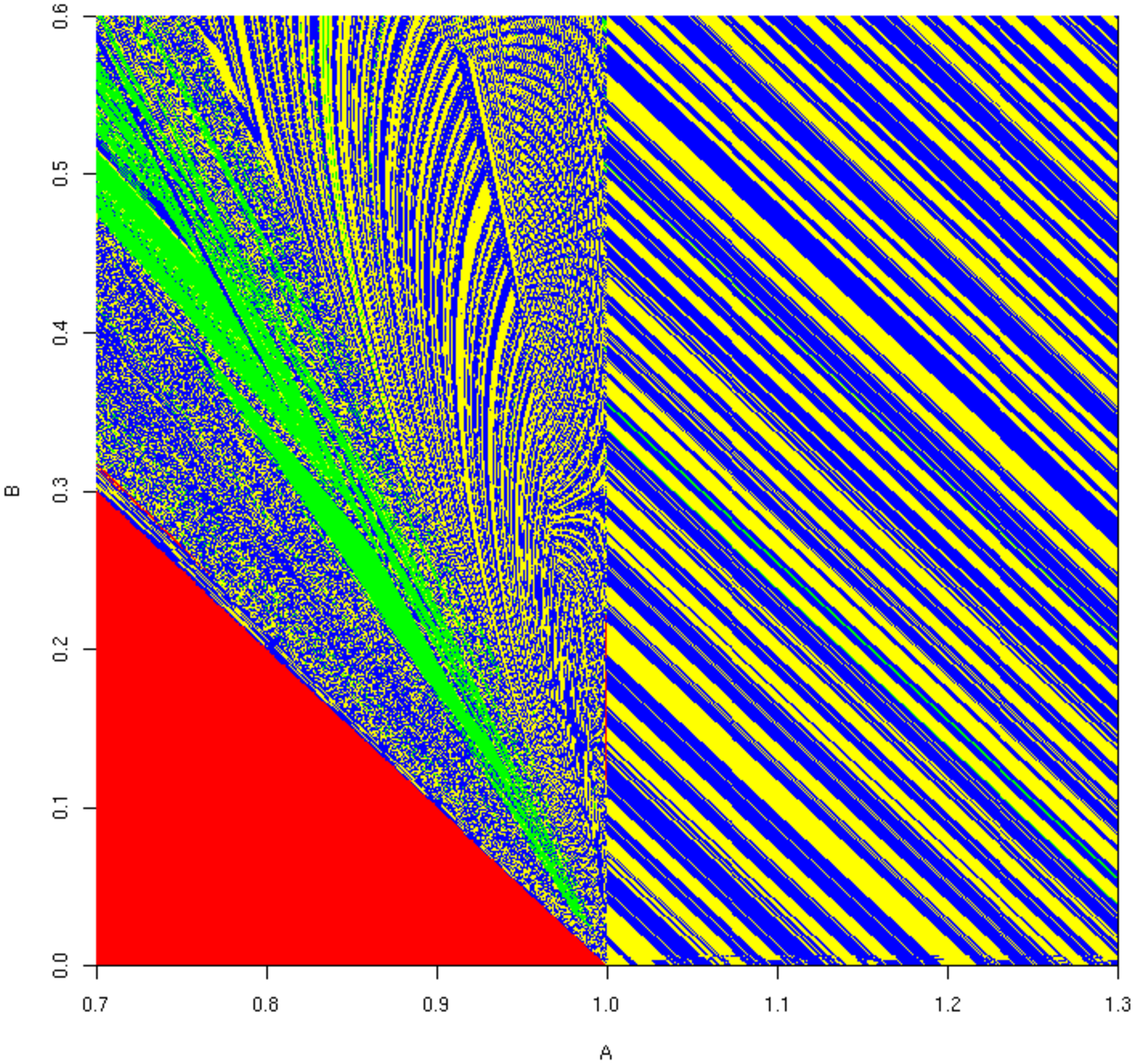}
\caption[Basins of attraction for the delay
equation~\eqref{eq.pwc2} for initial functions of the form $t \mapsto
A + B t$.]{Basins of attraction for the delay equation~\eqref{eq.pwc2}
for initial functions of the form $t \mapsto A + B t$, $t \in [0,1]$.
The basin colors are those of the corresponding solutions shown in
Figure~\ref{fig.persolspwc}.  The second image shows an enlargement,
by a factor of $10$, of part of the first image.}
\label{fig.pwcbasins1}
\end{center}
\end{figure}

To illustrate that there is nothing very special about the subspace
$\Sigma$ spanned by functions of the form~\eqref{eq.icsubspace1},
Figure~\ref{fig.pwcbasins2} shows basins of attraction for the
DDE~\eqref{eq.pwc2} for initial functions of the form
\begin{equation} \label{eq.icsubspace2}
  x(t) = A \cos (2 \pi t) + B \sin (2 \pi t), \quad t \in [0,1].
\end{equation}
This figure exhibits a fractal structure similar to that seen in
Figure~\ref{fig.pwcbasins1}.
\begin{figure}
\begin{center}
\includegraphics[height=3.5in]{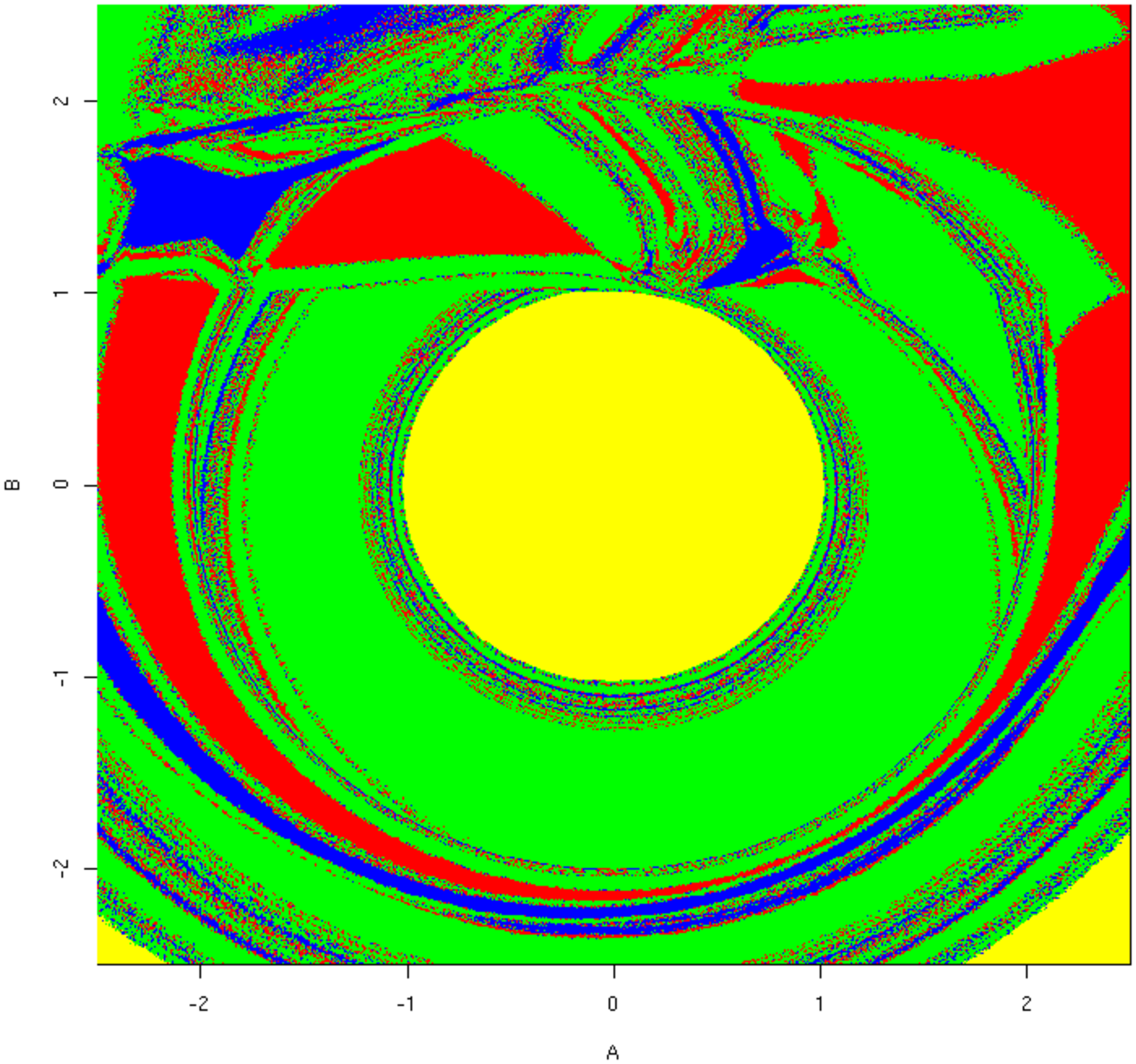} \\
\includegraphics[height=3.5in]{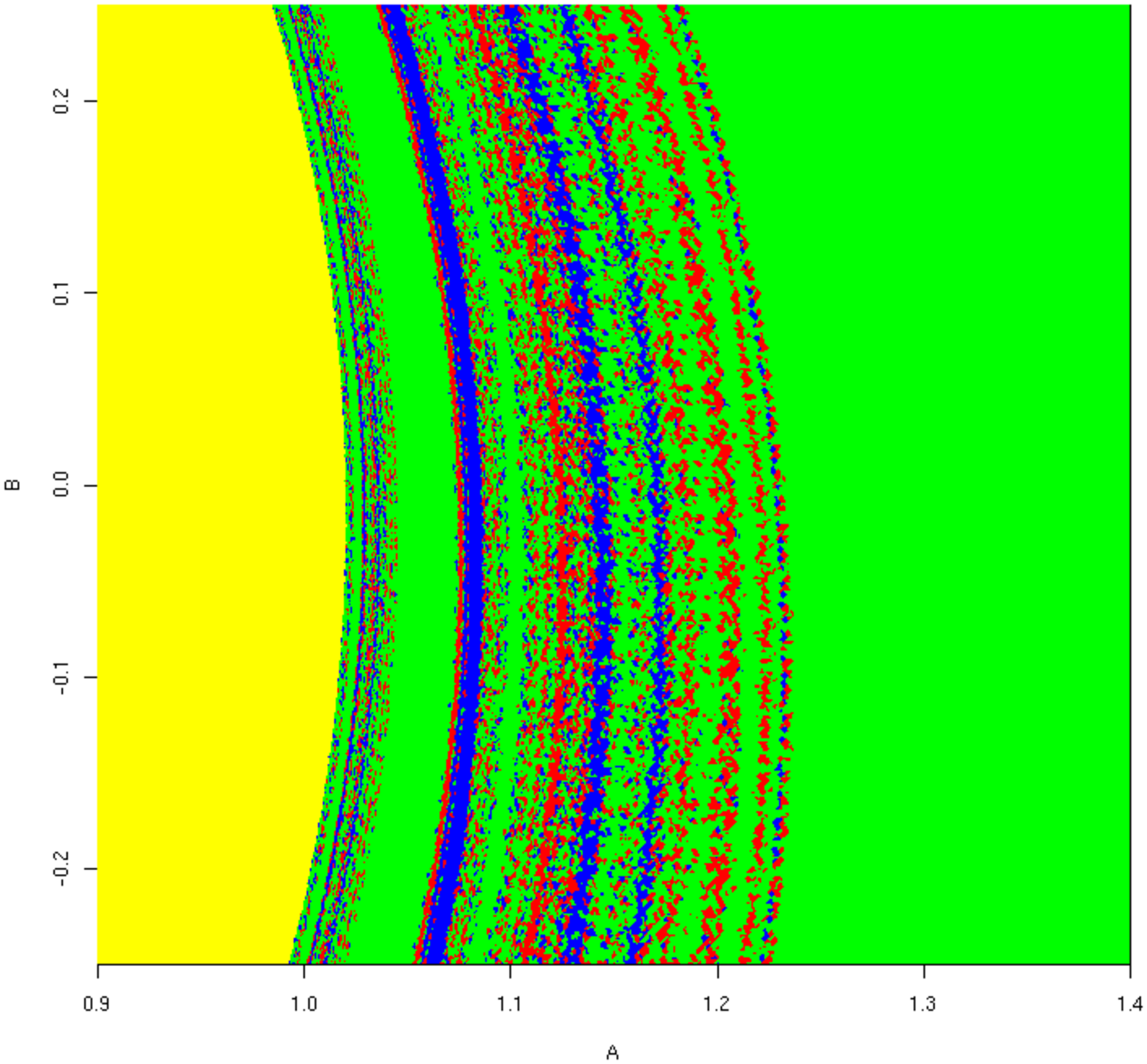}
\caption[As Figure~\ref{fig.pwcbasins1}, except with initial functions
of the form $t \mapsto A \sin(2 \pi t) + B \cos(2 \pi t)$.]{As
Figure~\protect\ref{fig.pwcbasins1}, except with initial functions of
the form $t \mapsto A \sin(2 \pi t) + B \cos(2 \pi t)$.  The second
image shows an enlargement, by a factor of $10$, of part of the first
image (the granularity is a result of limited numerical accuracy).}
\label{fig.pwcbasins2}
\end{center}
\end{figure}

\clearpage
\subsubsection{Mackey-Glass equation} \label{sec.transmg}

The Mackey-Glass delay differential equation \cite{MG77},
\begin{equation} \label{eq.mg4}
  x'(t) = -\alpha x(t) + \beta \frac{x(t-1)}{1 + x(t-1)^{10}},
\end{equation}
was originally introduced to model oscillations in neutrophil
populations.  It has subsequently been the subject of much study
because of the variety of dynamical phenomena it exhibits.

Losson \etal\ \cite{LML93} found multistability (coexistence of two
attracting periodic orbits) in the DDE~\eqref{eq.mg4}, but for the
parameter sets they considered they found basins of attraction with
only simple, non-fractal boundaries.  We have made a more thorough
search of parameter space, sampling the rectangle $[0.5,10] \times
[0,200]$ in $(\alpha,\beta)$-space at $20 \times 20$ resolution.  For
each sample, $100$ different numerical solutions of~\eqref{eq.mg4}
were computed.  In this way we found numerous parameter values for
which equation~\eqref{eq.mg4} has higher-order multi-stability and
basins of attraction with apparently fractal boundaries.  For example,
at $\alpha=1/0.1625$, $\beta=12/0.1625$, there are the four coexisting
attracting periodic solutions shown in Figure \ref{fig.persolsmg}.
These solutions occur in symmetric positive and negative pairs, due to
the invariance of equation~\eqref{eq.mg4} under the transformation
$x(t) \mapsto -x(t)$.
\begin{figure}
\begin{center}
\includegraphics[height=2in]{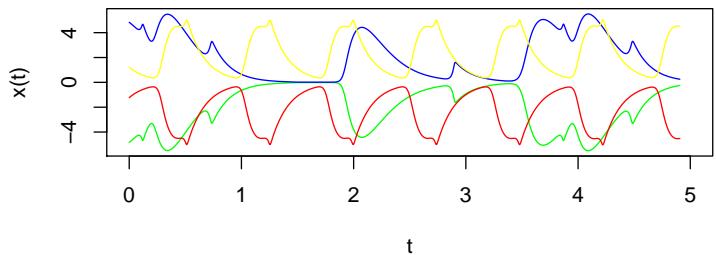}
\caption[Four coexisting attracting periodic solutions of the
Mackey-Glass equation.]{Four coexisting attracting periodic solutions
of the Mackey-Glass equation~\eqref{eq.mg4} with parameters
$\alpha=1/0.1625$, $\beta=12/0.1625$.}
\label{fig.persolsmg}
\end{center}
\end{figure}

Figure~\ref{fig.mgbasins1} shows the basins of attraction for
equation~\eqref{eq.mg4}, or rather, part of the intersection of these
basins with the subspace of $C$ spanned by functions of the
form~\eqref{eq.icsubspace1}.  Also shown is a sequence of
magnifications, spanning four orders of magnitude, that suggest the
basin boundaries are fractal sets with Cantor-like structure.  The
reflection symmetry in the first figure is due to invariance of the
DDE under the transformation $x(t) \mapsto -x(t)$, or equivalently
$(A,B) \mapsto (-A,-B)$.
\begin{figure}
\begin{center}
\setlength{\unitlength}{0.84in}
\begin{picture}(6.5,6.5)
\put(0,3.25){ \includegraphics[width=2.73in]{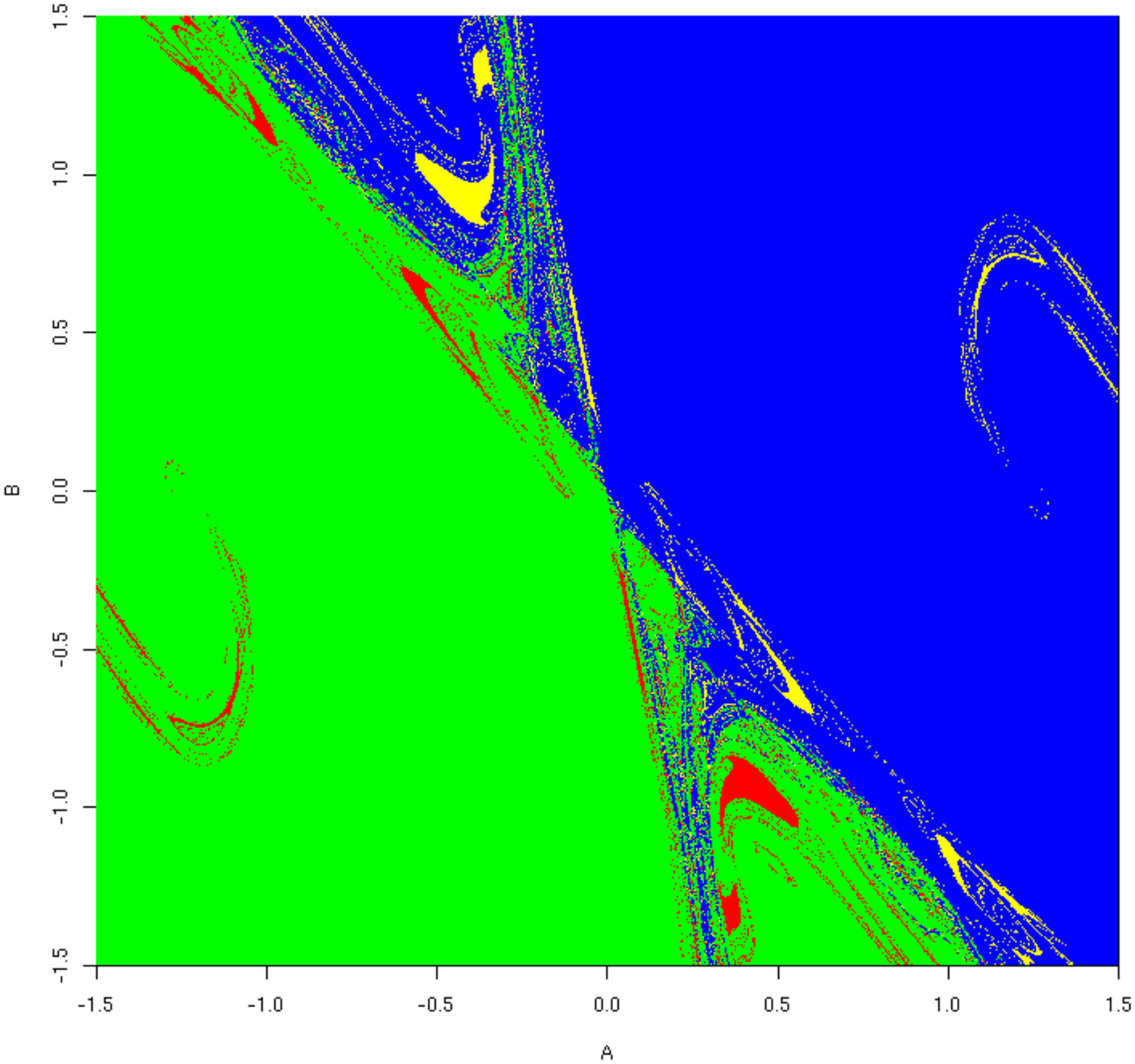} }
\put(3.25,3.25){ \includegraphics[width=2.73in]{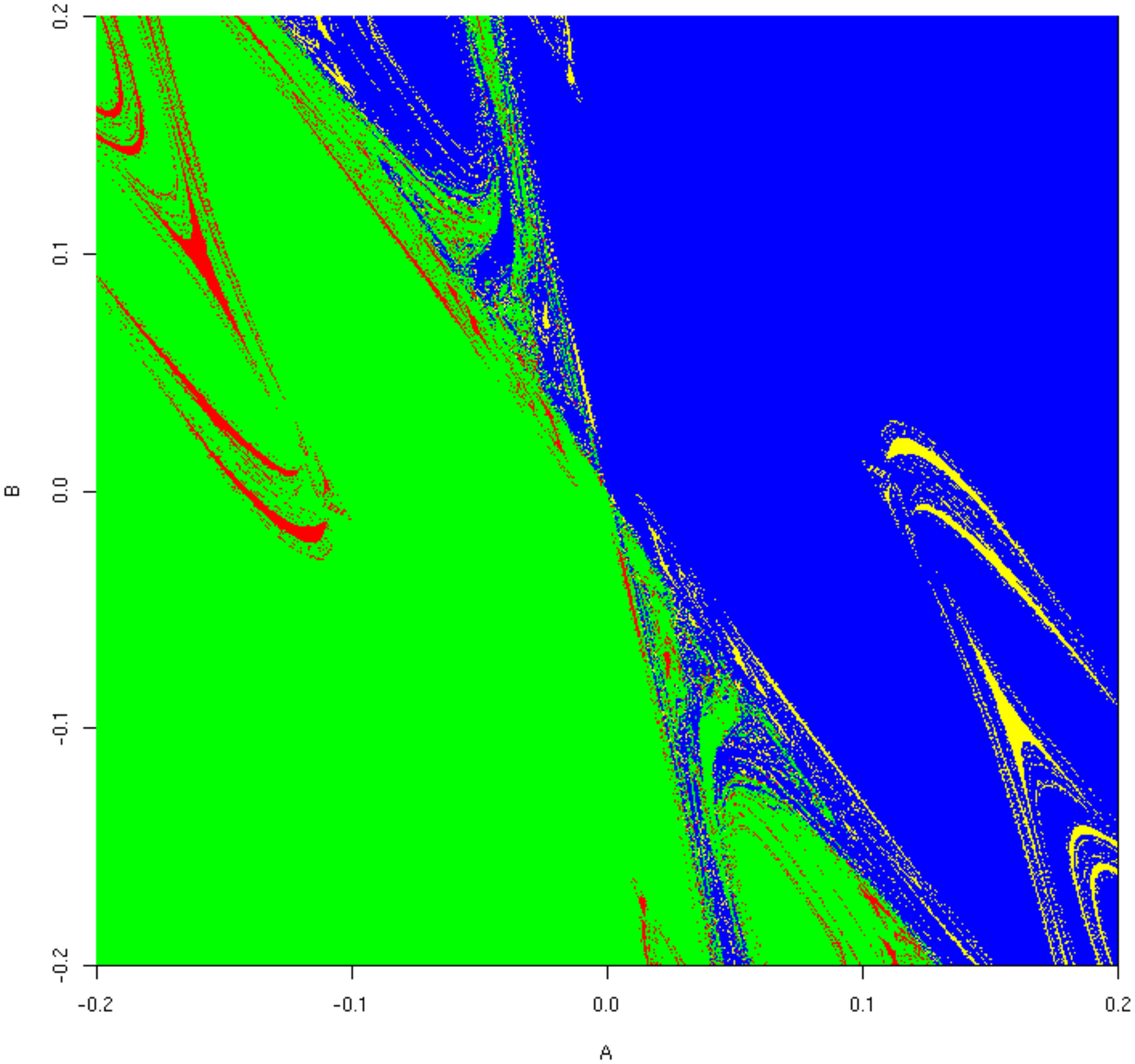} }
\put(0,0){ \includegraphics[width=2.73in]{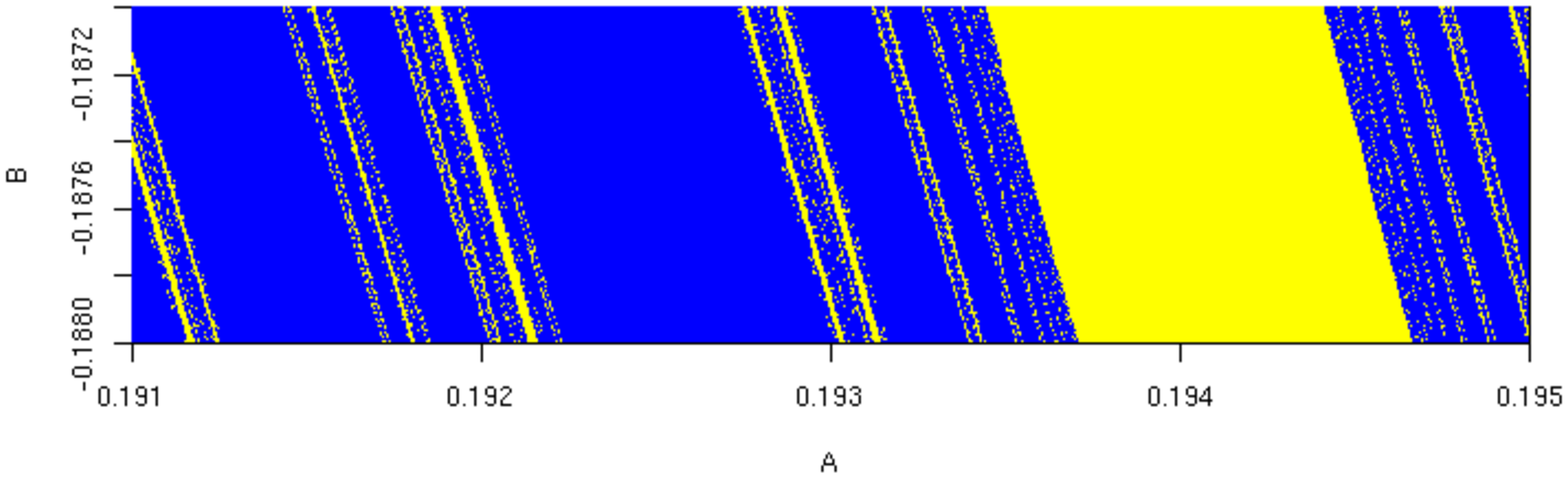} }
\put(0,1.6){ \includegraphics[width=2.73in]{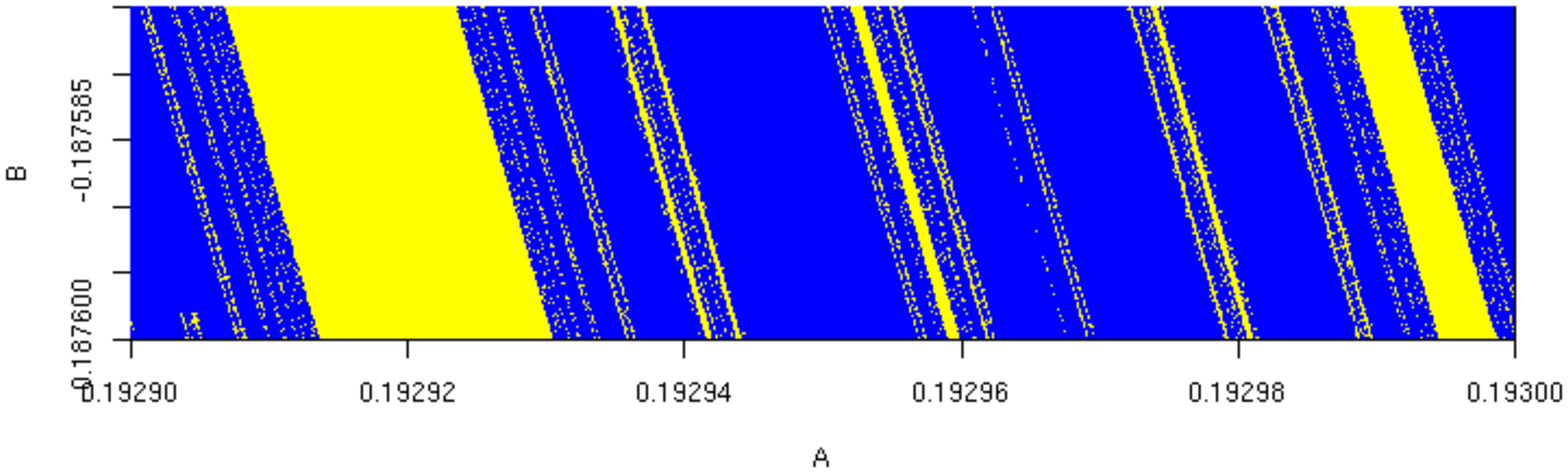} }
\put(3.25,0){ \includegraphics[width=2.73in]{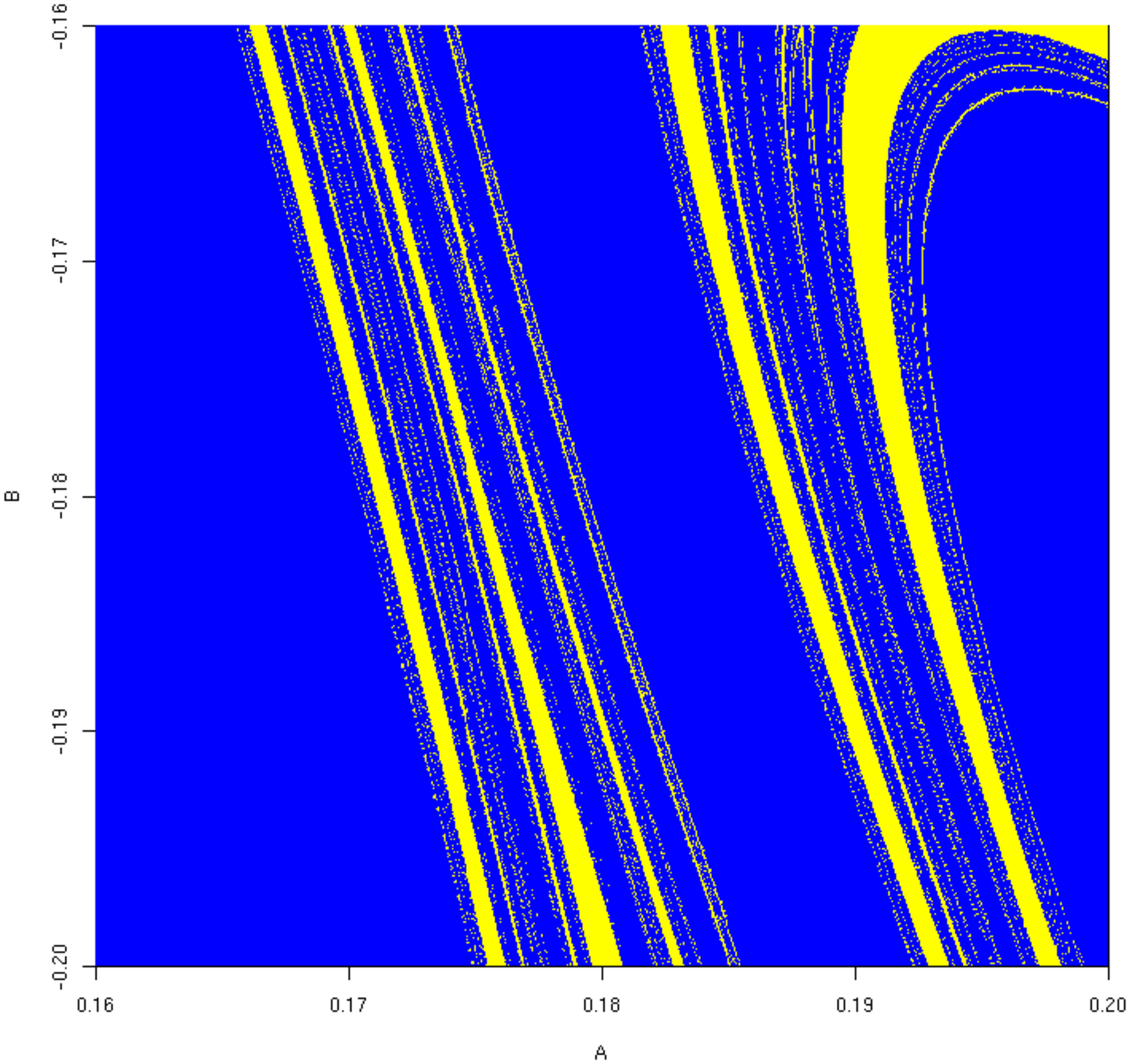} }
\put(1.57,4.75){\framebox(0.35,0.35){}}
\dashline{0.05}(1.57,4.75)(3.58,3.59)
\dashline{0.05}(1.57,5.1)(3.58,6.22)
\put(6.13,3.6){\framebox(0.28,0.28){}}
\dashline{0.05}(6.13,3.88)(3.58,2.99)
\dashline{0.05}(6.42,3.88)(6.42,2.99)
\put(5.8,1.09){\framebox(0.28,0.07){}}
\dashline{0.05}(6.09,1.08)(3.16,0.34)
\dashline{0.05}(6.09,1.16)(3.16,1.03)
\put(1.67,0.6){\framebox(0.07,0.02){}}
\dashline{0.05}(1.67,0.6)(0.33,1.94)
\dashline{0.05}(1.74,0.6)(3.16,1.94)
\end{picture}\\
\caption[Basins of attraction of the Mackey-Glass
equation for initial functions of the form $t \mapsto A
+ B t$.]{Basins of attraction of the Mackey-Glass
equation~\eqref{eq.mg4} for initial functions of the form $t \mapsto A
+ B t$, $t \in [0,1]$.  The basin colors are those of the
corresponding solutions shown in Figure~\ref{fig.persolsmg}.  A
sequence of magnifications over $4$ orders of magnitude is shown to
highlight the fractal structure of the basin boundaries.}
\label{fig.mgbasins1}
\end{center}
\end{figure}


\clearpage
\subsection{Chaotic transients} \label{sec.longtrans}

If transient chaos does occur in the delay equations~\eqref{eq.pwc2}
and~\eqref{eq.mg4} then it should be possible to find solutions with
long chaotic transients.  If there is a chaotic saddle, the basin
boundary will contain the saddle and its stable manifold.  Thus
chaotic transients should be found for initial functions close to the
basin boundary.  Such initial functions, of the form $x(t) = A + B t$,
$t \in [0,1]$, can simply be read off Figures~\ref{fig.pwcbasins1}
and~\ref{fig.mgbasins1} by choosing a point $(A,B)$ near a boundary
between basins of attraction.  This point can be refined, using a
bisection algorithm, to obtain a point $(A,B)$ and corresponding
initial function $\phi$ arbitrarily close (within numerical precision)
to the basin boundary.  Integration of the DDE forward from this
initial function is then expected to yield a solution with a long
chaotic transient.

Figure~\ref{fig.pwclongtrans} shows a numerical solution of
equation~\eqref{eq.pwc2} corresponding to an initial function found in
this way.  This solution does indeed appear to exhibit aperiodic
behavior for a considerable duration (about $50$ time units) before
settling down to one of the attracting periodic solutions.
Figure~\ref{fig.mglongtrans} shows a solution of the Mackey-Glass
equation~\ref{eq.mg4} obtained in the same manner, also with a long
chaotic transient that settles down to an attracting periodic orbit
after about $50$ time units.

\begin{figure}
\begin{center}
\includegraphics[width=5.5in]{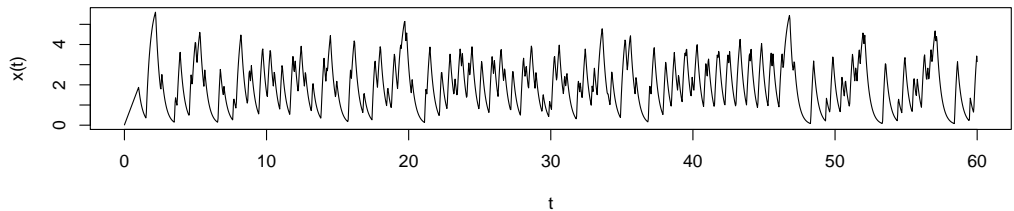}
\caption[A transient chaotic solution of the delay equation
\eqref{eq.pwc2}.]{A transient chaotic solution of the delay equation
\eqref{eq.pwc2}, corresponding to the initial function $x(t) =
0.0131614 + 1.870858 t$, $t \in [0,1]$.}
\label{fig.pwclongtrans}
\end{center}
\end{figure}
\begin{figure}
\begin{center}
\includegraphics[width=5.5in]{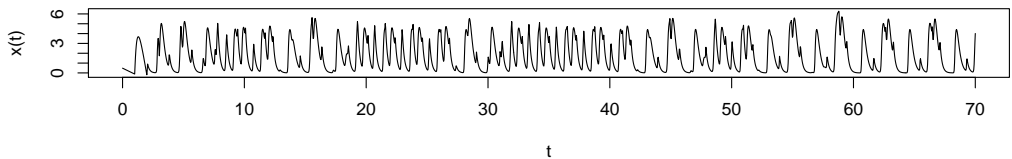}
\caption[A transient chaotic solution of the Mackey-Glass
equation.]{A transient chaotic solution of the Mackey-Glass
equation~\eqref{eq.mg4}, corresponding to the initial function $x(t) =
0.482417 - 0.599163 t$, $t \in [0,1]$.}
\label{fig.mglongtrans}
\end{center}
\end{figure}

The observation of long chaotic transients, together with the
existence of seemingly fractal basins of attraction, suggests the
presence of chaotic saddles in the delay equations~\eqref{eq.pwc2}
and~\eqref{eq.mg4}.  It is of interest to approximate these saddles
numerically.  This problem is considered in the following sections.

\section{Numerical Analysis} \label{sec.algorithms}

Analysis of transient chaos---that is, finding a numerical
approximation of the saddle and characterizing it quantitatively, \eg\
in terms of ergodic properties such as Lyapunov exponents and fractal
dimensions---requires a method for computing arbitrarily long orbits
on or very near the chaotic saddle.  Because the saddle is unstable,
this is not straightforward: rather than being attracted to the
saddle, any numerical trajectory starting near the saddle, no matter
how close, will eventually depart from it.

A variety of algorithms have been developed for computing numerical
trajectories very near a chaotic saddle, including the
``straddle-orbit''\cite{BGOY88}, ``PIM'' (Proper Interior Maximum)
\cite{Nusse89}, and ``stagger-and-step'' \cite{Sweet01} methods.  Each
of these algorithms uses the following strategy.  Starting with an
initial phase point near the saddle (or its stable manifold), evolve
this point forward under the system dynamics, occasionally applying a
perturbation of size less than \eps\ to keep the trajectory within a
small neighborhood of the saddle.  By construction the resulting
trajectory $\{x_n\}$ satisfies
\begin{equation}
  |x_{n+1} - S(x_n)| < \eps,
\end{equation}
so $\{x_n\}$ is an $\eps$-pseudo-orbit of $S$.  It is presumed that if
\eps\ is sufficiently small then $\{x_n\}$ approximates a true orbit on the saddle
(\eg\ by the Shadowing Lemma, \cf\ Section~\ref{sec.shadowing}).  The
important differences between the various algorithms, which we outline
below, lie in the methods used to find appropriate perturbations.

Throughout the following, let $\{S^n: n \in \Zplus\}$ be a
discrete-time dynamical system defined by iterates of a transformation
$S: X \to X$.

\subsection{Straddle orbit method}

The goal of the straddle orbit method is to construct a perturbed
orbit $\{x_n\}$ that follows the basin boundary.  Between iterates of
$S$, the straddle orbit method~\cite{BGOY88} employs a bisection
algorithm to perturb an orbit to within a small neighborhood of the
basin boundary.  The algorithm is as follows.

\begin{center}
\fbox{\parbox{5in}{
\begin{enumerate}
\item Choose phase points $x_A, x_B \in X$ that lie in different basins.
\item Let $x_C$ be the midpoint of the segment $x_Ax_B$; determine which basin $x_C$ is in,
\eg\ by iteration of $S$.
\item If $x_C$ is in the same basin as $x_A$, let $x_A:=x_C$, else let $x_B:= x_C$.
\item If $|x_A-x_B| \geq \eps$, return to 2.
\item Let $x_A:= S(x_A)$, $x_B:= S(x_B)$.
\item If $|x_A-x_B| \geq \eps$, return to 2, else return to 5.
\end{enumerate}
}}
\end{center}

The perturbation phase of the algorithm is carried out in steps 2--4,
which use a bisection algorithm to isolate the basin boundary between
points $x_A$ and $x_B$ separated by a distance less than \eps.  Points
$x_A$ and $x_B$ are both evolved under iterates of $S$, and the
bisection algorithm is repeated whenever $x_A$ and $x_B$ differ by
more than \eps.

The straddle orbit $\{x_n\}$ is the sequence of points $x_A$
(alternatively $x_B$) obtained from step 5.  By construction, the
straddle orbit remains within distance \eps\ of the basin boundary.
It is presumed that for \eps\ sufficiently small the straddle orbit
approximates an orbit on the boundary.  If the boundary consists of
the stable manifold of the chaotic saddle then the straddle orbit
should follow this stable manifold and, after an initial transient
phase, should remain with
\eps\ of the saddle itself.

\subsection{PIM method}

The goal of the PIM method~\cite{Nusse89,Nusse91} is to construct a
perturbed orbit $\{x_n\}$ that remains indefinitely within some
neighborhood $R$ of the chaotic saddle.  This is done by perturbing
the orbit so as to increase the escape time $T(x_n)$, which is the
number of iterates of $S$ required to take $x_n$ out of $R$.

Suppose the dynamical system $S: X \to X$ has a chaotic saddle
$\Lambda$.  Let $R \subset X$ be a \emph{transient region}, such that
$\Lambda \subset R$ and $R$ contains no attractor.  For $x
\in R$, define the \emph{escape time} $T(x)$ by
\begin{equation}
\begin{split}
  T(x) &= \min\{n > 0: S^n(x) \notin R\} \\
     ( &= \infty \text{ if } S^n(x) \in R \; \forall n>0 ).
\end{split}
\end{equation}

In the following, an ordered set $(x_a,x_b,x_c)$ of points $x_a, x_b,
x_c \in X$ is said to be a PIM triple if $x_b$ lies on the segment
$x_a x_c$ and $T(x_b) > \max\big( T(x_a),T(x_c)\big)$.

\begin{center}
\fbox{\parbox{5in}{
\begin{enumerate}
\item Choose a PIM triple $(x_a,x_b,x_c)$ such that $x_a$ and $x_c$ lie in different basins.
\item Choose $N $ equally spaced points on the segment $x_a x_c$.  From these
choose a PIM triple $(\bar{x}_a,\bar{x}_b,\bar{x}_c)$ such that the
segment $\bar{x}_a \bar{x}_c$ is a proper subset of the segment $x_a
x_b$.
\item Let $x_a:= \bar{x}_a$, $x_c:= \bar{x}_c$.
\item If $|x_a - x_c| \geq \eps$, return to 2.
\item Let $x_a:= S(x_a)$, $x_c:= S(x_c)$.
\item If $|x_a-x_c| \geq \eps$, return to 2, else return to 5.
\end{enumerate}
}}
\end{center}

The PIM orbit $\{x_n\}$ is the sequence of points $x_a$ (alternatively
$x_b$) obtained from step 5.  By construction the PIM orbit $\{x_n\}$
satisfies $T(x_n) > 0$ for all $n$, so $\{x_n\}$ remains within the
transient region $R$ for all time.

\subsection{Stagger-and-step method}

The stagger-and-step method~\cite{Sweet01} is also based on seeking
perturbations that increase the escape time of the orbit, but
perturbations are not restricted to a particular line segment.  Let
the escape time $T(x)$ be defined as in the PIM method above, and let
$\Tstar > 0$ (the minimum allowed escape time).  Then a
stagger-and-step trajectory $\{x_n\}$ results from iterating the
following algorithm.

\begin{center}
\fbox{\parbox{5in}{
\begin{enumerate}
\item Choose $x_0 \in X$ with $T(x_0) > \Tstar$.
\item If $T(x_n) \leq \Tstar$, find a random perturbation $r \in X$, $|r|<\eps$,
such that $T(x_n+r) > \Tstar$, and let $x_n:= x_n + r$.
\item Let $x_{n+1}:= S(x_n)$.
\item Return to 2.
\end{enumerate}
}}
\end{center}

\section{Application to Delay Equations} \label{sec.analysis}

\subsection{Approximate discrete-time map} \label{sec.approxmap}

The algorithms described above are formulated in the context of
discrete-time dynamical systems.  For a continuous-time system they
can be applied to an appropriate discretized version of the dynamics,
for example iterates of the time-one map.  In the case of a delay
equation~\eqref{eq.ddetype} the time-one map is a transformation $S =
S_1: C \to C$.  In a numerical simulation this transformation cannot
be represented exactly, and we must resort to a finite dimensional
approximation.  For this purpose the phase point $x_t \in C$ can be
represented by the vector $\mathbf{u}(t) =
\big(u_0(t),\ldots,u_N(t)\big)$ of values
\begin{equation} \label{eq.uidef}
  u_i(t) = x_t(s_i) = x(t+s_i),
\end{equation}
that $x_t$ takes on a uniform grid
\begin{equation}
  s_i = -1 + i/N, \quad i=0,\ldots,N,
\end{equation}
typically with $N$ of the order $10^2$ or $10^3$.  The time-one map
$S(\phi)$ can be carried out by constructing a numerical solution $x$
with initial function $\phi$ to time $t=1$, and evaluating the vector
of solution values $u_i=x(1+s_i)$, $i=0,\ldots,N$.

To be more precise, in numerical integration of the
DDE~\eqref{eq.ddetype} with time step $0 < h \ll 1$, $\mathbf{u}(t+h)$
is approximated by
\begin{equation}
  \mathbf{u}(t) \mapsto \mathbf{u}(t+h) \approx T\big( \mathbf{u}(t)
  \big)
\end{equation}
for some transformation $T: \Reals^{N+1} \to \Reals^{N+1}$, the
details of which depend on the choice of integration scheme.  With
$h=1/N$ we have
\begin{equation}
\begin{split}
  \mathbf{u}(t+1) &= \mathbf{u}(t + Nh) \\
                  &\approx T^N\big( \mathbf{u}(t) \big),
\end{split}
\end{equation}
so that
\begin{equation} \label{eq.Stild}
  \tilde{S} = T^N
\end{equation}
approximates the time-one map $S$.  The transformation $\tilde{S}:
\Reals^{N+1} \to \Reals^{N+1}$ is readily implemented using any of the
various codes available for numerical integration of
DDEs.\footnote{Throughout this chapter, numerical integration is
performed using the Fortran code DKLAG6~\cite{Thomp} interfaced with
the R language~\cite{R}.}

\subsection{Computing escape times}

Implementing the PIM and stagger-and-step methods requires a method of
computing the escape time function $T$.  This in turn requires a
practical method of describing the transient region $R$.  One method
is to define $R$ to be the set
\begin{equation}
  R = \{x \in X: \text{dist}(x,A_i) > \delta \; \forall i \},
\end{equation}
for some $\delta > 0$, where the $A_i$ are the attractors of the
system.  Presumably these are already known or approximated, so that
$\text{dist}(x,A_i)$ can be computed.  This approach was used
in~\cite{Nusse89} for low-dimensional systems.

Although it has not received mention elsewhere, it seems clear that
the $A_i$ must also include periodic orbits whose stability is of
saddle type.  Otherwise, the presence of such an orbit implies the
existence of trajectories with arbitrarily large escape times, but
which do not lie near the chaotic saddle (\eg, trajectories that
follow the stable manifold of the unstable periodic orbit).  If a
particular such orbit $A$ is not counted among the $A_i$ then the PIM
and stagger-and-step methods fail, with the trajectory $\{x_n\}$
converging to $A$.\footnote{As a side-effect that might be exploited,
the PIM and stagger-and-step methods appear to be novel ways to find
and approximate saddle type periodic orbits.}  In this case the
resulting trajectory $\{x_n\}$ automatically yields an approximation
of $A$, which can then be included among the $A_i$ and the method
applied again.

\subsection{Failure of existing algorithms} \label{sec.transfail}

In principle, any of the methods in~\cite{BGOY88,Nusse89,Sweet01}
could be used to approximate the chaotic saddle (if one exists) for
the finite dimensional map $\tilde{S}: \Reals^{N+1} \to
\Reals^{N+1}$ defined in Section~\ref{sec.approxmap}.  We have
implemented all of these methods and applied them to the delay
equations~\eqref{eq.pwc2} and~\eqref{eq.mg4} for which we have found
evidence of transient chaos.  We find that each of these methods does
in fact generate an aperiodic trajectory of some duration, but that
eventually this trajectory converges to an unstable (saddle type)
periodic orbit, regardless of the method used.  The unstable periodic
solutions found in this way for equations~\eqref{eq.pwc2}
and~\eqref{eq.mg4} are shown in Figures~\ref{fig.unstablepwc}
and~\ref{fig.unstablemg}, respectively.  This mode of failure is
interesting since it has not been reported before.
\begin{figure}
\begin{center}
\includegraphics[width=\figwidth]{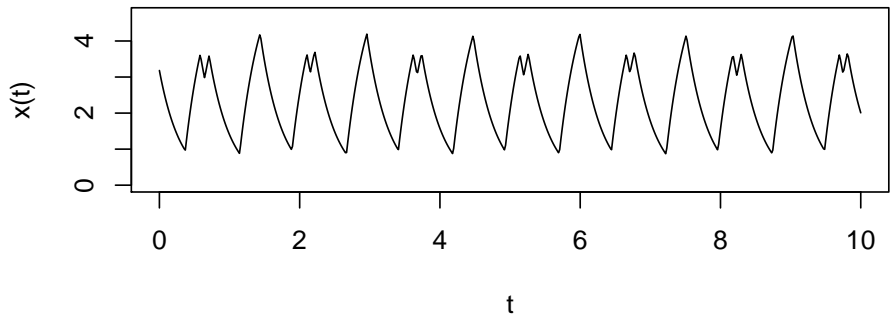}
\caption{An unstable (saddle type) periodic solution of the
delay equation~\eqref{eq.pwc2}.}
\label{fig.unstablepwc}
\end{center}
\end{figure}
\begin{figure}
\begin{center}
\includegraphics[width=\figwidth]{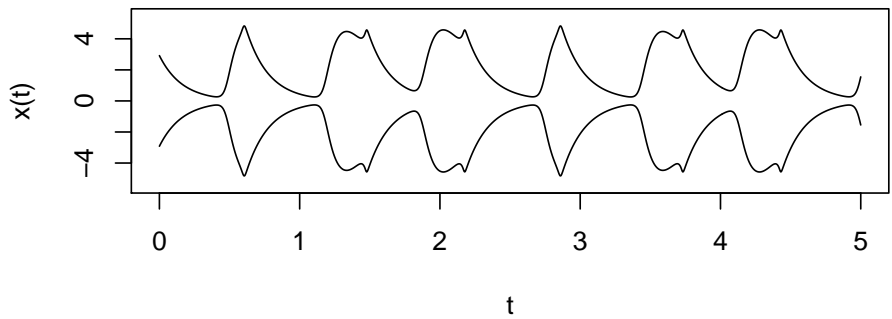}
\caption[Unstable (saddle type) periodic solutions of the
Mackey-Glass equation.]{Unstable (saddle type) periodic solutions of
the Mackey-Glass equation~\eqref{eq.mg4}.}
\label{fig.unstablemg}
\end{center}
\end{figure}

There is a simple plausible explanation for the failure of the
straddle orbit method.  For each of the DDEs under consideration there
appears to be a saddle type periodic orbit.  The stable manifold of
any such orbit is contained within the basin boundary.  A straddle
orbit, which by construction lies close to the basin boundary, is
eventually perturbed onto (or near) this stable manifold.  The orbit
thereafter follows this stable manifold rather than the stable
manifold of the chaotic saddle, and consequently the straddle orbit is
asymptotic to the periodic orbit.

In both the PIM and stagger-and-step methods the unstable periodic
orbit can be avoided by excluding it from the transient region $R$, as
described in the previous section.  This prevents these methods from
converging to the periodic orbit, since an orbit that enters a
neighborhood of an unstable periodic orbit is deemed to have left the
transient region.  However, with this provision both algorithms
eventually stall at a point where they are unable to find a
perturbation that yields an increased escape time.  The reason for
this failure seems to be the mechanism illustrated in
Figure~\ref{fig.transfail}.  Here the orbit through the phase point
$x$ exits the transient region $R$ by entering a neighborhood $\Omega$
of an unstable periodic orbit, represented by the point $P$.  The
orbit through the perturbed phase point $\tilde{x}$ narrowly misses
$\Omega$, so that $T(\tilde{x}) > T(x)$.  Since it increases the
escape time of the trajectory, $\tilde{x}$ is taken as a
``successful'' perturbation in either the PIM or stagger-and-step
method.  However, this perturbation is spurious since $\tilde{x}$ is
carried by successive iterations of $S$ into a region where further
successful perturbations, spurious or otherwise, do not exist.  Both
algorithms come to a halt when the trajectory through $\tilde{x}$
reaches a point where all phase points within distance $\eps$ exit $R$
via $\Omega$, and all have the same escape time.
\begin{figure}
\begin{center}
\includegraphics{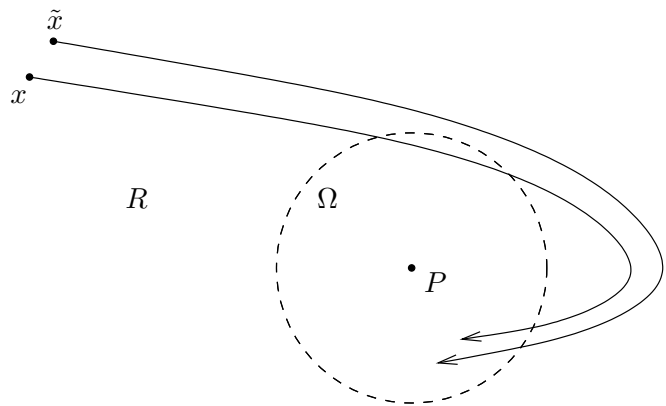}
\caption[Mechanism of failure of the PIM and stagger-and-step methods.]{Mechanism
of failure of the PIM and stagger-and-step methods.  The phase point
$x$ exits the transient region $R$ via a neighborhood $\Omega$ of an
unstable periodic orbit $P$.  The perturbed phase point $\tilde{x}$
just misses $\Omega$, so $T(\tilde{x}) > T(x)$.}
\label{fig.transfail}
\end{center}
\end{figure}

It has been pointed out~\cite{Sweet01} that the PIM method is expected
to fail if the chaotic saddle has more than one unstable direction.
This would suggest an alternative explanation for the failure of the
method here.  However, the results of Section~\ref{sec.translyap}
provide evidence that the chaotic saddle has only one unstable
direction, for both of the DDEs~\eqref{eq.pwc2} and~\eqref{eq.mg4}.
Therefore the failure of the method does not seem to be caused by the
presence of multiple unstable directions.

\subsection{Modified stagger-and-step method} \label{sec.ssmod}

The following slight modification of the stagger-and-step method
circumvents the difficulty described above, and proves effective for
the delay equations considered here.  Before applying it to DDEs, we
present the modified algorithm in its general form.

The idea behind the method is to make a more aggressive search for a
``successful stagger'', \ie\ a perturbation that moves $x_n$ onto a
nearby trajectory that remain within a neighborhood of the saddle for
a longer period of time $T(x_n)$.  In the original stagger-and-step
method a perturbation is sought only when $T(x_n) =
\Tstar$.  For $T(x_n) > \Tstar$, $x_n$ is simply iterated forward under $S$
until $T(x_n)=\Tstar$.  In our modification, up to $N>0$ random
perturbations are selected in an attempt to find a successful stagger
at \emph{each} iteration.

For a given $\Tstar>0$ (the minimum allowed escape time) and small
$\eps > 0$, a trajectory $\{x_n\}$ results from iteration of the
following algorithm.
\begin{center}
\fbox{\parbox{5in}{
\begin{enumerate}
\item Choose $x_0 \in X$ with $T(x_0) > \Tstar$.
\item Set $j:= 1$.
\item \label{it.first}Choose a random perturbation $r \in X$, $|r| < \eps$.
\item If $T(x_n+r) > T(x_n)$ then set $x_n:= x_n+r$ and go to 6.
\item If $T(x_n) \leq \Tstar$ or $j < N$ then set $j:= j+1$
and return to \ref{it.first}.
\item Let $x_{n+1}:= S (x_n)$.
\item Return to 2.
\end{enumerate}
}}
\end{center}

Thus each iteration consists of a possible successful stagger of the
current phase point, followed by an iteration of $S$.  Because $T(x_n)
\ge \Tstar > 0$ for all $n$, we have $x_n \in R$ for all time; that is, the trajectory
never leaves the transient region.  If the trajectory is aperiodic, it
is taken (after removal of an initial transient where it follows the
stable manifold of the chaotic saddle) as an approximation of the
chaotic saddle.

The parameters \eps, \Tstar\ and $N$ are adjustable.  As discussed
in~\cite{Sweet01}, the success of the stagger-and-step method can
depend on a careful choice of \Tstar; this observation applies also to
the modified algorithm.  If \Tstar\ is outside a range of suitable
values, a very large and possibly infinite number of ``stagger
attempts'' are necessary before a successful stagger (one that
increases $T(x_n)$ above \Tstar) is found.

Parameter $N$ is the maximum number of ``stagger attempts'' at each
iteration if $T(x_n)>\Tstar$.  The original stagger-and-step method is
similar but not identical to the case $N=1$.  In this case only one
stagger attempt is made for phase points with $T(x_n)>\Tstar$.
Usually this attempt fails.  After each such failure $T(x_n)$
decreases by one, until $T(x_n)=\Tstar$ where an exhaustive search for
a successful stagger is made.

In applying the method to delay equations we find, regardless of the
choice of \Tstar, that very frequently a successful stagger cannot be
found when $T(x_n)=\Tstar$, and the algorithm ``gets stuck''.  This
phenomenon appears to be related to the instability of trajectories in
a neighborhood of the saddle: under iteration of $\tilde{S}$, the
potential successful staggers within an \eps-ball at $x_n$ quickly
diverge to a distance greater than \eps\ from the current phase point.
If this divergence occurs in fewer than the number of iterations
required for $T(x_n)$ to fall to $\Tstar$, a successful stagger will
fail to exist when $T(x_n)=\Tstar$ is reached.  With our modification,
taking $N>1$ ($N=5$ has worked well in practice) greatly reduces the
frequency of this outcome, since a more thorough search for a
successful stagger is made at each iteration.

Even with this approach it occasionally happens that a phase point is
reached where $T(x_n)=\Tstar$ and where it is impossible to find a
successful stagger.  In such cases an effective remedy is to revert to
a previous iteration, make a thorough search until a successful
stagger is found, and continue the algorithm from this point.

\section{Numerical Analysis Results}

\subsection{Visualizing the saddle and its invariant measure} \label{sec.transvis}

The stagger-step algorithm described above, when applied to the
approximate discrete-time map $\tilde{S}$ as defined for the delay
equations~\eqref{eq.pwc2} and~\eqref{eq.mg4}, yields a numerical
trajectory $\{\mathbf{u}(n) \in \Reals^{N+1}: n=0,1,2,\ldots\}$ that
approximates a trajectory in $C$ near the presumed chaotic saddle.
The corresponding solution of the DDE can be constructed from
equation~\eqref{eq.uidef}.  Figures~\ref{fig.saddlesolpwc}
and~\ref{fig.saddlesolmg} show chaotic solutions, computed in this
manner, for the delay equations~\eqref{eq.pwc2} and~\eqref{eq.mg4},
respectively.
\begin{figure}
\begin{center}
\includegraphics[width=\figwidth]{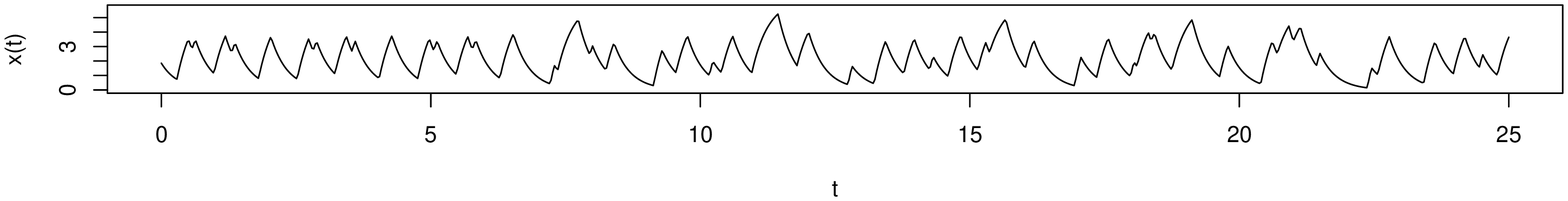} \\
\includegraphics[width=\figwidth]{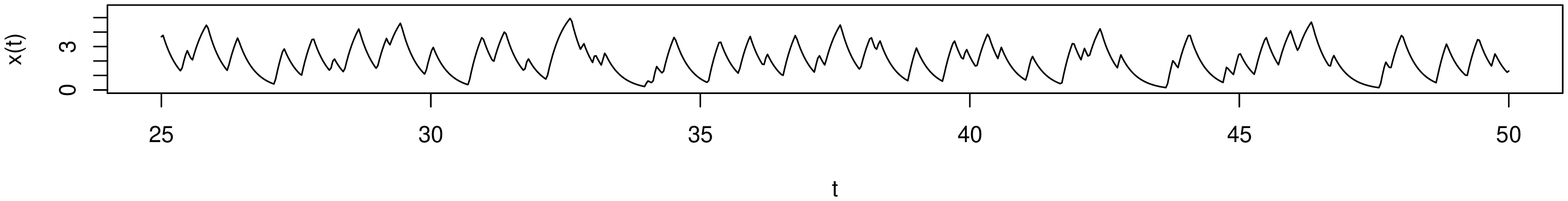} \\
\includegraphics[width=\figwidth]{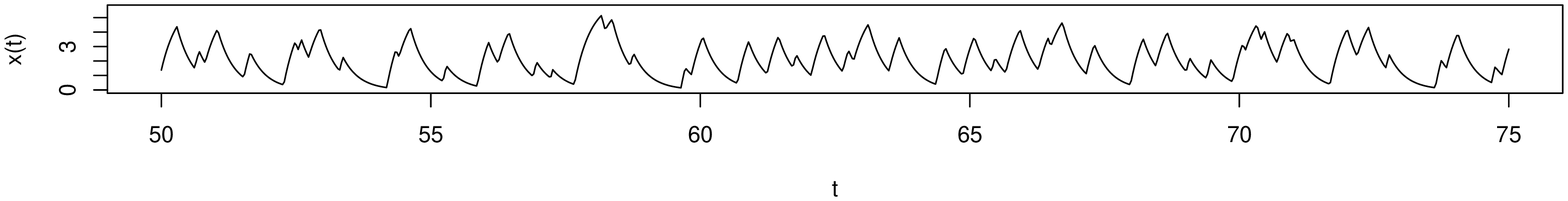} \\
\includegraphics[width=\figwidth]{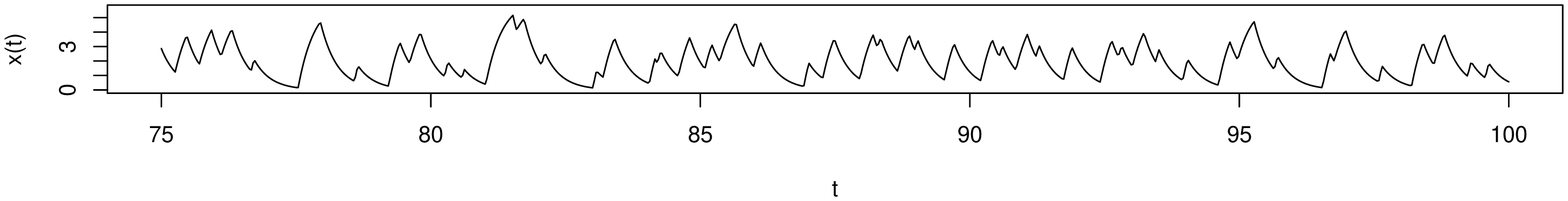} \\
\includegraphics[width=\figwidth]{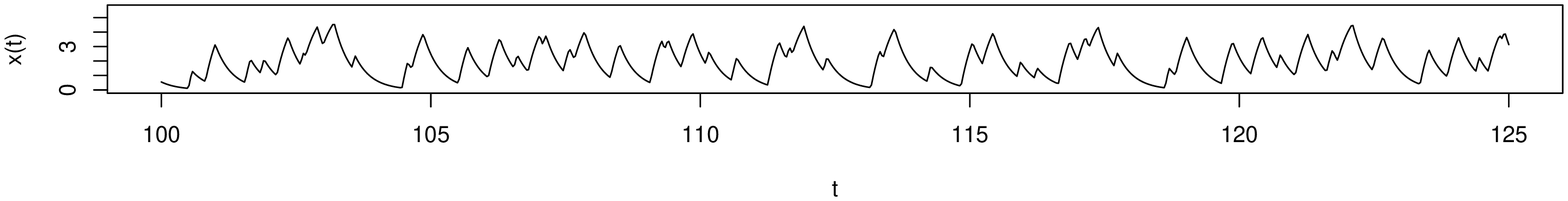} \\
\includegraphics[width=\figwidth]{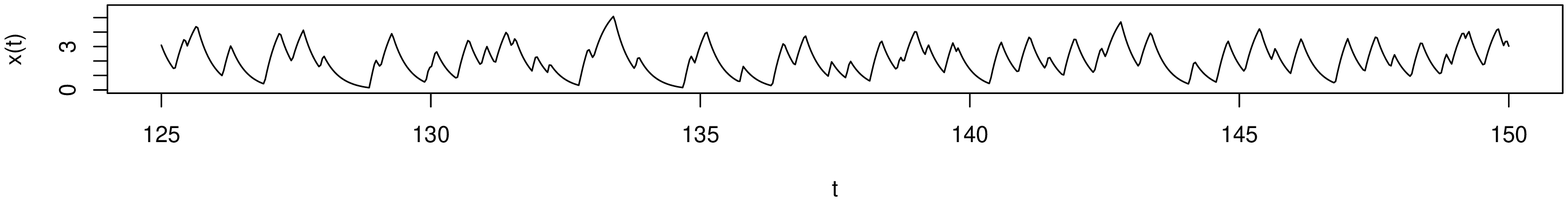}
\caption{A segment of a numerical solution near the chaotic saddle of
the delay equation~\eqref{eq.pwc2}, computed using the modified
stagger-and-step algorithm.}
\label{fig.saddlesolpwc}
\end{center}
\end{figure}
\begin{figure}
\begin{center}
\includegraphics[width=\figwidth]{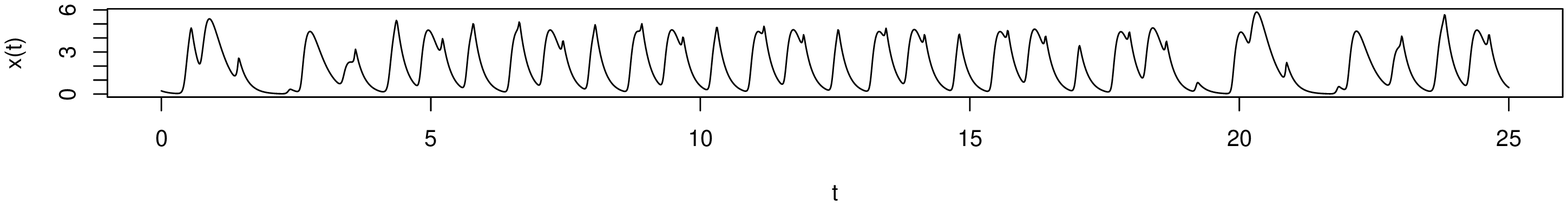} \\
\includegraphics[width=\figwidth]{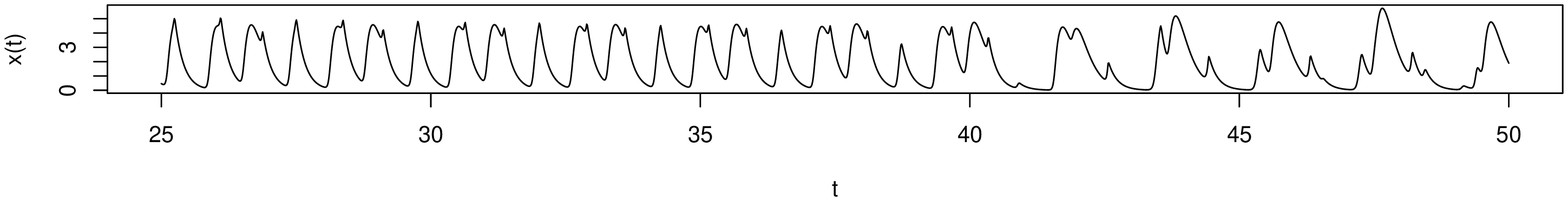} \\
\includegraphics[width=\figwidth]{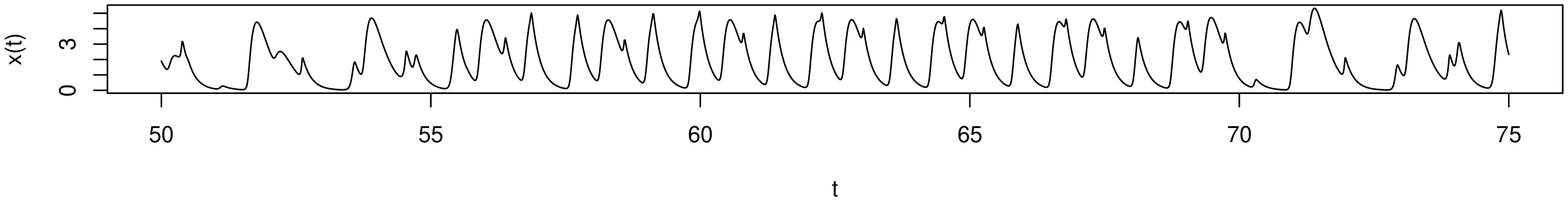} \\
\includegraphics[width=\figwidth]{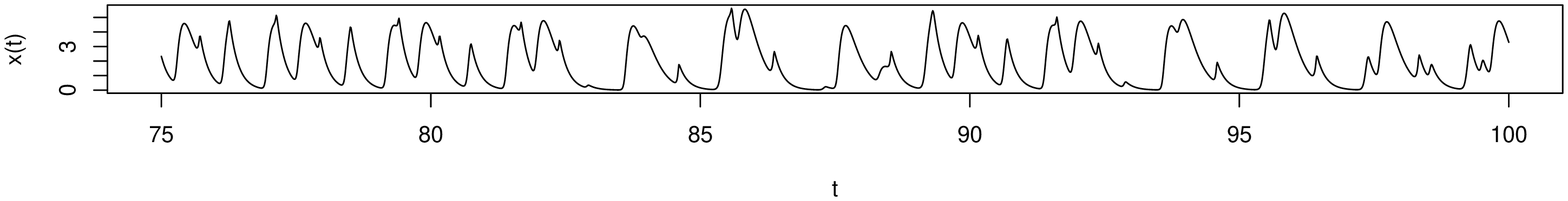} \\
\includegraphics[width=\figwidth]{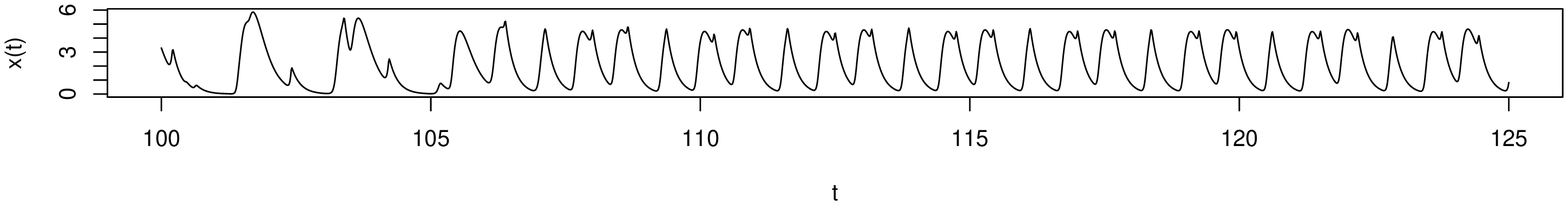} \\
\includegraphics[width=\figwidth]{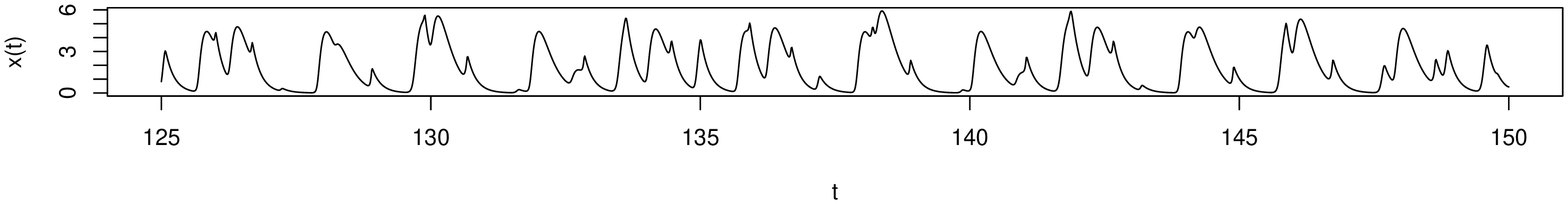}
\caption[A segment of a numerical solution near the chaotic saddle of
the Mackey-Glass equation, computed using the modified
stagger-and-step algorithm.]{A segment of a numerical solution near
the chaotic saddle of the Mackey-Glass equation~\eqref{eq.mg4},
computed using the modified stagger-and-step algorithm.}
\label{fig.saddlesolmg}
\end{center}
\end{figure}

With the trajectory $\{\mathbf{u}(n)\}$ in hand, it is possible to
visualize and otherwise characterize the geometry of the saddle.  The
phase space dimension of this trajectory is finite but large, so that
visualizing it (and hence the chaotic saddle it approximates) is
difficult.  As discussed in Sections~\ref{sec.trace1}
and~\ref{sec.trace2}, one way to visualize an object in the infinite
dimensional phase space $C$ is to plot its image under a suitable
``trace map''
\begin{equation}
  \pi: C \to \Reals^M.
\end{equation}
For delay equations like those considered here, a common practice is
to use the map
\begin{equation} \label{eq.transtrace}
  \pi: u \mapsto \big( u(-1), u(0) \big),
\end{equation}
\ie\ to plot $x(t)$ vs.\ $x(t-1)$.  With respect to the finite-dimensional
vector $\mathbf{u}$ that approximates $u$, $\pi$ acts according to
\begin{equation} \label{eq.transtrace2}
  \pi: \mathbf{u} \mapsto (u_0,u_N).
\end{equation}

Figure~\ref{fig.pwcsaddle} shows a two-dimensional image, computed in
the manner described above, of the chaotic saddle for the delay
equation~\eqref{eq.pwc2}.  Similarly, Figure~\ref{fig.mgsaddle}
presents an image of the chaotic saddle of the Mackey-Glass
equation~\eqref{eq.mg4}.  These images provide a ``flattened'' view of
a trajectory on the saddle.  Of course, the trajectory itself lives in
the phase space $C$ (or, at least, the numerical approximation of the
trajectory lives in $\Reals^{N+1}$).

\begin{figure}[p]
\begin{center}
\includegraphics[height=2.8in]{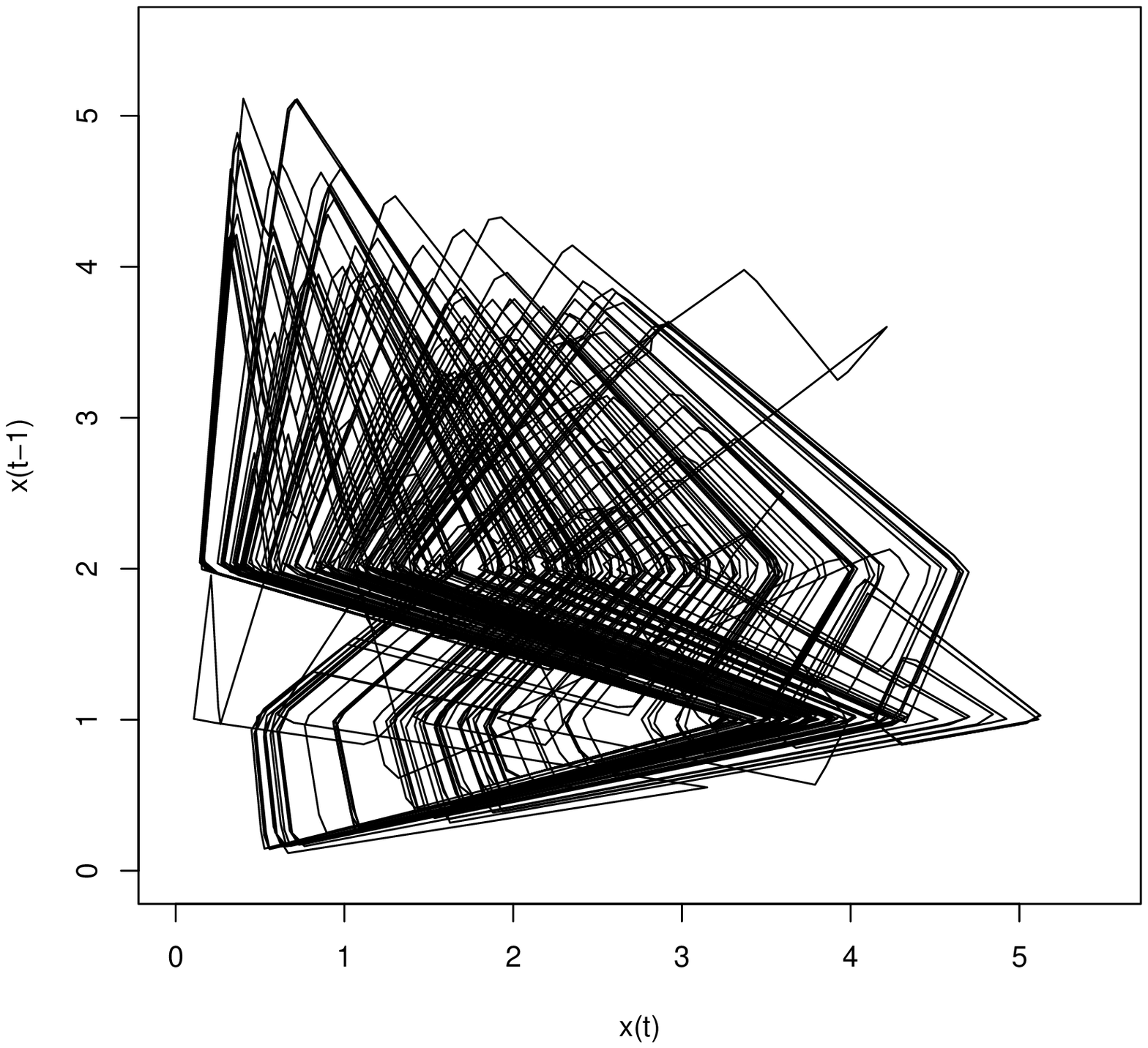}
\caption[Two-dimensional projection of part of a trajectory on the
chaotic saddle of the delay equation~\eqref{eq.pwc2}.]{Two-dimensional
projection, under the trace map~\eqref{eq.transtrace}, of part of a
trajectory on the chaotic saddle of the delay
equation~\eqref{eq.pwc2}.}
\label{fig.pwcsaddle}
\end{center}
\end{figure}
\begin{figure}[p]
\begin{center}
\includegraphics[height=2.8in]{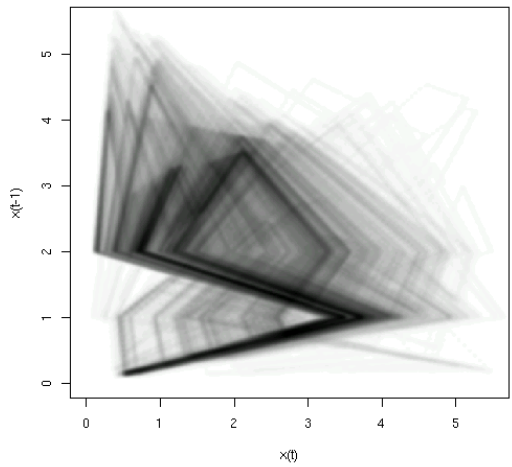}
\caption[Two-dimensional projection of the natural invariant measure
on the chaotic saddle of the delay
equation~\eqref{eq.pwc2}.]{Two-dimensional projection, under the trace
map~\eqref{eq.transtrace}, of the natural invariant measure on the
chaotic saddle of the delay equation~\eqref{eq.pwc2}.}
\label{fig.saddledenspwc2d}
\end{center}
\end{figure}

\begin{figure}[p]
\begin{center}
\includegraphics[height=2.8in]{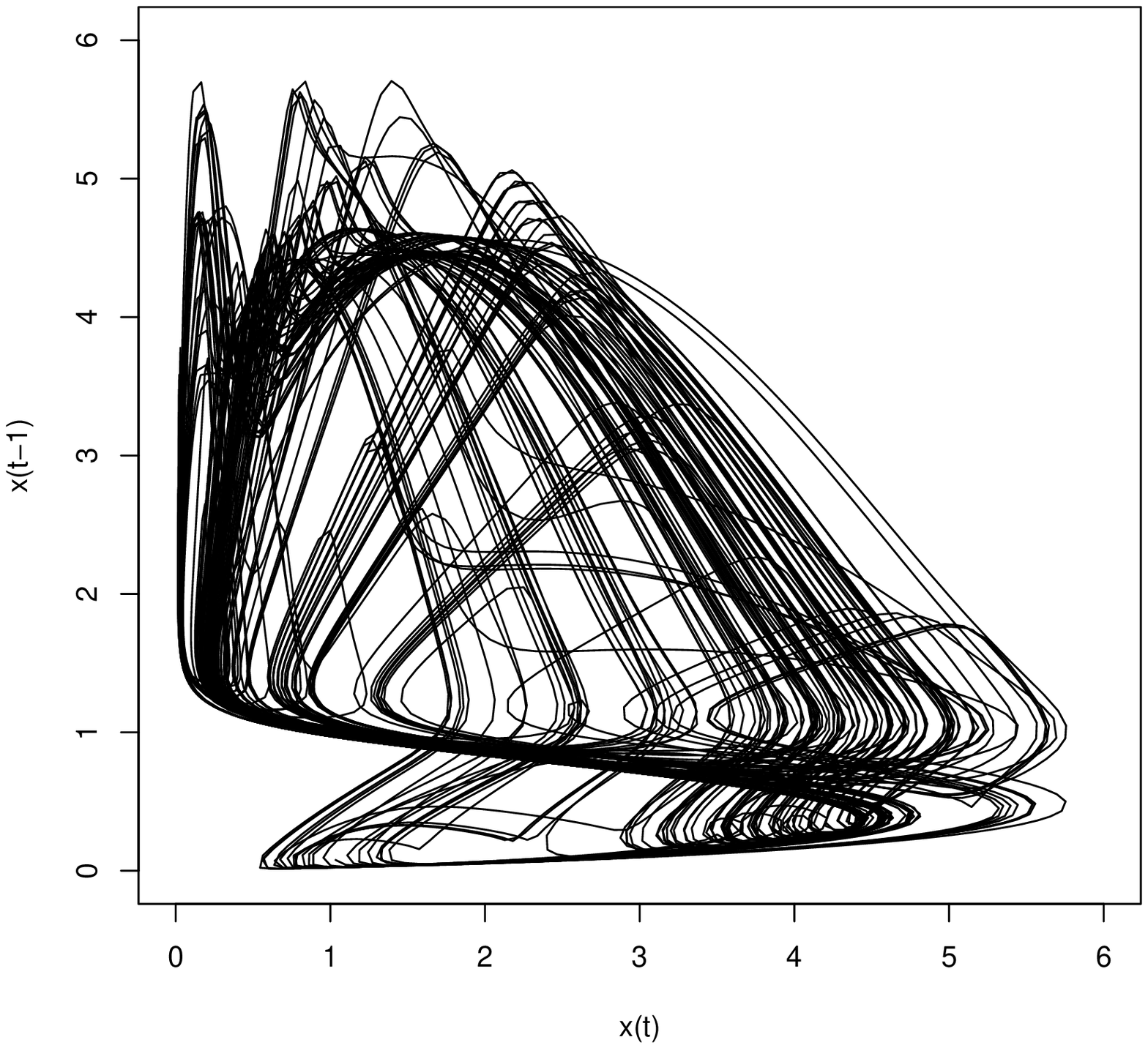}
\caption[Two-dimensional projection of part of a trajectory on the
chaotic saddle of the Mackey-Glass equation.]{Two-dimensional
projection, under the trace map~\eqref{eq.transtrace}, of part of a
trajectory on the chaotic saddle of the Mackey-Glass
equation~\eqref{eq.mg4}.}
\label{fig.mgsaddle}
\end{center}
\end{figure}
\begin{figure}[p]
\begin{center}
\includegraphics[height=2.8in]{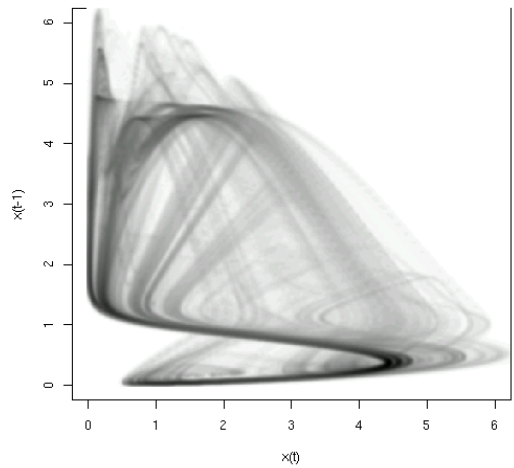}
\caption[Two-dimensional projection of the natural invariant measure
on the chaotic saddle of the Mackey-Glass equation.]{Two-dimensional
projection, under the trace map~\eqref{eq.transtrace}, of the natural
invariant measure on the chaotic saddle of the Mackey-Glass
equation~\eqref{eq.mg4}.  Density is indicated by grayscale
intensity.}
\label{fig.saddledensmg2d}
\end{center}
\end{figure}

The asymptotic statistics of a trajectory on the saddle induces an
invariant measure.  That is, for a given initial phase point $u \in C$
on the saddle, the measure $\mu$ on $C$ defined by
\begin{equation}
  \mu(A) = \lim_{M \to \infty} \frac{1}{M} \sum_{n=1}^M 1_A\big( S^n(u) \big),
\end{equation}
provided the limit exists, is invariant under the dynamics $S$ (\cf\
Section~\ref{sec.statreg}).  Using a stagger-step trajectory
$\{\mathbf{u}(n): n=1,2,\ldots,M\}$ of large length $M$, this measure
can be approximated by the measure
\begin{equation}
  \tilde{\mu}(A) = \frac{1}{M} \sum_{n=1}^M 1_A\big(\mathbf{u}(n)\big).
\end{equation}
on $\Reals^{N+1}$.

As discussed in Section~\ref{sec.trace2}, it is possible to visualize
$\tilde{\mu}$ by computing its two-dimensional image under the trace
map $\pi$ (equation~\eqref{eq.transtrace2}).  This amounts to simply
computing a two-dimensional histogram (\ie, density) of pairs
$(u_0,u_N)$ along the numerical trajectory $\{\mathbf{u}(n)\}$.  The
resulting images of the invariant measures for the delay
equations~\eqref{eq.pwc2} and~\eqref{eq.mg4} are shown in
Figures~\ref{fig.saddledenspwc2d} and~\ref{fig.saddledensmg2d},
respectively.

In principle there may be an uncountable number of distinct invariant
measures on the saddle.  In practice, however, any stagger-step
trajectory appears to yield the same approximate invariant measure,
for both of the delay equations considered here.  This suggests the
existence of a unique ``natural'' invariant measure for these systems,
analogous to SRB measure in that it is the invariant measure naturally
selected by numerical simulations and (presumably) physical
experiments.  See~\cite{Dham99,Dham01} and references therein for
discussion of how the notion of natural invariant measure should be
defined in the context of transient chaos.

In terms of the solution $x(t)$, the asymptotic statistics on the
chaotic saddle are described by the measure $\nu$ on \Reals\ given by
\begin{equation}
  \nu(A) = \lim_{T \to \infty} \frac{1}{T} \int_0^T 1_A\big(x(t))
  \,dt,
\end{equation}
or in terms of the discrete-time map $S$,
\begin{equation}
  \nu(A) = \lim_{M \to \infty} \frac{1}{M} \sum_{n=1}^M 1_A((S^n
  u)(1)).
\end{equation}
This is just the one-dimensional projection, under the trace map $\pi:
u \mapsto u(0)$, of the natural invariant measure $\mu$.  The measure
$\nu$ has the practical significance of describing where a solution on
the saddle spends most of its time.  It is more intuitively understood
in terms of its density (the so-called ``invariant density''), which
can be approximated by a histogram of values $x(t)$ along a long
solution near the saddle.  Invariant densities found in this way for
the delay equations~\eqref{eq.pwc2} and~\eqref{eq.mg4} are shown in
Figures~\ref{fig.saddledenspwc1d} and~\ref{fig.saddledensmg1d},
respectively.

\begin{figure}
\begin{center}
\includegraphics[width=\figwidth]{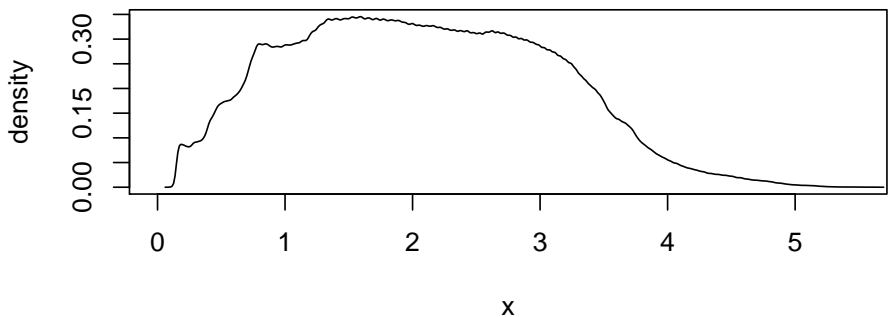}
\caption{One-dimensional invariant density for the chaotic saddle of
the delay equation~\eqref{eq.pwc2}.}
\label{fig.saddledenspwc1d}
\end{center}
\end{figure}

\begin{figure}
\begin{center}
\includegraphics[width=\figwidth]{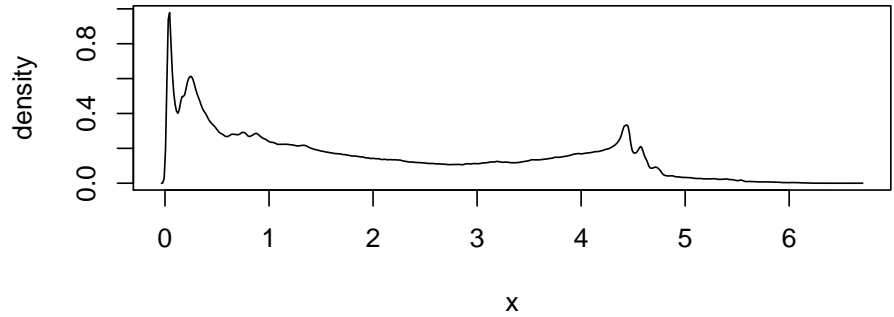}
\caption[One-dimensional invariant density for the chaotic saddle of
the Mackey-Glass equation.]{One-dimensional invariant density for the
chaotic saddle of the Mackey-Glass equation~\eqref{eq.mg4}.}
\label{fig.saddledensmg1d}
\end{center}
\end{figure}

\subsection{Quantitative characterization: Ergodic parameters} \label{sec.translyap}

It is considered \emph{de rigeur}, whenever a new chaotic invariant
set is found, to characterize its geometry and the dynamics on it by
computing its fractal dimensions and Lyapunov exponents, and possibly
other ergodic parameters (\cf\ Section~\ref{sec.dimexp}).  For the
chaotic saddles of the delay equations considered here, for which we
are able to compute arbitrarily long numerical trajectories
$\{\mathbf{u}(n)\}$ approximating the saddle, these quantities can be
found by straightforward computation using the algorithms referenced
in Section~\ref{sec.dimexp}.

Table~\ref{tab.transdimexp} summarizes the results of computations of
the largest five Lyapunov exponents $\lambda_i$~\cite{Ben80a,Ben80b},
the Lyapunov dimension~\cite{Fred83}
\begin{equation}
  d_L = j + \frac{\sum_{i=1}^j \lambda_i}{|\lambda_{j+1}|} \quad \text{where}
  \quad j = \max_k \sum_{i=1}^k \lambda_i > 0,
\end{equation}
and the correlation dimension~\cite{GP83} of the chaotic saddles for
each of the delay equations~\eqref{eq.pwc2} and~\eqref{eq.mg4}.  Both
DDEs have a zero Lyapunov exponent.  This is to be expected for
continuous-time systems in general, with the zero Lyapunov exponent
corresponding to neutral expansion along the direction tangent to the
flow~\cite{Hak83}.

Note that both DDEs exhibit just a single positive Lyapunov exponent.
The existence of a positive exponent confirms that there is
sensitivity to initial conditions in a neighborhood of the saddle.
The fact that there is only one single positive exponent suggests that
the saddle has only one unstable direction.  Thus the failure of the
PIM method for these DDEs cannot be due to the presence of multiple
unstable directions.

\begin{landscape}
\begin{table}
\begin{center}
{\small
\begin{tabular}{c|c|*{5}{c}|c|c}
System & Parameters
&
\multicolumn{5}{c|}{Lyapunov spectrum (bits/time)}
&
\begin{minipage}[t]{0.75in}
Lyapunov dimension
\end{minipage}
&
\begin{minipage}[t]{0.75in}
Correlation dimension
\end{minipage} \\
& & $\lambda_1$ & $\lambda_2$ & $\lambda_3$ & $\lambda_4$ & $\lambda_5$ & & \\
\hline
\begin{minipage}[c]{1.75in}
\begin{equation*}
F(x) = \begin{cases}
          c & \text{if $x \in [x_1,x_2]$} \\
          0 & \text{otherwise}
       \end{cases}
\end{equation*}
\end{minipage}
&
\begin{minipage}[c]{1in}
\begin{equation*}
\begin{cases}
\alpha=3.25 \\ c=20.5 \\ x_1=1 \\ x_2=2
\end{cases}
\end{equation*}
\end{minipage}
&
$0.54$ & $0.00$ & $-1.5$ & $-8.2$ & $-12$ & $2.36$ & $1.96$ \\
\begin{minipage}[c]{1.75in}
\begin{equation*}
  F(x) = \beta \frac{x}{1 + x^{10}}
\end{equation*}
\end{minipage}
&
\begin{minipage}[c]{1in}
\begin{equation*}
\begin{cases}
\alpha=1/0.1625 \\ \beta=12/0.1625
\end{cases}
\end{equation*}
\end{minipage}
&
$0.60$ & $0.00$ & $-0.50$ & $-3.1$ & $-3.8$ & 3.03 & 2.24
\end{tabular}
}
\vspace*{0.5in}
\caption{Lyapunov exponents and fractal dimensions for chaotic saddles
of delay differential equations having the form $x'(t) = -\alpha x(t)
+ F\big(x(t-1)\big)$.}
\label{tab.transdimexp}
\end{center}
\end{table}
\end{landscape}

\section{Conclusions}

Transient chaos in delay differential equations, although anticipated
by numerous published results~\cite{AE87,HM82,HW83,Hale88,LML93,
Wal81}, has not specifically been investigated before.  Multistability
with fractal basins of attraction, a key signature of transient chaos,
has been reported previously for the delay equation~\eqref{eq.pwc2}.
We have also found multistability and fractal basins in the
Mackey-Glass equation~\eqref{eq.mg4}.  Existence of fractal basins
suggests the existence of transiently chaotic trajectories and the
presence of a chaotic saddle~\cite{MGOY85}.  Indeed, by computing
basins of attraction for these two delay equations we have been able
to illustrate, numerically, the existence of solutions with long
chaotic transients.

The published methods for approximating chaotic saddles fail when
applied to the delay equations considered here.  The PIM
method~\cite{Nusse89} in particular is expected to fail if the saddle
has more than one unstable direction~\cite{Sweet01}.  However, our
computations of Lyapunov exponents point to the presence of only a
single unstable direction, so this explanation is inadequate.
Instead, the failure of the known methods stems from the presence of
saddle type unstable periodic orbits, to which each of the methods
eventually converge.  This mode of failure has not been observed
before.

A slightly modified version of the stagger-and-step method
(Section~\ref{sec.ssmod}) avoids unstable periodic orbits, and appears
to be an effective method for approximating chaotic saddles for delay
equations.  Using this method we are able to compute chaotic numerical
solutions of arbitrarily long duration, for both of
equations~\eqref{eq.pwc2} and~\eqref{eq.mg4}.

Despite the fact that the saddle is embedded in an infinite
dimensional phase space (or at least a finite- but high-dimensional
phase space used for numerical approximation), it is possible to go
some way toward visualizing it by graphing its projections onto two
dimensions.  We have done this for the delay equations considered
here, projecting the numerical approximation of the chaotic saddle
onto two dimensions by applying a particular ``trace map'' commonly
used for visualizing the dynamics of delay equations.

The distribution of orbits on the chaotic saddle can be characterized
by an invariant measure.  This too can be approximated numerically
from a stagger-and-step trajectory for a given DDE, and projected onto
two dimensions for the purpose of visualization.

Chaotic invariant sets are typically characterized in terms of their
ergodic parameters such as Lyapunov exponents and dimensions.  These
can be found by the standard algorithms, involving straightforward
computations on the numerical trajectory resulting from the
stagger-and-step algorithm.  For each of the delay
equations~\eqref{eq.pwc2} and~\eqref{eq.mg4} we have applied these
techniques to the numerical trajectory approximating the saddle, and
thereby estimated its Lyapunov spectrum, Lyapunov dimension, and
correlation dimension, the results of which are summarized in
Table~\ref{tab.transdimexp}.  Two interesting results follow.  Firstly,
despite the infinite dimensionality of the phase space of the delay
equation, the saddle itself has quite low dimension, of the order $2
\sim 3$.  The same observation has been made with regard to chaotic
attractors of delay equations~\cite{Farm82}.  Second, for both of the
DDEs considered the saddle has only one positive Lyapunov exponent,
hence only one unstable direction.  This lends support to our
hypothesis that the previously published algorithms for approximating
the saddle fail for some reason other than the existence of multiple
unstable directions.

\chapter{Conclusion}

\begin{singlespacing}
\minitoc  
\end{singlespacing}

\vspace{0.4in}

\section{Summary of Conclusions}

The probabilistic approach to evolutionary delay differential
equations suffers from the chief difficulty, encountered in various
guises throughout this thesis, that the phase space of a delay equation
is infinite dimensional.  This difficulty might explain the absence in
the literature of a thorough discussion of how such an approach might
be developed.  In Chapter~\ref{ch.framework} we have attempted to
bridge this gap.

The phase space that arises naturally in the formulation of delay
equations as dynamical systems is the space $C$ of continuous
functions from the interval $[-1,0]$ into \Rn.  Any solution of a
given DDE can be identified with the evolution of a corresponding
phase point in $C$.  Within this context Chapter~\ref{ch.framework}
develops the basic framework for the application of ergodic concepts
to delay equations, and explores the implications of this framework.
Its main conclusions, mostly negative, are consequences of the
infinite dimensionality of $C$.

An ergodic approach to delay equations entails an adequate theory of
probability on the phase space $C$, for which measure theoretic
probability provides a sufficiently abstract setting.  However, there
are peculiarities of probability in infinite dimensions that make an
ergodic approach to DDEs problematic.  The main analytical tool of
applied ergodic theory is the Perron-Frobenius operator~\cite{LM94},
which prescribes the evolution of probability densities under the
action of a dynamical system.  The Perron-Frobenius operator formalism
has been very successful in the analysis of finite dimensional
systems.  However, infinite dimensional systems cannot be expected to
have well-defined densities, owing to singularity of the evolution
operator (Section~\ref{sec.infdimdens}).  Consequently one cannot
define a Perron-Frobenius operator corresponding to a delay equation.
The absence of a well-defined Perron-Frobenius operator precludes the
application of the major part of the ergodic theoretic toolbox, \eg\
in~\cite{LM94}.

An ergodic approach to delay equations will also require a theory of
integration with respect to measures on $C$.  For example, the
Birkhoff Ergodic Theorem (\cf\ Section~\ref{sec.ergmixexact}) allows
one to express a time average in terms of an integral (\ie\ spatial
average or expectation) with respect to an ergodic measure.  The
evaluation of such integrals requires an adequate theory of
integration on function space.  This theory is lacking, except in the
special case of Wiener measure.  Because Wiener measure is invariant
under the quantum field equations, the theory of integration with
respect to Wiener measure has been well developed in the physics
literature.  However, we cannot expect invariance of Wiener measure
under delay differential equations, except perhaps in special cases.
The lack of a general theory of integration on function space is a
serious barrier to developing an ergodic theory of delay equations.

In the application of ergodic concepts to physical systems, the notion
of SRB measure plays a central role.  An SRB measure characterizes the
asymptotic statistics of almost every orbit of a dynamical system.
For finite dimensional systems Lebesgue measure provides a natural and
essentially unique translation-invariant notion of ``almost every''.
However, in infinite dimensions there is no measure analogous to
Lebesgue measure.  In particular, there is no non-trivial
translation-invariant measure on $C$ to provide the requisite notion
of ``almost every''.  Consequently the definition of SRB measure for
delay equations is ambiguous.  One way to resolve this ambiguity is to
substitute the notion of prevalence~\cite{HSY92} in place of
``Lebesgue almost every'' in the definition of SRB measure.  This is
consistent with the present definition of SRB measure for finite
dimensional systems and therefore provides a natural extension to the
infinite dimensional case.


\bigskip

The difficulties associated with the infinite dimensionality of DDEs
can be avoided either by finite dimensional approximation or by
removing the requirement that DDEs be treated in a dynamical systems
context (thereby precluding any discussion of ergodic concepts, since
the notion of an evolution semigroup underlies all of ergodic theory).

Despite the infinite dimensionality of the phase space, a delay
equation nevertheless prescribes the evolution of some finite
dimensional quantity $x(t) \in \Rn$.  In Chapter~\ref{ch.densev} we
have investigated probabilistic approaches to this evolution problem.
In order that a DDE prescribes a finite dimensional evolutionary
process, it is necessary to restrict the set of allowable initial
functions to some finite dimensional subset of $C$, \eg\ the subspace
of constant functions.  If the set of allowable initial functions is
$n$-dimensional then one can define a family of solution maps $S_t:
\Rn \to \Rn$.  For some simple DDEs for which the solution map can be
found analytically, one can derive an explicit formula for the
Perron-Frobenius operator corresponding to $S_t$ and thereby
analytically solve the density evolution problem
(Section~\ref{sec.fpexplicit}).  For more complicated equations this
method is impractical, due both to the difficulty of finding $S_t$
analytically and to the non-invertibility of $S_t$ once it has been
found.  In particular this method fails to provide an analytical
approach to the evolution of densities for chaotic delay equations
with interesting statistical properties.

In the absence of a generally applicable analytical method, it is
desirable to have an effective computational approach to the evolution
of densities for DDEs.  Of the numerical methods considered, the
simplest is the ``brute force'' method of simulating large ensembles
of solutions and compiling histograms to approximate densities
(Section~\ref{sec.histos}).  Because this method relies on adequate
statistical sampling to obtain accurate results it is computationally
intensive, to the point of being impractical for many applications.
Nevertheless, due to general results on the reliability of statistics
computed from numerical simulations~\cite{Ben78a}, one has reason to
hope that ensemble simulation is a robust means of estimating ensemble
densities.

As an alternative to ensemble simulation we have developed a numerical
method for computing the evolution of densities for DDEs, based on
piecewise linear approximation of the solution map
(Section~\ref{sec.approxfp}).  This approximate solution map is easily
inverted, and leads to an approximate version of the analytical
approach considered in Section~\ref{sec.fpexplicit}.  This method is
much more efficient for computing the evolution of densities.
However, its effectiveness is limited for DDEs with chaotic dynamics,
for which the solution map for large times can be extremely complex
and difficult to approximate.  Thus for chaotic DDEs this numerical
approach is not suitable for evolving densities to arbitrarily large
times.

The method of steps, frequently used to find analytical solutions of
delay equations, can be used to express a delay equation as a sequence
of ordinary differential equations.  With appropriate modifications to
this method, a DDE can be expressed as a system of simultaneous ODEs.
One can then formulate the density evolution problem for a DDE in
terms of a corresponding ODE system, and use methods for ODEs to write
an evolution equation for the density (Section~\ref{sec.stepsdens}).
For simple DDEs this equation can be solved analytically by the method
of characteristics, reproducing the analytical results obtained in
Section~\ref{sec.fpexplicit}.  For more complicated equations an
analytical solution is not possible, but the method retains some
intuitive appeal because it provides a model for the evolution of
densities for DDEs, in terms of the transportation of a line mass by a
flow in $\Reals^N$.  A numerical implementation of this model provides
geometrical insight into the results of our other numerical approaches
to density evolution (Section~\ref{sec.geom}).  However, the model has
limited utility when densities are evolved to large time, both because
the dimension $N$ increases with time, and, for chaotic DDEs, the
solution map becomes very complicated and the resulting distribution
of the line mass in $\Reals^N$ becomes difficult to approximate and in
any case loses its intuitive appeal.


\bigskip

For a variety of delay differential equations, numerically computed
solution ensembles appear to converge to a unique asymptotic
distribution, characterized by an asymptotic probability measure
\etastar, with density \rhostar, on \Rn\ (Section~\ref{sec.invex}).
The same density is observed if one constructs a histogram of solution
values along a \emph{single} solution of the DDE.  This phenomenon can
be understood in terms of (hypothetical) ergodic properties of the
associated infinite dimensional dynamical system $\{S_t\}$ on $C$
(Section~\ref{sec.invinterp}).  In the examples considered, $\{S_t\}$
is known to possess a compact attractor $\Lambda$: any ensemble of
trajectories starting in the basin of attraction of $\Lambda$ will,
asymptotically, be distributed on $\Lambda$.  The numerical evidence
supports the existence of a natural invariant probability measure
(\ie, an SRB measure) \mustar\ supported on $\Lambda$.  The asymptotic
measure \etastar\ can be interpreted as the image of \mustar\ under a
suitable projection from $C$ into \Rn.

The practical and theoretical importance of invariant measures, and
SRB measures especially, makes the computation of asymptotic measures
for DDEs a desirable goal.  The simplest approach to this problem is
to evolve an initial density forward in time until the asymptotic
statistics become apparent.  However, of the methods discussed above
for computing the evolution of densities for DDEs, none is well suited
to evolving densities to large times except the ``brute force''
ensemble simulation method.  The increase in time of both the
complexity of the solution map and the dimension of the system are
fundamental obstacles to evolving densities to large times.  In
Chapter~\ref{ch.invdens} we have investigated alternative approaches
to estimating asymptotic densities.

For some DDEs we have hypothesized the existence of an SRB measure,
since a histogram of a single solution reproduces the asymptotic
density found by ensemble simulation.  For DDEs with this property the
asymptotic density can be approximated more efficiently by simulating
a single long solution, rather than the many (\eg\ $10^6$) different
solutions required for ensemble simulation (Section~\ref{sec.srbev}).
While this approach can be used to quickly estimate the asymptotic
density, it falls into the class of ``brute force'' methods that
provide no insight into underlying the mechanism.

Ulam's method~\cite{Ulam60}---the most widely known technique for
estimating invariant measures---can be formulated for DDEs so that it
yields an approximation of the asymptotic measure \etastar\
(Section~\ref{sec.ulam}).  However, this approximation turns out to be
identical (by definition) to the histogram of a time series generated
by a typical solution of the given DDE, because our construction is
somewhat circular.  Thus, at least in our formulation, Ulam's method
does not provide an independent estimate of the invariant density.

The ``self-consistent Perron-Frobenius operator'' method
of~\cite{Kan89,Lep93} is a promising approach to estimating
asymptotic densities for DDEs.  A suitable discretization of a given
DDE yields a discrete-time system to which this method can be applied.
However, a straightforward implementation
(Section~\ref{sec.approxinv}) fails to generate the desired
approximate invariant density.  Instead, the method converges to an
unstable fixed point of the DDE.  This failure can be explained, in
part, by the fact that in the delay equations we consider, the
instability (hence chaotic behavior and nontrivial statistical
properties) is delay-induced, \ie\ is not present in the absence of a
non-zero delay.  In the systems considered in~\cite{Lep93}, chaotic
behavior is present even in the case of zero delay.  Adapting the
approach of~\cite{Lep93} to systems with delay-induced instability
remains an open problem.

A solution to the problem of estimating asymptotic densities for DDEs
remains elusive.  Previously published methods of estimating invariant
measures for dynamical systems do not adapt well to delay equations.
At present the only effective methods are based on computing
statistics on numerical solutions.


\bigskip

Transient chaos in delay differential equations, although anticipated
by a number of publications, has not been investigated before.  We
have found multistability with fractal basins of attraction, a key
signature of transient chaos, in a delay equation with
piecewise-constant nonlinearity (also reported in~\cite{LML93}) as
well as the Mackey-Glass equation (Section~\ref{sec.transev}).
Existence of fractal basins suggests the existence of transiently
chaotic trajectories and the presence of a chaotic saddle.  Indeed, by
computing basins of attraction for these two delay equations we have
been able to illustrate, numerically, the existence of solutions with
long chaotic transients.

Previously published numerical methods for approximating unstable
chaotic sets (\ie\ chaotic saddles) all fail when applied to the delay
equations considered here.  The PIM method~\cite{Nusse89} in
particular is expected to fail if the saddle has more than one
unstable direction.  However, our calculations of Lyapunov exponents
point to the presence of only a single unstable direction, so this
explanation is inadequate.  Instead, the failure of the known
algorithms appears to stem from the presence of saddle type unstable
periodic orbits, to which each of the algorithms eventually converge.
This mode of failure has not been observed before.

We have developed a modified version of the stagger-and-step method
(Section~\ref{sec.ssmod}), aimed at avoiding unstable periodic orbits.
With this method, aperiodic numerical trajectories of arbitrarily long
duration can be found for both of the delay equations considered in
our examples.  It is presumed that such a trajectory approximates the
chaotic saddle.

Despite the fact that the saddle is embedded in an infinite
dimensional phase space (or at least a finite- but high-dimensional
phase space used for numerical approximation), it is possible to go
some way toward visualizing the saddle by graphing its projections
onto two dimensions.  We have done this for the delay equations
considered here, projecting the numerical approximation of the chaotic
saddle onto two dimensions by applying a particular ``trace map''
commonly used for visualizing the dynamics of delay equations
(Section~\ref{sec.transvis}).

The distribution of orbits on the chaotic saddle can be characterized
by an invariant measure.  This too can be approximated numerically
from a stagger-and-step trajectory, and projected onto two dimensions
for the purpose of visualization.

Chaotic invariant sets are typically characterized in terms of their
ergodic parameters such as Lyapunov exponents and dimensions.  These
can be found using standard algorithms, involving straightforward
computations on the numerical trajectory resulting from the
stagger-and-step algorithm.  For each of the delay equations
considered we have applied these techniques and thereby estimated the
Lyapunov spectrum, Lyapunov dimension, and correlation dimension of
the respective chaotic saddles (Section~\ref{sec.translyap}).  Despite
the infinite dimensionality of the phase space of the delay equation,
the saddle itself has quite low dimension, of the order $2 \sim 3$.
The same observation has been made with regard to chaotic
\emph{attractors} of delay equations~\cite{Farm82}.  For both of the
DDEs considered, the saddle has only one positive Lyapunov exponent,
hence only one unstable direction.  This lends support to our
hypothesis that the previously published algorithms for approximating
the saddle fail for some reason other than the existence of multiple
unstable directions.

\section{Directions for Further Research}

A number of the investigations undertaken in this thesis suggest
avenues for further research.  Some of the more promising directions
are outlined below, along with some new ideas that have not been
sufficiently developed to warrant inclusion in the foregoing.

In Section~\ref{sec.infprob} we showed that the notion of physical or
SRB measure is problematic for infinite dimensional systems such as
DDEs, owing to the lack (or ambiguity) of an appropriate notion of
``almost every'' in infinite dimensions.  Prevalence~\cite{HSY92} is
one such notion that appears not to have been investigated in the
context of physical measures.  Its translation invariance makes
prevalence a good candidate for what might be considered a ``natural''
sense of almost every, which is what we desire in the identification
of a natural or physical invariant measure.  Since prevalence is
equivalent to ``Lebesgue almost every'' in finite dimensional spaces,
using prevalence in the definition of SRB measure would be consistent
with the already-accepted but inadequate definition.  Prevalence of a
given set also happens to be relatively easy to prove, thanks to a
number of results established in~\cite{HSY92}.  Especially as SRB
measure is expected to play an important role in an ergodic theoretic
understanding of such infinite dimensional phenomena as \eg\
turbulence in fluids, further investigation of the definition of SRB
measure for infinite dimensional systems, and the potential role of
the notion of prevalence in particular, is warranted.

Many results in ergodic theory are expressed in terms of integrals
over the phase space.  For this reason it is argued above that an
ergodic theoretic understanding of delay equations will require a
theory of integration on the function space $C$.  Such a theory has
been developed if the measure of integration is Wiener measure, but we
have argued that this is inadequate for an understanding of delay
equations because, in general, Wiener measure is not expected to be
invariant under a given DDE.  This position is dissatisfying, if only
because the physics literature has amassed such a wealth of theory and
tools (\eg\ Feynman diagrams) for integrating functionals with respect
to Wiener measure.  The possibility of exploiting these tools in the
context of delay equations has not been adequately explored.  One
avenue, for example, might follow the approach
of~\cite{BK84,Rud85,Rud87,Rud88}, where exactness is proved for a
class of partial differential equations by establishing a conjugacy
with a certain dynamical system for which Wiener measure is invariant.
It would be interesting to seek a class of delay equations for which a
similar argument could be made.  This would provide the first rigorous
result on strong ergodic properties of a delay equation as a dynamical
system in $C$.

In Section~\ref{sec.approxfp} we develop a numerical method for
approximating the evolution of densities, based on piecewise-linear
approximation of the solution map for a given DDE.  The method itself
is not limited to delay equations, and could be applied to any system
for which a solution map can be approximated.  It would be
interesting, for example, to apply this approach to estimating
invariant densities for ordinary differential equations.

Ulam's method may yet turn out to be an effective approach to
estimating asymptotic densities for delay equations.  The Euler
discretization with time step $h$, considered in
Section~\ref{sec.approxinv}, defines a discrete-time dynamical system
$S: \Reals^N \to \Reals^N$.  For some delay equations $S$ is known to
have strong ergodic properties~\cite{LM95}, making it a good candidate
for the application of Ulam's method.  That is, Ulam's method might be
used to approximate the $N$-dimensional natural $S$-invariant measure.
This would require a partition \Acal\ of $\Reals^N$ and the definition
of a transition probability matrix relative to \Acal\
(equation~\eqref{eq.transmat}, with Lebesgue measure $\lambda$ on
$\Reals^N$).  At first this seems impractical, since even a relatively
coarse partition would contain on the order of $100^N$ cells (with $N$
large, say $N \gtrsim 100$, so that the discretization accurately
models the DDE).  However, attractors for delay equations have
generally been found to be low-dimensional (\eg, $D \approx
3$)~\cite{Farm82}, and therefore should be resolvable with a carefully
chosen partition containing on the order of $100^3$ cells.  This is
within the limits of practical computation.  An ``automatic
refinement'' method for choosing an optimal partition, coupled with
efficient construction of the transition matrix $P$, has recently been
implemented in the software package GAIO~\cite{DJ98,DJ99,Junge01}.
Some exciting recent developments in this area show that numerical
simulations can be used to prove rigorous results about dynamical
systems~\cite{Tucker00,Tucker02} and infinite dimensional systems in
particular~\cite{Day03}.  It seems reasonable that this software could
be applied to the estimation of invariant measures for delay
equations.

In Section~\ref{sec.approxinv} we attempted to develop a method for
estimating asymptotic densities for DDEs, based on the ideas
in~\cite{Kan89,Lep93}.  The results of this investigation were
disappointing and somewhat surprising.  Using the same ideas, the
authors of~\cite{Kan89} and~\cite{Lep93} have successfully estimated
``collapsed'' (\ie, one- and two-dimensional) invariant densities for
some other high-dimensional systems.  We have hypothesized that this
discrepancy is due to the fact that for the systems considered
in~\cite{Lep93}, chaotic behavior occurs even in the absence of an
explicit delay in the dynamics, whereas for the DDEs we consider the
instability is inherently delay-induced.  This explanation is not
entirely satisfactory.  It seems that some variation on this theme
should be effective for delay differential equations.  In particular,
the second-order method outlined in Section~\ref{sec.2ord} warrants
further investigation.

In Chapter~\ref{ch.transient}, previously published algorithms for
approximating chaotic saddles failed when applied to the delay
equations we have considered.  This failure is apparently due to the
presence of saddle type periodic orbits, which capture the numerical
trajectory that would otherwise approximate the saddle.  This
mechanism should not be specific to delay equations.  It would be
interesting to investigate the behavior of these algorithms for
simpler (\eg\ finite dimensional) systems that also possess
saddle-type periodic orbits, both to test our hypotheses regarding
this mode of failure and to investigate more effective remedies.

Our modification of the stagger-and-step method~\cite{Sweet01}
successfully avoids the difficulty that otherwise occurs in the
presence of saddle type periodic orbits.  An obvious extension of this
work would be an attempt at a similar modification of the PIM method.
Our numerical results suggest that the chaotic saddle has only one
unstable direction, at least for the DDEs we have considered.  Thus in
principle the PIM method~\cite{Nusse89} should be applicable, but for
the difficulty presented by the presence of saddle type periodic
orbits.  A successful modification of this method would be a valuable
tool not just for delay equations but for the numerical analysis of
transient chaos in general.

A theme that arises in various contexts throughout this thesis is that
the analysis of DDEs is complicated by their infinite dimensionality.
Chaos in infinite-dimensional systems is not a new subject: the field
has a rich literature, with a focus mainly on chaotic partial
differential equations (PDEs).  However, little has been said about
PDEs in an ergodic theoretic context; consequently this thesis
provides only scant discussion of the PDE literature.  Nevertheless, a
number of fruitful avenues for further research are suggested by the
problem of adapting and applying to DDEs the techniques that have been
developed for PDEs.

One such collection of techniques is motivated by the possible
existence of \emph{inertial manifolds} for DDEs.  An inertial manifold
(IM), a subset of phase space, is a smooth, finite-dimensional
invariant manifold that exponentially attracts all trajectories in a
certain neighborhood (see \eg\ \cite{Tem88,Tem90,Tem97} and references
therein).  Inertial manifolds have been shown to exist for a broad
class of dissipative dynamical systems including some PDEs.  If an
inertial manifold $\mathcal{M}$ exists then any global attractor will
be contained in $\mathcal{M}$; thus the subsystem obtained by
restricting the original dynamical system to $\mathcal{M}$ faithfully
reproduces the asymptotic dynamics.  This subsystem is itself
equivalent to a certain \emph{finite} system of ordinary differential
equations.  This drastic simplification makes it possible to analyze the
asymptotic dynamics using methods for finite dimensional systems.
This approach has led, for example, to rigorous bounds on the
dimension of the attractor for some PDEs~\cite{Tem00}.  Techniques
have also been developed for approximating inertial manifolds
numerically, and results have been obtained on the persistence of
inertial manifolds under perturbation and numerical
approximation~\cite{DT94,JRT00,JS95,JST98,JT96}.

We are unaware of any investigations of inertial manifolds for chaotic
DDEs (in the case of a DDE we are interested in finite-dimensional
invariant manifolds in the function space $C$).  However, the
observations in~\cite{Farm82} and elsewhere of low-dimensional
attractors for some DDEs suggest the presence of an inertial manifold.
In a preliminary investigation in this direction, we have considered
the special case of seeking a flat inertial manifold, \ie\ an
attracting invariant linear subspace of $C$.  If such an invariant
subspace exists it should be readily observable, \eg\ by employing a
Gram-Schmidt procedure to show (numerically) the existence of a finite
basis for points on the attractor.  We have carried out this procedure
for the chaotic DDEs considered in Section~\ref{sec.invex}, with
results indicating that there is no finite basis for the attractor,
hence no flat inertial manifold.  The possible existence of curved
inertial manifolds warrants further investigation.  The finite
dimensional reduction such an invariant manifold would afford might be
instrumental in circumventing some of the difficulties encountered in
this thesis.

As mentioned in Section~\ref{sec.ddetheory}, delay differential
equations are a special case of a more general class of retarded
functional differential equations (RFDEs).  Throughout this thesis we
have restricted our attention to delay equations having the particular
form of equation~\eqref{eq.dde1} (\eg\ with a single, fixed delay).
At times we have been able to exploit the relative simplicity of the
special form of this equation (\eg\ Section~\ref{sec.stepsdens}) to
obtain a desired result.  However, many of the results of this thesis
do not rely on the special form of this equation.  An interesting and
obvious avenue for further research is to extend the present work,
where possible, to more general RFDEs.

%
%
\bibliographystyle{amsplain}
\bibliography{thesisbib}
%
%
\end{document}